\documentclass[11pt, a4paper]{amsart}
\usepackage[utf8]{inputenc}
\usepackage[margin=0.9in]{geometry}

\usepackage{comment}
\usepackage{lmodern}
\usepackage{upgreek}
\usepackage{color}
\usepackage{amsmath}
\usepackage{amssymb}
\usepackage{amsthm}

\usepackage{pdfpages}

\usepackage{fancyhdr}

\usepackage{setspace}

\usepackage{hyperref}
\hypersetup{
	colorlinks=true,
	linkcolor=blue,
	filecolor=blue,      
	urlcolor=blue,
	citecolor=blue,
	pdftitle={Products of prime ideals in  ray class groups},
	pdfpagemode=FullScreen,
}

\numberwithin{equation}{section}
\newtheorem{theorem}{Theorem}[section]
\newtheorem{lemma}[theorem]{Lemma}
\newtheorem{proposition}[theorem]{Proposition}
\newtheorem{corollary}[theorem]{Corollary}

\theoremstyle{remark}
\newtheorem{remark}[theorem]{Remark}

\newtheorem*{hypothesis*}{Hypothesis}

\theoremstyle{definition}

\newtheorem*{prob*}{Problem}

\title[Products of prime ideals in  ray class groups]{Products of prime ideals in  ray class groups}

\author{Likun Xie}
\address{Max-Planck-Institut für Mathematik
	Vivatsgasse 7, 53111, Bonn, Germany} 
\email{xie@mpim-bonn.mpg.de}

\subjclass[2020]{Primary 11R44; Secondary 11N36}

\keywords{Ray class groups,   products of prime ideals, dense model}

\begin{document}
\begin{abstract}

	We prove that every class in the narrow ray class
	group  modulo an integral ideal \(\mathfrak q\) of a fixed number field is represented by a product of
	three prime ideals of norm at most
	$
( N\mathfrak q)^{\max(1,3\alpha,4\alpha_0)+\kappa} 
    $ for any $\kappa>0$, 
		where \(\alpha\) is the  exponent in short character sum bounds for general non-principal
	ray class characters and \(\alpha_0\) comes from a bounded-order
	subconvexity input for Hecke \(L\)-functions. 
	Wu's subconvexity bound gives the
	admissible choice \(\alpha=\alpha_0=103/256\), hence the  explicit  
	bound \((N\mathfrak q)^{103/64+\kappa}\).
	This improves the previous
	\(O_K((N\mathfrak q)^3)\)-scale bound of
	Deshouillers, Gun, Ramaré, and Sivaraman.	
	We also prove that a positive proportion of ray classes are represented by products of two prime ideals.
	The proof extends the multiplicative dense-model and transference framework of
Matom\"aki--Ter\"av\"ainen to narrow ray class groups.

\end{abstract}	
	\maketitle	
\section{Introduction}
A conjecture of Erd\H{o}s states that, for every sufficiently large prime
modulus \(q\), every reduced residue class modulo \(q\) is represented by a
product \(p_1p_2\) of two primes with \(p_1,p_2\le q\); see~\cite[\S 2]{erdos}.
Matom\"aki and Ter\"av\"ainen~\cite{matomaki} proved a ternary version of this
problem. They showed that, for cubefree \(q\), every reduced residue class
modulo \(q\) is represented by a product of three primes, each at most \(q\),
and that for general \(q\) the same is true with the bound \(q^{1+\varepsilon}\).
They also obtained lower bounds for the proportion of classes represented by
products of two primes. Their main tool is a multiplicative dense-model theorem
for primes, which transfers the problem to a product-set problem in a finite
abelian group, up to certain coset obstructions. This approach is rooted in the
dense-model transference method introduced by Green~\cite{Green} and further
developed by Green and Tao~\cite{GreenTao}.

There are also function-field analogues of Erd\H{o}s's conjecture, where
geometric methods become available; see for instance the work of Sawin
\cite{sawin}, and also  \cite{xie}. 
This paper considers the
number-field setting, where the approach is closer in spirit to the integer
case. Our purpose is to extend the dense-model and transference framework of
Matom\"aki and Ter\"av\"ainen \cite{matomaki} to narrow ray class groups over
number fields.
Let \(K\) be a fixed number field with ring of integers \(\mathcal O_K\). Let
\(\mathfrak q\subseteq\mathcal O_K\) be a non-zero integral ideal, put
$
Q:=N\mathfrak q,
$
and let
$
G:=\operatorname{Cl}^{(\infty)}_{\mathfrak q}
$
be the narrow ray class group modulo \(\mathfrak q\), where all real places
are included in the modulus. For \(X\ge2\), define
\[
P_X(\mathfrak q)
:=
\bigl\{[\mathfrak p]\in G:\mathfrak p\nmid\mathfrak q,\ N\mathfrak p\le X\bigr\},
\]
and
\[
E_k(X;\mathfrak q)
:=
\underbrace{
	P_X(\mathfrak q)\cdot \ldots \cdot P_X(\mathfrak q)
}_{k\ \mathrm{times}}
\subseteq G.
\]
Thus \(c\in E_k(X;\mathfrak q)\) precisely when \(c\) is represented by a
product of \(k\) prime ideal classes, with each prime ideal having norm at most
\(X\).

The bounds obtained below depend on two analytic inputs. Let
\[
0<\alpha_0\le \alpha<1.
\]
The first input is a short character-sum estimate for arbitrary non-principal
ray class characters. For every \(\varepsilon>0\) there exists
\(\eta=\eta(K,\varepsilon)>0\) such that  for every finite order non-principal   Hecke
character \(\chi\) with module of definition \(\mathfrak q\),
\begin{equation}
	\sum_{N\mathfrak a\le T}\chi(\mathfrak a)
	\ll_{K,\varepsilon} T^{1-\eta}
	\qquad
	(T\ge Q^{\alpha+\varepsilon}).
	\tag{\(\mathrm{CS}(\alpha)\)}
\end{equation}
The second input is a bounded-order subconvexity estimate. For every fixed
integer \(\ell\ge2\) and every \(\varepsilon>0\), every primitive
non-principal Hecke character \(\chi^*\) of order at most \(\ell\) satisfies
\begin{equation}
	L\Bigl(\frac12+it,\chi^*\Bigr)
	\ll_{K,\ell,\varepsilon}
	C(\chi^*,t)^{\alpha_0/2+\varepsilon}
	\qquad (t\in\mathbb R),
	\tag{\(\mathrm L^{\mathrm b}(\alpha_0)\)}
\end{equation}
where \(C(\chi^*,t)\) denotes the analytic conductor. The first exponent \(\alpha\) controls the general dense-model
estimates, while \(\alpha_0\) controls only the bounded-order obstruction
estimates. 

In particular, \(\mathrm L^{\mathrm b}(\alpha_0)\) implies   the following  short character-sum estimate for  
non-principal bounded order ray class characters: for every fixed
integer \(\ell\ge2\) and every \(\varepsilon>0\),  there exists
\(\eta=\eta( K, \ell,\varepsilon)>0\) such that  every non-principal Hecke
character \(\chi\) of order  at most \(\ell\) with module of definition \(\mathfrak q\) satisfies
\begin{equation} 
	\sum_{N\mathfrak a\le T}\chi(\mathfrak a)
	\ll_{K,\ell,\varepsilon}
	T^{1-\eta}
	\qquad
	(T\ge Q^{\alpha_0+\varepsilon}).
	\tag{\(\mathrm{CS}^{\mathrm b}(\alpha_0)\)}
\end{equation}
The exponent \(\alpha\) and $\alpha_0$ control the general dense-model estimates. 
Our main result is the following ternary representation theorem for number fields. 
 \begin{theorem}
 	\label{intro:three-prime-ray}
 	Assume \(\mathrm{CS}(\alpha)\) and \(\mathrm L^{\mathrm b}(\alpha_0)\). Let \(\kappa>0\) be fixed. For $Q$ sufficiently large, if 
 	$
 	X\ge Q^{\max(1,3\alpha,4\alpha_0)+\kappa},
 	$
 	then
 	$
 	E_3(X;\mathfrak q)=G.
 	$
 \end{theorem}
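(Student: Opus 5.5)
We follow the Matom\"aki--Ter\"av\"ainen strategy, transplanted to $G=\operatorname{Cl}^{(\infty)}_{\mathfrak q}$. Let $f\colon G\to[0,\infty)$ be the normalized weighted counting function of primes, $f(g)=\frac{|G|}{\pi_K(X)}\sum_{N\mathfrak p\le X,\,[\mathfrak p]=g}1$ (possibly with smooth weights and a restriction to $N\mathfrak p\in(X^{1-\delta},X]$). It suffices to show that $f*f*f(c)>0$ for every $c\in G$. The first step is a \emph{dense model}: we construct $g\colon G\to[0,1+o(1)]$ with $\mathbb E g\gg 1$ such that $\|\widehat f-\widehat g\|_{\ell^\infty(\widehat G)}$ is small. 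Following \cite{matomaki}, we build $f$ from a sieve majorant (a Selberg or linear-sieve weight over ideals of norm up to $X$) and control its Fourier coefficients $\widehat f(\chi)=\frac{1}{\pi_K(X)}\sum_{\mathfrak p}\chi(\mathfrak p)$ only on average. The input is a large-sieve/mean-value estimate for Hecke characters mod $\mathfrak q$, combined with $\mathrm{CS}(\alpha)$. Via a Heath-Brown or Vaughan type decomposition of the prime indicator into type I/II pieces with one smooth factor of length $\ge Q^{\alpha+\varepsilon}$, we obtain a bound $\sum_\chi|\widehat f(\chi)|^{2}\ll$ (mass of a dense function). We also obtain a bound on $\ell^{p}$ norms for some $p<3$ via restriction-type estimates, so that $\sum_{\chi}|\widehat f(\chi)|^3$ converges with room to spare. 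The exponent $3\alpha$ enters here: in the ternary convolution, a product of three primes of norm $\le X$ must produce a smooth variable longer than $Q^{\alpha}$ after splitting, which forces $X^{1/3}\gtrsim Q^{\alpha}$.

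Second, transference. Write
\[
f*f*f(c)=\frac1{|G|}\sum_{\chi}\widehat f(\chi)^3\overline{\chi}(c),
\]
and replace $f$ by $g$ one factor at a time. The error is bounded by $\|\widehat f-\widehat g\|_\infty^{1-\theta}$ times an $\ell^{3-\epsilon}$ norm, which is uniformly bounded by the restriction estimate. The main term is $g*g*g(c)$, where $g$ is a dense bounded function. Here we apply a Kneser-type / Pollard product-set argument in $G$. If $\operatorname{supp} g$ has density $\delta>1/3$ in every coset of every subgroup $H$ of bounded index, then $g*g*g\gg1$ everywhere. The genuine obstructions are subgroups $H\le G$ of bounded index such that the primes concentrate on few cosets of $H$. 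Equivalently, there is a bounded-order character $\chi$ with $|\widehat f(\chi)|$ close to $1$.

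Third, we rule out these obstructions, and we expect this to be the main obstacle. For a character $\chi$ of order $\le\ell$, we must show that primes of norm $\le X$ equidistribute in the classes of $G/\ker\chi$, or at least hit every coset with positive density. We use $\mathrm L^{\mathrm b}(\alpha_0)$ with a zero-density / Burgess-style argument. Subconvexity gives $\mathrm{CS}^{\mathrm b}(\alpha_0)$, and hence, through a Linnik-type log-free zero-density estimate near $1$ for the boundedly many characters of order $\le \ell$, it gives nonvanishing of $\sum_{\mathfrak p}\chi(\mathfrak p)$ deviations. The threshold $4\alpha_0$ should arise from needing the bilinear pieces to have a variable of length $Q^{\alpha_0}$ inside a fourfold (or $\ell^4$ moment) decomposition. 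A possible exceptional real character (Siegel zero) is handled by a Deuring--Heilbronn repulsion argument: in that case all other characters are even better controlled, and products of three primes still cover the cosets of the index-$2$ kernel, because an odd number of factors can land in either coset. Combining the three steps, for $Q$ large and $X\ge Q^{\max(1,3\alpha,4\alpha_0)+\kappa}$ we obtain $f*f*f(c)>0$ for all $c$, so $E_3(X;\mathfrak q)=G$. The condition $X\ge Q^{1+\kappa}$ ensures that there are $\gg |G|\,X^{\kappa/2}$ primes, so that the density of $f$ is meaningful.
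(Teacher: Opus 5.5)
\textbf{There is a genuine gap.} Your outline (sieve majorant, dense model, spectral control of $\widehat f$, Kneser in $G$) is the right framework. But the steps that actually fix the exponents are misidentified, and Step~3 asks for something that is not available.

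\emph{Where $3\alpha$ and $4\alpha_0$ come from.} The threshold $3\alpha$ does not come from a smooth variable of length $Q^{\alpha}$ in a Vaughan or Heath-Brown splitting. No such decomposition is needed: for $X\ge Q^{1+\kappa}$ the plain mean-value theorem bounds $\sum_\chi|\widehat f(\chi)|^2$, and the large spectrum is controlled by a sieve-weighted Hal\'asz--Montgomery inequality. The threshold comes from the density of the dense model. $\mathrm{CS}(\alpha)$ only gives level $D=X^{\vartheta}$, with $\vartheta=1-\varepsilon-\alpha\frac{\log Q}{\log X}$, for the upper-bound linear sieve majorant. So the majorant loses the Brun--Titchmarsh factor $2/\vartheta$, and $A=\{g\ge\varepsilon/10\}$ has density only about $\vartheta/2$. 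The Kneser analysis needs this density to exceed $1/3$, i.e.\ $\log X>3\alpha\log Q$; writing $\mathbb E g\gg1$ hides exactly this constraint.

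The surviving obstructions are subgroups of index $3k+2$ with $A'$, $B'$ confined to $k+1$ cosets. This is not the same as some bounded-order $\chi$ having $|\widehat f(\chi)|$ close to $1$. The paper also does not try to show that primes of norm $\le Q^{1.61}$ meet every coset with positive density; that is a Linnik-type statement beyond current methods, and unprovable for a quadratic character with a possible Siegel zero. Instead it uses three separate arguments:
\begin{itemize}
\item The upper bound in Lemma~\ref{rough_ideal_lemma} (a sieve in cosets, using $\mathrm{CS}^{\mathrm b}(\alpha_0)$) caps every coset at $(2/(Y\vartheta_0)+o(1))X/\log X$ primes. This forces $A$ to meet about $(\vartheta_0/2)Y>3Y/8$ cosets, hence $k\le2$.
\item Non-complementary configurations are excluded by a lower-bound sieve for almost-primes in an empty coset.
\item Complementary configurations of index $5$ and $8$ are excluded by the one-sided bound $\Re\sum_{\mathfrak p}\chi(\mathfrak p)w(\mathfrak p)\le(\frac14+o(1))W$ of Lemma~\ref{lem:weighted-prime-ideal-sums-rescaled}, with weight $f_{A_0}$ and $A_0=4\alpha_0$. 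This comes from Zaman's explicit formula by discarding the zeros, since $\Re F_{A_0}\ge0$ on $\Re z\ge0$. The bad configurations would force a real part near $\cos(2\pi/5)\approx0.31$ for index $5$.
\end{itemize}
That inequality, together with $\vartheta_0/2>3/8$, is where $4\alpha_0$ comes from. No log-free zero-density estimate and no fourfold bilinear decomposition is involved.

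\emph{The index-$2$ case.} This is the decisive gap. Suppose $[G:H]=2$ and $A',B'\subseteq a_1H$ with $a_1\notin H$, and let $\psi$ be the quadratic character with kernel $H$. Then almost all primes of norm $\le X$ satisfy $\psi(\mathfrak p)=-1$, and a product of three such primes lies in $a_1^3H=a_1H$. So your parity remark, that an odd number of factors ``can land in either coset'', fails. Deuring--Heilbronn repulsion controls the other characters but says nothing about $\psi$ itself.

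To reach $H$ you must exhibit primes with $\psi(\mathfrak p)=1$. There may be only about $M\rho_KL(1,\psi)\mathcal V_\psi(Q)$ of them in $(M,2M]$. That proportion cannot be shown to exceed $(\log Q)^{-1/2}$, so these primes are invisible to the standard transference, whose relative error is $(\log Q)^{-1/2+\varepsilon/2}$. The paper handles this in two steps:
\begin{itemize}
\item Lemma~\ref{lem:quadratic-prime-dichotomy}, a sieve argument on $(1*\psi)(\mathfrak n)\log N\mathfrak n$, gives one of two outcomes. Either $\sum 1/N\mathfrak p\ge c_*$ over $\psi(\mathfrak p)=1$ with $Q^{\alpha+\varepsilon}<N\mathfrak p\le Q^{\beta_*}$, or some dyadic range contains $\gg M\rho_KL(1,\psi)\mathcal V_\psi(Q)$ such primes.
\item In the second outcome, Proposition~\ref{prop:quadratic-transference-final} gives a transference whose error is measured on the scale $M\rho_KL(1,\psi)\mathcal V_\psi(Q)$. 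It rests on the Hal\'asz--Montgomery inequality with weight $1*\psi$ (Lemma~\ref{lem:sharp-HM-psi}) and on Siegel's bound $L(1,\psi)\gg_\delta Q^{-\delta}$.
\end{itemize}
Without a substitute for this, your argument does not show $H\subseteq E_3(X;\mathfrak q)$, and so does not prove $E_3(X;\mathfrak q)=G$.
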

 
 We also prove the following two-prime density result. 
 
 \begin{theorem} 
 	\label{intro:two-prime-ray}
 	Assume \(\mathrm{CS}(\alpha)\) and \(\mathrm L^{\mathrm b}(\alpha_0)\). Let \(\kappa>0\) be fixed, and let
 	\(\varepsilon>0\) be sufficiently small. For $Q$ sufficiently large in terms of $\varepsilon$, the following hold. 
 	\begin{enumerate}
 		\item[(i)] If
 		$
 		X\ge Q^{\max(1,3\alpha,4\alpha_0)+\kappa},
 		$
 		then
 		$
 		|E_2(X;\mathfrak q)|
 		\ge
 		\bigl(\frac23-\varepsilon\bigr)|G|.
 		$
 		
 		\item[(ii)] If
 		$
 		X\ge Q^{\max(1,4\alpha,4\alpha_0)+\kappa},
 		$
 		then
 		$
 		|E_2(X;\mathfrak q)|
 		\ge
 		\bigl(\frac{11}{16}-\varepsilon\bigr)|G|.
 		$
 	\end{enumerate}
 \end{theorem}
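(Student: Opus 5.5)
The plan is to run the two-prime version of the Matom\"aki--Ter\"av\"ainen transference argument in \(G\). The argument compares the prime-counting convolution \(1_{P}*1_{P}\) with the self-convolution of a bounded dense model \(g\) on \(G\). It then bounds from below the number of classes where \(g*g\) is not small, using Kneser's theorem, with coset obstructions controlled by \(\mathrm L^{\mathrm b}(\alpha_0)\). The two thresholds \(\frac23\) and \(\frac{11}{16}\) should come from the sup-norm bound on the dense model available in each range of \(X\), together with the resulting combinatorial case analysis. I have not determined these constants precisely; the plan below only indicates where each would enter.

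\textbf{Step 1: pseudorandom majorant and Fourier control.} Let \(\theta(c)\) be the \(\log N\mathfrak p\)-weighted count of primes \(\mathfrak p\nmid\mathfrak q\) with \(N\mathfrak p\le X\) and \([\mathfrak p]=c\), normalised to have mean \(1\) over \(G\). Then \(\hat\theta(\chi)=|G|^{-1}\sum\chi(\mathfrak p)\log N\mathfrak p\). I will build a Selberg/Barban--Vehov-type upper-bound sieve majorant \(\nu\ge c_0\theta\) of level \(D\), where \(c_0\) is a fixed constant depending only on \(\kappa\) and the sieve construction. Using \(\mathrm{CS}(\alpha)\) for the sums \(\sum_{N\mathfrak a\le T}\chi(\mathfrak a)\) with \(T\approx X/D\), I will show that \(\hat\nu(\chi)=o(1)\) for every non-principal \(\chi\). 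This forces \(D\approx X/Q^{\alpha+\varepsilon}\). In parallel, a Vinogradov/Heath-Brown type decomposition of \(\theta\) should give \(\sum_{\chi\neq1}|\hat\theta(\chi)|^{2}\cdot\) (restricted-moment bounds), or at least \(\|\hat\theta\|_{\ell^{q}}\) bounds for some \(q<3\). This is the input the dense model theorem needs. The ratio \(\log X/\log D\) then determines the upper bound \(\theta\le M\nu\)-on-average. In the range \(X\ge Q^{\max(1,3\alpha,4\alpha_0)+\kappa}\) I expect an effective constant giving density \(\ge\frac12\)-type information. In the range \(X\ge Q^{4\alpha+\kappa}\) the constant should be slightly better.

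\textbf{Step 2: dense model and transference.} Using the multiplicative dense model theorem (the number-field transfer of the one in~\cite{matomaki}), I will produce \(g:G\to[0,M]\) with \(\sum g=|G|\) and \(\|\hat\theta-\hat g\|_\infty\le\varepsilon\). I will then write
\[
1_P*1_P-g*g=(\theta-g)*\theta+g*(\theta-g).
\]
By Parseval and the restriction bounds of Step 1, this difference is \(o(1)\) in \(\ell^2(G)\) after normalisation. Hence, outside a set of \(o(|G|)\) classes, \(g*g(c)\ge\varepsilon'\) implies \(c\in E_2(X;\mathfrak q)\). It therefore suffices to show that \(\{c:g*g(c)\ge\varepsilon'\}\) has size at least \((\frac23-\varepsilon)|G|\), respectively at least \((\frac{11}{16}-\varepsilon)|G|\).

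\textbf{Step 3: the combinatorial lower bound (main obstacle).} Let \(A=\{g\ge\delta\}\). Then \(|A|\ge(1/M-O(\delta))|G|\), and by a standard popular-sums argument \(g*g\ge\varepsilon'\) on most of \(A+A\) (written multiplicatively, \(AA\)). Kneser's theorem gives \(|AA|\ge 2|A|-|H|\) with \(H\) the stabiliser of \(AA\). If \(H\) is small, this already gives \(|AA|\ge 2|A|\), which is at least the target proportion once \(M\) is as in Step 1. The hard case is when \(AA\) is a union of cosets of a subgroup \(H\) of bounded index \(\ell\). Here I will use \(\mathrm L^{\mathrm b}(\alpha_0)\), through \(\mathrm{CS}^{\mathrm b}(\alpha_0)\) and a zero-density/Siegel-type argument. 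The goal is to show that for \(X\ge Q^{4\alpha_0+\kappa}\) the primes are asymptotically equidistributed among the cosets of \(H\), apart from the coset structure forced by the quadratic or bounded-order characters. This makes \(g\) essentially uniform on \(G/H\), so \(A\) meets a large share of the \(\ell\) cosets. A short case analysis over \(\ell\) (the worst being \(\ell=3\) for the first bound, and \(\ell=2\) or \(4\) for the second) then bounds the number of cosets covered by \(AA\) from below. Specifically, if \(A\) meets at least \(k\) cosets of \(H\), then \(AA\) meets at least \(\min(\ell,2k-1)\) cosets (Cauchy--Davenport/Kneser in \(G/H\)). In the \(\ell=3\) case this yields at least \(\frac23|G|\). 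With the better constant from the second range, the extremal configuration shifts and should give \(\frac{11}{16}\). Pinning down these extremal configurations, and checking that the weighted coset masses from the bounded-order \(L\)-function input suffice in each case, is where I expect most of the work.
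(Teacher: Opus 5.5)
Your Steps 1--2 are broadly compatible with the paper's dense-model transference. A Halász--Montgomery large-spectrum bound suffices there, and no Heath-Brown decomposition is needed. The density you anticipate is off, though. The model $g$ has mean $\vartheta/2$ with $\vartheta=1-\varepsilon-\alpha\log Q/\log X$, so $|A|\approx\frac{\vartheta}{2}|G|<\frac12|G|$, not ``$\ge\frac12$-type''. The thresholds $3\alpha$ and $4\alpha$ are exactly what make $2|A|$ exceed $\frac23|G|$, resp.\ reach $\frac34|G|$.

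The genuine gap is in Step 3, and the first problem is your equidistribution claim. Asymptotic equidistribution of primes among cosets of bounded-index subgroups does not follow from $\mathrm L^{\mathrm b}(\alpha_0)$. A bound on the critical line says nothing about zeros near $s=1$. A hypothetical Siegel zero of a quadratic $\psi$ would push almost all prime ideals of norm $\le Q^{O(1)}$ into the nontrivial coset of $\ker\psi$. The paper only ever extracts one-sided information:
\begin{enumerate}
\item[(1)] the upper-bound sieve in cosets (Lemma~\ref{rough_ideal_lemma}, which needs only $\mathrm{CS}^{\mathrm b}(\alpha_0)$) shows that $A'$ meets at least about a $\frac38$ proportion of the cosets of any bounded-index subgroup;
\item[(2)] the explicit-formula inequality of Lemma~\ref{lem:weighted-prime-ideal-sums-rescaled} shows that primes escape the specific index-$5$ and index-$8$ configurations.
\end{enumerate}

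The second and more serious problem is that you are missing the idea needed to beat Kneser's extremal cases. Even with the coset information above, Kneser's theorem leaves the configurations $[G:H]=3k+2$, $k\in\{0,1,2\}$, with $A'B'$ a union of $2k+1$ cosets. These give only $|E_2(X;\mathfrak q)|\ge\frac{2k+1}{3k+2}|G|$, i.e.\ $\frac12,\frac35,\frac58$ of $|G|$, which is below both $\frac23$ and $\frac{11}{16}$. Index $3$, which you single out, is harmless.

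No argument that only looks at $g*g$ can get past this. Primes in the cosets outside $S=\bigcup_i a_iH$ may be far too sparse for the dense model to register. In the quadratic case one can only guarantee $\gg Y_1Q^{-\delta}$ of them in some dyadic range, via Lemma~\ref{lem:quadratic-prime-dichotomy} plus Siegel's bound.

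The missing idea is the energy increment of Proposition~\ref{prop:E2-energy-increment}:
\begin{enumerate}
\item[(a)] Take such a sparse dyadic set $\mathcal P_1$ of primes in a coset $a_0H\not\subset S$. It comes from the rough-number sieve, from Corollary~\ref{cor:dyadic-prime-escape}, or from the quadratic dichotomy.
\item[(b)] Take $\mathcal P_2$ to be the primes of norm in $(X^{2/3},X]$ in a heavy coset $a_{j_0}H\subseteq S$ chosen so that $a_0a_{j_0}H\not\subset A'B'$.
\item[(c)] Bound the multiplicative energy of $\mathcal P_1\times\mathcal P_2$ by $\frac{C_k}{|G|}|\mathcal P_1|^2|\mathcal P_2|^2$ (Lemma~\ref{lem:ray-prime-energy}). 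There, high moments of $\sum_{\mathfrak p\in\mathcal P_1}\chi(\mathfrak p)$ handle the sparsity of $\mathcal P_1$.
\item[(d)] Conclude by Cauchy--Schwarz that $E_2(X;\mathfrak q)$ contains at least $(\gamma_k-o(1))|G|$ classes outside $A'B'$.
\end{enumerate}
Only the totals $\frac{2k+1}{3k+2}+\gamma_k$ clear $\frac23$ and $\frac{11}{16}$.
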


For a general fixed number field \(K\), Wu's subconvexity theorem for Hecke
\(L\)-functions~\cite{wu}, together with the admissible Ramanujan exponent
\(\theta=7/64\) of Blomer--Brumley~\cite{ramanujan}, gives
\[
\alpha=\alpha_0=\frac{103}{256};
\]
see Proposition~\ref{prop_subconvexity}. Hence Theorem~\ref{intro:three-prime-ray}
gives
\[
E_3(X;\mathfrak q)=G
\qquad
\text{whenever}
\qquad
X\ge Q^{103/64+\kappa},\quad 103/64\approx 1.609.
\]
This improves the exponent in the   three-prime result of
Deshouillers--Gun--Ramar\'e--Sivaraman~\cite{ideal_product}, where the scale is
\(O_K(Q^3)\). They  show  that every narrow ray class modulo
\(\mathfrak q\) can be represented by a product of three degree-one unramified prime
ideals of norm \(\ll_K Q^3\). We state our theorem for arbitrary prime ideals.   The restriction
to unramified degree-one prime ideals may also be imposed by removing the
negligible contribution of prime ideals of higher residue degree from the
prime-counting arguments, since the number of prime ideals of residue degree \(>1\) and norm at most \(T\) is \(O_K(T^{1/2})\).

The separation between \(\alpha\) and \(\alpha_0\) is useful when stronger
bounds are available for bounded-order characters. In particular, the Weyl-type subconvexity results of Balkanova, Frolenkov, and Wu for cube-free conductor \cite{cubefree_totally_real} suggest the possibility of a bounded-order input \(\mathrm L^{\mathrm b}(1/3)\). Indeed, if a Hecke character has order
dividing a fixed integer \(\ell\), then its conductor is cube-free away
from the finitely many primes lying above the rational primes dividing
\(\ell\).
Thus one may ask whether  a
variant of their method yields
\(\mathrm L^{\mathrm b}(1/3)\). We do not pursue this refinement here.
 Assuming such a bounded-order input
 \(\mathrm L^{\mathrm b}(1/3)\), while using the general character-sum
exponent \(\alpha=103/256\)  from Wu's subconvexity bound, Theorems~\ref{intro:three-prime-ray} and
\ref{intro:two-prime-ray}{\rm (i)} would hold at the scale
$
X\ge Q^{4/3+\kappa}.
$
While  Theorem~\ref{intro:two-prime-ray}{\rm (ii)}  depends on
the term \(4\alpha\), and therefore improving its scale would require an
improvement in the general character-sum exponent \(\alpha\).

 We now summarize the proof. The first step is to construct a dense model for
 the prime ideal classes in \(G\). This uses the linear sieve over ideals,
 mean-value estimates in ray classes, and the character-sum input
 \(\mathrm{CS}(\alpha)\). The dense model reduces the representation problem to
 product-set estimates for dense subsets of the finite abelian group \(G\).
 
 The second step is finite-group theoretic. Kneser's theorem shows that either
 the relevant product sets are already large enough, or the dense model is
 concentrated in a small number of cosets of a subgroup \(H\le G\). The coset
 obstructions are then treated analytically. The obstructions of indices \(5\)
 and \(8\) are ruled out by weighted prime-sum estimates for bounded-order
 characters, which is where \(\mathrm L^{\mathrm b}(\alpha_0)\) enters. The index
 \(2\) obstruction is the exceptional quadratic case and requires a separate
 ideal-theoretic version of the argument of Matom\"aki--Ter\"av\"ainen. The proof
 of the binary theorem uses the same dense model and Kneser analysis, together
 with a multiplicative energy estimate for products of prime ideals in ray
 classes.
 
 The paper is organized as follows. Section~2 collects the linear sieve,
 ray-class mean-value estimates, and the subconvexity-to-character-sums input.
 Section~3 proves the dense model theorem over ray class groups and derives the
 main transference criteria. Section~4 contains the finite abelian group
 product-set arguments. Section~5 proves the prime-escape estimates for quotient
 indices \(5\) and \(8\). Section~6 proves the ternary Theorem \ref{intro:three-prime-ray}, Section~7 treats
 the exceptional quadratic case, and Section~8 proves the binary density Theorem \ref{intro:two-prime-ray}.
 
\subsection{Notations}
Let \(K\) be a number field of degree
$
n_K := [K:\mathbb Q] ,
$ with ring of integers \(\mathcal O_K\).
Denote by \(\zeta_K(s)\) the Dedekind zeta function of \(K\), and write
$
\rho_K := \operatorname*{Res}_{s=1}\zeta_K(s)
$
for its residue at \(s=1\).
For a nonzero integral ideal \(\mathfrak q\subset \mathcal O_K\), we write
\(\mathrm N\mathfrak q=[\mathcal O_K:\mathfrak q]\) for its absolute norm.

Let \(J(\mathfrak q)\) denote the group of fractional ideals of \(K\)
coprime to \(\mathfrak q\), and let $P ^{+}{(\mathfrak q)} $ denote the subgroup of \(J(\mathfrak q)\) consisting of principal fractional  ideals $ (\alpha)$, \(\alpha\in K^\times, \alpha\equiv 1 \pmod{\mathfrak q},\) and \(\sigma(\alpha)>0\) for every real embedding \(\sigma:K\hookrightarrow
\mathbb R\).  
 
We write
\[
G  :=\operatorname{Cl}^{(\infty)}_{\mathfrak q}
=
J(\mathfrak q)/P ^{+}{(\mathfrak q)}
\]
for the narrow ray class group modulo \(\mathfrak q\). 

Throughout this paper, by a Hecke character modulo \(\mathfrak q\) we mean
a finite-order Hecke character  for which
\(\mathfrak q\) is a module of definition in the terminology of Neukirch \cite[Def.~VII.6.11]{neukirch}.  Equivalently, this is a character of
the narrow ray class group \(G\); or a Dirichlet character modulo \(\mathfrak q\) in the sense of \cite[Def.~VII.6.8]{neukirch}; see also \cite[Sec.~2.3]{notes}.

We denote by \(\widehat G\) the character group of \(G\). If \(\chi\in \widehat G\), we also regard \(\chi\) as a function on integral ideals by setting
\[
\chi(\mathfrak a)
=
\begin{cases}
	\chi([\mathfrak a]), & \text{if }(\mathfrak a,\mathfrak q)=1,\\
	0, & \text{otherwise},
\end{cases}
\]
where \([\mathfrak a]\) denotes the class of \(\mathfrak a\) in
\(G \). We shall use the usual orthogonality relations for
the finite abelian group \(G \). In particular, if
\((\mathfrak a\mathfrak b,\mathfrak q)=1\), then
\[
\frac{1}{|G |}
\sum_{\chi\in \widehat{G }}
\chi(\mathfrak a)\overline{\chi(\mathfrak b)}
=
\mathbf 1_{[\mathfrak a]=[\mathfrak b]  }.
\]

	\section{Preliminary results}
	\subsection{Linear sieve in number fields}
We record a form of the fundamental lemma of the linear sieve for ideals. It follows from Coleman's extension of the Rosser--Iwaniec sieve to number
fields \cite{sieve}, together with the classical construction of the linear
sieve weights \cite[Ch.~3]{opera}.

Fix once and for all a total order \(\le_K\) on the non-zero integral ideals
of \(K\), such that
\[
N\mathfrak a<N\mathfrak b \quad\Longrightarrow\quad
\mathfrak a<_K\mathfrak b.
\]
For \(z\ge2\), put
\[
\mathcal P(z):=\prod_{N\mathfrak p<z}\mathfrak p.
\]

\begin{lemma}\label{sieve_lemma}
	Let $z\ge2$ and $D=z^s$ with $s\ge1$.
	There exist coefficients \(\lambda_{\mathfrak d}^{\pm}=\lambda_{\mathfrak d}^{\pm}(D,z)\), supported on
	squarefree ideals \(\mathfrak d\mid \mathcal P(z)\) with
	\(N\mathfrak d\le D\), such that
	$
	|\lambda_{\mathfrak d}^{\pm}|\le1
	$
	and the following hold.
	
  For every integral ideal \(\mathfrak n\),
	\[
	\sum_{\mathfrak d\mid\mathfrak n}\lambda_{\mathfrak d}^{-}
	\le
	1_{(\mathfrak n,\mathcal P(z))=1}
	\le
	\sum_{\mathfrak d\mid\mathfrak n}\lambda_{\mathfrak d}^{+}.
	\]
	
	Let \(h\) be a multiplicative function on ideals satisfying
	$
	0\le h(\mathfrak p)<1
	$
	for every prime ideal \(\mathfrak p\). Assume that \(h\) has sieve
	dimension \(1\), in the following sense: there exists a constant
	\(C_0\ge1\) such that, for all ideals \(\mathfrak h,\mathfrak j\) with
	\(\mathfrak h<_K\mathfrak j\) and \(N\mathfrak h\ge2\),
	\[
	\prod_{\mathfrak h\le_K\mathfrak p<_K\mathfrak j}
	\bigl(1-h(\mathfrak p)\bigr)^{-1}
	\le
	\frac{\log N\mathfrak j}{\log N\mathfrak h}
	\Bigl(1+\frac{C_0}{\log N\mathfrak h}\Bigr).
	\]

	If \(1\le s\le3\), then
	\[
	\sum_{\mathfrak d\mid\mathcal P(z)}
	\lambda_{\mathfrak d}^{+}h(\mathfrak d)
	\le
	\bigl(F_0(s)+o_K(1)\bigr)
	\prod_{N\mathfrak p<z}\bigl(1-h(\mathfrak p)\bigr),
	\]
	where
	\[
	F_0(s)=\frac{2e^\gamma}{s}.
	\]
	
	If \(2\le s\le4\), then
	\[
	\sum_{\mathfrak d\mid\mathcal P(z)}
	\lambda_{\mathfrak d}^{-}h(\mathfrak d)
	\ge
	\bigl(f_0(s)+o_K(1)\bigr)
	\prod_{N\mathfrak p<z}\bigl(1-h(\mathfrak p)\bigr),
	\]
	where
	\[
	f_0(s)=\frac{2e^\gamma\log(s-1)}{s}.
	\]
Here the \(o_K(1)\)-terms are as \(z\to\infty\).
\end{lemma}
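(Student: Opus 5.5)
The plan is to transplant the classical Rosser--Iwaniec (beta-sieve) construction to ideals, with the total order \(\le_K\) playing the role of the natural order on primes, and then to verify that the resulting sieve functions agree with the standard \(F,f\) in the stated ranges. First, fix the sifting range and enumerate the prime ideals \(\mathfrak p\) with \(N\mathfrak p<z\) as \(\mathfrak p_1>_K\mathfrak p_2>_K\cdots\) in decreasing \(\le_K\)-order. For a squarefree \(\mathfrak d=\mathfrak p_1\cdots\mathfrak p_r\mid\mathcal P(z)\) written in this order, define the Rosser truncation conditions with \(\beta=2\): \(\lambda^+_{\mathfrak d}=\mu(\mathfrak d)\) if \(N\mathfrak p_1\cdots N\mathfrak p_{m-1}\,N\mathfrak p_m^{3}<D\) for all odd \(m\le r\), and \(\lambda^-_{\mathfrak d}=\mu(\mathfrak d)\) if the same holds for all even \(m\le r\); otherwise the coefficient is \(0\). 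Then \(|\lambda^\pm_{\mathfrak d}|\le1\) holds trivially. The support condition \(N\mathfrak d\le D\) follows as in the rational case: the last admissible index gives \(N\mathfrak d\le N\mathfrak p_1\cdots N\mathfrak p_{m-1}N\mathfrak p_m^{3}<D\) after checking one extra parity step. Ties in the norm are harmless because \(\le_K\) refines the norm order.

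Second, the pointwise inequalities \(\sum_{\mathfrak d\mid\mathfrak n}\lambda^-_{\mathfrak d}\le 1_{(\mathfrak n,\mathcal P(z))=1}\le\sum_{\mathfrak d\mid\mathfrak n}\lambda^+_{\mathfrak d}\) are purely combinatorial. I would use Buchstab's identity iterated along the \(\le_K\)-ordered primes: the difference \(\sum_{\mathfrak d\mid\mathfrak n}\lambda^\pm_{\mathfrak d}-1_{(\mathfrak n,\mathcal P(z))=1}\) equals \(\pm\) a sum, over the first "failed" truncation point \(\mathfrak d=\mathfrak p_1\cdots\mathfrak p_m\) with \(m\) of the right parity, of \(1_{(\mathfrak n/\mathfrak d,\ \mathcal P(\mathfrak p_m))=1}\)-type indicators. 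Each of these is nonnegative. This argument uses only the order structure and unique factorisation into prime ideals, so it transfers verbatim from \cite[Ch.~3]{opera}.

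Third, and this is the main analytic step, I would evaluate \(\sum\lambda^\pm_{\mathfrak d}h(\mathfrak d)\) using the same Buchstab decomposition with \(h\) in place of the indicator. This gives \(V(z)\) times the main term plus a sum over failure points. The dimension-one hypothesis, in the stated form over \(\le_K\)-intervals, is exactly what is needed to run the Iwaniec iteration/induction. It shows that this sum is \(V(z)\bigl(F(s)+O(e^{-s}(\log z)^{-1/3})\bigr)\), respectively \(V(z)\bigl(f(s)+\dots\bigr)\), where \(F,f\) are the continuous solutions of the delay-differential system \((sF)'=f(s-1)\), \((sf)'=F(s-1)\), with \(sF(s)=2e^\gamma\) for \(s\le3\) and \(f(s)=0\) for \(s\le2\). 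This is precisely Coleman's extension \cite{sieve}, and I would cite it rather than redo the asymptotic analysis; the \(o_K(1)\) comes from the error terms depending on \(C_0\) and on \(K\) through the prime ideal theorem.

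Finally, solving the system explicitly in the stated ranges gives \(F(s)=2e^\gamma/s\) for \(1\le s\le3\). Integrating \((sf)'=F(s-1)=2e^\gamma/(s-1)\) from \(2\) gives \(sf(s)=2e^\gamma\log(s-1)\) for \(2\le s\le4\). This matches \(F_0\) and \(f_0\). The hard part is the uniformity in the third step, namely that the main-term asymptotics hold with only the one-sided dimension condition over \(\le_K\)-intervals. I would rely on Coleman's paper for this rather than reprove it.
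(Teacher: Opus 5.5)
Your proposal is correct and takes the same route as the paper, which simply derives the lemma from Coleman's number-field version of the Rosser--Iwaniec sieve combined with the classical $\beta=2$ linear-sieve weights; your Buchstab-type check of the pointwise inequalities, and your solution of the delay-differential system giving $F(s)=2e^\gamma/s$ on $[1,3]$ and $sf(s)=2e^\gamma\log(s-1)$ on $[2,4]$, correctly spell out what the paper leaves implicit. One small correction: the $o(1)$ error in the evaluation of $\sum\lambda^{\pm}_{\mathfrak d}h(\mathfrak d)$ depends only on the dimension constant $C_0$, and no prime ideal theorem enters at that stage; $K$ appears only through $C_0$ when the lemma is applied with $h(\mathfrak p)=1/N\mathfrak p$.
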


\subsection{Character sums from subconvexity}
\label{sec:HB}

We record a standard consequence of conductor-aspect subconvexity for
Hecke \(L\)-functions. This will provide a concrete admissible value of the
character-sum exponent \(\alpha\) in \(\mathrm{CS}(\alpha)\).

We use the following subconvexity theorem of Wu.
\begin{theorem}[\cite{wu}, Thm.~1.1]
	\label{wu}
	Let \(\chi\) be a Hecke character of \(K\), with analytic conductor
	\(C(\chi)\). Then
	\[
	\biggl|L \Bigl(\frac12,\chi\Bigr)\biggr|
	\ll_{K,\varepsilon}
	C(\chi)^{
		\frac14-\frac{1-2\theta}{16}+\varepsilon},
	\]
	where \(\theta\) is any admissible exponent toward the
	Ramanujan--Petersson conjecture for \(\mathrm{GL}_2\) over \(K\).
\end{theorem}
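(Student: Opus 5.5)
Theorem~\ref{wu} is quoted from Wu, and the paper uses it as a black box. If I had to reconstruct its proof, I would follow the Michel--Venkatesh period approach in the refined form Wu uses. The first step is to realize the square of the \(\mathrm{GL}_1\) value as a \(\mathrm{GL}_2\) period. Write \(L(\tfrac12,\chi)^2=L(\tfrac12,E\otimes\chi)\), where \(E\) is the unitary Eisenstein series induced from the trivial character; this is the Hecke \(\mathrm{GL}_2\times\mathrm{GL}_1\) value of the Eisenstein series twisted by \(\chi\). By Hecke's integral representation it equals, up to local factors, the diagonal torus period
\[
\int_{K^\times\backslash\mathbb A^\times} E\bigl(a(y)\,n(\xi)\,g_0\bigr)\,\chi(y)\,d^\times y,
\]
suitably regularized. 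Here \(n(\xi)\) is a unipotent translate and \(g_0\) a local test vector, both chosen at the ramified places of \(\chi\). The goal of this step is to choose these so that the local zeta integrals are \(\gg C(\chi)^{-1/2-\varepsilon}\); this is the usual test-vector computation.

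The second step is to apply Cauchy--Schwarz to the period. This bounds \(|L(\tfrac12,\chi)|^2\), up to normalization, by an inner product \(\langle |E|^2\cdot(\text{translate}),\,\text{torus measure}\rangle\). I would then spectrally decompose the regularized \(|E|^2\) into three parts: the constant term, a cuspidal part \(\sum_\pi\langle |E|^2,\varphi_\pi\rangle\varphi_\pi\), and a continuous part.
\begin{itemize}
\item The constant and degenerate terms give the convexity-quality main term.
\item For each \(\pi\), the triple product coefficient \(\langle |E|^2,\varphi_\pi\rangle\) is controlled by Watson--Ichino, which gives \(L(\tfrac12,\pi)^2\)-type bounds.
\item The torus periods of the \(\varphi_\pi\) translated by a unipotent of size \(\asymp C(\chi)\) decay by mixing. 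This decay is exactly where the bound toward Ramanujan, \(\theta\), enters: the matrix coefficients of \(\pi_v\) decay only like \(\|g\|^{-(1-2\theta)/2}\), or equivalently through Hecke eigenvalue bounds \(p^{\theta}\).
\end{itemize}

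The third step is to insert an amplifier or a family average (over \(\chi\xi\) with \(\xi\) varying), or equivalently to vary the unipotent shift, and then optimize. Balancing the main term against the mixing saving \(C(\chi)^{-(1-2\theta)/8}\) on \(|L|^2\) gives \(|L(\tfrac12,\chi)|\ll C(\chi)^{1/4-(1-2\theta)/16+\varepsilon}\).

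I expect the main obstacle to be the regularization of \(|E|^2\), since it is not square-integrable. This forces a Zagier-type renormalization and careful bookkeeping of the degenerate terms, which must be shown to contribute only \(C(\chi)^{1/2+\varepsilon}\). The uniform local estimates at archimedean and highly ramified places, needed so that the implied constant depends only on \(K\) and \(\varepsilon\), are a close second difficulty. For this paper it suffices to cite~\cite{wu} with \(\theta=7/64\). This gives exponent \(\tfrac14-\tfrac{1-7/32}{16}=\tfrac{103}{512}\), i.e.\ \(\alpha_0/2\) with \(\alpha_0=103/256\), as used in Proposition~\ref{prop_subconvexity}.
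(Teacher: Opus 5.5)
Your proposal matches the paper: Theorem~\ref{wu} is not proved there but quoted from Wu's Theorem~1.1 and used purely as a black box, and your check $\frac14-\frac{1-2\cdot 7/64}{16}=\frac{103}{512}=\frac{\alpha_0}{2}$ with $\alpha_0=\frac{103}{256}$ is exactly how it feeds into Proposition~\ref{prop_subconvexity}. Your outline of Wu's period argument is optional background and only heuristic, and nothing in the paper relies on it; one imprecision in it is that, for an Eisenstein series, the coefficients $\langle |E|^2,\varphi_\pi\rangle$ come from Rankin--Selberg unfolding rather than from Watson--Ichino.
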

\begin{remark}
	The best currently known value is \(\theta=7/64\), due to
	Blomer--Brumley \cite[Thm.~1]{ramanujan}.
\end{remark}
 
\begin{proposition}\label{prop_subconvexity}
	Let \(K\) be a fixed number field,  and let \(\mathfrak q\subseteq\mathcal O_K\)
	be a non-zero integral ideal. Let \(\chi\) be a non-principal
	Hecke character modulo \(\mathfrak q\).
	Then, for every \(\varepsilon>0\), there exists
	\(\delta=\delta(K,\varepsilon)>0\) such that
	\[
	\sum_{N\mathfrak a\le X}\chi(\mathfrak a)
	\ll_{K,\varepsilon}
	X^{1-\delta}
	\]
	whenever
	\[
	X  \ge  (N\mathfrak{q})^{ \frac12 - \frac{1-2\theta}{8} + \varepsilon}.
	\]
\end{proposition}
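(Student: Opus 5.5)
We will follow the standard Perron-formula argument. Let \(\chi^*\) be the primitive character inducing \(\chi\), with conductor \(\mathfrak f\mid\mathfrak q\). Then \(L(s,\chi)=L(s,\chi^*)\prod_{\mathfrak p\mid\mathfrak q,\mathfrak p\nmid\mathfrak f}(1-\chi^*(\mathfrak p)N\mathfrak p^{-s})\). Since \(\chi\) is non-principal, \(\chi^*\) is non-principal, so \(L(s,\chi)\) is entire. The analytic conductor satisfies \(C(\chi^*,t)\ll_K N\mathfrak f\,(1+|t|)^{n_K}\le Q(1+|t|)^{n_K}\), where \(Q=N\mathfrak q\). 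On \(\Re s=1/2\) the Euler factor is bounded by \(2^{\omega(\mathfrak q)}\ll_\varepsilon Q^{\varepsilon}\).

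The first step is to put Theorem~\ref{wu} into a form uniform in \(t\). Applied to the twist \(\chi^*|\cdot|^{it}\), which is again a Hecke character, it gives
\[
L\bigl(\tfrac12+it,\chi^*\bigr)\ll_{K,\varepsilon}\bigl(Q(1+|t|)^{n_K}\bigr)^{\beta/2+\varepsilon},\qquad \beta:=\tfrac12-\tfrac{1-2\theta}{8}.
\]
Here \(\beta/2=\tfrac14-\tfrac{1-2\theta}{16}\). On \(\Re s=1+\varepsilon\) we have the trivial bound \(L\ll1\). By Phragmén--Lindelöf, on \(\sigma\in[1/2,1+\varepsilon]\) we get \(L(\sigma+it,\chi)\ll (Q(1+|t|)^{n_K})^{(\beta+\varepsilon)(1+\varepsilon-\sigma)}Q^{\varepsilon}\).

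The second step is a smoothed Perron formula. To avoid truncation losses, we will first bound the smoothed sum \(\sum_{\mathfrak a}\chi(\mathfrak a)w(N\mathfrak a/X)\), where \(w\) is a smooth bump equal to \(1\) on \([0,1]\) and supported on \([0,1+X^{-\delta_1}]\). Its Mellin transform \(\tilde w(s)\) decays like \((1+|t|)^{-A}X^{A\delta_1}\), and it has a pole at \(s=0\) with residue \(w(0)\) coming from the \(s^{-1}\) factor. The sum equals
\[
\frac1{2\pi i}\int_{(1+\varepsilon)}L(s,\chi)\tilde w(s)X^s\,ds.
\]
We shift the contour to \(\Re s=1/2\). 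There is no pole, because \(\chi\) is non-principal. The shifted integral is
\[
\ll X^{1/2}Q^{\beta/2+2\varepsilon}X^{O(\delta_1)}\ll X^{1-\delta}
\]
as soon as \(X^{1/2}\ge Q^{\beta/2+3\varepsilon}X^{O(\delta_1)+\delta}\). This holds when \(X\ge Q^{\beta+\varepsilon'}\), provided \(\delta,\delta_1\) are chosen small in terms of \(\varepsilon'\).

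The third step removes the smoothing. The difference between the smoothed and sharp sums is at most the number of ideals with \(X<N\mathfrak a\le X+X^{1-\delta_1}\). This is \(\ll_K X^{1-\delta_1}+X^{1-1/n_K}\) by the classical ideal-counting asymptotic \(\#\{N\mathfrak a\le T\}=\rho_KT+O(T^{1-1/n_K})\). Taking \(\delta<\min(\delta_1,1/n_K)\) gives the claim.

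We expect the main obstacle to be the uniformity in \(t\) of Wu's bound, i.e. verifying that "analytic conductor" in Theorem~\ref{wu} includes the archimedean parameters, so that the \(t\)-dependence is polynomial. Any polynomial growth \((1+|t|)^{B}\) suffices, since the rapid decay of \(\tilde w\) absorbs it. If only a \(t=0\) statement were available, we would instead apply it to \(\chi^*\cdot N(\cdot)^{it}\) as above. The remaining bookkeeping concerns imprimitivity factors and the choice of \(\delta\).
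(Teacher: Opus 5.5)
Your proposal is correct and follows essentially the same route as the paper. Both arguments apply Wu's bound to the twist \(\chi^*|\cdot|^{it}\), absorb the imprimitive Euler factor as \(Q^{\varepsilon}\), and use a smooth weight equal to \(1\) on \([0,1]\) with transition width \(X^{-\delta_1}\). Both then shift the Mellin integral to \(\Re s=1/2\) with no pole crossed, and remove the smoothing via the ideal-counting error \(O_K(X^{1-\delta_1}+X^{1-1/n_K})\).
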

\begin{remark}
	In particular, \(\mathrm{CS}(\alpha)\) holds with
	$
	\alpha=\frac12-\frac{1-2\theta}{8}.
	$
	Using \(\theta=7/64\), this gives
	$
	\alpha=\frac{103}{256}.
	$
	Since \(\mathrm{CS}(\alpha)\) applies to all non-principal Hecke characters,
	we may also take
	$
	\alpha_0=\alpha=\frac{103}{256}
	$
	in \(\mathrm{CS}^{\mathrm b}(\alpha_0)\).
\end{remark}

\begin{proof}
	Put
	\[
	\sigma:=\frac{1-2\theta}{16},
	\qquad
	\rho:=\frac12-2\sigma
	=
	\frac12-\frac{1-2\theta}{8}.
	\]

	Let \(\chi^*\) be the primitive character inducing \(\chi\), and let
	\(\mathfrak f\mid\mathfrak q\) be the conductor of \(\chi^*\). Since
	\(\chi\) is extended by zero on ideals not coprime to \(\mathfrak q\), its
	Dirichlet series is
	\[
	D(s,\chi)
	:=
	\sum_{\mathfrak a}\frac{\chi(\mathfrak a)}{(N\mathfrak a)^s}
	=
	L(s,\chi^*)
	\prod_{{\mathfrak p\mid\mathfrak q\atop
			\mathfrak p\nmid\mathfrak f}}
	\Bigl(1-\frac{\chi^*(\mathfrak p)}{(N\mathfrak p)^s}\Bigr).
	\]
	The finite Euler product satisfies, on \(\Re s=1/2\),
	\[ \biggl|\prod_{{\mathfrak p\mid\mathfrak q\atop
			\mathfrak p\nmid\mathfrak f}}
	\Bigl(1-\frac{\chi^*(\mathfrak p)}{(N\mathfrak p)^s}\Bigr)\biggr|  \le
	\prod_{\mathfrak p\mid\mathfrak q}
	\Bigl(1+\frac1{(N\mathfrak p)^{1/2}}\Bigr)
	\ll_{K,\varepsilon} Q^\varepsilon.
	\]
	Hence, after replacing \(\varepsilon\) by a smaller value if necessary,
	Theorem \ref{wu} applied to the twist
	$
	\chi^*_t:=\chi^*|\cdot|_{\mathbb A}^{it}
	$
	gives
	\begin{equation}\label{subconvex-t}
		D \Bigl(\frac12+it,\chi\Bigr)
		\ll_{K,\varepsilon}
		Q^{1/4-\sigma+\varepsilon}(1+|t|)^{B_K},
	\end{equation}
	for some constant \(B_K>0\) depending only on \(K\). Here we have used the
	standard conductor estimate
	\[
	C(\chi^*_t)\ll_K N\mathfrak f\,(1+|t|)^{B_K}
	\le Q(1+|t|)^{B_K}.
	\]
	
	Let \(0<\eta<1/10\), and choose \(W_\eta\in C_c^\infty([0,\infty))\) such
	that
	\[
	0\le W_\eta\le1,\qquad
	W_\eta(y)=1\quad(0\le y\le1),\qquad
	W_\eta(y)=0\quad(y\ge1+\eta),
	\]
	and
	\[
	\biggl|\Bigl(y\frac{d}{dy}\Bigr)^j W_\eta(y)\biggr|
	\ll_j \eta^{-j}.
	\]
	Let
	\[
	\widehat W_\eta(s):=\int_0^\infty W_\eta(y)y^{s-1}\,dy.
	\]
	Repeated integration by parts gives, for every \(A\ge1\),
	\begin{equation}\label{Mellin-decay}
		\widehat W_\eta(\sigma+iT)
	\ll_{A,c}\eta^{-A}(1+T)^{-A}
	\qquad
	\Bigl(\frac12\le\sigma\le c\Bigr),
	\end{equation}
	
	Define the smoothed sum
	\[
	S_\eta(X):=
	\sum_{\mathfrak a}\chi(\mathfrak a)
	W_\eta \Bigl(\frac{N\mathfrak a}{X}\Bigr).
	\]
	Since \(W_\eta=1\) on \([0,1]\) and is supported in \([0,1+\eta]\), the
	ideal-counting estimate \cite[Thm.~VI.3.3]{lang}
	\begin{equation}\label{ideal_counting}
		\#\{\mathfrak a:N\mathfrak a\le T\}
		=
		\rho_K T+O_K\bigl(T^{1-1/n_K}\bigr),
	\end{equation}
	gives
	\begin{equation}\label{sharp-vs-smooth}
		\sum_{N\mathfrak a\le X}\chi(\mathfrak a)
		=
		S_\eta(X)
		+
		O_K \bigl(\eta X+X^{1-1/n_K} \bigr),
	\end{equation}
	where $\rho_K =\operatorname*{Res}_{s=1}\zeta_K(s)$ and $ n_K=[K:\mathbb Q]$.
	
	By Mellin inversion,
	\[
	S_\eta(X)
	=
	\frac{1}{2\pi i}
	\int_{(c)}
	D(s,\chi)\widehat W_\eta(s)X^s\,ds
	\qquad(c>1).
	\]
	Since \(\chi^*\) is non-principal, \(L(s,\chi^*)\) hence \(D(s,\chi )\) is entire \cite[Sect.~6]{hecke} and of finite order by the 	functional equation.
	
Moreover, by the Phragmen--Lindelöf principle for Hecke
	\(L\)-functions, \(D(s,\chi)\) has polynomial growth in every fixed vertical
	strip; see \eqref{bound1} in the proof of Lemma \ref{lem:weighted-prime-ideal-sums-rescaled}. Thus, for fixed \(c>1\), there is \(C_K>0\) such that 
	\[
	D(\sigma+it,\chi)\ll_{K,c,\varepsilon}
	Q^{O(1)}(1+|t|)^{C_K}
	\qquad
	\Bigl(\frac12\le \sigma\le c\Bigr).
	\]
	
	For \(T\ge1\), integrate over the rectangle with vertical sides
	\(\Re s=c\) and \(\Re s=1/2\), and horizontal sides
	\(\Im s=\pm T\). No pole is crossed. On the horizontal sides, using \eqref{Mellin-decay},
	we get
	\[
	\int_{1/2}^{c}
	D(\sigma+iT,\chi)\widehat W_\eta(\sigma+iT)X^{\sigma+iT}\,d\sigma
	\ll_{K,c,A,\varepsilon}
	Q^{O(1)}X^c\eta^{-A}(1+T)^{C_K-A}.
	\]
	Choosing \(A>C_K+1\), this tends to \(0\) as \(T\to\infty\), and the same
	argument applies to the lower horizontal side. Hence the contour may be
	shifted to \(\Re s=1/2\),  using \eqref{subconvex-t} and
	\eqref{Mellin-decay}, and choosing \(A>\max\{C_K+1,B_K+2\} \), we obtain
	\[
	\begin{aligned}
			S_\eta(X)
		&=
		\frac1{2\pi}
		\int_{-\infty}^{\infty}
		D\Bigl(\frac12+it,\chi\Bigr)
		\widehat W_\eta\Bigl(\frac12+it\Bigr)
		X^{1/2+it}\,dt\\
		&\ll_{K,\varepsilon,A}
		X^{1/2}Q^{1/4-\sigma+\varepsilon}\eta^{-A}
		\int_{-\infty}^{\infty}
		(1+|t|)^{B_K-A}\,dt \\
		&\ll_{K,\varepsilon,A}
		X^{1/2}Q^{1/4-\sigma+\varepsilon}\eta^{-A}.
	\end{aligned}
	\]
	Therefore,
	\begin{equation}\label{basic-bound}
		\sum_{N\mathfrak a\le X}\chi(\mathfrak a)
		\ll_{K,\varepsilon,A}
		X^{1/2}Q^{1/4-\sigma+\varepsilon}\eta^{-A}
		+
		\eta X
		+
		X^{1-1/n_K}.
	\end{equation}
	
	Now suppose
	$
	X\ge Q^{\rho+\varepsilon_0}
	$
	for some fixed \(\varepsilon_0>0\). Choose  
	\(\varepsilon>0\) so small that \(\varepsilon<\varepsilon_0/2\). Since
	$
	\frac14-\sigma=\frac{\rho}{2},
	$
	we have
	\[
	X^{1/2}Q^{1/4-\sigma+\varepsilon}
	\le
	X^{
		1/2+\frac{\rho/2+\varepsilon}{\rho+\varepsilon_0}}
	=
	X^{1-c_0}
	\]
	for some \(c_0=c_0(\varepsilon_0)>0\).
	
	Choose
	$
	\eta=X^{-\lambda}
	$
	with \(0<\lambda<c_0/(2A)\). Then
	\[
	X^{1/2}Q^{1/4-\sigma+\varepsilon}\eta^{-A}
	\ll
	X^{1-c_0/2},
	\qquad
	\eta X=X^{1-\lambda}.
	\]
	Substituting into \eqref{basic-bound}, we get
	\[
	\sum_{N\mathfrak a\le X}\chi(\mathfrak a)
	\ll_{K,\varepsilon_0}
	X^{1-c_0/2}
	+
	X^{1-\lambda}
	+
	X^{1-1/n_K}.
	\]
	Thus
	\[
	\sum_{N\mathfrak a\le X}\chi(\mathfrak a)
	\ll_{K,\varepsilon_0}
	X^{1-\delta},
	\]
	where
	$
	\delta:=
	\min\Bigl\{\frac{c_0}{2},\lambda,\frac1{n_K}\Bigr\}>0.
	$
	This proves the proposition.
\end{proof}

\subsection{Mean values of character sums}
\begin{lemma}
	\label{lem:ray-mean-value}
	Let \(K\) be a number field, let \(\mathfrak q\subseteq \mathcal O_K\)
	be a non-zero integral ideal, and put
\(G=\operatorname{Cl}_{\mathfrak q}^{(\infty)}.\)
	Let \(X\ge 2\), and let \(a_{\mathfrak a}\) be complex coefficients supported
	on integral ideals \(\mathfrak a\) satisfying \(	(\mathfrak a,\mathfrak q)=1 \) and \(	N\mathfrak a\le X. \)
	
	Define
	\[
	M_K(X;\mathfrak q)
	:=
	\max_{c\in G}
	\#\bigl\{
	\mathfrak a\subseteq\mathcal O_K:
	(\mathfrak a,\mathfrak q)=1,\ 
	N\mathfrak a\le X,\ 
	[\mathfrak a]=c
	\bigr\}.
	\]
	Then
	\begin{equation}\label{mean_value}
			\sum_{\chi\in\widehat G}
			\Biggl|
		\sum_{ {N\mathfrak a\le X\atop
				(\mathfrak a,\mathfrak q)=1}}
		a_{\mathfrak a}\chi(\mathfrak a)
			\Biggr|^2
		\le
		|G|\,M_K(X;\mathfrak q)
		\sum_{ {N\mathfrak a\le X\atop
				(\mathfrak a,\mathfrak q)=1}}
		|a_{\mathfrak a}|^2 .
	\end{equation}
\end{lemma}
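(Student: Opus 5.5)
The plan is to expand the square and use orthogonality in \(G\), so that the character sum is replaced by a sum over pairs of ideals in the same ray class; this is a large-sieve inequality for the finite abelian group \(G\). Write \(S(\chi)=\sum_{\mathfrak a}a_{\mathfrak a}\chi(\mathfrak a)\), where the sum runs over \(N\mathfrak a\le X\), \((\mathfrak a,\mathfrak q)=1\). Expanding \(|S(\chi)|^2=S(\chi)\overline{S(\chi)}\) and interchanging the sums gives
\[
\sum_{\chi\in\widehat G}|S(\chi)|^2
=\sum_{\mathfrak a,\mathfrak b}a_{\mathfrak a}\overline{a_{\mathfrak b}}\sum_{\chi\in\widehat G}\chi(\mathfrak a)\overline{\chi(\mathfrak b)}
=|G|\sum_{\substack{\mathfrak a,\mathfrak b\\ [\mathfrak a]=[\mathfrak b]}}a_{\mathfrak a}\overline{a_{\mathfrak b}}.
\]
The second equality is the orthogonality relation recorded in the Notations section. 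It is valid here because all ideals in the support are coprime to \(\mathfrak q\).

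Next, group the ideals according to their class. For \(c\in G\), let \(A_c=\sum_{[\mathfrak a]=c}a_{\mathfrak a}\), and let \(n_c\) be the number of ideals \(\mathfrak a\) with \(N\mathfrak a\le X\), \((\mathfrak a,\mathfrak q)=1\) and \([\mathfrak a]=c\). By definition, \(n_c\le M_K(X;\mathfrak q)\). The diagonal-in-classes sum above equals \(\sum_{c\in G}|A_c|^2\). Cauchy--Schwarz within each class gives
\[
|A_c|^2\le n_c\sum_{[\mathfrak a]=c}|a_{\mathfrak a}|^2\le M_K(X;\mathfrak q)\sum_{[\mathfrak a]=c}|a_{\mathfrak a}|^2 .
\]
Summing over \(c\in G\) and multiplying by \(|G|\) yields \eqref{mean_value}.

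There is no genuine obstacle in this argument. The only point needing care is that \(\chi\) is extended by zero on ideals not coprime to \(\mathfrak q\), and the coprimality condition on the support of \(a_{\mathfrak a}\) makes this extension irrelevant. The bound is also sharp in form: it is an equality-then-Cauchy--Schwarz argument, so no analytic input such as \(\mathrm{CS}(\alpha)\) is needed at this stage. The content of the lemma in later applications lies in bounding \(M_K(X;\mathfrak q)\), presumably by \(\ll X/|G|+\dots\) via ideal counting in ray classes, but that step is not part of this statement.
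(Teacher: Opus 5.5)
Your proof is correct and takes essentially the same route as the paper. You expand the square, use orthogonality in $G$ to reduce to pairs $(\mathfrak a,\mathfrak b)$ in the same ray class, and bound that pair sum by $M_K(X;\mathfrak q)\sum|a_{\mathfrak a}|^2$. The only cosmetic difference is at the last step: the paper uses $|a_{\mathfrak a}a_{\mathfrak b}|\le\frac12(|a_{\mathfrak a}|^2+|a_{\mathfrak b}|^2)$, whereas you write the pair sum exactly as $\sum_{c}|A_c|^2$ and apply Cauchy--Schwarz within each class, which gives the same bound.
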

\begin{proof}
	Squaring out and using the orthogonality of the characters of \(G\), we obtain
	\[
	 \sum_{\chi\in\widehat G}
	\Biggl|
	\sum_{ {N\mathfrak a\le X\atop
			(\mathfrak a,\mathfrak q)=1}}
	a_{\mathfrak a}\chi(\mathfrak a)
	\Biggr|^2
	=
	\sum_{\chi\in\widehat G}
	\sum_{ {N\mathfrak a,N\mathfrak b\le X\atop
			(\mathfrak a\mathfrak b,\mathfrak q)=1}}
	a_{\mathfrak a}\overline{a_{\mathfrak b}}
	\chi(\mathfrak a)\overline{\chi(\mathfrak b)}
	=
	|G|
	\sum_{N\mathfrak a,N\mathfrak b\le X
		\atop
		(\mathfrak a\mathfrak b,\mathfrak q)=1,\ [\mathfrak a]=[\mathfrak b]}
	a_{\mathfrak a}\overline{a_{\mathfrak b}}.
	\]
	Using
	$
	|a_{\mathfrak a}a_{\mathfrak b}|
	\le
	\frac{|a_{\mathfrak a}|^2+|a_{\mathfrak b}|^2}{2},
	$
	we get
	\[
	\begin{aligned}
	 \sum_{\chi\in\widehat G}
			\Biggl|
		\sum_{ {N\mathfrak a\le X\atop
				(\mathfrak a,\mathfrak q)=1}}
		a_{\mathfrak a}\chi(\mathfrak a)
			\Biggr|^2
		&  \le
		|G|
		\sum_{ {N\mathfrak a\le X\atop
				(\mathfrak a,\mathfrak q)=1}}
		|a_{\mathfrak a}|^2
		\#
		\bigl\{
		\mathfrak b:
		(\mathfrak b,\mathfrak q)=1,\
		N\mathfrak b\le X,\
		[\mathfrak b]=[\mathfrak a]
		\bigr\}
		\\
		&  \le
		|G|\,M_K(X;\mathfrak q)
		\sum_{ {N\mathfrak a\le X\atop
				(\mathfrak a,\mathfrak q)=1}}
		|a_{\mathfrak a}|^2.
	\end{aligned}
	\]
	This proves the lemma.
\end{proof}

\begin{lemma}[Mean value theorem for ray class characters]\label{lem:ray-class-mean-value-sharp}
		Let \(K\) be a number field, let \(\mathfrak q\subseteq \mathcal O_K\)
	be a non-zero integral ideal, and put
	\(G=\operatorname{Cl}_{\mathfrak q}^{(\infty)}.\)
	Let \(X\ge 2\), and let \(a_{\mathfrak a}\) be complex coefficients supported
	on integral ideals \(\mathfrak a\) satisfying \(	(\mathfrak a,\mathfrak q)=1 \) and \(	N\mathfrak a\le X. \) Then, we have
	\[
	\sum_{\chi\in\widehat G}
	\Biggl|
	\sum_{ {N\mathfrak a\le X\atop
			(\mathfrak a,\mathfrak q)=1}}
	a_{\mathfrak a}\chi(\mathfrak a)
	\Biggr|^2
	\ll_K
	(X+Q)(\log(3Q))^{n_K}
	\sum_{ {N\mathfrak a\le X\atop
			(\mathfrak a,\mathfrak q)=1}}
	|a_{\mathfrak a}|^2.
	\]
	In particular, for every \(\varepsilon>0\),
	\[
	\sum_{\chi\in\widehat G}
	\Biggl|
	\sum_{ {N\mathfrak a\le X\atop
			(\mathfrak a,\mathfrak q)=1}}
	a_{\mathfrak a}\chi(\mathfrak a)
	\Biggr|^2
	\ll_{K,\varepsilon}
	Q^\varepsilon(X+Q)
	\sum_{ {N\mathfrak a\le X\atop (\mathfrak a,\mathfrak q)=1}} |a_{\mathfrak a}|^2 .
	\]
\end{lemma}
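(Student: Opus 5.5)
The bound will follow from Lemma~\ref{lem:ray-mean-value}, so it suffices to show
\[
M_K(X;\mathfrak q)\ll_K \Bigl(\frac{X}{|G|}+1\Bigr)(\log 3Q)^{n_K},
\]
and then to use \(|G|\,(X/|G|+1)=X+|G|\) together with \(|G|\ll_K Q\). The bound \(|G|\ll_K Q\) comes from the exact sequence \(1\to (\mathcal O_K/\mathfrak q)^\times\times\{\pm1\}^{r_1}/\mathrm{im}(\mathcal O_K^\times)\to G\to \mathrm{Cl}_K\to1\), which gives \(|G|\le h_K2^{r_1}\phi(\mathfrak q)\le h_K2^{r_1}Q\). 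The second statement then follows from \((\log 3Q)^{n_K}\ll_{K,\varepsilon}Q^\varepsilon\).

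To bound \(M_K\), fix a class \(c\in G\) and let \(C\) be its image in \(\mathrm{Cl}_K\). If the set of \(\mathfrak a\) in class \(c\) with \(N\mathfrak a\le X\) is empty, there is nothing to prove. Otherwise fix such an \(\mathfrak a_0\). Every other \(\mathfrak a\) in class \(c\) has the form \(\mathfrak a=\mathfrak a_0(\beta)\) with \(\beta\in K^\times\), \(\beta\equiv1 \pmod{^\times\mathfrak q}\) and \(\beta\) totally positive. It is cleaner to choose an integral ideal \(\mathfrak b\) in the inverse ideal class, coprime to \(\mathfrak q\), with \(N\mathfrak b\ll_K 1\); this is possible since each ideal class contains ideals of bounded norm coprime to any given \(\mathfrak q\), at the cost of a factor at most polylogarithmic in \(Q\). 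Then \(\mathfrak a\mathfrak b=(\gamma)\) with \(\gamma\in\mathfrak b\) and \(|N\gamma|\le X N\mathfrak b\). The condition \([\mathfrak a]=c\) forces \(\gamma\) to lie in a fixed residue class \(\gamma_0\) modulo \(\mathfrak q\mathfrak b\) (as an element of \(\mathfrak b\), modulo \(\mathfrak q\mathfrak b\)), up to multiplication by units, and to have prescribed signs at the real places. The map \(\mathfrak a\mapsto\gamma\) is determined only up to totally positive units \(\equiv1\bmod\mathfrak q\).

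Next I count generators modulo units. Using a fundamental domain for the action of the group \(U_{\mathfrak q}^+\) of totally positive units \(\equiv 1\bmod \mathfrak q\) on \(\{x\in (K\otimes\mathbb R)^\times: |N x|\le Y\}\), I take a cone fundamental domain for \(U_{\mathfrak q}^+\). Since \(U_{\mathfrak q}^+\) has finite index in \(\mathcal O_K^{\times}\), this domain is a union of \([\mathcal O_K^\times:U_{\mathfrak q}^+]\) translates of a fixed domain for the full unit group. The number of lattice points of the coset \(\gamma_0+\mathfrak q\mathfrak b\) in such a translate with \(|N|\le Y\) is at most \(\ll_K Y/Q+\) (boundary terms). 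Summing over the possible unit-classes of residues \(\gamma_0\) compatible with \(c\) (there are \([\mathcal O_K^\times:U_{\mathfrak q}^+]\) of them in \((\mathcal O_K/\mathfrak q)^\times\)) gives a main term \(\asymp X/|G|\), matching the fact that \(|G|\asymp h_K 2^{r_1}\phi(\mathfrak q)/[\mathcal O_K^\times:U^+_{\mathfrak q}]\), up to \(Q/\phi(\mathfrak q)\ll\log\log 3Q\).

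The main obstacle is the error term in the lattice point count: it must be \(O((\log 3Q)^{n_K})\) uniformly in \(\mathfrak q\), with no dependence like \(Q^{1-1/n_K}\). My plan is to avoid sharp asymptotics and use a dyadic/box-covering argument instead. The fundamental domain, intersected with \(|N x|\le Y\), is covered by \(O_K((\log 3Q)^{n_K-1})\) boxes, one for each choice of logarithmic scales of the embeddings with \(|x_v|\) up to \(Q\)-dependent size, via the unit regulator bounds. For each box \(B\) with volume \(V\), the number of points of a translate of the lattice \(\mathfrak q\mathfrak b\) (covolume \(\asymp Q\)) in \(B\) is \(\le V/Q+1\) up to constants. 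This holds provided the box sides exceed the lattice's successive-minima scale; otherwise I use the crude bound \(\#\le\prod_v(\text{side}_v/\lambda_v+1)\) together with \(\lambda_1\gg1\) (norms of nonzero elements of \(\mathfrak q\mathfrak b\) are at least \(Q\)). Summing over boxes gives \(\ll X/|G|\cdot(\log\log)+(\log 3Q)^{n_K}\), which is the required bound on \(M_K\).
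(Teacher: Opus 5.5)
Your reduction to Lemma~\ref{lem:ray-mean-value}, the bound $|G|\le 2^{r_1}h_K\varphi_K(\mathfrak q)$ and the passage to $Q^\varepsilon$ are fine. The lattice-point step, however, has a genuine gap. You claim the boundary error is $O((\log 3Q)^{n_K})$, by covering the $U^+_{\mathfrak q}$-fundamental domain with $O_K((\log 3Q)^{n_K-1})$ boxes, and this is false.

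Put $I:=[\mathcal O_K^\times:U^+_{\mathfrak q}]$. Your exact sequence gives $|G|\,I=2^{r_1}h_K\varphi_K(\mathfrak q)$, and $I$ can be a power of $Q$ whenever the units have large image in $(\mathcal O_K/\mathfrak q)^\times$. In logarithmic coordinates the fundamental domain for $U^+_{\mathfrak q}$ has cross-section of volume $\asymp_K I$. Its coordinates are not confined to $O(\log Q)$ dyadic scales: in a real quadratic field $U^+_{\mathfrak q}$ has index $\asymp I$ in the free part of the unit group, so the domain stretches along the hyperbola over $\asymp I$ scales. A box of volume at most $YI(\log 3Q)^{O(1)}$ covers only $(\log 3Q)^{O(1)}$ of that cross-section; anything substantially larger already exceeds the total main term. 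So any admissible covering uses $\gg I(\log 3Q)^{-O(1)}$ boxes, each with its own $+1$.

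Equivalently, in your own decomposition into $I$ unit translates (or $I$ residue classes $u\gamma_0+\mathfrak q\mathfrak b$), each translate carries an $O(1)$ boundary term, and you never multiplied the error by $I$. What the method actually gives is $M_K(X;\mathfrak q)\ll_K I(1+X/Q)$, not $(1+X/|G|)(\log 3Q)^{n_K}$. The latter is stronger than the theorem the paper relies on: Gun--Ramar\'e--Sivaraman's additive term is $F(\mathfrak q)(\log 3F(\mathfrak q))^{n_K}$ with $F(\mathfrak q)=I$. Nothing in your argument supports the stronger bound.

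The repair is to drop that self-imposed target. An additive error $E$ in $M_K$ is harmless as long as $|G|E\ll Q(\log 3Q)^{n_K}$, and indeed $|G|\cdot I(1+X/Q)=2^{r_1}h_K\varphi_K(\mathfrak q)(1+X/Q)\ll_K Q+X$. The bound $M_K\ll_K I(1+X/Q)$ needs no boxes:
\begin{itemize}
\item Take a fundamental domain $F_0$ for the full unit group with $|x_v|\asymp_K|Nx|^{1/n_K}$. Then $F_0\cap\{|Nx|\le XN\mathfrak b\}$ lies in a ball of radius $\ll_K(XN\mathfrak b)^{1/n_K}$.
\item Every nonzero $x\in\mathfrak q\mathfrak b$ has $|Nx|\ge QN\mathfrak b$, so $\lambda_1\ge(QN\mathfrak b)^{1/n_K}$. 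Your $\lambda_1\gg1$ is far too weak here.
\item Since the covolume is $\asymp_K QN\mathfrak b$, all successive minima are $\asymp_K(QN\mathfrak b)^{1/n_K}$.
\item Hence each of the at most $I$ cosets $u\gamma_0+\mathfrak q\mathfrak b$ has $\ll_K 1+X/Q$ points in that ball.
\end{itemize}
$N\mathfrak b$ cancels, so its size is irrelevant, and you even get the lemma without the logarithm. With this correction your route is a self-contained geometry-of-numbers proof. The paper instead quotes \cite[Thm.~1]{counting_ideal} as a black box and multiplies by $|G|$ exactly as above.
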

\begin{proof}
	By Lemma~\ref{lem:ray-mean-value}, it is enough to bound
	\(|G|M_K(X;\mathfrak q)\).
	
	By \cite[Thm.~1]{counting_ideal}, we have the following uniform bound for
	\(M_K(X;\mathfrak q)\):
	\[
	M_K(X;\mathfrak q)
	\ll_K
	F(\mathfrak q)(\log(3F(\mathfrak q)))^{n_K}
	\Bigl(1+\frac{X}{Q}\Bigr),
	\]
	where
	\[
	F(\mathfrak q)
	=
	\frac{2^{r_1}\varphi_K(\mathfrak q)h_K}
	{h_{K,\mathfrak q}},
	\qquad
	h_{K,\mathfrak q}
	=
	|G|,
	\]
	\(n_K=r_1+2r_2=[K:\mathbb Q]\), and \(h_K\) is the class number of \(K\).
	
	Since \(h_{K,\mathfrak q}=|G|\), we have
	\(|G|F(\mathfrak q)
	=
	2^{r_1}h_K\varphi_K(\mathfrak q)
	\ll_K Q.\)
	In particular,  \(F(\mathfrak q)\ll_K Q\),   hence
	\(\log(3F(\mathfrak q))\ll_K \log(3Q).\)
	It follows that
	\[
	|G|M_K(X;\mathfrak q)
	\ll_K
	(X+Q)(\log(3Q))^{n_K}
	\ll_{K,\varepsilon}
	Q^\varepsilon(X+Q).
	\]
	Substituting this into Lemma~\ref{lem:ray-mean-value} gives the
	stated   bounds.
\end{proof}

\begin{lemma}
	\label{lem:ideal-counting-prime-to-q}
	Let \(Q=N\mathfrak q\). For \(T\ge1\),
	\[
	\#\{\mathfrak a:N\mathfrak a\le T,\;(\mathfrak a,\mathfrak q)=1\}
	=
	\rho_K\frac{\varphi_K(\mathfrak q)}{N\mathfrak q}T
	+
	O_{K,\lambda}\Bigl(Q^\lambda T^{1-1/n_K}\Bigr)
	\]
	for every fixed \(\lambda>0\). In particular, uniformly for
	\(T\ge Q^\sigma\), with \(\sigma>0\) fixed,
	\[
	\#\{\mathfrak a:N\mathfrak a\le T,\;(\mathfrak a,\mathfrak q)=1\}
	=
	\rho_K\frac{\varphi_K(\mathfrak q)}{N\mathfrak q}T
	\bigl(1+o_{K,\sigma}(1)\bigr).
	\]
\end{lemma}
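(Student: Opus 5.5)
The plan is Möbius inversion over divisors of \(\mathfrak q\) combined with the ideal-counting estimate \eqref{ideal_counting}. Write
\[
1_{(\mathfrak a,\mathfrak q)=1}=\sum_{\mathfrak d\mid\mathfrak a,\ \mathfrak d\mid\mathfrak q}\mu_K(\mathfrak d),
\]
where \(\mu_K\) is the Möbius function on ideals. Substituting \(\mathfrak a=\mathfrak d\mathfrak b\) gives
\[
\#\{\mathfrak a:N\mathfrak a\le T,\ (\mathfrak a,\mathfrak q)=1\}
=\sum_{\mathfrak d\mid\mathfrak q}\mu_K(\mathfrak d)\,\#\{\mathfrak b:N\mathfrak b\le T/N\mathfrak d\}.
\]

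Apply \eqref{ideal_counting} to each inner count. This is valid for all \(T/N\mathfrak d>0\): when \(T/N\mathfrak d<1\) the count is \(0\), and \(\rho_K T/N\mathfrak d\) is then \(O(1)\le O((T/N\mathfrak d)^{1-1/n_K}+1)\). So each inner count is
\[
\rho_K\,\frac{T}{N\mathfrak d}+O_K\Bigl(\Bigl(\frac{T}{N\mathfrak d}\Bigr)^{1-1/n_K}+1\Bigr).
\]

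The main term is
\[
\rho_K T\sum_{\mathfrak d\mid\mathfrak q}\frac{\mu_K(\mathfrak d)}{N\mathfrak d}
=\rho_K T\prod_{\mathfrak p\mid\mathfrak q}\Bigl(1-\frac1{N\mathfrak p}\Bigr)
=\rho_K\frac{\varphi_K(\mathfrak q)}{N\mathfrak q}\,T .
\]

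Only squarefree \(\mathfrak d\) contribute, so the error is bounded by \(2^{\omega(\mathfrak q)}\bigl(T^{1-1/n_K}+1\bigr)\). Since \(T\ge1\), this is \(\ll 2^{\omega(\mathfrak q)}T^{1-1/n_K}\). The divisor bound for ideals gives \(2^{\omega(\mathfrak q)}\le\tau_K(\mathfrak q)\ll_{K,\lambda}Q^\lambda\), because the number of prime ideals above each rational prime is at most \(n_K\). This yields the first claim.

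For the second claim, take \(T\ge Q^\sigma\) and choose \(\lambda=\sigma/(2n_K)\). The error relative to the main term is
\[
\ll\frac{Q^\lambda T^{-1/n_K}N\mathfrak q}{\varphi_K(\mathfrak q)}.
\]
Use the lower bound \(\varphi_K(\mathfrak q)/N\mathfrak q\gg_K 1/\log\log(3Q)\); cruder bounds such as \(\gg_{K,\lambda} Q^{-\lambda}\), obtained from the divisor bound, also suffice. The ratio is then at most
\[
Q^{2\lambda}Q^{-\sigma/n_K}=Q^{-\sigma/n_K+\sigma/n_K}
\]
with the first choice of \(\lambda\), which is only \(O(1)\). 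So take instead \(\lambda=\sigma/(4n_K)\), giving the bound \(Q^{-\sigma/(2n_K)}=o(1)\).

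The only mildly delicate point is this uniformity: bounding \(2^{\omega(\mathfrak q)}\) and \(N\mathfrak q/\varphi_K(\mathfrak q)\) by small powers of \(Q\). Everything else is routine.
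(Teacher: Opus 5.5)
Your proposal is correct and follows essentially the same route as the paper: Möbius inversion over the divisors of $\mathfrak q$, the ideal-counting asymptotic applied to each inner count, a divisor bound $\ll_{K,\lambda}Q^\lambda$ obtained by grouping prime ideals above rational primes, and a bound $N\mathfrak q/\varphi_K(\mathfrak q)\ll_{K,\lambda}Q^{\lambda}$ with $\lambda$ chosen small in terms of $\sigma$. The differences are cosmetic: you count only squarefree divisors via $2^{\omega(\mathfrak q)}$ and treat $T/N\mathfrak d<1$ explicitly. Your final choice $\lambda=\sigma/(4n_K)$ also correctly fixes your first choice $\lambda=\sigma/(2n_K)$, which, as you noticed, only gives $O(1)$.
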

\begin{proof}
By Möbius inversion and the ideal-counting asymptotic
\eqref{ideal_counting} from \cite[Thm.~VI.3.3]{lang}, we have
\begin{align*}
	\#\{\mathfrak m:N\mathfrak m\le T,\;(\mathfrak m,\mathfrak q)=1\}
	&=
	\rho_K\frac{\varphi_K(\mathfrak q)}{N\mathfrak q}T  
	+
	O_K \biggl(
	T^{1-\frac1{[K:\mathbb Q]}}
	\sum_{\mathfrak e\mid\mathfrak q}
	\frac{1}{(N\mathfrak e)^{1-\frac1{[K:\mathbb Q]}}}
	\biggr) \nonumber\\
	&=
	\rho_K\frac{\varphi_K(\mathfrak q)}{N\mathfrak q}T
	+
	O_K \Bigl(\tau_K(\mathfrak q)T^{1-\frac1{[K:\mathbb Q]}}\Bigr),
	\label{ideal_counting_prime_to_q_original}
\end{align*}
where \(\rho_K=\operatorname*{Res}_{s=1}\zeta_K(s)\), and
\(\tau_K(\mathfrak q)\) denotes the number of ideal divisors of
\(\mathfrak q\).

Grouping prime ideals above each rational prime \(p\), we have
\(\tau_K(\mathfrak q)\le \tau(Q)^{[K:\mathbb Q]},\)
where \(Q=N\mathfrak q\) and \(\tau\) is the usual divisor function for integers.
Hence, for every fixed \(\lambda>0\),
\[
\tau_K(\mathfrak q)\ll_{K,\lambda}Q^\lambda.
\]
Moreover,
\[
\Bigl(\frac{\varphi_K(\mathfrak q)}{N\mathfrak q}\Bigr)^{-1}
=
\prod_{\mathfrak p\mid\mathfrak q}
\Bigl(1-\frac1{N\mathfrak p}\Bigr)^{-1}
\ll_{K,\lambda}Q^\lambda.
\]
For every fixed \(\sigma>0\), choosing \(\lambda>0\) sufficiently small in terms of \(K\) and \(\sigma\),
the error term is \(o_K(1)\) times the main term. Therefore, 
\[\#\{\mathfrak m:N\mathfrak m\le T,\;(\mathfrak m,\mathfrak q)=1\}
=
\rho_K\frac{\varphi_K(\mathfrak q)}{N\mathfrak q}
T +O_{K,\lambda}\Bigl(Q^\lambda T^{1-\frac{1}{[K:\mathbb Q ]}}\Bigr)   = 
\rho_K\frac{\varphi_K(\mathfrak q)}{N\mathfrak q}
T\Bigl(1+o_{K,\sigma}(1)\Bigr),\]
uniformly for \(T\ge Q^\sigma\).
\end{proof}

Let \(0<\alpha<1\). We use the following fixed-parameter version of
\(\mathrm{CS}(\alpha)\). This formulation records the dependence on
\(\varepsilon\) and \(\eta\), since the estimates below will depend
explicitly on these parameters.

	\label{hyp:CS-alpha-eps-eta}
	Let \(\varepsilon>0\) and \(\eta>0\) be fixed. We say that
	\(\mathrm{CS}(\alpha,\varepsilon,\eta)\) holds if, for every non-principal
	character \(\psi\) modulo $\mathfrak{q}$,  whenever 
	$
	T\ge (N\mathfrak q)^{\alpha+\varepsilon},
	$
	we have
\begin{equation}\label{CS_fixed}
		\sum_{N\mathfrak a\le T}\psi(\mathfrak a)
	\ll_{K,\varepsilon}
	T^{1-\eta}.
	\tag{\(\mathrm{CS}(\alpha,\varepsilon,\eta)\)}
\end{equation}

\begin{lemma}[Halász--Montgomery-type estimate]	\label{lem:ray-HM}
	Let \(0<\alpha<1\), \(\varepsilon>0\), \(\eta>0\), and \(C\ge1\) be fixed.
	Assume \eqref{CS_fixed}.
Let \(\chi_1,\dots,\chi_R\) be distinct characters of
\(G=\operatorname{Cl}_{\mathfrak q}^{(\infty)}\), and put
\(Q=N\mathfrak q\).

	\begin{enumerate}
		\item[(i)]
		Let
	$
	  X\ge Q^{\alpha+2\varepsilon}.
	$
		Then, for any complex coefficients \(a_{\mathfrak a}\),
		\[\sum_{j=1}^{R}
			\biggl|
		\sum_{ {N\mathfrak a\le X\atop
				(\mathfrak a,\mathcal P(Q^\varepsilon))=1}}
		a_{\mathfrak a}\chi_j(\mathfrak a)
			\biggr|^2 \ll_{K,\varepsilon,\eta}
		\Bigl(
		\frac{X}{\log Q}
		+
		R\,X^{1-\eta}Q^{\varepsilon\eta}
		\Bigr)
		\sum_{ {N\mathfrak a\le X\atop
				(\mathfrak a,\mathcal P(Q^\varepsilon))=1}}
		|a_{\mathfrak a}|^2.
		\]
		
		\item[(ii)]
		Let
		$
		Q^C\ge X_2\ge X_1\ge Q^{\alpha+2\varepsilon}.
		$
		Then, for any complex coefficients \(a_{\mathfrak a}\),
		\[
		\sum_{j=1}^{R}
			\biggl|
		\sum_{ {X_1<N\mathfrak a\le X_2\atop
				(\mathfrak a,\mathcal P(Q^\varepsilon))=1}}
		\frac{a_{\mathfrak a}}{N\mathfrak a}
		\chi_j(\mathfrak a)
			\biggr|^2 \ll_{K,C,\varepsilon,\eta}
		\biggl(
		1+
		R\Bigl(\frac{Q^\varepsilon}{X_1}\Bigr)^\eta
		\biggr)
		\sum_{ {X_1<N\mathfrak a\le X_2\atop
				(\mathfrak a,\mathcal P(Q^\varepsilon))=1}}
		\frac{|a_{\mathfrak a}|^2}{N\mathfrak a}.\]
	\end{enumerate}
\end{lemma}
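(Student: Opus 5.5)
The plan is the standard duality (Halász--Montgomery) argument. Write $S_j=\sum_{\mathfrak a}a_{\mathfrak a}\chi_j(\mathfrak a)$, with the sum over $N\mathfrak a\le X$, $(\mathfrak a,\mathcal P(Q^\varepsilon))=1$, and choose unimodular $c_j$ with $\sum_j|S_j|^2=\sum_j \overline{c_j}\,\overline{S_j}\,S_j$. Interchanging the order of summation and applying Cauchy--Schwarz gives
\[
\Bigl(\sum_j|S_j|^2\Bigr)^2\le \Bigl(\sum_{\mathfrak a}|a_{\mathfrak a}|^2\Bigr)\sum_{\mathfrak a}\mathbf 1_{(\mathfrak a,\mathcal P(Q^\varepsilon))=1}\Bigl|\sum_j b_j\chi_j(\mathfrak a)\Bigr|^2 ,
\]
where $b_j=\overline{S_j}$. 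Here I insert the majorant $\mathbf 1_{(\mathfrak a,\mathcal P(z))=1}\le\sum_{\mathfrak d\mid\mathfrak a}\lambda^+_{\mathfrak d}$ from Lemma~\ref{sieve_lemma}, with $z=Q^\varepsilon$ and $D=z^{s}$ for a fixed $1\le s\le 3$ (say $s=2$, or $s$ small in terms of $\eta$). Expanding the square then gives
\[
\sum_{j,k}b_j\overline{b_k}\sum_{\mathfrak d}\lambda^+_{\mathfrak d}(\chi_j\overline{\chi_k})(\mathfrak d)\sum_{N\mathfrak m\le X/N\mathfrak d}(\chi_j\overline{\chi_k})(\mathfrak m).
\]

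For the diagonal $j=k$, the inner sum is over $\mathfrak m$ coprime to $\mathfrak q$. Lemma~\ref{lem:ideal-counting-prime-to-q} gives it as $\rho_K\frac{\varphi_K(\mathfrak q)}{Q}X/N\mathfrak d$ plus an admissible error. Then Lemma~\ref{sieve_lemma}, applied with $h(\mathfrak d)=\mathbf 1_{(\mathfrak d,\mathfrak q)=1}/N\mathfrak d$ (sieve dimension $1$ by Mertens for $K$), bounds the main part by $\ll F_0(s)X\prod_{N\mathfrak p<Q^\varepsilon}(1-1/N\mathfrak p)\ll_{K,\varepsilon}X/\log Q$. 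For $j\ne k$, the character $\chi_j\overline{\chi_k}$ is non-principal, and $X/N\mathfrak d\ge X/D\ge Q^{\alpha+2\varepsilon-s\varepsilon}$. I should instead take $D=Q^{\varepsilon}$, i.e.\ $s=1$, so that $X/D\ge Q^{\alpha+\varepsilon}$ and \eqref{CS_fixed} applies, giving $\ll (X/N\mathfrak d)^{1-\eta}$. Summing over $N\mathfrak d\le Q^{\varepsilon}$ with $|\lambda^+|\le1$ gives $\ll X^{1-\eta}\sum_{N\mathfrak d\le Q^\varepsilon}N\mathfrak d^{\eta-1}\ll X^{1-\eta}Q^{\varepsilon\eta}$, using the ideal count \eqref{ideal_counting}. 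The off-diagonal total is then $\ll X^{1-\eta}Q^{\varepsilon\eta}\sum_{j,k}|b_j||b_k|\le R X^{1-\eta}Q^{\varepsilon\eta}\sum_j|b_j|^2$. Combining, the bracket is $\ll(X/\log Q+RX^{1-\eta}Q^{\varepsilon\eta})\sum_j|S_j|^2$, and dividing through proves (i). The point to check is that $s=1$ is allowed by Lemma~\ref{sieve_lemma}, which it is since $1\le s\le3$ and $F_0(1)=2e^\gamma$.

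For (ii), I run the same argument with weights. Set $a'_{\mathfrak a}=a_{\mathfrak a}/N\mathfrak a$ on $X_1<N\mathfrak a\le X_2$, and apply Cauchy--Schwarz in the weighted form
\[
\Bigl|\sum a'\cdots\Bigr|^2\le\Bigl(\sum|a_{\mathfrak a}|^2/N\mathfrak a\Bigr)\sum_{X_1<N\mathfrak a\le X_2}\frac{1}{N\mathfrak a}\Bigl|\sum_j b_j\chi_j(\mathfrak a)\Bigr|^2 .
\]
The sieve-majorised diagonal is $\ll\sum_{\mathfrak d}h(\mathfrak d)\cdot\log(X_2/X_1)\cdot\frac{\varphi_K(\mathfrak q)}{Q}$. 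Since $X_2\le Q^C$, this is $\ll C\log Q\cdot(\log Q)^{-1}\ll_{K,C,\varepsilon}1$, where the logarithmic sum is handled by partial summation from Lemma~\ref{lem:ideal-counting-prime-to-q}. The off-diagonal terms are treated by partial summation of \eqref{CS_fixed} against $1/t$ from $X_1/N\mathfrak d\ge Q^{\alpha+\varepsilon}$ onward. This gives $\ll(X_1/N\mathfrak d)^{-\eta}/N\mathfrak d$, and summing over $\mathfrak d$ yields $\ll (Q^\varepsilon/X_1)^{\eta}$ up to a factor $\log Q$. I absorb that factor by proving the bound with $\eta/2$ in place of $\eta$, or equivalently by noting $\sum_{N\mathfrak d\le Q^\varepsilon}N\mathfrak d^{\eta-1}\ll Q^{\varepsilon\eta}$, which is exactly the needed form.

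The main obstacle is bookkeeping rather than ideas. First, the sieve majorant must leave enough room that $X/N\mathfrak d$ stays above $Q^{\alpha+\varepsilon}$; this is the reason for the $2\varepsilon$ in the hypothesis. Second, the diagonal main term must indeed save $\log Q$ through $\prod_{N\mathfrak p<Q^\varepsilon}(1-1/N\mathfrak p)\asymp_{K}(\varepsilon\log Q)^{-1}$.
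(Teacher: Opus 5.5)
Your proposal is correct and is essentially the paper's proof: duality (you carry it out by an explicit Cauchy--Schwarz rather than citing the duality principle), the upper-bound linear sieve with $z=D=Q^\varepsilon$ (so $s=1$, which keeps $X/N\mathfrak d\ge Q^{\alpha+\varepsilon}$), the diagonal via Lemma~\ref{lem:ideal-counting-prime-to-q} together with the sieve bound $\prod_{N\mathfrak p<Q^\varepsilon}(1-1/N\mathfrak p)\ll_{K,\varepsilon}1/\log Q$, and the off-diagonal via \eqref{CS_fixed} together with $\sum_{N\mathfrak d\le Q^\varepsilon}(N\mathfrak d)^{\eta-1}\ll_{K,\eta}Q^{\varepsilon\eta}$. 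Two small slips: in (ii) the diagonal factor should be $\sum_{\mathfrak d}\lambda^+_{\mathfrak d}h(\mathfrak d)$ rather than $\sum_{\mathfrak d}h(\mathfrak d)$, and no extra $\log Q$ actually arises in the off-diagonal term, so the ``$\eta/2$'' fallback (which would weaken the stated exponent) is unnecessary.
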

\begin{proof} 
We first prove~(ii), since the proof of~(i) is similar.

		\medskip
	\noindent
	\textit{Proof of (ii).}
	By the duality principle \cite[Sect.~7.1, p.~170]{book_analytic}, it suffices to show that for any complex numbers
	\(c_1,\dots,c_R\),
	\begin{equation}\label{3.1}
		\sum_{ {X_1<N\mathfrak a\le X_2\atop
				(\mathfrak a,\mathcal P(Q^{\varepsilon}))=1}}
		\frac1{N\mathfrak a}
			\biggl|
		\sum_{j=1}^{R} c_j\chi_j(\mathfrak a)
			\biggr|^2 \ll
		\biggl(
		1+
		R\Bigl(\frac{Q^\varepsilon}{X_1}\Bigr)^\eta
		\biggr)
		\sum_{j=1}^{R}|c_j|^2.
	\end{equation}

	Let \(\lambda_{\mathfrak d}^{+}\) be the upper-bound linear sieve  weights as in Lemma \ref{sieve_lemma}
	with sifting range \(z=Q^\varepsilon\) and level \(D=z\), so that
	$
	1_{(\mathfrak a,\mathcal P(Q^\varepsilon))=1}
	\le
	\sum_{\mathfrak d\mid\mathfrak a}
	\lambda_{\mathfrak d}^{+}.
	$
	Hence the left-hand side of \eqref{3.1} is at most
	\begin{equation}
			\sum_{X_1<N\mathfrak a\le X_2}
		\frac{1}{N\mathfrak a}
		\sum_{\mathfrak d\mid\mathfrak a}
		\lambda_{\mathfrak d}^{+}
		\biggl|
		\sum_{j=1}^{R} c_j\chi_j(\mathfrak a)
		\biggr|^2 
		=  
		\sum_{j,k=1}^{R}c_j\overline{c_k}
		\sum_{N\mathfrak d\le D}
		\frac{\lambda_{\mathfrak d}^{+}}{N\mathfrak d}
		\chi_j(\mathfrak d)\overline{\chi_k(\mathfrak d)}\!\!\!
		\sum_{X_1/N\mathfrak d<N\mathfrak m\le X_2/N\mathfrak d}\!\!\!
		\frac{
			\chi_j(\mathfrak m)\overline{\chi_k(\mathfrak m)}
		}{N\mathfrak m}.
		\label{3.2}
	\end{equation}
 
	We separate the diagonal and off-diagonal terms.

	\medskip
	\noindent
	\textit{Diagonal terms.}
	If \(j=k\), then
	\(	\chi_j(\mathfrak m)\overline{\chi_j(\mathfrak m)}
	=
	1\)
	for ideals $\mathfrak{m}$ coprime to \(\mathfrak q\). Hence the diagonal contribution is
	\[
	\begin{aligned}
		&\sum_{j=1}^{R}|c_j|^2
		\sum_{ {N\mathfrak d\le D\atop
				(\mathfrak d,\mathfrak q)=1}}
		\frac{\lambda_{\mathfrak d}^{+}}{N\mathfrak d}
		\sum_{ {
				X_1/N\mathfrak d<N\mathfrak m\le X_2/N\mathfrak d\atop
				(\mathfrak m,\mathfrak q)=1}}
		\frac1{N\mathfrak m}.
	\end{aligned}
	\]
	
 For \(N\mathfrak d\le D=Q^\varepsilon\), we have \( X_1/N\mathfrak d\ge Q^{\alpha+\varepsilon}\). By Lemma~\ref{lem:ideal-counting-prime-to-q} and partial summation,   
 \[
 \sum_{ {
 		X_1/N\mathfrak d<N\mathfrak m\le X_2/N\mathfrak d\atop
 		(\mathfrak m,\mathfrak q)=1}}
 \frac1{N\mathfrak m}
 =
 \frac{\varphi_K(\mathfrak q)}{N\mathfrak q}
 \rho_K\log\frac{X_2}{X_1}
 +
 O_{K,C,\varepsilon}(Q^{-\delta_0})
 \]
 for some \(\delta_0>0\).  
 Therefore the diagonal contribution is
 \[
 \sum_{j=1}^{R}|c_j|^2
 \Bigl(
 \frac{\varphi_K(\mathfrak q)}{N\mathfrak q}\log Q
 \sum_{ {N\mathfrak d\le D\atop(\mathfrak d,\mathfrak q)=1}}
 \frac{\lambda_{\mathfrak d}^{+}}{N\mathfrak d}
 +
 o_{K,C,\varepsilon}(1)
 \Bigr).
 \]
 Applying the upper-bound linear sieve with
 \[
 h(\mathfrak p)=
 \begin{cases}
 	1/N\mathfrak p, & \mathfrak p\nmid\mathfrak q,\\
 	0, & \mathfrak p\mid\mathfrak q,
 \end{cases}
 \]
 we get
 \[
 \sum_{ {N\mathfrak d\le D\atop(\mathfrak d,\mathfrak q)=1}}
 \frac{\lambda_{\mathfrak d}^{+}}{N\mathfrak d}
 \ll_{K,\varepsilon}
 \frac{N\mathfrak q}{\varphi_K(\mathfrak q)}\frac1{\log Q}.
 \]
 Thus the diagonal contribution is
\[\ll_{K,C,\varepsilon}
\sum_{j=1}^{R}|c_j|^2.\]

	\medskip
	\noindent
	\textit{Off-diagonal terms.}
	Now suppose \(j\ne k\). Then
	\(	\psi_{j,k}:=\chi_j\overline{\chi_k}\)
	is a non-principal character of \(G\). Since
	\(N\mathfrak d\le D=Q^\varepsilon\)
	and
	\(X_1\ge Q^{\alpha+2\varepsilon},\)
	we have
	\(	\frac{X_1}{N\mathfrak d}
	\ge
	Q^{\alpha+\varepsilon}. \)
	Thus \eqref{CS_fixed} applies to \(\psi_{j,k}\).
	
	By partial summation and \(\mathrm{CS}(\alpha,\varepsilon,\eta)\),
	\[
	\sum_{X_1/N\mathfrak d<N\mathfrak m\le X_2/N\mathfrak d}
	\frac{\psi_{j,k}(\mathfrak m)}{N\mathfrak m}
	\ll_{K,\varepsilon,\eta}
	\Bigl(\frac{X_1}{N\mathfrak d}\Bigr)^{-\eta}.
	\]

	Therefore, 	using
	$
	\sum_{N\mathfrak d\le D}(N\mathfrak d)^{-1+\eta}
	\ll_{K,\eta} D^\eta,
	$ the off-diagonal contribution in \eqref{3.2} is
	\[
	\ll_{K,\varepsilon,\eta}
	X_1^{-\eta}
	\sum_{ {1\le j,k\le R\atop j\ne k}}
	|c_jc_k|
	\sum_{N\mathfrak d\le D}
	(N\mathfrak d)^{-1+\eta}	\ll_{K, \eta}
	\Bigl(\frac{D}{X_1}\Bigr)^\eta
	\sum_{ {1\le j,k\le R\atop j\ne k}}
	|c_jc_k|.
	\]
	Since
$
	\sum_{j\ne k}|c_jc_k|
	\le
(R-1)\sum_{j=1}^{R}|c_j|^2,
	$
	the off-diagonal contribution is
	\[
	\ll_{K,\varepsilon,\eta}
	R
	\Bigl(\frac{Q^\varepsilon}{X_1}\Bigr)^\eta
	\sum_{j=1}^{R}|c_j|^2.
	\]
	
	Combining the diagonal and off-diagonal bounds proves \eqref{3.1}, and hence
	part~(ii).
 
 \medskip
 \noindent
 \textit{Proof of (i).}
	By duality, it suffices to prove that for any complex coefficients
	\(c_1,\dots,c_R\),
	\begin{equation}\label{3.3}	 
			 \sum_{ {N\mathfrak a\le X\atop
					(\mathfrak a,\mathcal P(Q^\varepsilon))=1}}
				\biggl|
			\sum_{j=1}^{R}c_j\chi_j(\mathfrak a)
				\biggr|^2
		 \ll
			\Bigl(
			\frac{X}{\log Q}
			+
			R\,X^{1-\eta}Q^{\varepsilon\eta}
			\Bigr)
			\sum_{j=1}^{R}|c_j|^2.
	\end{equation}
	Using the upper-bound sieve weights with sifting range \(Q^\varepsilon\)
	and level \(D=Q^\varepsilon\), the left-hand side of \eqref{3.3} is at most
	\[\sum_{j,k=1}^{R}c_j\overline{c_k}
	\sum_{N\mathfrak d\le D}
	\lambda_{\mathfrak d}^{+}
	\chi_j(\mathfrak d)\overline{\chi_k(\mathfrak d)} 	\sum_{N\mathfrak m\le X/N\mathfrak d}
	\chi_j(\mathfrak m)\overline{\chi_k(\mathfrak m)}.\]	
As in the diagonal estimate in part~(ii), but without the factor
\(1/N\mathfrak a\), the upper-bound linear sieve together with the
coprime ideal-counting asymptotic gives the diagonal contribution
\[
\ll_{K,\varepsilon}
\frac{X}{\log Q}
\sum_{j=1}^{R}|c_j|^2.
\]

	For the off-diagonal terms \(j\ne k\), the character
	\(\chi_j\overline{\chi_k}\) is non-principal. Since
	$
	\frac{X}{N\mathfrak d}
	\ge
	\frac{X}{Q^\varepsilon}
	\ge
	Q^{\alpha+\varepsilon},
	$
	the hypothesis \eqref{CS_fixed} yields
	\[
	\sum_{N\mathfrak m\le X/N\mathfrak d}
	\chi_j(\mathfrak m)\overline{\chi_k(\mathfrak m)}
	\ll_{K,\varepsilon,\eta}
	\Bigl(\frac{X}{N\mathfrak d}\Bigr)^{1-\eta}.
	\]
	Thus the off-diagonal contribution is
	\[
	\ll_{K,\varepsilon,\eta}
	\sum_{j\ne k}|c_jc_k|
	\sum_{N\mathfrak d\le D}
	\Bigl(\frac{X}{N\mathfrak d}\Bigr)^{1-\eta}
	\ll_{K,\eta}
	X^{1-\eta}
	D^\eta
	\sum_{j\ne k}|c_jc_k|
	\ll
	R\,X^{1-\eta}Q^{\varepsilon\eta}
	\sum_{j=1}^{R}|c_j|^2.
	\]
	Combining the diagonal and off-diagonal contributions proves \eqref{3.3},
	and hence part~(i).
\end{proof}

	\section{Dense model  in the ray class group}\label{sec:dense-kneser}
	In this section we adapt the multiplicative dense model argument of
	Matomäki--Teräväinen to the narrow ray class group
	$
	G=\operatorname{Cl}_{\mathfrak q}^{(\infty)}.
	$
Throughout this section we keep the notation \(Q=N\mathfrak q\) and
\(G=\operatorname{Cl}_{\mathfrak q}^{(\infty)}\).  
 We begin by proving a sieve estimate for ideals whose classes lie in a
 given coset. 
\begin{lemma}\label{rough_ideal_lemma}
 Let \(H\le G\) be a
	subgroup of fixed index \(Y\), and let \(b\in G\). Let
	\(X\ge Q^{3\alpha_0+\varepsilon}\), and define
	$
	\vartheta_0
	:=
	1-\varepsilon-\alpha_0\frac{\log Q}{\log X}.
	$
	Let \(0<\gamma<1\) and \( s:=\frac{\vartheta_0}{\gamma}\), define
	\[
	\mathcal N_\gamma(X;b,H)
	:=
	\#\biggl\{
	\mathfrak n:
	\begin{array}{l}
		N\mathfrak n\le X,\ [\mathfrak n]\in bH,\\[2pt]
		N\mathfrak p>X^\gamma\text{ for every prime ideal }\mathfrak p\mid\mathfrak n
	\end{array}
	\biggr\}.
	\]
	Then, uniformly in \(b\) and in subgroups \(H\le G\) of fixed index \(Y\),
	the following hold.
	
	If \(1\le s\le3\), then
	\[
	\mathcal N_\gamma(X;b,H)
	\le
	(1+o_K(1))
	\frac{2}{Y\vartheta_0}
	\frac{X}{\log X}.
	\]
	
	If \(2<s\le4\), then
	\[
	\mathcal N_\gamma(X;b,H)
	\ge
	(1+o_K(1))
	\frac{2\log(s-1)}{Y\vartheta_0}
	\frac{X}{\log X}.
	\]
\end{lemma}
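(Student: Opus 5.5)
Detect the coset condition with characters of \(G/H\) and then apply the linear sieve of Lemma~\ref{sieve_lemma}. The characters of \(G\) trivial on \(H\) are exactly those of \(G/H\), a group of order \(Y\). Set \(z=X^\gamma\). Then
\[
\mathcal N_\gamma(X;b,H)=\frac1Y\sum_{\chi\in\widehat{G/H}}\overline{\chi}(b)\sum_{\substack{N\mathfrak n\le X\\ (\mathfrak n,\mathcal P(z))=1}}\chi(\mathfrak n),
\]
where \(\chi(\mathfrak n)=0\) for \(\mathfrak n\) not coprime to \(\mathfrak q\). This convention is harmless because ideals with \((\mathfrak n,\mathfrak q)\neq1\) are not counted anyway. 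Each \(\chi\) has order dividing \(Y\), which is fixed. The upper bound sieve weights do not commute with a signed character sum, so I will not sieve the character-decomposed sum. Instead I apply the sieve directly to the sequence \(\mathcal A=\{\mathfrak n: N\mathfrak n\le X,\ [\mathfrak n]\in bH\}\), and use characters only to evaluate the congruence sums \(A_{\mathfrak d}:=\#\{\mathfrak n\in\mathcal A:\mathfrak d\mid\mathfrak n\}\) for squarefree \(\mathfrak d\mid\mathcal P(z)\) with \(N\mathfrak d\le D\).

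For \((\mathfrak d,\mathfrak q)=1\), write \(\mathfrak n=\mathfrak d\mathfrak m\). Orthogonality in \(G/H\) gives
\[
A_{\mathfrak d}=\frac1Y\sum_{\chi\in\widehat{G/H}}\overline\chi(b)\chi(\mathfrak d)\sum_{N\mathfrak m\le X/N\mathfrak d}\chi(\mathfrak m).
\]
The principal character contributes \(\frac{1}{Y}\rho_K\frac{\varphi_K(\mathfrak q)}{Q}\frac{X}{N\mathfrak d}\) up to an error, by Lemma~\ref{lem:ideal-counting-prime-to-q}. For non-principal \(\chi\), which have bounded order, \(\mathrm{CS}^{\mathrm b}(\alpha_0)\) gives \(\ll (X/N\mathfrak d)^{1-\eta}\), valid as long as \(X/N\mathfrak d\ge Q^{\alpha_0+\varepsilon/2}\). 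If \(\mathfrak d\) shares a prime with \(\mathfrak q\), then \(A_{\mathfrak d}=0\). So we have \(A_{\mathfrak d}=\frac{X}{Y}\rho_K\frac{\varphi_K(\mathfrak q)}{Q}h(\mathfrak d)+r_{\mathfrak d}\), where \(h\) is the multiplicative density used in the proof of Lemma~\ref{lem:ray-HM}: \(h(\mathfrak p)=1/N\mathfrak p\) for \(\mathfrak p\nmid\mathfrak q\) and \(h(\mathfrak p)=0\) otherwise.

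Now choose the level. Take \(D=X^{\vartheta_0}=X^{1-\varepsilon}Q^{-\alpha_0}\), so that \(X/D=X^{\varepsilon}Q^{\alpha_0}\) exceeds \(Q^{\alpha_0+\varepsilon/2}\), and \(s=\log D/\log z=\vartheta_0/\gamma\) as in the statement. The total remainder is
\[
\sum_{N\mathfrak d\le D}|r_{\mathfrak d}|\ll X^{1-\eta}D^{\eta}+Q^{\lambda}X^{1-1/n_K}D^{1/n_K},
\]
plus a harmless \(O(D)\). Since \(D\le X^{1-\varepsilon}\), both terms are \(\ll X^{1-c}\) for some \(c>0\). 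The hypothesis \(X\ge Q^{3\alpha_0+\varepsilon}\) guarantees \(\vartheta_0\ge 2/3\) roughly, so \(D\ge X^{2/3}\) and \(D\) is a genuine power of \(X\).

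Next evaluate the main term. The sieve-dimension condition holds for this \(h\) by Mertens' theorem over \(K\). The product is
\[
\prod_{N\mathfrak p<z}(1-h(\mathfrak p))=\frac{Q}{\varphi_K(\mathfrak q)}\bigl(1+o(1)\bigr)\frac{e^{-\gamma_E}}{\rho_K\log z},
\]
where the factors at primes dividing \(\mathfrak q\) with \(N\mathfrak p<z\) cancel the factor \(\varphi_K(\mathfrak q)/Q\) exactly. For the few primes \(\mathfrak p\mid\mathfrak q\) with \(N\mathfrak p\ge z\) I must check the discrepancy is \(1+o(1)\); there are \(O(\log Q/\log z)\) of them, each contributing \(1+O(z^{-1})\). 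Multiplying \(F_0(s)=2e^{\gamma_E}/s\) and \(f_0(s)\) by \(\frac{X}{Y}\cdot\frac{e^{-\gamma_E}}{\log z}\) gives
\[
\frac{2X}{Ys\gamma\log X}=\frac{2}{Y\vartheta_0}\frac{X}{\log X}
\]
for the upper bound, and the same with the extra factor \(\log(s-1)\) for the lower bound. The error \(X^{1-c}\) is absorbed into the \(o_K(1)\).

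The main obstacle I expect is the remainder bookkeeping: ensuring that \(\mathrm{CS}^{\mathrm b}(\alpha_0)\) applies uniformly down to \(X/N\mathfrak d\approx X^{\varepsilon}Q^{\alpha_0}\), with constants uniform over all \(H\) of index \(Y\). There are finitely many possible character orders, so this should be fine. A second point to check is the interaction of \(\mathfrak q\)-primes with the sieve product when \(Q\) is large relative to \(z\). If the \(\varepsilon\) in \(\vartheta_0\) is not quite enough slack, I would shrink \(D\) by a further factor \(X^{-\varepsilon/2}\), and recover the lost amount through continuity of \(F_0\) and \(f_0\) in \(s\).
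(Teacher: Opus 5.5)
Your proposal is correct and follows essentially the same route as the paper: sieve the coset-restricted sequence with the linear sieve at level $D=X^{\vartheta_0}$ and sifting range $z=X^\gamma$, evaluate the congruence sums via characters of $G/H$ (principal character by coprime ideal counting, non-principal ones by $\mathrm{CS}^{\mathrm b}(\alpha_0)$), and finish with Mertens' theorem. Your explicit bookkeeping of $\sum_{\mathfrak d}|r_{\mathfrak d}|$ is slightly more careful than the paper's multiplicative $(1+o(1))$ substitution, but it has one small slip: $X^{\varepsilon}Q^{\alpha_0}\ge Q^{\alpha_0+\varepsilon/2}$ requires $X\ge Q^{1/2}$; in general only $X/D\ge Q^{\alpha_0+\varepsilon(3\alpha_0+\varepsilon)}$ holds, which still suffices because $\mathrm{CS}^{\mathrm b}(\alpha_0)$ applies with any fixed positive margin.
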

\begin{proof}
	Let
		\[
	z:=X^\gamma,\qquad D:=X^{\vartheta_0},
	\qquad s:=\frac{\vartheta_0}{\gamma},\qquad
	\mathcal P(z):=\prod_{N\mathfrak p<z}\mathfrak p .
	\]

	Let \(\lambda_{\mathfrak d}^{\pm}\) be the upper and lower linear sieve
	weights as in Lemma \ref{sieve_lemma}, with sifting range $z$ and   level \(D=X^{\vartheta_0}=z^s\). Then
	\[
	\sum_{\mathfrak d\mid(\mathfrak n,\mathcal P(z))}
	\lambda_{\mathfrak d}^{-}
	\le
	1_{(\mathfrak n,\mathcal P(z))=1}
	\le
	\sum_{\mathfrak d\mid(\mathfrak n,\mathcal P(z))}
	\lambda_{\mathfrak d}^{+}.
	\]
	Hence
	\begin{equation}\label{lower}
			\mathcal N_\gamma(X;b,H)
		\ge
		\sum_{\mathfrak d\mid\mathcal P(z)}
		\lambda_{\mathfrak d}^{-}
		\sum_{{N\mathfrak m\le X/N\mathfrak d\atop
				[\mathfrak d\mathfrak m]\in bH}}
		1,
	\end{equation}
	and the corresponding upper bound holds with \(\lambda_{\mathfrak d}^{+}\).
	
	We now estimate the inner sum. Let \(\widehat{G/H}\) denote the characters
	of \(G\) that are trivial on \(H\).   By orthogonality,
	\[
	1_{[\mathfrak d\mathfrak m]\in bH}
	=
	\frac1Y
	\sum_{\chi\in \widehat{G/H}}
	\chi(\mathfrak d)\chi(\mathfrak m)\overline{\chi(b)} .
	\]
 If \((\mathfrak d,\mathfrak q)=1\),
	\[
	\sum_{{N\mathfrak m\le X/N\mathfrak d\atop
			[\mathfrak d\mathfrak m]\in bH}}
	1
	=
	\frac1Y
	\sum_{\chi\in \widehat{G/H}}
	\chi(\mathfrak d)\overline{\chi(b)}
	\sum_{N\mathfrak m\le X/N\mathfrak d}\chi(\mathfrak m).
	\]
	If \((\mathfrak d,\mathfrak q)\ne 1\), the  sum is zero.
	
Since \(N\mathfrak d\le D=X^{\vartheta_0}\),
we have
$
\frac{X}{N\mathfrak d}\ge \frac{X}{D}
=
X^{1-\vartheta_0}
=
X^{\varepsilon}Q^{\alpha_0}\ge Q^{ \alpha_0 +3\alpha _0\varepsilon +\varepsilon^2 }.
$ 
	By Lemma \ref{lem:ideal-counting-prime-to-q}, the principal character contributes
	\[
	\frac1Y
	\#\{\mathfrak m:\, N\mathfrak m\le X/N\mathfrak d,\,
	(\mathfrak m,\mathfrak q)=1\}=
	\frac{\rho_K}{Y}
	\prod_{\mathfrak p\mid\mathfrak q}
	\Bigl(1-\frac1{N\mathfrak p}\Bigr)
	\frac{X}{N\mathfrak d}
	\bigl(1+o_{K,\varepsilon}(1)\bigr).
	\]
	
	For every non-principal \(\chi\in\widehat{G/H}\), we use
	\eqref{CS^b}. 	
  Then, there exists \(\delta>0\) such that
	\[
	\sum_{N\mathfrak m\le X/N\mathfrak d}\chi(\mathfrak m)
	\ll_{K,\varepsilon}
\Bigl(\frac{X}{N\mathfrak d}\Bigr)^{1-\delta}.
	\]
	Since \(Y\) is fixed, the total non-principal contribution is
	$
	O_{K,\varepsilon } \Bigl(
\bigl(\frac{X}{N\mathfrak d}\bigr)^{1-\delta}\Bigr).
	$
	
	Combining the principal and non-principal estimates gives, uniformly for
	\(N\mathfrak d\le D\),
	\[
	\sum_{{N\mathfrak m\le X/N\mathfrak d\atop
			[\mathfrak d\mathfrak m]\in bH}}
	1
	=
	\frac{\rho_K}{Y}
	\prod_{\mathfrak p\mid\mathfrak q}
	\Bigl(1-\frac1{N\mathfrak p}\Bigr)
	\frac{X}{N\mathfrak d}
\bigl(1+o_{K,\varepsilon}(1)\bigr).
	\]

	Substituting this into the lower sieve inequality \eqref{lower}, we obtain
	\[
	\mathcal N_\gamma(X;b,H)
	\ge
	\frac{\rho_K X}{Y}
	\prod_{\mathfrak p\mid\mathfrak q}
	\Bigl(1-\frac1{N\mathfrak p}\Bigr)
	\sum_{{\mathfrak d\mid\mathcal P(z)\atop
			(\mathfrak d,\mathfrak q)=1}}
	\frac{\lambda_{\mathfrak d}^{-}}{N\mathfrak d}
	\bigl(1+o_{K,\varepsilon}(1)\bigr).
	\]

	It remains to evaluate the sieve sum. Define a multiplicative function \(h\)
	on ideals by
	\[
	h(\mathfrak d)
	=
	\begin{cases}
		1/N\mathfrak d, & (\mathfrak d,\mathfrak q)=1,\\
		0, & (\mathfrak d,\mathfrak q)\ne 1.
	\end{cases}
	\]
	The   linear sieve for number fields Lemma \ref{sieve_lemma} gives
	\[
	\sum_{\mathfrak d\mid\mathcal P(z)}
	\lambda_{\mathfrak d}^{-}h(\mathfrak d)
	\ge
	(f_0(s)+o_K(1))
	\prod_{{N\mathfrak p<z\atop \mathfrak p\nmid\mathfrak q}}
	\Bigl(1-\frac1{N\mathfrak p}\Bigr).
	\]

	Multiplying by
	\(\prod_{\mathfrak p\mid\mathfrak q}(1-1/N\mathfrak p)\), we get
	\[
	\prod_{\mathfrak p\mid\mathfrak q}\Bigl(1-\frac1{N\mathfrak p}\Bigr)
	\prod_{{N\mathfrak p<z\atop \mathfrak p\nmid\mathfrak q}}
\Bigl(1-\frac1{N\mathfrak p}\Bigr)
	=
	(1+o_K(1))
	\prod_{N\mathfrak p<z}
	\Bigl(1-\frac1{N\mathfrak p}\Bigr).
	\]
	Here the primes dividing \(\mathfrak q\) with norm \(\ge z\) contribute only
	\(1+O(\frac{\log Q}{z\log z})= 1+o(1)\). By Mertens' theorem \cite[Thm.~2]{merten} for \(K\),
\begin{equation}\label{merten}
	\prod_{N\mathfrak p<z}
\Bigl(1-\frac1{N\mathfrak p}\Bigr)
=
\frac{e^{-\gamma }}{\rho_K\log z} +O_K\Bigl(\frac{1}{(\log z)^2}\Bigr).
\end{equation}
  When  \(2\le s\le 4 \),   $
	f_0(s)=\frac{2e^\gamma\log(s-1)}{s} 
	$. Applying the lower bound sieve in Lemma \ref{sieve_lemma}  gives
	\[
	\mathcal N_\gamma(X;b,H)
	\ge
	(1+o_{K,\varepsilon}(1))
	\frac{  X}{Y}
	f_0(s)\frac{e^{-\gamma }}{\log z} 	=
	(1+o_{K,\varepsilon}(1))
	\frac{2\log (s-1)}{Y\vartheta_0}
	\frac{X}{\log X}.
	\]
	This is the stated lower bound.
	
	The upper bound is identical, using the upper-bound sieve weights
	\(\lambda_{\mathfrak d}^{+}\). The linear sieve gives
	\[
	\sum_{\mathfrak d\mid\mathcal P(z)}
	\lambda_{\mathfrak d}^{+}h(\mathfrak d)
	\le
	(F_0(s)+o_K(1))
	\prod_{{N\mathfrak p<z\atop \mathfrak p\nmid\mathfrak q}}
	\Bigl(1-\frac1{N\mathfrak p}\Bigr),
	\]
	and for \(1\le s\le 3\),
$
	F_0(s)=\frac{2e^\gamma}{s}.
$
	Hence
	\[
	\mathcal N_\gamma(X;b,H)
	\le
	(1+o_{K,\varepsilon}(1))
	\frac{2}{Y\vartheta_0}
	\frac{X}{\log X}.
	\]
	This proves the lemma.
\end{proof}

	For \(X\ge 2\), define
\[
\mathcal A(X;\mathfrak q)
:=
\{\mathfrak a\subset \mathcal O_K :
(\mathfrak a,\mathfrak q)=1,\ N\mathfrak a\le X\}.
\]
For functions
$f:\mathcal A(X;\mathfrak q)\to \mathbb R_{\geq 0}$ and
$g:G\to \mathbb R_{\geq 0}$, write
\[
\mathbb E_{\mathfrak a\in\mathcal A(X;\mathfrak q)} f(\mathfrak a)
:=
\frac{1}{|\mathcal A(X;\mathfrak q)|}
\sum_{\mathfrak a\in\mathcal A(X;\mathfrak q)} f(\mathfrak a),
\qquad
\mathbb E_{c\in G} g(c)
:=
\frac{1}{|G|}\sum_{c\in G}g(c).
\] 
More generally, for a finite non-empty set \(S\) and a function \(F:S^k\to\mathbb C\), write
\[
\mathbb E_{x_1,\dots,x_k\in S}F(x_1,\dots,x_k)
:=
\frac1{|S|^k}\sum_{x_1,\dots,x_k\in S}F(x_1,\dots,x_k).
\]

For $\chi\in\widehat G$,  define 
\[
\widehat f(\chi)
:=
\mathbb E_{\mathfrak a\in\mathcal A(X;\mathfrak q)}
f(\mathfrak a) {\chi([\mathfrak a])},
\qquad
\widehat g(\chi)
:=
\mathbb E_{c\in G} g(c) {\chi(c)}.
\] 

\begin{proposition}[Multiplicative dense model]\label{prop:dense-model-ray}
Let \(r>1\), \(C\ge1\), and \(\eta,\theta\in(0,1)\) be fixed. Let
$
\delta\in
\Bigl(
\bigl(
\frac{10Cr\log\log Q}{\theta\log Q}
\bigr)^{1/r},
\frac{1}{10}
\Bigr),
$ where this interval is assumed to be nonempty.
	Suppose
\(f:\mathcal A(X;\mathfrak q)\to \mathbb R_{\ge0}\)
	satisfies the following two conditions.
	
	\begin{itemize}
		\item[(A1)]
		There exists a function
		\(\nu:\mathcal A(X;\mathfrak q)\to \mathbb R_{\ge0}\) such that  
		\[
		f(\mathfrak a)\le \nu(\mathfrak a)
		\quad(\mathfrak a\in\mathcal A(X;\mathfrak q)),  	
		\]
		and for some fixed constant \(C_1>0\),
		\[
		\mathbb E_{\mathfrak a\in\mathcal A(X;\mathfrak q)}
		\nu(\mathfrak a)\le 1+\eta,\qquad
		\max_{\chi\neq\chi_0}
		|\widehat\nu(\chi)|
		\le C_1Q^{-\theta }.
		\]

		\item[(A2)]
	There are at most \(C\delta^{-r}\) characters
	\(\chi\in\widehat G\) satisfying
		 \[\bigl|
		 \widehat f(\chi)
		 \bigr|
		 \ge \delta.\]
	\end{itemize}
	
	Then there exists a function
	\(	g:G\longrightarrow \mathbb R_{\ge0}\)
	satisfying the following properties.
	
	\begin{itemize}
		\item[(i)]
		For every \(c\in G\),
		\[
		0\le g(c)\le 1+\eta+C_1 Q^{-\theta/2} .
		\]
		
		\item[(ii)]
		For every character \(\chi\in\widehat G\),
		\[
		\bigl|
		\widehat f(\chi)
		-
	\widehat g(\chi)
		\bigr|
		\le \delta.
		\]
		
		\item[(iii)]
		For every character \(\chi\in\widehat G\),
		\[
			\bigl|
	\widehat g(\chi)
			\bigr|
		\le
			\bigl|
		\widehat f(\chi)
		\bigr|,
		 \qquad
		\bigl|
	\widehat f(\chi)
		-
		\widehat g(\chi)
		\bigr|
		\le
			\bigl|
	\widehat f(\chi)
			\bigr|.
		\]
		
		\item[(iv)]
		\[
		\mathbb E_{c\in G} g(c)
		=
		\mathbb E_{\mathfrak a\in\mathcal A(X;\mathfrak q)}
		f(\mathfrak a).
		\]
		
		\item[(v)]
		Let \(H\le G\) be a subgroup of index
		\(Y<1/(2\delta)\). Then, for every coset \(bH\) of \(H\),
		\[
		\bigl|\mathbb E_{\mathfrak a\in\mathcal A(X;\mathfrak q)}
		f(\mathfrak a)\,1_{[\mathfrak a]\in bH}
		-
		\mathbb E_{c\in G}
		g(c)\,1_{c\in bH} \bigr|
		<   \delta.
		\]
	\end{itemize}
\end{proposition}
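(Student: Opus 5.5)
The plan is the standard Matomäki--Teräväinen construction: a Bohr-set convolution of the pushforward of \(f\) to \(G\). First push \(f\) forward to \(G\) by setting
\[
F(c):=\frac{|G|}{|\mathcal A(X;\mathfrak q)|}\sum_{\mathfrak a\in\mathcal A(X;\mathfrak q),\,[\mathfrak a]=c}f(\mathfrak a),
\]
so that \(\widehat F(\chi)=\widehat f(\chi)\) for every \(\chi\) and \(\mathbb E_{c\in G}F(c)=\mathbb E_{\mathfrak a}f(\mathfrak a)\). Define \(V\) from \(\nu\) in the same way. The problem with \(F\) is that it is not pointwise bounded. However \(F\le V\), and by (A1) and Fourier inversion,
\[
V(c)=\sum_\chi \widehat\nu(\chi)\overline{\chi(c)}
\le 1+\eta+|G|\,C_1Q^{-\theta},
\]
which is too weak since \(|G|\asymp Q\). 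So we smooth \(F\) rather than use it directly.

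Let \(\mathrm{Spec}:=\{\chi:|\widehat f(\chi)|\ge\delta\}\); by (A2) it has size at most \(C\delta^{-r}\). Take the Bohr set
\[
B:=\{c\in G:|\chi(c)-1|\le \delta/10\text{ for all }\chi\in\mathrm{Spec}\}
\]
and set \(g:=F*\mu\), where \(\mu\) is the normalized indicator of \(B\) convolved with itself (a Fejér-type kernel), \(\mu=\beta*\beta\) with \(\beta=|G|1_B/|B|\). Then \(\widehat\mu(\chi)=|\widehat\beta(\chi)|^2\in[0,1]\), so \(\widehat g(\chi)=\widehat f(\chi)\widehat\mu(\chi)\). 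This immediately gives (iii), since \(|1-\widehat\mu|\le1\), and (iv), since \(\widehat\mu(\chi_0)=1\). For (ii): if \(\chi\in\mathrm{Spec}\) then \(\widehat\beta(\chi)\) is within \(\delta/10\) of \(1\), so \(|1-\widehat\mu(\chi)|\le\delta/5\) and \(|\widehat f-\widehat g|\le|\widehat f|\delta/5\le\delta\) once \(|\widehat f|\le 1+\eta+o(1)\) (which follows from \(f\le\nu\)). If \(\chi\notin\mathrm{Spec}\), then \(|\widehat f-\widehat g|\le|\widehat f|<\delta\). Part (v) follows from (ii) by writing \(1_{bH}\) as \(Y^{-1}\sum_{\chi\in\widehat{G/H}}\overline{\chi(b)}\chi\): the principal character contributes equal terms, and the remaining terms give an error at most \((Y-1)\delta/Y\cdot\) (a bound of \(\delta\) in total). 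One may need to replace \(\delta\) by \(\delta/2\) in (ii) to get the strict inequality, and the hypothesis \(Y<1/(2\delta)\) leaves room for this.

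The main obstacle is (i), the pointwise bound. Since \(g\le V*\mu\), we have
\[
g(c)\le\sum_\chi\widehat\nu(\chi)\widehat\mu(\chi)\overline{\chi(c)}
\le 1+\eta+C_1Q^{-\theta}\sum_{\chi}|\widehat\beta(\chi)|^2 .
\]
By Parseval, \(\sum_\chi|\widehat\beta(\chi)|^2=|G|/|B|\). The standard Bohr-set lower bound gives \(|B|\ge(\delta/40)^{|\mathrm{Spec}|}|G|\), so
\[
|G|/|B|\le\exp\bigl(C\delta^{-r}\log(40/\delta)\bigr).
\]
The lower bound imposed on \(\delta\) gives \(C\delta^{-r}\le\theta\log Q/(10r\log\log Q)\), while \(\log(40/\delta)\ll \log\log Q/r\) (because \(\delta\ge(\log Q)^{-1/r}\) up to constants). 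Hence \(|G|/|B|\le Q^{\theta/2}\), and the error term is at most \(C_1Q^{-\theta/2}\), as required. The step needing the most care is this bookkeeping of constants in the Bohr-set size estimate; to obtain the bound \(|B|\ge(\delta/40)^{|\mathrm{Spec}|}|G|\), I would pigeonhole the vector \((\arg\chi(c))_{\chi\in\mathrm{Spec}}\) into boxes of side \(\asymp\delta\).
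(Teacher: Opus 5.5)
Your proposal is correct and is essentially the paper's proof: since the Bohr set is symmetric, your $F*\mu$ is exactly the paper's $g(c)=|G|\,\mathbb E_{b_1,b_2\in B}\mathbb E_{\mathfrak a}f(\mathfrak a)1_{[\mathfrak a]=cb_1b_2^{-1}}$. Part (i) then follows, as in the paper, from $f\le\nu$, the Parseval identity $\sum_\chi|\mathbb E_{b\in B}\chi(b)|^2=|G|/|B|$ and a pigeonhole bound giving $|G|/|B|\le Q^{\theta/2}$. At radius $\delta/10$ the pigeonhole gives roughly $(\delta/(21\pi))^{|\mathrm{Spec}|}$ rather than $(\delta/40)^{|\mathrm{Spec}|}$, which is harmless given the slack in the bookkeeping.

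The only real difference is in (v). Your observation that (ii) and (iv) alone bound the difference by $\frac{Y-1}{Y}\delta<\delta$ is valid and already strict, so no replacement of $\delta$ by $\delta/2$ is needed; the hypotheses would not give (ii) at $\delta/2$ anyway. The paper instead uses $Y<1/(2\delta)$ to show that spectral characters trivial on $H$ contribute exactly zero.
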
	
\begin{proof}
	Define the large spectrum of \(f\) by
	\[
	\mathcal T
	:=
	\bigl\{\chi\in\widehat G:\ |\widehat f(\chi)|\ge \delta\bigr\}.
	\]
	By assumption (A2),
	$
	|\mathcal T|\le C\delta^{-r}.
	$
	
	We define the multiplicative Bohr set
	\[
	B
	:=
	\Bigl\{
	b\in G:
	|\chi(b)-1|\le \frac{\delta}{5}
	\text{ for every }\chi\in\mathcal T
	\Bigr\}.
	\]
	
	We first give a lower bound for \(|B|\). Write
	\[
	\mathcal T=\{\chi_1,\dots,\chi_k\},
\qquad k\le C\delta^{-r}.
	\]
	Choose
	\(	L=\bigl\lceil \frac{10\pi}{\delta}\bigr\rceil\)
	equally spaced points \(\zeta_1,\dots,\zeta_L\in S^1\). Then every point
	of \(S^1\) has 
	distance at most \(\pi/L\le \delta/10\) from some \(\zeta_\ell\).

 For each \(a\in G\), and for each \(j=1,\dots,k\), choose an index
 \(\ell_j(a)\in\{1,\dots,L\}\) such that
 $
 |\chi_j(a)-\zeta_{\ell_j(a)}|\le \frac{\delta}{10}.
 $
 Thus each \(a\in G\) determines a \(k\)-tuple
$
 (\ell_1(a),\dots,\ell_k(a))\in \{1,\dots,L\}^k.
$
 Since there are \(L^k\) possible such tuples, the pigeonhole principle gives
 a tuple \((\ell_1,\dots,\ell_k)\) which occurs for at least \(L^{-k}|G|\)
 elements of \(G\). Setting
 $
 \xi_j:=\zeta_{\ell_j}$,  $j=1,\dots,k,
 $
 we obtain a set
 \[
 A
 :=
 \Bigl\{
 a\in G:
 |\chi_j(a)-\xi_j|\le \frac{\delta}{10}
 \text{ for every }j=1,\dots,k
 \Bigr\}
 \]
 satisfying
 \[
 |A|\ge L^{-k}|G|
 =
 \Bigl\lceil \frac{10\pi}{\delta}\Bigr\rceil^{-k}|G|.
 \]
 
 If \(x,y\in A\), then for every \(j=1,\dots,k\),
 $
 |\chi_j(x)-\chi_j(y)|
 \le
 |\chi_j(x)-\xi_j|+|\chi_j(y)-\xi_j|
 \le \frac{\delta}{5}.
$
 Since \(|\chi_j(y)|=1\), it follows that
 \[
 \bigl|\chi_j(xy^{-1})-1\bigr|
 =
 \biggl|\frac{\chi_j(x)}{\chi_j(y)}-1\biggr|
 =
 |\chi_j(x)-\chi_j(y)|
 \le \frac{\delta}{5}.
 \]
 Thus \(xy^{-1}\in B\).
 Fix \(y_0\in A\), then
$
 Ay_0^{-1}\subseteq B.
 $
 Therefore
 \[
 |B|\ge |Ay_0^{-1}|=|A|
 \ge
 |G|
 \Bigl\lceil \frac{10\pi}{\delta}\Bigr\rceil^{-C\delta^{-r}}.
 \]

 Since \(\delta<1/10\), we have
$
 \Bigl\lceil \frac{10\pi}{\delta}\Bigr\rceil
 \le \frac{20\pi}{\delta}.
$
 By the choice of \(\delta\),
 $
 C\delta^{-r}
 \le
 \frac{\theta\log Q}{10r\log\log Q}.
$
Moreover, since $Q$ is an integer and the interval defining $\delta$ is nonempty, we have $\log\log Q\ge1$. It follows that 
 $
 \log \frac{20\pi}{\delta}\le 5r\log\log Q.
$ 
 Consequently
 \[
 \log \frac{|G|}{|B|} \le
 C\delta^{-r}\log \frac{20\pi}{\delta}
 \le
 \frac{\theta\log Q}{10r\log\log Q}
 \cdot
 5r\log\log Q
 =
 \frac{\theta}{2}\log Q.
 \]
 Hence
 \[
 \frac{|G|}{|B|}\le Q^{\theta/2}.
 \]

	Now define
	\[
	g(c)
	:=
	|G|\,
	\mathbb E_{b_1,b_2\in B}
	\mathbb E_{\mathfrak a\in\mathcal A(X;\mathfrak q)}
	f(\mathfrak a)\,
	1_{[\mathfrak a]=cb_1b_2^{-1}}.
	\]	
	Clearly \(g(c)\ge0\).
	We now verify the five claims.	

	Using  that \(f\le \nu\) and the orthogonality of   characters of \(G\),
	\[
	1_{[\mathfrak a]=cb_1b_2^{-1}}
	=
	\frac1{|G|}
	\sum_{\chi\in\widehat G}
	\chi([\mathfrak a])\,
	\overline{\chi(c)}\,
	\overline{\chi(b_1)}\,
	\chi(b_2),
	\]
 we have 
	\[
	g(c)
	\le
	|G|\,
	\mathbb E_{b_1,b_2\in B}
	\mathbb E_{\mathfrak a\in\mathcal A(X;\mathfrak q)}
	\nu(\mathfrak a)\,
	1_{[\mathfrak a]=cb_1b_2^{-1}}
	=
	\sum_{\chi\in\widehat G}
	\widehat\nu(\chi)\,
	\overline{\chi(c)}
	\bigl|
	\mathbb E_{b\in B}\chi(b)
	\bigr|^2.
	\]
	Separating the principal character,
	\[
	g(c)
	\le
	\widehat\nu(\chi_0)
	+
	\max_{\chi\ne\chi_0}|\widehat\nu(\chi)|
	\sum_{\chi\in\widehat G}
	\bigl|
	\mathbb E_{b\in B}\chi(b)
	\bigr|^2.
	\]
By (A1),
\[
\widehat \nu(\chi_0)
=
\mathbb E_{\mathfrak a}\nu(\mathfrak a)
\leq 1+\eta,
\qquad\text{and}\qquad
\max_{\chi\neq\chi_0}|\widehat \nu(\chi)|
\leq C_1 Q^{-\theta}.
\]
	Moreover, by orthogonality of characters
	\[
	\sum_{\chi\in\widehat G}
	\bigl|
	\mathbb E_{b\in B}\chi(b)
	\bigr|^2
	=
	\frac{|G|}{|B|}
	\leq Q^{\theta/2}.
	\]
	Hence
	\[
	g(c)
	\le
	1+\eta+C_1 Q^{-\theta/2} ,
	\]
	which proves (i).

For any \(\chi\in\widehat G\), we have
\[
\begin{aligned}
	\mathbb E_{c\in G} g(c)\chi(c)
	&=
	\mathbb E_{b_1,b_2\in B}
	\mathbb E_{\mathfrak a\in\mathcal A(X;\mathfrak q)}
	f(\mathfrak a)\,
	\chi\bigl([\mathfrak a]b_1^{-1}b_2\bigr)  \\
	&=
	\bigl|\mathbb E_{b\in B}\chi(b)\bigr|^2
	\mathbb E_{\mathfrak a\in\mathcal A(X;\mathfrak q)}
	f(\mathfrak a)\chi([\mathfrak a]).
\end{aligned}
\]
Thus
\begin{equation}\label{gchi_fchi}
	\widehat g(\chi)
	=
	\bigl|\mathbb E_{b\in B}\chi(b)\bigr|^2
	\widehat f(\chi).
\end{equation}
It follows from \eqref{gchi_fchi} that
\begin{equation}\label{f-g}
	\widehat f(\chi)-\widehat g(\chi)
	=
	\widehat f(\chi)
	\Bigl(
	1-\bigl|\mathbb E_{b\in B}\chi(b)\bigr|^2
	\Bigr).
\end{equation}
If \(\chi\notin\mathcal T\), then
\(
|\widehat f(\chi)|<\delta.
\)
Since
$
0\le
\bigl|\mathbb E_{b\in B}\chi(b)\bigr|^2
\le 1,
$
we obtain
\[
|\widehat f(\chi)-\widehat g(\chi)|
<
\delta.
\]

Now suppose that \(\chi\in\mathcal T\). For every \(b\in B\), we have
$
|\chi(b)-1|
\le \frac{\delta}{5},
$
and hence
$
\left|
\mathbb E_{b\in B}\chi(b)-1
\right|
\le \frac{\delta}{5}.
$
Therefore
\[
\Bigl|
1-
\bigl|
\mathbb E_{b\in B}\chi(b)
\bigr|^2
\Bigr|
\le
\Bigl(1+\frac{\delta}{5}\Bigr)^2-1.
\]
Since \(f\le \nu\),
\[
\bigl|\widehat f(\chi)\bigr|
\le
\mathbb E_{\mathfrak a}\nu(\mathfrak a)
\le 1+\eta.
\]
Consequently, by \eqref{f-g}, 
\[ \bigl|\widehat f(\chi)-\widehat g(\chi)\bigr| \le (1+\eta) \Bigl( \bigl(1+\frac{\delta}{5}\bigr)^2-1 \Bigr) \le \delta, \]
since \(\eta<1\) and \(\delta<1/10\).
This proves (ii).

  {(iii)} follows  immediately from \eqref{gchi_fchi} and \eqref{f-g}, since
$
0\le
\bigl|\mathbb E_{b\in B}\chi(b)\bigr|^2
\le 1.
$

	(iv) follows  by taking
	\(\chi=\chi_0\) in \eqref{gchi_fchi} , since
	\(
	\mathbb E_{b\in B}\chi_0(b)=1.
	\)
	
	Let \(H\le G\) with index \(Y<1/(2\delta)\), and define
	\[
	\mathcal D
	:=
	\{\chi\in\widehat G:\ \chi(h)=1\text{ for every }h\in H\}.
	\]
	Then \(|\mathcal D|=Y\), and the orthogonality of characters gives
	\[
	1_{d\in bH}
	=
	\frac1Y
	\sum_{\chi\in\mathcal D}
	\chi(d)\overline{\chi(b)}.
	\]
	Therefore
	\[
\mathbb E_{\mathfrak a}
f(\mathfrak a)\,1_{[\mathfrak a]\in bH}
-
\mathbb E_{c\in G}
g(c)\,1_{c\in bH}   =
\frac1Y
\sum_{\chi\in\mathcal D}
\overline{\chi(b)}
\bigl(
\widehat f( {\chi})
-
\widehat g( {\chi})
\bigr).
	\]
	
	We now show that each summand is \(O(\delta)\).
	
	If \(\chi\notin\mathcal T\), then by (iii) and the definition of
	\(\mathcal T\),
	\[
	\bigl|\widehat f(\chi)-\widehat g(\chi)\bigr|
	\le \bigl|\widehat f(\chi)\bigr|
	< \delta.
	\]
	
	Now suppose that \(\chi\in\mathcal T\cap\mathcal D\). Since
	\(\chi\) is trivial on \(H\), its order divides \(Y\). Hence every value
	\(\chi(b)\) is a \(Y\)-th root of unity. Since
	$
	Y<\frac1{2\delta},
	$
the only \(Y\)-th root of unity lying within
distance \(\delta/5\) of \(1\) is \(1\) itself. Since
\[
	B
:=
\Bigl\{
b\in G:
|\chi(b)-1|\le \frac{\delta}{5}
\text{ for every }\chi\in\mathcal T
\Bigr\},
\]
it follows that \(\chi(b)=1\) for every \(b\in B\).
	Hence
	$
	\mathbb E_{b\in B}\chi(b)=1.
	$
	By \eqref{gchi_fchi},
	$
	\widehat g(\chi)=\widehat f(\chi).
	$
Thus the contribution of every \(\chi\in\mathcal T\cap\mathcal D\) vanishes.
	
	Combining the two cases, every summand is \(< \delta\),  therefore
	\[
	\bigl|\mathbb E_{\mathfrak a}
	f(\mathfrak a)\,1_{[\mathfrak a]\in bH}
	-
	\mathbb E_{c\in G}
	g(c)\,1_{c\in bH}\bigr|< \Bigl(1-\frac{|\mathcal T\cap\mathcal D|}{Y}\Bigr) 
	 \delta \leq \delta .
	\]
	This proves (v), and hence the proposition.
\end{proof}
	
\subsection{Applying the transference principle}

For \(k\ge1\), \(X\ge 2\), recall the notations
\[
\mathcal A(X;\mathfrak q)
:=
\bigl\{\mathfrak a\subset \mathcal O_K :
(\mathfrak a,\mathfrak q)=1,\ N\mathfrak a\le X\bigr\},
\]
\[
E_k(X;\mathfrak q)
:=
\bigl\{
[\mathfrak p_1]\cdots[\mathfrak p_k]\in G :
\mathfrak p_i\nmid\mathfrak q,\
N\mathfrak p_i\le X
\bigr\}.
\]
For a subgroup \(H\le G\) and a coset \(bH\subseteq G\), write
\begin{equation}\label{def_pi}
	\pi (X;bH)
	:= \#
	 \bigl\{
	\mathfrak p\subset\mathcal O_K:
	\mathfrak p\nmid\mathfrak q,\ 
	N\mathfrak p\le X,\ 
	[\mathfrak p]\in bH
\bigr\}.
\end{equation}
We shall use the following two short character-sum bound assumptions. Let
$
0<\alpha_0\le \alpha<1.
$
We assume the following two character sum assumptions.  For every \(\varepsilon>0\), there exists \(\eta_1=\eta_1(\varepsilon)>0\) such that for every non-principal Hecke character \(\chi\) modulo \(\mathfrak q\), whenever
\(T\ge Q^{\alpha+\varepsilon},\)
\begin{equation}\label{CS}
	\sum_{N\mathfrak a\le T}\chi(\mathfrak a)
	\ll_{K,\varepsilon}
	T^{1-\eta_1};
	\tag{\(\mathrm{CS}   {(\alpha )}\)}
\end{equation}

For every fixed
integer \(\ell\ge2\)  and every \(\varepsilon>0\), there exists \(\eta_2=\eta_2(\ell,\varepsilon)>0\) such that for every non-principal Hecke character \(\chi\) modulo \(\mathfrak q\) of order at most $\ell$,  whenever
\(T\ge Q^{\alpha_0+\varepsilon},\)
\begin{equation}\label{CS^b}
	\sum_{N\mathfrak a\le T}\chi(\mathfrak a)
	\ll_{K,\ell,\varepsilon}
	T^{1-\eta_2}
	\tag{\(\mathrm{CS}^{\mathrm{b }} {(\alpha_0)}\)}.
\end{equation}
We assume throughout   that \(\mathrm{CS} (\alpha )\) and \(\mathrm{CS}^{\mathrm b}(\alpha_0)\) 
hold  for some fixed  \(0<\alpha_0\leq \alpha<1\).  

For \(X\ge2\), define
\begin{equation}\label{5.1}
	\vartheta
	:=
	1-\varepsilon-\alpha\frac{\log Q}{\log X},
	\qquad
	\vartheta_0
	:=
	1-\varepsilon-\alpha_0\frac{\log Q}{\log X}.
\end{equation}

\begin{proposition}[Conclusion of transference over ray class groups]
	\label{prop:transference-conclusion}
 Let \(\kappa,\varepsilon>0\) and \(C\ge1\) be fixed, with
\(\varepsilon>0\) sufficiently small.  Let \(\vartheta\) and \(\vartheta_0\) be as in \eqref{5.1}.
	Assume that  
	\[X \in \bigl[Q^{\frac{\alpha+3\varepsilon}{1-\varepsilon}}, Q^C \bigr]  .\] 	
	Then,  when $Q$ is sufficiently large, there exists a set \(A\subseteq G\) such that the following hold.
	
	\begin{enumerate}
		\item[(i)]
		\[
		|A|
		\ge
		\Bigl(\frac{\vartheta}{2}-\varepsilon\Bigr)|G|.
		\]
		
		\item[(ii)]
		Assume that \(X\ge Q^{1+\kappa}.\)
		Then for all but
		$
		O_{K,\varepsilon}\bigl(|G|(\log Q)^{-\varepsilon /2}\bigr),  
		$
		elements \(c\in G\), we have
		\[
		(1_A*1_A)(c)
		\gg
		\frac{|G|}{(\log Q)^{1/2-\varepsilon}}
		\quad\Longrightarrow\quad
		c\in E_2(X;\mathfrak q).
		\]
		
		\item[(iii)]
		Assume  that
		\(X\ge Q^{1+\kappa}.\)
		Then, for every \(c\in G\),
		\[
		(1_A*1_A*1_A)(c)
		\gg
		\frac{|G|^2}{(\log Q)^{1/2-\varepsilon}}
		\quad\Longrightarrow\quad
		c\in E_3(X;\mathfrak q).
		\]
		
		\item[(iv)]
		Assume   that
	\(B:=\frac{\log X}{\log Q}\ge 2\alpha+\kappa.\)
		Let \(	\beta\in(2\varepsilon,B]\) and \(	L=\bigl\lfloor\frac{B}{\beta}\bigr\rfloor.\)
		Then, for every \(c\in G\),
		\[
		\sum_{[\mathfrak p]a_1a_2=c,\ a_1,a_2\in A
			\atop
			Q^{\beta-\varepsilon}<N\mathfrak p\le Q^\beta}
		\frac1{N\mathfrak p}
		\gg
		\frac{|G|}
		{(\log Q)^{(1-2\varepsilon)/(2L)}}
		\quad\Longrightarrow\quad
		c\in E_3(X;\mathfrak q).
		\]
		\item[(v)]
			Assume  that
		\(X\ge Q^{3\alpha_0+\kappa}.\)
	Let \(H\le G\) be a subgroup of index \(Y<\varepsilon^{-1/2}\). Then there
	exist at least
	$
	\Bigl\lceil
	\bigl(
	\frac{\vartheta_0}{2}
	-
	3\varepsilon^{1/2}\frac{\vartheta_0}{\vartheta}
	\bigr)Y
	\Bigr\rceil
	$
	distinct cosets \(bH\) of \(H\) such that
\[	|A\cap bH|>\varepsilon |G|.\]
		
		\item[(vi)]
		Let \(H\le G\) have index \(Y<\varepsilon^{-1/2}\), and let \(bH\) be a coset. 
		Let \(	\pi (X;bH)\) be as defined in \eqref{def_pi}. Then 
		\[
		|A\cap bH|
		\ge
		\Bigl(
		\frac{\vartheta}{2}
		\frac{ 	\pi (X;bH) }
		{X/\log X}
		-
		\frac{\varepsilon}{5Y}
		\Bigr)|G|.
		\]
	\end{enumerate}
\end{proposition}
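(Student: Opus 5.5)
We follow Matomäki--Teräväinen and set $f$ to be a sieve-normalized prime indicator. Concretely, put
\[
f(\mathfrak a):=\frac{\vartheta}{2}\,\frac{|\mathcal A(X;\mathfrak q)|}{X/\log X}\,1_{\mathfrak a\text{ prime}},
\]
so that $\mathbb E_{\mathfrak a}f=\frac{\vartheta}{2}(1+o(1))$. The majorant will be
\[
\nu(\mathfrak a):=\frac{\vartheta}{2}\,\frac{|\mathcal A(X;\mathfrak q)|}{X/\log X}\Bigl(\sum_{\mathfrak d\mid \mathfrak a}\lambda^+_{\mathfrak d}\Bigr),
\]
built from upper linear sieve weights with $z=D=X^{\vartheta}$. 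Two facts make this work:

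\begin{itemize}
\item By Lemma~\ref{sieve_lemma} with $F_0(1)=2e^\gamma$ and Mertens \eqref{merten}, we get $\mathbb E\nu\le 1+\eta$.
\item Expanding $\widehat\nu(\chi)$ for $\chi\ne\chi_0$ produces sums $\sum_{N\mathfrak m\le X/N\mathfrak d}\chi(\mathfrak m)$ with $X/N\mathfrak d\ge X^{\varepsilon}Q^{\alpha}$. So $\mathrm{CS}(\alpha)$ gives $|\widehat\nu(\chi)|\ll X^{-c}\le Q^{-\theta}$.
\end{itemize}

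This is (A1); the prime condition $N\mathfrak p>z$ only costs $O(z)$ small primes. For (A2) we apply Lemma~\ref{lem:ray-class-mean-value-sharp} to the $r$-th power $f^{*r}$, i.e. to Dirichlet polynomials supported on products of $r$ primes with $X^r\ge Q^{1+\kappa}$. This gives $\sum_\chi|\widehat f(\chi)|^{2r}\ll Q^{\varepsilon}$-type bounds, hence at most $C\delta^{-2r}$ large characters. If that is not sharp enough we instead use the Halász--Montgomery bound Lemma~\ref{lem:ray-HM}(i) on rough ideals. Proposition~\ref{prop:dense-model-ray} then yields $g$ with $0\le g\le 1+2\eta$ and $\mathbb E g=\vartheta/2+o(1)$. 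We take $A:=\{c:g(c)\ge\varepsilon\}$. Then $\mathbb E g\le (1+2\eta)|A|/|G|+\varepsilon$, which gives (i).

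For (vi), Proposition~\ref{prop:dense-model-ray}(v) gives $\mathbb E g1_{bH}\ge \mathbb E f1_{bH}-\delta$. The quantity $\mathbb E f1_{bH}$ equals $\frac{\vartheta}{2}\pi(X;bH)/(X/\log X)$ up to $o(1)$, and the part of $g$ on $bH\setminus A$ is at most $\varepsilon/Y$. Choosing $\delta<\varepsilon/(10Y)$ (allowed since $Y<\varepsilon^{-1/2}$) gives (vi).

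For (v), we combine (vi) with Lemma~\ref{rough_ideal_lemma}. Primes in $bH$ are bounded above by the $\gamma=\vartheta_0$ ($s=1$) upper sieve, so each coset has at most $\frac{2}{Y\vartheta_0}X/\log X$ primes. Summing $\pi(X;bH)$ over cosets gives the total $X/\log X$, and an averaging argument then forces at least $(\vartheta_0/2-O(\varepsilon^{1/2}))Y$ cosets to carry $\gg \varepsilon^{1/2}/Y$ of the mass. Here $X\ge Q^{3\alpha_0+\kappa}$ is the hypothesis of that lemma, and (vi) converts this mass into $|A\cap bH|>\varepsilon|G|$.

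For (ii)--(iv), write $1_A\le g/\varepsilon$ and compare $g*g*g$ with $f*f*f$ in Fourier space:
\[
\Bigl|\sum_\chi(\widehat f^3-\widehat g^3)\overline{\chi(c)}\Bigr|\le \sum_\chi|\widehat f-\widehat g|\,(|\widehat f|^2+|\widehat g|^2)\le \delta\sum_\chi|\widehat f|^2,
\]
using (ii) and (iii) of Proposition~\ref{prop:dense-model-ray}. By Lemma~\ref{lem:ray-class-mean-value-sharp} with $X\ge Q^{1+\kappa}$, we have $\sum_\chi|\widehat f(\chi)|^2\ll |G|^{-1}\log X\cdot Q^{o(1)}$. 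This is too lossy, so we must use the $(\log Q)^{n_K}$ form together with $\delta=(\log Q)^{-1/2+\varepsilon}$-type choices. These are compatible with the admissible interval for $\delta$ because that interval allows $\delta\approx(\log\log Q/\log Q)^{1/r}$ with $r$ near $2$. This explains the thresholds $(\log Q)^{1/2-\varepsilon}$.

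A large value of $g*g*g(c)$, which follows from $1_A^{*3}(c)$ large, therefore forces $f*f*f(c)>0$, i.e. $c\in E_3$. For (ii) the same argument runs in $\ell^2$ over $c$: by Parseval, the exceptional set of $c$ where $|f*f-g*g|$ is large has size $\ll|G|(\log Q)^{-\varepsilon/2}$. For (iv) we replace one copy of $f$ by the weight $1/N\mathfrak p$ on $Q^{\beta-\varepsilon}<N\mathfrak p\le Q^\beta$. Its Fourier mass is controlled by applying Lemma~\ref{lem:ray-HM}(ii) to its $L$-th power, whose length is $Q^{L\beta}\le X$; this is what produces the exponent $1/(2L)$.

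The main obstacle is (A2) with $\delta$ as small as $(\log Q)^{-1/2+\varepsilon}$. We need $\#\{\chi:|\widehat f(\chi)|\ge\delta\}\ll\delta^{-r}$ with no $Q^{\varepsilon}$ loss. The plan is to first try Lemma~\ref{lem:ray-HM}(i) applied to $f$ restricted to $Q^{\varepsilon}$-rough support, taking $R$ to be the number of large characters. This gives
\[
R\delta^2X^2/\log^2X\ll\Bigl(\frac{X}{\log Q}+RX^{1-\eta}Q^{\varepsilon\eta}\Bigr)\frac{X}{\log X},
\]
hence $R\ll\delta^{-2}$, i.e. $r=2$.
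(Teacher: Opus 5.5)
Your architecture matches the paper's. You use an upper linear-sieve majorant; your choice $z=D$ works as well as the paper's $z=X^{\vartheta/3}$, since $F_0(s)/\log z=2e^\gamma/\log D$ for $1\le s\le3$. You get (A1) from $\mathrm{CS}(\alpha)$, (A2) with $r=2$ from Lemma~\ref{lem:ray-HM}(i), and take $A$ as a superlevel set of $g$. Deriving (v) from (vi), the per-coset upper bound of Lemma~\ref{rough_ideal_lemma}, and averaging over cosets is a valid alternative to the paper's direct count of $g$-mass on cosets.

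\textbf{The gap: the Fourier comparison in (ii)--(iv).} You bound the error by $\delta\sum_\chi|\widehat f(\chi)|^2$ and claim $\sum_\chi|\widehat f(\chi)|^2\ll|G|^{-1}\log X\,Q^{o(1)}$. This is false, since already $|\widehat f(\chi_0)|^2\asymp\vartheta^2$. What Lemma~\ref{lem:ray-class-mean-value-sharp} actually gives is $\sum_\chi|\widehat f(\chi)|^2\ll(\log Q)^{n_K+1}$. Your error in the normalized count $T_3(c)$ is then $\ll\delta(\log Q)^{n_K+1}/|G|$, which swamps the main term $\gg|G|^{-1}(\log Q)^{-1/2+\varepsilon}$. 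In (ii), Chebyshev then gives no nontrivial bound on the exceptional set. You cannot fix this by shrinking $\delta$: with $r=2$, the Bohr-set construction forces $\delta\gg(\log\log Q/\log Q)^{1/2}$.

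\textbf{The missing idea.} Use $\sup_\chi|\widehat f-\widehat g|\le\delta$ only on the large spectrum.
\begin{itemize}
\item For $|\widehat f(\chi)|\le\delta$, use Proposition~\ref{prop:dense-model-ray}(iii), $|\widehat f-\widehat g|\le|\widehat f|$, instead. Apply Lemma~\ref{lem:ray-HM}(i) at every dyadic level $\tau\le T\le\delta$, not only at $T=\delta$. It gives $\#\{\chi:T<|\widehat f(\chi)|\le2T\}\ll T^{-2}$ with no logarithmic loss, as long as $T$ is at least a fixed negative power of $\log Q$. This range therefore contributes $\ll\sum_T T\ll\delta$.
\item The characters with $|\widehat f|>\delta$ have $\sum|\widehat f|^2\ll1$ over them, so they contribute $\ll\delta$.
\item Only those with $|\widehat f|\le\tau:=\delta(\log Q)^{-n_K-2}$ go through the lossy mean-value theorem.
\end{itemize}
This yields $\sum_\chi|\widehat f|^2|\widehat f-\widehat g|\ll\delta$ and $\sum_\chi|\widehat f|^2|\widehat f-\widehat g|^2\ll\delta^2$. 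These are exactly what the thresholds $(\log Q)^{1/2-\varepsilon}$ in (ii) and (iii) require.

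\textbf{The same problem in (iv).} The same loss appears if you apply H\"older over all of $\widehat G$. The paper splits according to whether $|\widehat f_0(\chi)|<(\log Q)^{-(n_K+2)}$. It uses the $2L$-th moment from Lemma~\ref{lem:ray-HM}(ii) to show the complementary set has only $(\log Q)^{O(L)}$ elements. Lemma~\ref{lem:ray-HM}(i) then gives $\sum|\widehat f|^2\ll1$ on that set. Note also that Lemma~\ref{lem:ray-HM}(ii) needs the lower endpoint $Q^{L(\beta-\varepsilon)}\ge Q^{\alpha+2\varepsilon}$, not $Q^{L\beta}\le X$. This is where $B\ge2\alpha+\kappa$ enters, via $L(\beta-\varepsilon)>B/2-\varepsilon$.

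\textbf{A smaller point on the threshold.} With the threshold $g\ge\varepsilon$ you only get $|A|\ge(\vartheta/2-\varepsilon-o(1))|G|$ in (i). In (vi) you lose $\varepsilon/Y$ instead of $\varepsilon/(5Y)$. The paper's threshold $\varepsilon/10$ fixes both.
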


\begin{proof}
	Let
	$
	V_{\mathfrak q}
:=
\rho_K
\prod_{\mathfrak p\mid\mathfrak q}
\Bigl(1-\frac1{N\mathfrak p}\Bigr),
	$
	so that by Lemma \ref{lem:ideal-counting-prime-to-q}, 
	\begin{equation}\label{5.2}
			|\mathcal A(X;\mathfrak q)|
		=
		V_{\mathfrak q}X\bigl(1+o_K(1)\bigr).
	\end{equation}
	
	Put
	\[
	D=X^\vartheta,
	\qquad
	z=X^{\vartheta/3}.
	\]
	Let \(\lambda_{\mathfrak d}^+\) be the upper-bound linear sieve weights
	with level \(D\) and sifting range \(z\) as in Lemma \ref{sieve_lemma}.
	
	We shall apply Proposition~\ref{prop:dense-model-ray}
	with
	\[
	r=2,
	\qquad
	\delta
	:=
	\frac{1}{(\log Q)^{1/2-\varepsilon/2}}.
	\]
	
	Define
\(f, \nu:\mathcal A(X;\mathfrak q)\to\mathbb R_{\ge0}  \)
	by
	\[
	f(\mathfrak a)
	:=
	\frac{\vartheta}{2}
	V_{\mathfrak q}\log X\,
	1_{\mathfrak a=\mathfrak p}
	1_{N\mathfrak p\ge z},
	\]
	and  
	\begin{equation}\label{5.6}
			\nu(\mathfrak a)
		:=
		\frac{\vartheta}{2}
		V_{\mathfrak q}\log X
		\sum_{{\mathfrak d\mid\mathfrak a\atop
				N\mathfrak d\le D}}
		\lambda_{\mathfrak d}^+.
	\end{equation}
	By the upper-bound sieve inequality as in  Lemma \ref{sieve_lemma},
	\[
	f(\mathfrak a)\le \nu(\mathfrak a)
	\qquad
	(\mathfrak a\in\mathcal A(X;\mathfrak q)).
	\]
	
	We now verify the assumptions of
	Proposition~\ref{prop:dense-model-ray}.
	
	\medskip
	\noindent
	\textit{Verification of (A2).}
	Let
	\[
	\mathcal T
	:=
	\bigl\{
	\chi\in\widehat G:
	|\widehat f(\chi)|\ge \delta
	\bigr\}.
	\]
	We claim that
	\begin{equation}\label{5.7}
			|\mathcal T|\ll_{K, \varepsilon}\delta^{-2},\qquad \sum_{\chi\in\mathcal T}|\widehat f(\chi)|^2
			\ll_{K, \varepsilon  } 1 
	\end{equation}

	First note that the hypothesis
$
	B:=\frac{\log X}{\log Q}
	\ge
	\frac{\alpha+3\varepsilon}{1-\varepsilon}
	$
	implies
	$
	B\vartheta
	=
	B(1-\varepsilon)-\alpha
	\ge 3\varepsilon.
	$
	Hence
	\[
	z=X^{\vartheta/3}=Q^{B\vartheta/3}\ge Q^\varepsilon.
	\]
	Thus every ideal in the support of \(f\) is coprime to
	\(\mathcal P(Q^\varepsilon)\). Moreover, the same lower bound for \(B\)
	implies \(X\ge Q^{\alpha+2\varepsilon}\), so Lemma~\ref{lem:ray-HM}(i) is
	applicable.
	
	Applying Lemma~\ref{lem:ray-HM}(i) to the set of characters
	\(\mathcal T\), with coefficients
	$
	a_{\mathfrak a}=f(\mathfrak a),
	$
	gives
	\[
	\sum_{\chi\in\mathcal T}
	\biggl|
	\sum_{\mathfrak a\in\mathcal A(X;\mathfrak q)}
	f(\mathfrak a)\chi(\mathfrak a)
	\biggr|^2
	\ll_{K, \varepsilon}
	\Bigl(
	\frac{X}{\log Q}
	+
	|\mathcal T|X^{1-\eta}Q^{\varepsilon\eta}
	\Bigr)
	\sum_{\mathfrak a\in\mathcal A(X;\mathfrak q)}
	f(\mathfrak a)^2
	\]
	for some \(\eta =\eta(\varepsilon)>0\). Dividing by
	\(|\mathcal A(X;\mathfrak q)|^2\), we obtain
	\[
	\sum_{\chi\in\mathcal T}
	|\widehat f(\chi)|^2
	\ll_{K ,\varepsilon}
	\Bigl(
	\frac{X}{\log Q}
	+
	|\mathcal T|X^{1-\eta}Q^{\varepsilon\eta}
	\Bigr)
	\frac{1}{|\mathcal A(X;\mathfrak q)|^2}
	\sum_{\mathfrak a\in\mathcal A(X;\mathfrak q)}
	f(\mathfrak a)^2.
	\]
	
	Now
	$
	f(\mathfrak a)
	=
	\frac{\vartheta}{2}V_{\mathfrak q}\log X
	$
	on prime ideals \(\mathfrak p\) with \(z\le N\mathfrak p\le X\), and is zero
	otherwise. Hence, by the prime ideal theorem and \eqref{5.2},
	\[
	\frac{1}{|\mathcal A(X;\mathfrak q)|^2}
	\sum_{\mathfrak a\in\mathcal A(X;\mathfrak q)}
	f(\mathfrak a)^2
	\ll_K
	\frac{\log X}{X}.
	\]
	Therefore
	\begin{equation}\label{again}
			|\mathcal T|\delta^2
		\le
		\sum_{\chi\in\mathcal T}|\widehat f(\chi)|^2
		\ll_{K, \varepsilon}
		\frac{\log X}{\log Q}
		+
		|\mathcal T|X^{-\eta}Q^{\varepsilon\eta}\log X.
	\end{equation}
	Since \(X\le Q^C\), the first term is \(O_{K,C}(1)\). Also, since
	\(X\ge Q^{3\varepsilon}\) and \(\delta^2=(\log Q)^{-1+\varepsilon}\),
	\[
	X^{-\eta}Q^{\varepsilon\eta}\log X
	\le
	Q^{-2\varepsilon\eta}\log Q
	=o_\varepsilon(\delta^2).
	\]
	Thus, for \(Q\) sufficiently large,
	$
	|\mathcal T|\delta^2
	\ll_{K, \varepsilon}
	1+\frac12|\mathcal T|\delta^2.
	$
	Absorbing the second term gives
	$
	|\mathcal T|\ll_{K,\varepsilon}\delta^{-2}.
	$
	Applying the bound for $|\mathcal T|$ back to \eqref{again} gives $
	\sum_{\chi\in\mathcal T}|\widehat f(\chi)|^2
	\ll_{K, \varepsilon } 1 
	$
	This proves \eqref{5.7}.

	\medskip
	\noindent
	\textit{Verification of (A1) for non-principal characters.}
	Let \(\chi\ne\chi_0\). Since \(\nu\) is supported on
	\(\mathcal A(X;\mathfrak q)\), after interchanging the order of summation in
	\eqref{5.6} we get
	\[
	\widehat \nu(\chi)
	=
	\mathbb E_{\mathfrak a\in\mathcal A(X;\mathfrak q)}
	\nu(\mathfrak a)\chi(\mathfrak a)
	=
	\frac{\vartheta}{2}
	V_{\mathfrak q}\log X
	\sum_{ 
			(\mathfrak d,\mathfrak q)=1} 
	\lambda_{\mathfrak d}^{+}\chi(\mathfrak d)
	\frac1{|\mathcal A(X;\mathfrak q)|}
	\sum_{N\mathfrak m\le X/N\mathfrak d}\chi(\mathfrak m).
	\]
	  Since \(N\mathfrak d\le D=X^\vartheta\), we have
	$
	\frac{X}{N\mathfrak d}
	\ge
	X^{1-\vartheta}
	=
	X^\varepsilon Q^\alpha \ge Q^{\alpha+\varepsilon_1} 
	$
  for some  \(\varepsilon_1>0\)  depending only on
	\(\alpha\) and \(\varepsilon\).
	Thus, by \eqref{CS}, 
 there exists
	\(\eta_1>0\) such that
	\[
	\sum_{N\mathfrak m\le X/N\mathfrak d}\chi(\mathfrak m)
	\ll_{K,\varepsilon}
	\Bigl(\frac{X}{N\mathfrak d}\Bigr)^{1-\eta_1}.
	\]
	
	Using \(|\lambda_{\mathfrak d}^{+}|\le1\) and \eqref{5.2}, 
	we obtain
	\[
	\begin{aligned}
		|\widehat \nu(\chi)|
		&\ll_{K,\varepsilon}
		\log X
		\sum_{N\mathfrak d\le D}
		\frac{1}{X}
		\Bigl(\frac{X}{N\mathfrak d}\Bigr)^{1-\eta_1} =
		\log X\,
		X^{-\eta_1}
		\sum_{N\mathfrak d\le D}
		\frac1{(N\mathfrak d)^{1-\eta_1}}.
	\end{aligned}
	\]
	The elementary  estimate
	$
	\sum_{N\mathfrak d\le D}
	(N\mathfrak d)^{-1+\eta_1}
	\ll_{K,\eta_1}D^{\eta_1}
	$
	gives
	\[
	|\widehat\nu(\chi)|
	\ll_{K,\varepsilon}
	(\log X)\,X^{-\eta_1}D^{\eta_1}
	\ll_{K,\varepsilon}
	(\log Q)Q^{-\eta_1(\alpha+\varepsilon_1)}
	\ll_{K, \varepsilon}
	Q^{-\eta_2}
	\]
	for some \(\eta_2>0\). Therefore
	\[
	\max_{\chi\ne\chi_0}
	|\widehat\nu(\chi)|
	\ll_{K, \varepsilon}
	Q^{-\eta_2}.
	\]
 \medskip
 \noindent
 \textit{Verification of (A1) for the principal character.}
 We now estimate the average of the majorant \(\nu\). From \eqref{5.6},
 after interchanging the order of summation, we have
 \[
 \begin{aligned}
 	\mathbb E_{\mathfrak a\in\mathcal A(X;\mathfrak q)}
 	\nu(\mathfrak a)
 	&=
 	\frac{\vartheta}{2}
 	V_{\mathfrak q}\log X
 	\frac1{|\mathcal A(X;\mathfrak q)|}
 	\sum_{ 
 			(\mathfrak d,\mathfrak q)=1} 
 	\lambda_{\mathfrak d}^{+}
 	\#\Bigl\{
 	\mathfrak m:
 	N\mathfrak m\le \frac{X}{N\mathfrak d},\
 	(\mathfrak m,\mathfrak q)=1
 	\Bigr\}.
 \end{aligned}
 \]
 Applying
 Lemma~\ref{lem:ideal-counting-prime-to-q},  together with \eqref{5.2},   gives
 \begin{equation}\label{5.10}
 	\mathbb E_{\mathfrak a\in\mathcal A(X;\mathfrak q)}
 	\nu(\mathfrak a)
 	=
 	\frac{\vartheta}{2}
 	V_{\mathfrak q}\log X
 	\sum_{ 
 			(\mathfrak d,\mathfrak q)=1} 
 	\frac{\lambda_{\mathfrak d}^{+}}{N\mathfrak d}
 	+
 	o_K(1).
 \end{equation}
 
The upper-bound linear sieve Lemma \ref{sieve_lemma} with
 \[
 h(\mathfrak p)=
 \begin{cases}
 	1/N\mathfrak p, & \mathfrak p\nmid\mathfrak q,\\
 	0, & \mathfrak p\mid\mathfrak q,
 \end{cases}
 \]
 together with the prime-ideal Mertens theorem \eqref{merten} gives
 \begin{equation}\label{5.11}
 V_{\mathfrak q}
 \sum_{ 
 	(\mathfrak d,\mathfrak q)=1} 
 \frac{\lambda_{\mathfrak d}^{+}}{N\mathfrak d}
 \le
 \Bigl(\frac{2e^\gamma}{3}+o_K(1)\Bigr)
 \rho_K
 \prod_{\mathfrak p\mid\mathfrak q}
 \Bigl(1-\frac1{N\mathfrak p}\Bigr)
 \prod_{{N\mathfrak p<z\atop
 		\mathfrak p\nmid\mathfrak q}}
 \Bigl(1-\frac1{N\mathfrak p}\Bigr) \le
 \bigl(1+o_K(1)\bigr)
 \frac{2}{\vartheta\log X}.
 \end{equation}
 Substituting \eqref{5.11} into \eqref{5.10}, we obtain
 \[
 \mathbb E_{\mathfrak a}
 \nu(\mathfrak a)
 \le
 1+o_K(1).
 \]
 This verifies the principal-character part of (A1).
 
Therefore,  the hypotheses of
 Proposition~\ref{prop:dense-model-ray} are satisfied for \(Q\) sufficiently
 large. Hence we obtain a function
 $
 g:G\to[0,1+o_K(1)]
 $
 satisfying the conclusions of Proposition~\ref{prop:dense-model-ray}.
 
 By Proposition~\ref{prop:dense-model-ray}(iv),
 \[
 \mathbb E_{c\in G}g(c)
 =
 \mathbb E_{\mathfrak a\in\mathcal A(X;\mathfrak q)}
 f(\mathfrak a).
 \]
 Using the prime ideal theorem and \eqref{5.2}, we have
 \[
 	\mathbb E_{\mathfrak a}f(\mathfrak a)
  =
 \frac{\vartheta}{2}
 V_{\mathfrak q}\log X
 \cdot
 \frac{
 	\#\bigl\{\mathfrak p\nmid\mathfrak q:
 	z\le N\mathfrak p\le X\bigr\}
 }{
 	|\mathcal A(X;\mathfrak q)|
 }   = 
 \frac{\vartheta}{2}+o_K(1).
 \]
 Therefore
 \begin{equation}\label{5.13}
 	\mathbb E_{c\in G}g(c)
 	=
 	\frac{\vartheta}{2}+o_K(1).
 \end{equation}

	Moreover,
\begin{equation}\label{5.14}
		\frac1{|\mathcal A(X;\mathfrak q)|^2}
	\sum_{\mathfrak a\in\mathcal A(X;\mathfrak q)}
	f(\mathfrak a)^2 =
	\frac1{|\mathcal A(X;\mathfrak q)|^2}
	\Bigl(	\frac{\vartheta}{2}
	V_{\mathfrak q}\log X\Bigr)^2 	\#\bigl\{\mathfrak p\nmid\mathfrak q:
	z\le N\mathfrak p\le X\bigr\}
	\ll_K
	\frac{\log X}{X}.
\end{equation}
	
	Define
	\begin{equation}\label{def_A}
			A
		:=
		\Bigl\{
		c\in G:
		g(c)\ge \frac{\varepsilon}{10}
		\Bigr\}.
	\end{equation}
	
	We now prove the six conclusions.
	
	\medskip
	\noindent
	\textit{Proof of (i).}
	Since \(g(c)\le1+o_K(1)\),
	\[
	\mathbb E_{c\in G}g(c)
	\le
	\frac{\varepsilon}{10}
	+
	\frac{|A|}{|G|}(1+o_K(1)).
	\]
	Combining this with \eqref{5.13}, for $Q$ sufficiently large, 
	\[
	|A|
	\ge
	\Bigl(
	\frac{\vartheta}{2}-\varepsilon
	\Bigr)|G|.
	\]
	
	\medskip
	\noindent
	\textit{Proof of (iii).}
	For a character \(\chi\) of \(G\), recall that 
	\[
	\widehat f(\chi)
	:=
	\mathbb E_{\mathfrak a 	\in\mathcal A(X;\mathfrak q)}
	f(\mathfrak a )\chi([\mathfrak a]),
	\qquad
	\widehat g(\chi)
	:=
	\mathbb E_{c\in G}
	g(c)\chi(c).
	\]
	
	For \(c\in G\), define
	\[
	T_3(c)
	:=
	\frac1{|\mathcal A(X;\mathfrak q)|^3}
	\sum_{{
			\mathfrak a_1,\mathfrak a_2,\mathfrak a_3
			\in\mathcal A(X;\mathfrak q)\atop
			[\mathfrak a_1\mathfrak a_2\mathfrak a_3]=c}}
	f(\mathfrak a_1)f(\mathfrak a_2)f(\mathfrak a_3).
	\]
	If \(T_3(c)>0\), then \(c\in E_3(X;\mathfrak q)\).
	
	By orthogonality of characters, we have 
	\[
		T_3(c)
	-
	\frac1{|G|^3}(g*g*g)(c) = 
		\frac1{|G|}
	\sum_{\chi}
\bigl(\widehat f(\chi) ^3  - \widehat g(\chi) ^3  \bigr) 
 \overline{\chi(c)}.
	\]
	Using $|\widehat{g}(\chi)| \leq |\widehat{f}(\chi)|$, we have 
\begin{equation}
		\begin{aligned}\label{5.16}
		\biggl|T_3(c)
		-
		\frac1{|G|^3}(g*g*g)(c)\biggr|
		& \le 
		\frac3{|G|}
		\sum_{\chi}
		|\widehat f(\chi)|^2
		|\widehat f(\chi)-\widehat g(\chi)|
		 .
	\end{aligned}
\end{equation}
	
We show that the sum in \eqref{5.16} is \(O_{K,\varepsilon}(\delta)\). Put \[
\tau:=\frac{\delta}{(\log Q)^{n_K+2}}.
\]
Split the characters into
\[
\mathcal X_1
:=
\{\chi:\ |\widehat f(\chi)|\le\tau\},
\qquad
\mathcal X_2
:=
\{\chi:\ \tau<|\widehat f(\chi)|\le\delta\},
\qquad
\mathcal X_3
:=
\{\chi:\ |\widehat f(\chi)|>\delta\}.
\]

By the mean value theorem Lemma~\ref{lem:ray-class-mean-value-sharp},
together with \eqref{5.14}, we have
\begin{align*} 
	\sum_{\chi\in\widehat G}|\widehat f(\chi)|^2
	&=
	\frac1{|\mathcal A(X;\mathfrak q)|^2}
	\sum_{\chi\in\widehat G}
	\biggl|
	\sum_{\mathfrak a\in\mathcal A(X;\mathfrak q)}
	f(\mathfrak a)\chi([\mathfrak a])
	\biggr|^2 \nonumber \\
	&\ll_K
	(X+Q)(\log(3Q))^{n_K}
	\frac1{|\mathcal A(X;\mathfrak q)|^2}
	\sum_{\mathfrak a\in\mathcal A(X;\mathfrak q)}
	f(\mathfrak a)^2 \nonumber\\
	&\ll_K 
    \Bigl(1+\frac QX\Bigr)
    (\log(3Q))^{n_K}\log X.
\end{align*}
Since \(X\le Q^C\) and \(X\ge Q^{1+\kappa}\), this implies 
\begin{equation}\label{f_l2_mean_bound}
	\sum_{\chi\in\widehat G}|\widehat f(\chi)|^2
	\ll_{K }
	(\log Q)^{n_K+1}.
\end{equation}

For \(\mathcal X_1\), using
\(|\widehat f(\chi)-\widehat g(\chi)|
\le
|\widehat f(\chi)|\leq \tau \)  and \eqref{f_l2_mean_bound}, we obtain
\begin{equation}\label{5.17}
		\sum_{\chi\in\mathcal X_1}
	|\widehat f(\chi)|^2
	|\widehat f(\chi)-\widehat g(\chi)|
\le
	\tau\sum_{\chi\in\widehat G}|\widehat f(\chi)|^2 \ll_{K }
	\frac{\delta}{(\log Q)^{n_K+2}}
	(\log Q)^{n_K+1}
	\ll
	\delta.
\end{equation}

  For \(\mathcal X_2\), decompose dyadically according to
$
T<|\widehat f(\chi)|\le 2T$,
 $
\tau\le T\le\delta.
$
Let
\[
R(T)
:=
\#\{\chi:\ T<|\widehat f(\chi)|\le2T\}.
\]
Since \(\tau\) is  a negative
power of \(\log Q\), applying  Lemma~\ref{lem:ray-HM}(i)  as in the verification of (A2), we have 
$
R(T)\ll_{K,\varepsilon } T^{-2}
$
uniformly for \(\tau\le T\le\delta\). Hence
\begin{equation}\label{5.18}
		\sum_{\chi\in\mathcal X_2}
	|\widehat f(\chi)|^2
	|\widehat f(\chi)-\widehat g(\chi)|
	 \leq
	\sum_{\tau\le T\le\delta}
	R(T)T^3  \\
	 \ll_{K,\varepsilon }
	\sum_{\tau\le T\le\delta}T
	\ll
	\delta .
\end{equation}

For \(\mathcal X_3\), by Lemma~\ref{lem:ray-HM}(i), again applied as in the proof of \eqref{5.7} in the verification 
of (A2), we have $
|\mathcal X_3| \ll_{K,\varepsilon }\delta^{-2}  
$
 and \(\sum_{\chi\in  \mathcal X_3}|\widehat f(\chi)|^2
 \ll_{K, \varepsilon  } 1  \).  Moreover, Proposition~\ref{prop:dense-model-ray}(ii) gives
 \(|\widehat f(\chi)-\widehat g(\chi)|\le\delta.\)
Therefore
\begin{equation}\label{5.19}
	\sum_{\chi\in\mathcal X_3}
	|\widehat f(\chi)|^2
	|\widehat f(\chi)-\widehat g(\chi)|
	\ll_{K,\varepsilon }
	\delta .
\end{equation}

Combining \eqref{5.17}, \eqref{5.18}, and \eqref{5.19}, we obtain
\[
\sum_{\chi\in\widehat G}
|\widehat f(\chi)|^2
|\widehat f(\chi)-\widehat g(\chi)|
\ll_{K,\varepsilon }
\delta.
\]
Thus \eqref{5.16} gives
\begin{equation}\label{T_3_error}
	T_3(c)
	=
	\frac1{|G|^3}(g*g*g)(c)
	+
	O_{K,\varepsilon }\biggl(\frac{\delta}{|G|}\biggr).
\end{equation}
If
$
(1_A*1_A*1_A)(c)
\gg
\frac{|G|^2}{(\log Q)^{1/2-\varepsilon}},
$
then, as \(g\ge \varepsilon/10\) on \(A\), we have
\[
\frac1{|G|^3}(g*g*g)(c)
\ge \frac1{|G|^3}
\Bigl(\frac{\varepsilon}{10}\Bigr)^3
(1_A*1_A*1_A)(c)
\gg 
\frac{1}{|G|(\log Q)^{1/2-\varepsilon}} ,
\]
which dominates the error term in \eqref{T_3_error}  for \(Q\) sufficiently large,  since \(\frac{\delta}{|G|} =  \frac{1}{|G|(\log Q)^{1/2-\varepsilon/2}}. \)

Therefore \(T_3(c)>0\), and hence \(c\in E_3(X;\mathfrak q)\).
	
	\medskip
	\noindent
	\textit{Proof of (ii).}
	Define
	\[
	T_2(c)
	:=
	\frac1{|\mathcal A(X;\mathfrak q)|^2}
	\sum_{{
			\mathfrak a_1,\mathfrak a_2
			\in\mathcal A(X;\mathfrak q)\atop
			[\mathfrak a_1\mathfrak a_2]=c}}
	f(\mathfrak a_1)f(\mathfrak a_2).
	\]
	If \(T_2(c)>0\), then \(c\in E_2(X;\mathfrak q)\).
	
	Again by orthogonality of characters,
\[
T_2(c)
-
\frac1{|G|^2}(g*g)(c)
=
\frac1{|G|}
\sum_{\chi\in\widehat G}
\bigl(
\widehat f(\chi)^2-\widehat g(\chi)^2
\bigr)
\overline{\chi(c)}.
\]
Using $|\widehat{g}(\chi)| \leq |\widehat{f}(\chi)|$, we have  
\begin{equation}\label{5.21}
	\sum_{c\in G}
	\Bigl|
	T_2(c)
	-
	\frac1{|G|^2}(g*g)(c)
	\Bigr|^2
\le
	\frac4{|G|}
	\sum_\chi
	|\widehat f(\chi)|^2
	|\widehat f(\chi)-\widehat g(\chi)|^2.
\end{equation}
We estimate the sum in \eqref{5.21} using the same decomposition
\(\mathcal X_1,\mathcal X_2,\mathcal X_3\) as in the proof of (iii).

For \(\mathcal X_1\), using
\(|\widehat f(\chi)-\widehat g(\chi)|
\le
|\widehat f(\chi)|,\)
and \eqref{f_l2_mean_bound}, we obtain
\[	\sum_{\chi\in\mathcal X_1}
|\widehat f(\chi)|^2
|\widehat f(\chi)-\widehat g(\chi)|^2
 \le
\tau^2
\sum_{\chi\in\widehat G}|\widehat f(\chi)|^2  \\
 \ll_K
\frac{\delta^2}{(\log Q)^{2n_K+4}}
(\log Q)^{n_K+1}
\ll
\delta^2 .\]

For \(\mathcal X_2\), dyadically decompose according to
$
T<|\widehat f(\chi)|\le 2T$,
$
\tau\le T\le\delta.
$
As in the proof of (iii), Lemma~\ref{lem:ray-HM}(i) gives
\[
R(T):=
\#\{\chi:\ T<|\widehat f(\chi)|\le2T\}\ll_{K,\varepsilon} T^{-2}.
\]
Therefore
\[	\sum_{\chi\in\mathcal X_2}
|\widehat f(\chi)|^2
|\widehat f(\chi)-\widehat g(\chi)|^2
 \le
\sum_{\tau\le T\le\delta}
R(T)T^4  \\
 \ll_{K,\varepsilon}
\sum_{\tau\le T\le\delta}
T^2
\ll
\delta^2 .\]

For \(\mathcal X_3\),   similar as before,  Lemma~\ref{lem:ray-HM}(i) gives
$
|\mathcal X_3| \ll_{K,\varepsilon}\delta^{-2} 
$
and \(\sum_{\chi\in  \mathcal X_3}|\widehat f(\chi)|^2
\ll_{K, \varepsilon  } 1  \).
Hence
\[
\sum_{\chi\in\mathcal X_3}
|\widehat f(\chi)|^2
|\widehat f(\chi)-\widehat g(\chi)|^2
\ll_{K, \varepsilon  } 
\delta^2.
\]

Combining the three estimates, we get
\[
\sum_{\chi\in\widehat G}
|\widehat f(\chi)|^2
|\widehat f(\chi)-\widehat g(\chi)|^2
\ll_{K, \varepsilon  } 
\delta^2.
\]
Therefore \eqref{5.21} gives
\[\sum_{c\in G}
\Bigl|
T_2(c)
-
\frac1{|G|^2}(g*g)(c)
\Bigr|^2
\ll_{K, \varepsilon  } 
\frac{\delta^2}{|G|}.\]
Let
\[
\lambda:=
\frac{1}{|G|(\log Q)^{1/2-3\varepsilon/4}}.
\]
By Chebyshev's inequality,
\[
\#\Bigl\{c\in G:\ \Bigl|
T_2(c)
-
\frac1{|G|^2}(g*g)(c)
\Bigr|>\lambda\Bigr\}
\le
\lambda^{-2}
\sum_{c\in G}\Bigl|
T_2(c)
-
\frac1{|G|^2}(g*g)(c)
\Bigr|^2 \ll_{K, \varepsilon  }  |G|(\log Q)^{-\varepsilon/2}.
\]
Now suppose that \(c\) is outside this exceptional set of size \(O_{K, \varepsilon  } \bigl(|G|(\log Q)^{-\varepsilon /2}\bigr), \) and that
$
(1_A*1_A)(c)
\gg
\frac{|G|}{(\log Q)^{1/2-\varepsilon}}.
$
Then \[T_2(c)
=
\frac1{|G|^2}(g*g)(c)
+
O\Bigl(
\frac{1}{|G|(\log Q)^{1/2-3\varepsilon/4}}
\Bigr).\]
Since \(g\ge \varepsilon/10\) on \(A\), we have
\[
(g*g)(c)
\ge
\Bigl(\frac{\varepsilon}{10}\Bigr)^2
(1_A*1_A)(c)
\gg
\frac{|G|}{(\log Q)^{1/2-\varepsilon}},
\]
which dominates the error term. 
Therefore, for $Q$ sufficiently large,   \(T_2(c)>0\), and hence  \(c\in E_2(X;\mathfrak q)\).

	\medskip
	\noindent
\textit{Proof of (iv).}
Assume that
\(B:=\frac{\log X}{\log Q}\ge 2\alpha+\kappa.\)
Let \(\beta\in(2\varepsilon,B],\)
\(L=\bigl\lfloor\frac{B}{\beta}\bigr\rfloor.\)
Define
\[
f_0(\mathfrak a)
:=
\frac{
	1_{\mathfrak a=\mathfrak p}
	1_{Q^{\beta-\varepsilon}<N\mathfrak p\le Q^\beta}
}{N\mathfrak p},
\]
\[
(f*f*f_0)(c)
:=
\sum_{{
		\mathfrak a_1,\mathfrak a_2,\mathfrak a_3\in\mathcal A(X;\mathfrak q)\atop
		[\mathfrak a_1\mathfrak a_2\mathfrak a_3]=c}}
f(\mathfrak a_1)f(\mathfrak a_2)f_0(\mathfrak a_3),
\]
\[
(g*g*f_0)(c)
:=
\sum_{{
		a_1,a_2\in G,\ \mathfrak a_3\in\mathcal A(X;\mathfrak q)\atop
		a_1a_2[\mathfrak a_3]=c}}
g(a_1)g(a_2)f_0(\mathfrak a_3).
\]
If \((f*f*f_0)(c)>0,\)
then \(c\in E_3(X;\mathfrak q)\).

By orthogonality of characters  and that \(|\widehat g(\chi)|\leq |\widehat f(\chi)| \), we have
\begin{equation}\label{5.37}
	\biggl|\frac1{|\mathcal A(X;\mathfrak q)|^2}
	(f*f*f_0)(c)
	-
	\frac1{|G|^2}
	(g*g*f_0)(c) \biggr|
	\leq  
	\frac2{|G|}
	\sum_{\chi\in\widehat G}
	|\widehat f(\chi)|
	|\widehat f_0(\chi)|
	|\widehat f(\chi)-\widehat g(\chi)|
	.
\end{equation}
We split the characters into
\[
\mathcal X_1
:=
\Bigl\{
\chi\in\widehat G:
|\widehat f_0(\chi)|
<(\log Q)^{-(n_K+2)}
\Bigr\}, \qquad  \mathcal X_2:=\widehat G\setminus\mathcal X_1.
\]

For \(\chi\in\mathcal X_1\), using \(|\widehat f(\chi) -\widehat g(\chi)|\leq |\widehat f(\chi)| \) and \eqref{f_l2_mean_bound}, we have 
\begin{equation}\label{5.36}
	\frac1{|G|}
	\sum_{\chi\in\mathcal X_1}
	|\widehat f(\chi)|
	|\widehat f_0(\chi)|
	|\widehat f(\chi)-\widehat g(\chi)|\leq  \frac{1}{ |G|( \log Q)^{n_K+2 } } 	\sum_{\chi\in\widehat G}|\widehat f(\chi)|^2 \ll_K \frac{1}{ |G| \log Q }
\end{equation}
 
We now treat \(\mathcal X_2\).
By Proposition~\ref{prop:dense-model-ray}(ii),(iii), \(|\widehat f(\chi) -\widehat g(\chi)|\leq |\widehat f(\chi)| \) and \(|\widehat f(\chi) -\widehat g(\chi)|\leq  \delta \), together with H\"older's inequality, we have 
\begin{equation}\label{5.38}
	\frac1{|G|}
	\sum_{\chi\in\mathcal X_2}
	|\widehat f(\chi)|
	|\widehat f_0(\chi)|
	|\widehat f(\chi)-\widehat g(\chi)|\leq  	\frac{\delta^{\frac1L}}{|G|}
	\biggl(
	\sum_{\chi\in\mathcal X_2}
	|\widehat f_0(\chi)|^{2L}
	\biggr)^{\frac1{2L}}
\biggl(
	\sum_{\chi\in\mathcal X_2}
	|\widehat f(\chi)|^2
	\biggr)^{1-\frac1{2L}}.
\end{equation}

We first estimate the \(2L\)-th moment.
Expanding the \(L\)-th power,
\[
\bigl(
\widehat f_0(\chi)
\bigr)^L = 
\Bigl(
\sum_{\mathfrak a}
f_0(\mathfrak a)\chi(\mathfrak a)
\Bigr)^L
=
\sum_{\mathfrak n}
b_{\mathfrak n}\chi(\mathfrak n),
\]
where \(b_{\mathfrak n}\) is supported on ideals
\(\mathfrak n=\mathfrak p_1\cdots\mathfrak p_L\) with
\(Q^{\beta-\varepsilon}<N\mathfrak p_j\le Q^\beta,\)
and satisfies
\(|b_{\mathfrak n}|
\le
\frac{\tau_L(\mathfrak n)}{N\mathfrak n}.\)
Thus
\[Q^{L(\beta-\varepsilon)}
<
N\mathfrak n
\le
Q^{L\beta}
\le
Q^B
=
X.\]

We next verify that the lower endpoint lies in the range required for
Lemma~\ref{lem:ray-HM}(ii).
We claim that
\begin{equation}\label{claim}
	L(\beta-\varepsilon)>\frac B2-\varepsilon.
\end{equation}
If \(L=1\), then \(\beta>B/2\), and the claim is immediate. Suppose now
that \(L\ge2\). Since \(L=\lfloor B/\beta\rfloor\), we have
\[
\frac{B}{L+1}<\beta\le \frac BL.
\]
If \(B/(L+1)\ge2\varepsilon\), then
\[
L(\beta-\varepsilon)>
\frac{LB}{L+1}-L\varepsilon
\ge
\frac B2-\varepsilon.
\]
If \(B/(L+1)<2\varepsilon\), then, since \(\beta>2\varepsilon\),
\[
L(\beta-\varepsilon)>L\varepsilon>
\frac{LB}{2(L+1)}
=
\frac B2-\frac{B}{2(L+1)}
>
\frac B2-\varepsilon.
\]
This proves \eqref{claim}.

Because \(B\ge2\alpha+\kappa\), we may choose \(\varepsilon>0\) sufficiently
small so that  
$
L(\beta-\varepsilon)\ge \alpha+3\varepsilon.
$
Moreover, since \(\beta>2\varepsilon\), every prime factor of \(\mathfrak n\) has norm \(>Q^\varepsilon\). Hence
\((\mathfrak n,P(Q^\varepsilon))=1.\)
Therefore Lemma~\ref{lem:ray-HM}(ii) applies to the coefficients
\(a_{\mathfrak n}:=N\mathfrak n\,b_{\mathfrak n},\)
and gives that for some \(c_0=c_0(\alpha,\varepsilon,\kappa)>0\), 
\[
\sum_{\chi\in\mathcal X_2}
\bigl|
\widehat f_0(\chi)
\bigr|^{2L}
 \ll_{K,\varepsilon}
\bigl(
1+
|\mathcal X_2|
Q^{-c_0}
\bigr)
\sum_{\mathfrak n}
\frac{|a_{\mathfrak n}|^2}{N\mathfrak n}.
\]
Since
\(|a_{\mathfrak n}|
\le
\tau_L(\mathfrak n),\) by Mertens' theorem for
prime ideals \eqref{merten}, 
we have
\[
\sum_{\mathfrak n}
\frac{|a_{\mathfrak n}|^2}{N\mathfrak n}
\ll
\prod_{Q^{\beta-\varepsilon}<N\mathfrak p\le Q^\beta}
\Bigl(
1+\frac{O_L(1)}{N\mathfrak p}
\Bigr)
\ll_{K,\varepsilon} 1.
\]
Hence
\begin{equation}\label{5.41}
		\sum_{\chi\in\mathcal X_2}
	\bigl|
	\widehat f_0(\chi)
	\bigr|^{2L}
	\ll_{K,\varepsilon}
	1+|\mathcal X_2|Q^{-c_0}.
\end{equation}
On the other hand, by the definition of \(\mathcal X_2\),
\[
(\log Q)^{-2L(n_K+2)}|\mathcal X_2|
\le
\sum_{\chi\in\mathcal X_2}
\bigl|
\widehat f_0(\chi)
\bigr|^{2L}.
\]
Combining this with \eqref{5.41} and taking \(Q\) sufficiently large, we obtain
\begin{equation}\label{bound_X_2}
	|\mathcal X_2|
	\ll_{K,\varepsilon}
	(\log Q)^{2L(n_K+2)}.
\end{equation}
Applying this bound \eqref{bound_X_2} for $|\mathcal{X}_2|$ back to  \eqref{5.41}, we have 
\begin{equation}\label{5.42}
	\sum_{\chi\in\mathcal X_2}
	\bigl|
	\widehat f_0(\chi)
	\bigr|^{2L}
	\ll_{K,\varepsilon} 1.
\end{equation}
On the other hand, Lemma~\ref{lem:ray-HM}(i) gives for some $\eta=\eta(\varepsilon)>0$, 
\[
\sum_{\chi\in\mathcal X_2}
\bigl|\widehat f(\chi)\bigr|^2
\ll
\Bigl(
\frac{X}{\log Q}
+
|\mathcal X_2|X^{1-\eta}Q^{\varepsilon\eta}
\Bigr)
\frac1{|\mathcal A(X;\mathfrak q)|^2}
\sum_{\mathfrak a\in\mathcal A(X;\mathfrak q)}
f(\mathfrak a)^2.
\]
Using \eqref{5.14} and \eqref{bound_X_2} and taking $\varepsilon$ sufficiently small, we have 
\begin{equation}\label{5.43}
	\sum_{\chi\in\mathcal X_2}
\bigl|\widehat f(\chi)\bigr|^2
	\ll_{K,\varepsilon}
	\frac{\log X}{\log Q}
	+
	|\mathcal X_2|X^{-\eta}Q^{\varepsilon\eta}\log X
	\ll 
	1+
	|\mathcal X_2|X^{-\eta}Q^{\varepsilon\eta}\log Q\ll_{\varepsilon}  1 .
\end{equation} 

Substituting \eqref{5.42} and \eqref{5.43} into \eqref{5.38}, we obtain
\[
	\frac1{|G|}
\sum_{\chi\in\mathcal X_2}
\bigl|\widehat f(\chi)\bigr|
\bigl|\widehat f_0(\chi)\bigr|
\bigl|\widehat f(\chi)-\widehat g(\chi)\bigr|
\ll_{K,\varepsilon}
\frac{\delta^{1/L}}{|G|}.
\]

Together with \eqref{5.37} and \eqref{5.36},
\begin{equation}\label{5.44}
	\frac1{|\mathcal A(X;\mathfrak q)|^2}
	(f*f*f_0)(c)
	=
	\frac1{|G|^2}
	(g*g*f_0)(c)
	+
	O_K\biggl(
	\frac{\delta^{1/L}}{|G|}
	\biggr).
\end{equation}
Recall that
\(	\delta
=
(\log Q)^{-1/2+\varepsilon/2},\)
so
\[ \frac{\delta^{1/L}}{|G|}
=\frac{1}
{|G|(\log Q)^{(1- \varepsilon)/(2L)}}.\]
Since \(g\ge\varepsilon/10\) on \(A\),
\[
\frac1{|G|^2}(g*g*f_0)(c)
\gg_\varepsilon \frac1{|G|^2}
\sum_{{
		[\mathfrak p]a_1a_2=c\atop
		Q^{\beta-\varepsilon}<N\mathfrak p\le Q^\beta\\
		a_1,a_2\in A}}
\frac1{N\mathfrak p} \gg
\frac{1}
{|G|(\log Q)^{(1-2\varepsilon)/(2L)}}.
\]
 which  dominates the error term in  \eqref{5.44}.  Hence
\((f*f*f_0)(c)>0,\)
and therefore
\(c\in E_3(X;\mathfrak q).\)

\medskip
\noindent
\textit{Proof of (v).}
Let \(H\le G\) have index \(Y<\varepsilon^{-1/2}\). Let
\(b_1H,\dots,b_KH\)
be the cosets of \(H\) such that \(A\) contains more than
\(\varepsilon |G|\) elements from \(b_jH\). Let the remaining cosets be
\(c_1H,\dots,c_{Y-K}H.\)

For \(j\le K\), we first estimate the \(f\)-mass in the coset \(b_jH\).
By the definition of \(f\),
\begin{equation}\label{E_a_f_coset}
	\mathbb E_{\mathfrak a}
	f(\mathfrak a)1_{[\mathfrak a]\in b_jH}
	=
	\frac{\vartheta}{2}V_{\mathfrak q}\log X
	\frac{
		\#\{\mathfrak p:\ X^{\vartheta/3}\le N\mathfrak p\le X,\ [\mathfrak p]\in b_jH\}
	}{
		|\mathcal A(X;\mathfrak q)|
	}.
\end{equation}
Since \(\vartheta\le \vartheta_0\), the prime ideal theorem gives
$
\#\{\mathfrak p:  X^{\vartheta/3}\le N\mathfrak p<X^{\vartheta_0/3}\}
=o_K(X/\log X).
$
Applying the upper bound in Lemma \ref{rough_ideal_lemma} with $\gamma=\vartheta_0/3$  gives, uniformly in the coset \(b_jH\),
\[
	\#\{\mathfrak p:\ X^{\vartheta_0/3}\le N\mathfrak p\le X,\ [\mathfrak p]\in b_jH\}
\le
\mathcal{N}_{\vartheta_0/3}(X;b_j,H)  \le
(1+o_{K,\varepsilon}(1))
\frac{2}{Y\vartheta_0}
\frac{X}{\log X}.
\]
Therefore, combining these together with \eqref{5.2} in \eqref{E_a_f_coset},
we conclude that
\[
\mathbb E_{\mathfrak a}
f(\mathfrak a)1_{[\mathfrak a]\in b_jH}
\le
(1+o_{K,\varepsilon}(1))\frac{\vartheta}{Y\vartheta_0}.
\]
By Proposition~\ref{prop:dense-model-ray}(v),
\begin{equation}\label{approximation}
	\mathbb E_{c\in G}g(c)1_{c\in b_jH}
	=
	\mathbb E_{\mathfrak a}
	f(\mathfrak a)1_{[\mathfrak a]\in b_jH}
	+
	O(\delta),
\end{equation}
therefore 
\begin{equation}\label{5.8}
	\frac1{|G|}\sum_{a\in b_jH}g(a) =\mathbb E_{c\in G}g(c)1_{c\in b_jH}\le
	(1+o_{K,\varepsilon}(1))\frac{\vartheta}{Y\vartheta_0}.
\end{equation}

For \(j>K\), we use \(g\le 1+o_K(1)\) on \(A\) and \(g<\varepsilon/10\) on \(G\setminus A\), obtaining
\[	
\frac1{|G|}\sum_{a\in c_jH}g(a)
\le
(1+o_K(1))\frac{|A\cap c_jH|}{|G|}
+
\frac{\varepsilon}{10}\frac{|c_jH|}{|G|}  \le
(1+o_K(1))\varepsilon+\frac{\varepsilon}{10Y}.
\]
Since \(Y<\varepsilon^{-1/2}\), we have
\(\varepsilon\le \varepsilon^{1/2}/Y\). Hence, for \(Q\) sufficiently large,
\begin{equation}\label{5.9}
	\frac1{|G|}\sum_{a\in c_jH}g(a)
	\le
	\frac{2\varepsilon^{1/2}}{Y}.
\end{equation}
Summing \eqref{5.8} and \eqref{5.9} over all cosets and using
\eqref{5.13}, we get
\[
\frac{\vartheta}{2} +o_K(1) =\mathbb E_{c\in G}g(c)
\le
(1+o_K(1))
K\frac{\vartheta}{Y\vartheta_0}
+
2\varepsilon^{1/2}.
\]
Rearranging, for $Q$ sufficiently large, 
\[
K
\ge
\Bigl(
\frac{\vartheta_0}{2}
-
3\varepsilon^{1/2}
\frac{\vartheta_0}{\vartheta}
\Bigr)Y.
\]	
	\medskip
	\noindent
	\textit{Proof of (vi).}
	Since the primes with \(N\mathfrak p<X^{\vartheta_0/3}\) contribute
	\(o_K(X/\log X)\), by \eqref{E_a_f_coset} and \eqref{5.2}, we have
\begin{equation}\label{5.29}
		\mathbb E_{\mathfrak a}
	f(\mathfrak a)1_{[\mathfrak a]\in bH}
	\ge
	\frac{\vartheta}{2}
	\frac{\pi(X;bH)}{X/\log X}
	+o_K(1).
\end{equation}

 On the other hand,   using \(g\le 1+o_K(1)\) on \(A\) and \(g<\varepsilon/10\) on \(G\setminus A\), we have 
 \begin{equation}\label{bound_g_sum_coset}
 	\frac1{|G|}\sum_{a\in bH}g(a)
 	\le
 	(1+o_K(1))\frac{|A\cap bH|}{|G|}
 	+
 	\frac{\varepsilon}{10Y} 
 \end{equation}
	Combining \eqref{approximation}, \eqref{bound_g_sum_coset}, and \eqref{5.29}, we obtain that for $Q$ sufficiently large, 
	\[
	|A\cap bH|
	\ge
\Bigl(
\frac{\vartheta}{2}
\frac{\pi(X;bH)}{X/\log X}
-\frac{\varepsilon}{5Y}
\Bigr)|G|.
	\]	
	This proves (vi), and completes the proof.
\end{proof}
\section{Product sets}
This section converts the dense-model frameworks from
Proposition~\ref{prop:transference-conclusion} into product-set information
inside the ray class group. Apart from the transference inputs proved in the
previous section, the argument is purely finite-abelian-group theoretic and
closely follows the product-set argument of Matomäki--Teräväinen
\cite{matomaki}.

Throughout this section, let
$
G=\operatorname{Cl}_{\mathfrak q}^{(\infty)}$,
$
Q=N\mathfrak q.
$
For subsets \(A,B\subseteq G\), we write
\[
A\cdot B=\{ab:a\in A,\ b\in B\},
\]
and for functions \(f,g:G\to\mathbb C\), we write
\[
(f*g)(c)
:=
\sum_{{a,b\in G\atop ab=c}}
f(a)g(b).
\]

We shall use the following elementary lower bound for convolutions of
characteristic functions on finite abelian groups, which appears as
\cite[Lem.~3.4]{matomaki}. It may be viewed as a simple consequence of
inclusion--exclusion and is useful for obtaining lower bounds on product
sets inside cosets.

\begin{lemma}\cite[Lem.~3.4]{matomaki}\label{lem:convolution-product-lower-bound}
	Let $G$ be a finite abelian group.
	
	\begin{enumerate}
		\item[(i)]
		Let $A,B\subseteq G$ be nonempty subsets. Then, for every
		$c\in G$,
		\[
		1_A*1_B(c)
		\geq
		|A|+|B|-|G|.
		\]
		
		\item[(ii)]
		Let $H\leq G$, let $a,b\in G$, and let
		$A\subseteq aH$ and $B\subseteq bH$. Then, for every
		$c\in abH$,
		\[
		1_A*1_B(c)
		\geq
		|A|+|B|-|H|.
		\]
	\end{enumerate}
\end{lemma}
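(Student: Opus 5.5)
Both parts follow from counting, for a fixed target $c$, the representations $c=ab$ with $a\in A$ and $b\in B$. We have
\[
1_A*1_B(c)=\#\{a\in A:\ a^{-1}c\in B\}=|A\cap cB^{-1}|,
\]
where $cB^{-1}=\{cb^{-1}:b\in B\}$. The map $b\mapsto cb^{-1}$ is a bijection of $G$, so $|cB^{-1}|=|B|$.

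\emph{Part (i).} The sets $A$ and $cB^{-1}$ both lie in $G$. Inclusion--exclusion therefore gives
\[
|A\cap cB^{-1}|=|A|+|cB^{-1}|-|A\cup cB^{-1}|\ge |A|+|B|-|G|.
\]
This is exactly the claim. The nonemptiness hypothesis is not needed; if the right-hand side is negative, the bound is trivial.

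\emph{Part (ii).} We run the same argument inside a single coset of $H$. Let $c\in abH$. Since $B\subseteq bH$ and $G$ is abelian, $B^{-1}\subseteq b^{-1}H$. Hence
\[
cB^{-1}\subseteq ab\,b^{-1}H=aH.
\]
So $A$ and $cB^{-1}$ are both contained in the coset $aH$, which has $|H|$ elements. Inclusion--exclusion inside $aH$ gives
\[
1_A*1_B(c)=|A\cap cB^{-1}|\ge |A|+|B|-|A\cup cB^{-1}|\ge |A|+|B|-|H|.
\]

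There is no real obstacle here. The only point needing care is in part (ii): one must check that the translate $cB^{-1}$ lands in the same coset $aH$ as $A$. This uses $c\in abH$ and the commutativity of $G$.
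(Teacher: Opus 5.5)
Your proof is correct and is the inclusion--exclusion argument the paper has in mind; the paper cites \cite[Lem.~3.4]{matomaki} and gives no proof of its own. Writing $1_A*1_B(c)=|A\cap cB^{-1}|$ and bounding the union by $|G|$ (respectively by $|aH|=|H|$, after checking $cB^{-1}\subseteq aH$) is all that is needed; the only blemish is that in part (ii) you use the letter $a$ both for elements of $A$ and for the coset representative, which is harmless.
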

We shall also use Kneser’s theorem; see, for example,
\cite[Thm.~5.5]{taovu}.

\begin{lemma}[Kneser’s theorem]\label{lem:kneser}
	Let $G$ be a finite abelian group, and let $A,B\subseteq G$.
	Let $H$ be the stabilizer of $A\cdot B$. Then
	\[
	|A\cdot B|
	\geq
	|A\cdot H|+|B\cdot H|-|H|
	\geq
	|A|+|B|-|H|.
	\]
\end{lemma}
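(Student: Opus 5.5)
The statement to prove is Kneser's theorem: if $H$ is the stabilizer of $A\cdot B$, then $|A\cdot B|\ge |A\cdot H|+|B\cdot H|-|H|$.

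The plan is the standard proof by induction on $|B|$ via the Dyson $e$-transform. Since the paper only cites the result, I would follow \cite[Thm.~5.5]{taovu}. We may assume $A,B$ are nonempty; otherwise $A\cdot B=\emptyset$ and everything is trivial with the conventions used.

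\emph{Reduction.} Since $H$ stabilizes $A\cdot B$, we have $(A\cdot H)\cdot(B\cdot H)=A\cdot B$. So we may replace $A,B$ by $AH,BH$ and pass to $G/H$. There the stabilizer of $\bar A\cdot\bar B$ is trivial, and it suffices to prove
\[
|\bar A\bar B|\ge|\bar A|+|\bar B|-1
\]
whenever $\bar A\bar B$ has trivial stabilizer. Multiplying by $|H|$ recovers the first inequality. The second inequality is immediate from $|AH|\ge|A|$ and $|BH|\ge|B|$.

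\emph{Induction.} Prove by induction on $|B|$ the general statement $|AB|\ge|A\,\mathrm{Stab}(AB)|+|B\,\mathrm{Stab}(AB)|-|\mathrm{Stab}(AB)|$, for all pairs. If $|B|=1$ it is trivial. Otherwise, translate so that $1\in B$.

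\emph{Case 1: $AB'=A$ for all $b'\in B$.} Then $A$ is a union of cosets of $\langle B\rangle$, so $AB=A$ and $\langle B\rangle\le\mathrm{Stab}(AB)$. The inequality then follows directly.

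\emph{Case 2: some $e\in A$ has $eB\not\subseteq A$.} Apply the $e$-transform
\[
A'=A\cup eB,\qquad B'=B\cap e^{-1}A .
\]
Then $A'B'\subseteq AB$, $|A'|+|B'|=|A|+|B|$, and $1\le|B'|<|B|$. By induction applied to $(A',B')$ with $H'=\mathrm{Stab}(A'B')$, we get
\[
|AB|\ge|A'B'|\ge|A'|+|B'|-|H'|=|A|+|B|-|H'| .
\]

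\emph{Main obstacle.} The key difficulty is relating $H'$ to $H=\mathrm{Stab}(AB)$, since $H'$ may be larger than $H$. Following the textbook argument, I would handle this as follows:
\begin{itemize}
\item If $|AB|\ge |A'H'|+|B'H'|-|H'|$ already exceeds the target, we are done. Note the target is $|AH|+|BH|-|H|$, which is at most $|A|+|B|-|H|$ plus the coset-completion defects.
\item Otherwise, consider $AB\cup A'B'H'$ and its stabilizer. Use the \emph{union} trick: apply induction to the pair $(A, B\cap A'B'H'\ldots)$, or more concretely compare $AB$ with $A'B'H'$ and show that $AB\setminus A'B'H'$ is large. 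This uses that $(A\setminus A'H')\cdot B$ contributes elements outside $A'B'H'$.
\end{itemize}
The precise bookkeeping, namely choosing $e$ well and splitting into whether $H'\subseteq H$ or $AB=A'B'H'$, is the technical heart. It is handled exactly as in \cite[Thm.~5.5]{taovu}, which we would reproduce verbatim since nothing specific to ray class groups enters.
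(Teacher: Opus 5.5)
In substance your proposal does what the paper does. The paper gives no proof of this lemma and simply quotes \cite[Thm.~5.5]{taovu}, and your write-up likewise imports the whole nontrivial step from that reference. The parts you actually carry out are correct:
the reduction to $G/H$, which uses $(AH)(BH)=AB$, the fact that $\mathrm{Stab}_{G/H}(\bar A\bar B)=\mathrm{Stab}(AB)/H$ is trivial, and $|\bar B|\le|B|$, so the induction on $|B|$ is not disturbed;
Case~1, which should read $Ab'=A$ for all $b'\in B$, i.e.\ $AB=A$ (``$AB'=A$'' clashes with your later $B'$);
and the identities $A'B'\subseteq AB$, $|A'|+|B'|=|A|+|B|$, $1\le|B'|<|B|$ for the $e$-transform.

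However, the ``main obstacle'' paragraph is not an argument, because the mechanisms it suggests are vacuous.
\begin{itemize}
\item Since $A\subseteq A'\subseteq A'H'$, the set $A\setminus A'H'$ is empty, so ``$(A\setminus A'H')\cdot B$ contributes elements outside $A'B'H'$'' yields nothing.
\item Since $H'$ stabilizes $A'B'\subseteq AB$, you have $A'B'H'=A'B'$ and hence $AB\cup A'B'H'=AB$. Its stabilizer is just $H$ again, so nothing is gained.
\item The first bullet is merely a case split and does not handle the other case.
\end{itemize}
What is genuinely uncounted is $AB\setminus A'B'$. This set lies in $A\cdot(B\setminus B'H')$, because $A\cdot(B\cap B'H')\subseteq A'B'H'=A'B'$. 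The real content of Kneser's theorem is showing that this set is large enough to upgrade $|AB|\ge|A|+|B|-|H'|$ to the required bound $|AB|\ge|AH|+|BH|-|H|$. Here $H'=\mathrm{Stab}(A'B')$ may be larger than, and a priori unrelated to, $H$. Your sketch contains no argument for this step. As a self-contained proof it therefore has a gap exactly at the crux; as a citation of the textbook result, which is all the paper itself offers, it is fine.
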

\subsection{Reduction from popular products to product sets}
 We first use a popular-products lemma to reduce the problem to two cases where either many elements of the group $G$ are represented in $A\cdot B$,
 or after deleting few elements one obtains a product set $A'\cdot B' $ whose
 elements all have many representations. We then use Kneser's theorem to analyze the latter case. 
\begin{lemma}\cite[Lem.~6.1]{matomaki}
	\label{lem:popular-products}
	Let \(t\ge u\ge1\) be integers. Let \(A,B\) be finite subsets of a finite
	abelian group \(G\), with
	\(	|A|,|B|\ge t.\)
	Then at least one of the following holds.
	
	\begin{enumerate}
		\item[(a)]
		
		For at least
		\[
		|A|+|B|-2t-\frac{u|G|}{t}
		\]
		elements \(c\in G\), we have
		\[
		(1_A*1_B)(c)\ge u.
		\]
		
		\item[(b)]
		There exist subsets \(A'\subseteq A\) and \(B'\subseteq B\) such that
		\[
		|A\setminus A'|+|B\setminus B'|\le t-1,
		\]
		and for every \(c\in A'\cdot B'\),
		\[
		(1_A*1_B)(c)\ge t.
		\]
		
	\end{enumerate}
\end{lemma}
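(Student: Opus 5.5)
The plan is to run an iterative deletion argument and then count popular products by double counting. Put \(A_0=A\), \(B_0=B\). At each stage, call \(c\in G\) \emph{popular} if \((1_A*1_B)(c)\ge t\), always counting representations with respect to the original sets \(A,B\). If every element of \(A_i\cdot B_i\) is popular, we stop. Since at most \(t-1\) elements have been deleted, we are in alternative (b) with \(A'=A_i\), \(B'=B_i\).

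Otherwise there is an unpopular \(c\in A_i\cdot B_i\), meaning \(c\) has fewer than \(t\) representations \(c=ab\) with \(a\in A\), \(b\in B\). We delete from \(A_i\) every \(a\) that appears in a representation of \(c\) with \(a\in A_i\), \(b\in B_i\). This removes at least one element and at most \(t-1\) elements. The process continues until the total number of deleted elements reaches \(t\) or more.

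So suppose case (b) fails. We then perform \(m\) steps, killing distinct unpopular elements \(c_1,\dots,c_m\). Each step removes \(r_j\ge1\) elements, and \(\sum_j r_j\ge t\). We now want to produce many \(c\) with \((1_A*1_B)(c)\ge u\).

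The double count goes as follows. Write \(N_{\ge u}\) for the number of \(c\) with \((1_A*1_B)(c)\ge u\). Then
\[
|A||B|=\sum_c (1_A*1_B)(c)\le (u-1)|G|+\sum_{c:\,(1_A*1_B)(c)\ge u}(1_A*1_B)(c),
\]
and each term is at most \(\min(|A|,|B|)\). That alone gives \(N_{\ge u}\ge (|A||B|-u|G|)/\min(|A|,|B|)\approx \max(|A|,|B|)-u|G|/\min\), which is weaker than required.

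The better route is to use the removed structure directly. Every element \(a\in A\) with \(a\) not among the deleted elements produces, together with the fibres, many distinct products. Concretely, fix \(t\) distinct elements. Either (a) we use the elements of \(A\) remaining, or we observe that for each fixed \(b\) in a set of \(t\) elements of \(B\), the translates \(A b\) are distinct elements. Counting pairs \((c,b)\) with \(c\in Ab\) over \(t\) choices of \(b\) gives \(t|A|\) incidences. Elements covered fewer than \(u\) times contribute at most \((u-1)|G|\), so at least \((t|A|-u|G|)/t=|A|-u|G|/t\) elements are covered by at least \(u\) translates. Each such element satisfies \((1_A*1_B)(c)\ge u\). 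The loss of \(|B|-2t\) must then come from choosing the \(t\) elements of \(B\) cleverly and symmetrizing between \(A\) and \(B\).

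The main obstacle is obtaining exactly the bound \(|A|+|B|-2t-u|G|/t\), which is additive in both \(|A|\) and \(|B|\). My plan is to follow Matomäki--Teräväinen's proof of this lemma, by induction on \(|A|+|B|\):

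\emph{Step 1.} If some \(c\in A\cdot B\) has fewer than \(t\) representations, remove the fewer than \(t\) corresponding elements from the smaller side.

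\emph{Step 2.} Apply the induction hypothesis with a reduced parameter \(t\), tracking that each removal step loses at most as much as it costs in the bound.

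\emph{Step 3.} Handle the base case using Kneser's theorem (Lemma~\ref{lem:kneser}) applied to \(A'\cdot B'\).

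Verifying that the bookkeeping closes with the constant 2 is the delicate step. Since the result is cited from \cite[Lem.~6.1]{matomaki}, I would ultimately rely on that reference.
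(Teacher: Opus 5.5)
Falling back on \cite[Lem.~6.1]{matomaki} is exactly what the paper does, and that is acceptable. The argument you sketch before that fallback, however, is not a proof, and the gap is in the only nontrivial branch. Your deletion procedure does give (b) when it stops after at most $t-1$ deletions. What is missing is any derivation of (a) when (b) fails. None of your counts uses what that failure provides: the unpopular products $c_j$ found by your deletion, or, via K\"onig's theorem applied to the bipartite graph of pairs $(a,b)$ with $(1_A*1_B)(ab)<t$, a set of $t$ such pairs with distinct $a$'s and distinct $b$'s. Without that input (a) is false. For $A=B=G$, $t=2$, $u=1$ and $|G|\ge 9$, (a) would demand $\tfrac32|G|-4>|G|$ elements. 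So no counting that ignores the failure of (b) can succeed. Concretely, the translates $Ab_1,\dots,Ab_t$ carry exactly $t|A|$ incidences, so this count never yields more than $|A|-(u-1)|G|/t$, whichever $b_j$ you choose. Adding the rows $a_iB$ does not help: a single element can lie on all $t$ rows and all $t$ columns, so the combined count only gives a bound of about $(|A|+|B|)/2$.

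The induction sketch does not close either. After deleting $s$ elements, representation counts with respect to the smaller sets drop. The inductive alternative (b) at parameter $t-s$ then certifies only $t-s$ representations. The inductive alternative (a) carries the larger error $u|G|/(t-s)$, which the gain of $s$ in the main term does not compensate in general (e.g.\ $u=t$). Kneser's theorem controls $|A'\cdot B'|$, not how many elements have at least $u$ representations, and the latter is what (a) is about. The right endgame is your first double count with $\min\bigl((1_A*1_B)(c),t\bigr)$ in place of $(1_A*1_B)(c)$. A lower bound $\sum_c\min\bigl((1_A*1_B)(c),t\bigr)\ge t(|A|+|B|-2t)$ gives (a) at once, since elements with fewer than $u$ representations contribute at most $u-1$ each and the others at most $t$. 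Proving a Pollard-type bound of this kind in a general finite abelian group when (b) fails is a genuinely Kneser-type problem. It must exploit the unpopular pairs, and your sketch contains no step that does. So either supply such an argument, or simply cite the lemma as the paper does and drop the heuristics.
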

\begin{proof}
See \cite[Lem.~6.1]{matomaki}.
\end{proof}
We now combine the dense model set \(A\) with the popular-products lemma.
The following proposition packages the criteria that will be used in the
proofs of the two-prime and three-prime theorems.
\begin{proposition}[Criteria on \(E_2(X;\mathfrak q)\) and \(E_3(X;\mathfrak q)\)]
	\label{prop:criteria-E2-E3}
 Let \(\kappa,\varepsilon>0\) and \(C\ge1\) be fixed, with
	\(\varepsilon>0\) sufficiently small.  
	Assume
	\[
	X\in
	 \bigl[
	Q^{\max( 1, 2\alpha, 3\alpha_0)+\kappa},
	Q^C
 \bigr].
	\]
	Let \(\vartheta\) and \(\vartheta_0\) be as in \eqref{5.1}.
	Then, when $Q$ is sufficiently large, at least one of the following holds.
	
	\begin{enumerate}
		\item[(a)]
		We have
		\[
		|E_2(X;\mathfrak q)|
		\ge
		(\vartheta-3\varepsilon)|G|.
		\]
		If moreover
		\(	\vartheta>\frac23+3\varepsilon,\)
		then
		\[
		E_3(X;\mathfrak q)=G.
		\]
		
		\item[(b)]
		There exist subsets \(A',B'\subseteq G\) such that the following hold.
		
		\begin{enumerate}
			\item[(b.i)]
			$
			|A'|,\ |B'|
			\ge
			\bigl(
			\frac{\vartheta}{2}-\frac{3\varepsilon}{2}
			\bigr)|G|.
			$
			
			\item[(b.ii)]
			$
			|(A'\cdot B')\cap E_2(X;\mathfrak q)|
			\ge
			|A'\cdot B'|-\varepsilon|G|.
			$
			
			\item[(b.iii)]  
			For every \(c\in G\),
			\[
			(1_{A'\cdot B'}*1_{A'})(c)\gg |G|
			\quad\Longrightarrow\quad
			c\in E_3(X;\mathfrak q).
			\]
			
			\item[(b.iv)]
				Let
			\(B:=\frac{\log X}{\log Q}  \) and
		 \(	\beta\in(2\varepsilon,B]\). 
			For every \(c\in G\),
			\[
		\sum_{[\mathfrak p]\,d=c,\ d\in A'\cdot B'
			\atop
			Q^{\beta-\varepsilon}<N\mathfrak p\le Q^\beta}
		\frac1{N\mathfrak p}
			\gg 1
			\quad\Longrightarrow\quad
			c\in E_3(X;\mathfrak q).
			\]
			
			\item[(b.v)]
			For any subgroup \(H\le G\) of index \(Y\le\varepsilon^{-1/2}\), there are
			at least
			$
			\Bigl\lceil
			\bigl(
			\frac{\vartheta_0}{2}
			-
			3\varepsilon^{1/2}
			\frac{\vartheta_0}{\vartheta}
			\bigr)Y
			\Bigr\rceil
			$
			distinct cosets \(bH\) of \(H\) such that
			\[
			|A'\cap B'\cap bH|\ge \frac{\varepsilon}{2}|G|.
			\]
			
			\item[(b.vi)]
			For any subgroup \(H\le G\) of index \(Y\le\varepsilon^{-1/2}\) and any
			coset \(bH\subseteq G\),
			\[
		|A'\cap B'\cap bH|
		\ge
		\Bigl(
		\frac{\vartheta}{2}
		\frac{\pi(X;bH)}{X/\log X}
		-\frac{3\varepsilon}{10}
		\Bigr)|G|.
			\]	where \(	\pi (X;bH)\) is defined in \eqref{def_pi}. 
		\end{enumerate}
	\end{enumerate}
\end{proposition}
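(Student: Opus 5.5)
We take the set \(A\subseteq G\) from Proposition~\ref{prop:transference-conclusion}; the hypothesis \(X\ge Q^{\max(1,2\alpha,3\alpha_0)+\kappa}\) makes all of (i)--(vi) there available. We then apply Lemma~\ref{lem:popular-products} to \(A=B\), with
\[
t:=\lceil \varepsilon|G|/4\rceil,
\qquad
u:=\Bigl\lceil \frac{|G|}{(\log Q)^{1/2-\varepsilon}}\Bigr\rceil .
\]
Since \(|A|\ge(\vartheta/2-\varepsilon)|G|\ge t\), the lemma applies, and the two alternatives (a), (b) of that lemma will produce the two alternatives of the proposition.

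\emph{Case (a) of Lemma~\ref{lem:popular-products}.} At least \(2|A|-2t-u|G|/t\ge(\vartheta-2\varepsilon-\varepsilon/2-o(1))|G|\) elements \(c\) satisfy \((1_A*1_A)(c)\ge u\). Proposition~\ref{prop:transference-conclusion}(ii) (using \(X\ge Q^{1+\kappa}\)) puts all but \(O(|G|(\log Q)^{-\varepsilon/2})\) of them in \(E_2\). This gives \(|E_2|\ge(\vartheta-3\varepsilon)|G|\).

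If in addition \(\vartheta>2/3+3\varepsilon\), let \(S\) be this popular set, so \(|S|>(2/3+o(1))|G|\) is larger than \(|G|-|A|\) by \(\gg\varepsilon|G|\). Fix \(c\in G\). By Lemma~\ref{lem:convolution-product-lower-bound}(i), \((1_S*1_A)(c)\ge|S|+|A|-|G|\gg\varepsilon|G|\). Hence
\[
(1_A*1_A*1_A)(c)\ge u\,(1_S*1_A)(c)\gg |G|^2(\log Q)^{-1/2+\varepsilon},
\]
and Proposition~\ref{prop:transference-conclusion}(iii) gives \(c\in E_3\). Thus \(E_3=G\).

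\emph{Case (b) of Lemma~\ref{lem:popular-products}.} We obtain \(A',B'\subseteq A\) with \(|A\setminus A'|+|B\setminus B'|<t\le\varepsilon|G|/4+1\). The properties are then checked as follows.

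(b.i) is immediate from \(|A|\ge(\vartheta/2-\varepsilon)|G|\).

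(b.ii): every \(c\in A'\cdot B'\) has \((1_A*1_A)(c)\ge t\gg\varepsilon|G|\), which far exceeds the threshold in Proposition~\ref{prop:transference-conclusion}(ii). So only the exceptional \(o(|G|)\) elements can fail to lie in \(E_2\).

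(b.iii): suppose \((1_{A'\cdot B'}*1_{A'})(c)\gg|G|\). Each \(d\in A'\cdot B'\) carries \(\ge t\) representations \(d=a_1a_2\), so \((1_A*1_A*1_A)(c)\gg t|G|\gg_\varepsilon|G|^2\), and (iii) of the transference proposition applies.

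(b.iv) works the same way. The hypothesis gives
\[
\sum_{\substack{[\mathfrak p]a_1a_2=c,\ a_1,a_2\in A\\ Q^{\beta-\varepsilon}<N\mathfrak p\le Q^\beta}}\frac1{N\mathfrak p}\gg t\gg|G|,
\]
which beats the threshold \(|G|(\log Q)^{-(1-2\varepsilon)/(2L)}\) in Proposition~\ref{prop:transference-conclusion}(iv); that part applies because \(B\ge2\alpha+\kappa\).

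(b.v) and (b.vi) come from (v) and (vi) of the transference proposition. We have
\[
|A'\cap B'\cap bH|\ge|A\cap bH|-|A\setminus A'|-|A\setminus B'|\ge|A\cap bH|-\varepsilon|G|/4-1 .
\]
For (b.v), applied to a coset with \(|A\cap bH|>\varepsilon|G|\), this leaves \(\ge\varepsilon|G|/2\). For (b.vi), it turns the loss \(\varepsilon/(5Y)\) into at most \(\varepsilon/5+\varepsilon/4\)-type losses.

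\emph{Main obstacle.} The delicate point is the bookkeeping of these \(\varepsilon\)-losses. In (b.vi) the stated loss is \(3\varepsilon/10\), while the crude deletion bound gives \(\varepsilon/5+\varepsilon/4=9\varepsilon/20\). To fix this, take \(t\) smaller, say \(t=\lceil\varepsilon|G|/20\rceil\). The bound in (a) then becomes \(2|A|-\varepsilon|G|/10-20u/\varepsilon\ge(\vartheta-2.1\varepsilon-o(1))|G|\), which is still fine. In (b.v) the deletion then costs only \(\varepsilon/20\) of \(|G|\), and in (b.vi) the total loss is \(\varepsilon/5+\varepsilon/20<3\varepsilon/10\). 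The multiplicity \(t\gg\varepsilon|G|\) remains enough for (b.ii)--(b.iv), since the constants there may depend on \(\varepsilon\).
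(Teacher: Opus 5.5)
Your route is the paper's: you apply Lemma~\ref{lem:popular-products} to $A=B$ with $t\asymp\varepsilon|G|$ and push each conclusion through Proposition~\ref{prop:transference-conclusion}. Taking $u$ at the logarithmic threshold, instead of the paper's $u=\lfloor\varepsilon^2|G|/1000\rfloor$, is harmless when $|G|$ is large. Your corrected $t=\lceil\varepsilon|G|/20\rceil$ does give the $3\varepsilon/10$ in (b.vi); the paper uses $\lceil\varepsilon|G|/10\rceil$, which gives exactly $\varepsilon/5+\varepsilon/10$.

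\textbf{The gap: bounded $|G|$.} In alternative (a) you use that $u|G|/t$ is negligible, and this silently assumes $|G|$ is large in terms of $\varepsilon$. Since $u\ge 1$, you always have $u|G|/t\ge |G|/t\approx 20/\varepsilon$. This is not small compared with $\varepsilon|G|$ unless $|G|\gg\varepsilon^{-2}$.

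Nothing in the hypotheses forces $|G|\to\infty$ as $Q\to\infty$. The image of $\mathcal O_K^\times$ in $(\mathcal O_K/\mathfrak q)^\times$ can be of size comparable to $\varphi_K(\mathfrak q)$. For example, take $K$ real quadratic and $\mathfrak q$ a degree-one prime modulo which the fundamental unit is a primitive root; then $|G|\le 4h_K$.

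In that regime, alternative (a) of the lemma only gives $|A\cdot A|\ge 2|A|-2t-|G|/t$. Neither $|E_2(X;\mathfrak q)|\ge(\vartheta-3\varepsilon)|G|$ nor $E_3(X;\mathfrak q)=G$ follows from this, and the lemma does not let you pick alternative (b) instead. So your case analysis, as written, does not prove the proposition for such $\mathfrak q$.

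\textbf{The fix, as in the paper.} Before invoking the lemma, dispose of $|G|<M_\varepsilon$ with $M_\varepsilon\asymp\varepsilon^{-2}$ by taking $A'=B'=A$. When $|G|$ is bounded and $Q$ is large:
\begin{itemize}
\item the exceptional set in Proposition~\ref{prop:transference-conclusion}(ii) has size $<1$, so it is empty;
\item the required lower bounds in (ii)--(iv) tend to $0$, so a single representation suffices, and a prime sum that is $\gg 1$ suffices in (iv).
\end{itemize}
Hence every element of $A\cdot A$ lies in $E_2(X;\mathfrak q)$ and every element of $A\cdot A\cdot A$ lies in $E_3(X;\mathfrak q)$. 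Then (b.i), (b.v), (b.vi) follow directly from (i), (v), (vi), and (b.ii)--(b.iv) are immediate. For $|G|\ge M_\varepsilon$ your argument goes through as you wrote it.
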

\begin{proof}
	We may assume that \(\varepsilon>0\) is sufficiently small. Note that the condition
	\(X\in
	 \bigl[
	Q^{\max(1,2\alpha,3\alpha_0)+\kappa},
	Q^C
	 \bigr]\)
	ensures that the conditions on $X$ in each part of
	Proposition~\ref{prop:transference-conclusion} are satisfied.
	
	Let \(A\subseteq G\) be the set supplied by
	Proposition~\ref{prop:transference-conclusion}. Thus
	\begin{equation}	\label{6.2}
		|A|
		\ge
		\Bigl(
		\frac{\vartheta}{2}-\varepsilon
		\Bigr)|G|.
	\end{equation}

 Let   $M_{\varepsilon}\geq C_0\varepsilon^{-2}$ with $C_0>0$ chosen sufficiently large to absorb all rounding errors when applying Lemma \ref{lem:popular-products} later . For example, we may take $C_0= 2000$.   We first handle  the case \(|G|<M_\varepsilon\). Since \(|G|\) is then bounded in terms
 of \(\varepsilon\), the logarithmic-saving thresholds and exceptional sets
 in Proposition~\ref{prop:transference-conclusion} are, for \(Q\) sufficiently
 large, smaller than \(1\). Thus every element of \(A\cdot A\) belongs to
 \(E_2(X;\mathfrak q)\), and every element of \(A\cdot A\cdot A\) belongs
 to \(E_3(X;\mathfrak q)\). Taking \(A'=B'=A\), conclusions
 \((b.i)\)--\((b.vi)\) follow directly from
 Proposition~\ref{prop:transference-conclusion}. 
 
 Hence we may assume
 \(|G|\ge M_\varepsilon\).
 Let 
 \[
 t=
 \biggl\lceil
 \frac{\varepsilon|G|}{10}
 \biggr\rceil,
 \qquad
 u=
 \biggl\lfloor
 \frac{\varepsilon^2|G|}{1000}
 \biggr\rfloor.
 \]
 Then $ u\ge \frac{\varepsilon^2 |G|}{2000}\ge 1  $  and $t\ge u$.
 As \(\varepsilon\) is sufficiently small, \eqref{6.2} gives \(|A|\ge t\).
 	We now apply Lemma~\ref{lem:popular-products} with $ t, u$ and both sets   equal to \(A\). 
	
	\medskip
	\noindent
	\textit{Case 1: Lemma~\ref{lem:popular-products}(a) holds.}
	
		Then the set
	\[
	\mathcal C
	:=
	\bigl\{
	c\in G:
	(1_A*1_A)(c)\ge u
	\bigr\}
	\]
	has size 
	\[
		|\mathcal C|\ge 	2|A|-2t-\frac{u|G|}{t}
	>
	\Bigl(
	\vartheta-\frac 52\varepsilon
	\Bigr)|G| .
	\]

	By Proposition~\ref{prop:transference-conclusion}(ii), all but
	\(O(|G|(\log Q)^{-\varepsilon/2})\) of these elements in \(\mathcal C\) belong to
	\(E_2(X;\mathfrak q)\). Absorbing this negligible exceptional set into the
	\(\varepsilon|G|\)-term, we obtain
	\[
	|E_2(X;\mathfrak q)|
	\ge
	(\vartheta-3\varepsilon)|G|.
	\]
	This proves the first assertion of~(a).
	
	Assume now that
\(	\vartheta>\frac23+3\varepsilon.\)
	Then 
	\[
	|\mathcal C|
	\ge
	\Bigl(
	\vartheta-\frac{5}{2}\varepsilon
	\Bigr)|G|
	>
	\Bigl(
	\frac23+\frac{\varepsilon}{2}
	\Bigr)|G|,
	\qquad
	|A|
	\ge
	\Bigl(
	\frac{\vartheta}{2}-\varepsilon
	\Bigr)|G|
	>
	\Bigl(
	\frac13+\frac{\varepsilon}{2}
	\Bigr)|G|.
	\]

	Hence
	$
	|\mathcal C|+|A|>(1+\varepsilon )|G|.
	$
	By Lemma~\ref{lem:convolution-product-lower-bound}(i), for every
	\(c\in G\),
	\[
	(1_{\mathcal C}*1_A)(c)
	\ge
	|\mathcal C|+|A|-|G|
	> \varepsilon|G|.
	\]
	But
	\[
(1_A*1_A*1_A)(c)
 =
\sum_{d\in G}
(1_A*1_A)(d)\,1_A(cd^{-1})
 \ge
u\,
(1_{\mathcal C}*1_A)(c)
 \ge
\frac{\varepsilon^3 }{2000}|G|^2.
	\]
	Therefore Proposition~\ref{prop:transference-conclusion}(iii) implies that  
\(	c\in E_3(X;\mathfrak q)\) for every \(c\in G\).
	 Hence
\(E_3(X;\mathfrak q)=G.\)
	This completes the proof of~(a).
	
	\medskip
	\noindent
	\textit{Case 2: Lemma~\ref{lem:popular-products}(b) holds.}
	
	Then there exist \(A',B'\subseteq A\) such that
	\begin{equation}	\label{6.3}
		|A\setminus A'|+|A\setminus B'|
	\le
	t-1
	\le
	\frac{\varepsilon|G|}{10}.
	\end{equation}

	We now verify the six claims in~(b).
	
	\medskip
	\noindent
	\textit{Proof of (b.i).}
	Since \(A',B'\subseteq A\),
	\[
	|A'|
	 \ge
	|A|-\frac{\varepsilon|G|}{10},\qquad  	|B'|
	 \ge
	|A|-\frac{\varepsilon|G|}{10}.
	\]
	Using \eqref{6.2}, we get
	\[
	|A'|,\ |B'|
	\ge
	\Bigl(
	\frac{\vartheta}{2}-\frac{3\varepsilon}{2}
	\Bigr)|G|.
	\]
	
	\medskip
	\noindent
	\textit{Proof of (b.ii).}
	By Lemma~\ref{lem:popular-products}(b), for every
	\(d\in A'\cdot B'\),
\begin{equation}\label{6.4}
	(1_A*1_A)(d)\ge t\gg |G|.
\end{equation}
	Hence Proposition~\ref{prop:transference-conclusion}(ii) implies that,
	apart from at most
\(	O(|G|(\log Q)^{-\varepsilon/2})\)
	exceptions, every element of \(A'\cdot B'\) belongs to
	\(E_2(X;\mathfrak q)\). Since the exceptional set is
	\(o(\varepsilon|G|)\), we get
	\[
	|(A'\cdot B')\cap E_2(X;\mathfrak q)|
	\ge
	|A'\cdot B'|-\varepsilon|G|.
	\]
	
	\medskip
	\noindent
	\textit{Proof of (b.iii).}
	For any \(c\in G\),
	$(1_{A'\cdot B'}*1_{A'})(c)
	 =
	\sum_{d\in A'\cdot B'}
	1_{A'}(cd^{-1}).$
	Using \eqref{6.4},
	\[
		(1_A*1_A*1_A)(c)
		=
		\sum_{d\in G}
		(1_A*1_A)(d)\,1_A(cd^{-1})
		\ge
		\sum_{d\in A'\cdot B'}
		(1_A*1_A)(d)\,1_{A'}(cd^{-1})
		\ge
		t\,
		(1_{A'\cdot B'}*1_{A'})(c).
	\]
	Therefore,
\(	(1_{A'\cdot B'}*1_{A'})(c)\gg |G|\)
	implies
	\(	(1_A*1_A*1_A)(c)\ge t|G| \gg |G|^2.\)
	By Proposition~\ref{prop:transference-conclusion}(iii),
	\(c\in E_3(X;\mathfrak q).\)
	
	\medskip
	\noindent
	\textit{Proof of (b.iv).}
Let \(\beta\in(2\varepsilon,B] \) and \(	L=\bigl\lfloor\frac{B}{\beta}\bigr\rfloor.\)
By \eqref{6.4},  
for every \(c\in G\),
\[
\sum_{[\mathfrak p]a_1a_2=c,\ a_1,a_2\in A
	\atop
	Q^{\beta-\varepsilon}<N\mathfrak p\le Q^\beta}
\frac1{N\mathfrak p}
\ge
\sum_{[\mathfrak p]d=c,\ d\in A'\cdot B'
	\atop
	Q^{\beta-\varepsilon}<N\mathfrak p\le Q^\beta}
\frac{(1_A*1_A)(d)}{N\mathfrak p}
\gg
|G|
\sum_{[\mathfrak p]d=c,\ d\in A'\cdot B'
	\atop
	Q^{\beta-\varepsilon}<N\mathfrak p\le Q^\beta}
\frac1{N\mathfrak p}.
\]
Hence the hypothesis in (b.iv) implies the hypothesis of
Proposition~\ref{prop:transference-conclusion}(iv), and thus
\(c\in E_3(X;\mathfrak q).\)
	
	\medskip
	\noindent
	\textit{Proof of (b.v).}
	Let \(H\le G\) be a subgroup of index
	\(Y\le\varepsilon^{-1/2}\).	
	By Proposition~\ref{prop:transference-conclusion}(v),   Then there
	exist at least
\(	\Bigl\lceil
\bigl(
\frac{\vartheta_0}{2}
-
3\varepsilon^{1/2}\frac{\vartheta_0}{\vartheta}
\bigr)Y
\Bigr\rceil\)
	distinct cosets \(bH\) of \(H\) such that
	$	|A\cap bH|>\varepsilon |G|.$  
	Take one such coset \(bH\). Using \eqref{6.3},
	\[
		|A'\cap B'\cap bH|
	 \ge
	|A\cap bH|
	-
	|A\setminus A'|
	-
	|A\setminus B'|
	 >
	\varepsilon|G|
	-
	\frac{\varepsilon|G|}{10}
	 >
	\frac{\varepsilon}{2}|G|.
	\]	
	\medskip
	\noindent
	\textit{Proof of (b.vi).}
	By Proposition~\ref{prop:transference-conclusion}(vi),
	\(	|A\cap bH|
	\ge
	\bigl(
	\frac{\vartheta}{2}
	\frac{|	\pi (X;bH)|}{X/\log X}
	-
	\frac{\varepsilon}{5Y}
	\bigr)|G|.\)
	Using \eqref{6.3}, 
	\[
	\begin{aligned}
		|A'\cap B'\cap bH|
		&\ge
		|A\cap bH|
		-
		|A\setminus A'|
		-
		|A\setminus B'|
		\\
		&\ge
		\Bigl(
		\frac{\vartheta}{2}
		\frac{|	\pi (X;bH)|}{X/\log X}
		-
		\frac{\varepsilon}{5Y}
		-
		\frac{\varepsilon}{10}
		\Bigr)|G|\\
		&\ge 	 
		\Bigl(
		\frac{\vartheta}{2}
		\frac{|	\pi (X;bH)|}{X/\log X}
		-
		\frac{3\varepsilon}{10}
			\Bigr)|G|.
	\end{aligned}\]	
	This completes the proof. 
\end{proof}
\subsection{Structure of sets with small doubling}
The remaining lemma is a structural statement for sets with small product
set. It is the finite-abelian-group form of the corresponding lemma of
Matomäki--Teräväinen and follows from Kneser's theorem.
\begin{lemma} 
	\label{lem:small-doubling-structure}
	Let $G$ be a finite abelian group, and let
	$\alpha,\alpha',\beta\in(0,1]$ satisfy
\(	\beta<2\alpha\le2\alpha'.\)
	Let \(A,B\subseteq G\) with
	\(	|A|,  |B|\ge \alpha|G|.\)
	Assume that
	each of $A$ and $B$ meets at least proportion $\alpha'$ of the cosets of
	every subgroup $H_0\leq G$ with
	\(	[G:H_0]<\frac{1}{2\alpha-\beta}.\)
	Then at least one of the following  holds.
	
	\begin{enumerate}
		\item[(a)]
		\(	|A\cdot B|\ge \beta|G|.\)
		\item[(b)]
		Let \(H\le G\) be the stabilizer of \(A\cdot B\), and write
		\(	Y=[G:H].\)
		Then:
		
		\begin{enumerate}
			\item[(b.i)]
		\(	1<Y<\frac{1}{2\alpha'-\beta}.\)
			
			\item[(b.ii)]
			If
			\(	\alpha'>\frac13,
			\beta\le\frac23,\)
			then
			\(	Y=3k+2\)
			for some integer \(k\ge0\) satisfying
	\(\frac{k+1}{3k+2}\geq \alpha',\)
	each of $A$ and $B$ meets exactly $k+1$ cosets of $H$, and
	$A\cdot B$ is the union of exactly $2k+1$ cosets of $H$.
			
			\item[(b.iii)]
			If, for some sufficiently small \(\varepsilon>0\),
			$
			\alpha'\ge \frac38-\varepsilon,
			$
			$\beta<\frac{11}{16}-2\varepsilon,
			$
			then
			$
			Y=3k+2
			$
			for some
            $
			k\in\{0,1,2\},
			$
			each of $A$ and $B$
			meets exactly $k+1$ cosets of $H$, and $A\cdot B$ is the union
			of exactly $2k+1$ cosets of $H$.
		\end{enumerate}
	\end{enumerate}
\end{lemma}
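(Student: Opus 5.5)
Throughout, I write $a$ and $b$ for the number of cosets of $H$ met by $A$ and by $B$, and $m$ for the number of cosets of $H$ making up $A\cdot B$. The plan is to assume that (a) fails, so $|A\cdot B|<\beta|G|$, and to extract everything from Kneser's theorem (Lemma~\ref{lem:kneser}) applied with $H$ the stabilizer of $A\cdot B$. Since $A\cdot B$ is $H$-invariant, it is a union of exactly $m$ cosets of $H$. Kneser gives $|A\cdot B|\ge |A\cdot H|+|B\cdot H|-|H|$, and dividing by $|H|$ this reads
\[
m\ge a+b-1 .
\]
The failure of (a) reads $m<\beta Y$. All three parts then reduce to counting integers subject to these two inequalities together with lower bounds on $a$ and $b$.

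\emph{Step 1: proof of (b.i).} If $H=G$, then $A\cdot B=G$ (it is nonempty), so $|A\cdot B|=|G|\ge\beta|G|$, a contradiction; hence $Y>1$. Next I use the trivial lower bounds $a\ge |A|/|H|\ge \alpha Y$ and $b\ge\alpha Y$. These give $\beta Y>m\ge 2\alpha Y-1$, that is, $Y<1/(2\alpha-\beta)$. This is exactly the range in which the coset hypothesis applies, so I can take $H_0=H$ and upgrade to $a,b\ge \alpha' Y$. Repeating the inequality then gives $Y<1/(2\alpha'-\beta)$. This bootstrap, first using $\alpha$ to get into the range of the hypothesis and then using $\alpha'$, is the only conceptual point in the argument.

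\emph{Step 2: proof of (b.ii).} Now $a,b>Y/3$ and $m<\beta Y\le 2Y/3$. I write $Y=3k+r$ with $r\in\{0,1,2\}$. In every case $a,b\ge k+1$, so $m\ge 2k+1$.
\begin{itemize}
\item If $r=0$, then $m<2k$, which contradicts $m\ge 2k+1$.
\item If $r=1$, then $m<2k+\tfrac23$, again a contradiction.
\item If $r=2$, then $m<2k+\tfrac43$, which forces $m=2k+1$.
\end{itemize}
In the surviving case $r=2$, the inequality $a+b\le m+1=2k+2$ together with $a,b\ge k+1$ forces $a=b=k+1$. Finally, $a\ge\alpha'Y$ gives $(k+1)/(3k+2)\ge\alpha'$.

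\emph{Step 3: proof of (b.iii).} This is the step needing the most bookkeeping, since it is a finite case check. From (b.i), $Y<1/(2\alpha'-\beta)<1/(3/4-2\varepsilon-11/16+2\varepsilon)=16$, so $2\le Y\le 15$. For each such $Y$ I test the necessary condition
\[
2\lceil(3/8-\varepsilon)Y\rceil-1<(11/16-2\varepsilon)Y .
\]
For $\varepsilon$ small, the ceiling equals $\lceil 3Y/8\rceil$ (including at $Y=8$, where it equals $3$).
\begin{itemize}
\item The condition fails for $Y=3,4,6,7,9,10,11,12,13,14,15$. For example, at $Y=3$ it reads $3<2.06$, and at $Y=14$ it reads $11<9.6$.
\item It holds only for $Y\in\{2,5,8\}$.
\end{itemize}
In the three surviving cases, the bound $a,b\ge\lceil 3Y/8\rceil$ together with $m<(11/16)Y$ and $m\ge a+b-1$ pins the parameters down:
\begin{itemize}
\item $Y=2$: $a=b=1$ and $m=1$.
\item $Y=5$: $a=b=2$ and $m=3$.
\item $Y=8$: $a=b=3$ and $m=5$.
\end{itemize}
These are exactly $Y=3k+2$ with $k\in\{0,1,2\}$, $a=b=k+1$ and $m=2k+1$.

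I expect no real obstacle. The only care needed is to handle the $\varepsilon$-perturbations at the integer boundary values ($Y=8$ in particular) correctly, so that no additional $Y$ slips through the case check.
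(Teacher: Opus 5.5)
Your proof is correct and takes the same route as the paper. The paper defers to Matom\"aki--Ter\"av\"ainen's Lemma~6.3, which applies Kneser's theorem to the stabilizer $H$ of $A\cdot B$ to get $m\ge a+b-1$ and then does elementary coset counting, exactly as you do: first the bootstrap from $\alpha$ to $\alpha'$, then the residue analysis modulo $3$, and finally the finite check over $2\le Y\le 15$.
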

\begin{proof}
	This is the finite-abelian-group form of   \cite[Lemma 6.3]{matomaki}. Their proof uses only
	Kneser's theorem for finite abelian groups and elementary counting of
	cosets of the stabilizer of \(A\cdot B\). It applies verbatim to
	any finite abelian group \(G \).
\end{proof}
\section{Prime ideals in cosets of indices \(5\) and \(8\)}

Let
$
G=\operatorname{Cl}_{\mathfrak q}^{(\infty)}$,
$ 
Q=N\mathfrak q.
$
In order to deal with the   case arising from
Lemma~\ref{lem:small-doubling-structure}, we need information on the
distribution of prime ideals in cosets of subgroups of \(G\) of index
\(3k+2\), for \(k\in\{1,2\}\).

We first record the Hecke-character analogue of the weighted prime-sum input
used by Matomäki--Teräväinen.
 
\begin{lemma} \label{lem:weighted-prime-ideal-sums-rescaled}

	Let \(A_0>0\) be fixed, and define
	\[
	f_{A_0}(u)
	:=
	\begin{cases}
		A_0-u, & 0\le u\le A_0,\\
		0, & u>A_0.
	\end{cases}
	\]
    Let 	\(\chi\) be a non-principal Hecke character
 modulo \(\mathfrak q\) induced by the primitive character $\chi^*$ modulo \(\mathfrak f_\chi\), suppose that  $
	\chi^*$ satisfies 
	\begin{equation}\label{B.1}
			L\Bigl(\frac12+it,\chi^*\Bigr)
		\ll_{K,\varepsilon}
	 C(\chi^*,t) ^{\alpha_0/2+\varepsilon}
	 ,
	\end{equation}
	where $C(\chi^*,t)$ is the analytic conductor of $\chi^*$, then 
	\begin{equation}	\label{non-principal2}
			\Re
		\sum_{\mathfrak p\nmid \mathfrak q}
		\chi(\mathfrak p)
		\frac{\log N\mathfrak p}{N\mathfrak p}
		f_{A_0}\Bigl(
		\frac{\log N\mathfrak p}{\log Q}
		\Bigr)
		\le
		\Bigl(
		\frac{A_0\alpha_0}{2}
		+ o_K(1)
	\Bigr)\log Q.
	\end{equation}
	For the principal character,
	\begin{equation}\label{principal}
		\sum_{\mathfrak p\nmid\mathfrak q}
	\frac{\log N\mathfrak p}{N\mathfrak p}
	f_{A_0}\Bigl(
	\frac{\log N\mathfrak p}{\log Q}
	\Bigr)
	=
	\Bigl(
	\frac{A_0^2}{2}
	+ o_K(1)
\Bigr)\log Q.
	\end{equation}
\end{lemma}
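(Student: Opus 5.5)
The plan is to follow the Matomäki--Teräväinen treatment: write the weighted prime sum as a contour integral of $-L'/L$ against the Laplace-type transform of $f_{A_0}$, and bound the non-principal case through the zeros of $L(s,\chi^*)$.

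\emph{Setup.} Put $\mathcal L:=\log Q$. Since $f_{A_0}(u)=\int_u^{A_0}dv$, we have
\[
\sum_{\mathfrak p}\chi(\mathfrak p)\frac{\log N\mathfrak p}{N\mathfrak p}f_{A_0}\Bigl(\frac{\log N\mathfrak p}{\mathcal L}\Bigr)
=\frac{1}{2\pi i}\int_{(c)}\Bigl(-\frac{L'}{L}(1+s,\chi^*)\Bigr)\frac{Q^{A_0s}-1-A_0\mathcal L s}{\mathcal L s^2}\,ds+O(1).
\]
To check this, expand $\frac{1}{2\pi i}\int x^s/s^2\,ds=\log^+ x$. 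The passage from $\chi$ to $\chi^*$, the prime powers, and the primes dividing $\mathfrak q$ cost $O(\log Q/\log\log Q)=o(\mathcal L)$.

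\emph{Principal character.} Replace $L$ by $\zeta_K$. The pole of $-\zeta_K'/\zeta_K(1+s)$ at $s=0$ gives residue $A_0^2\mathcal L/2$. A zero-free region for $\zeta_K$ (fixed field) makes the rest $O(1)$. This gives \eqref{principal}. Alternatively, the prime ideal theorem with partial summation yields $\mathcal L\int_0^{A_0}(A_0-u)\,du$ directly.

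\emph{Non-principal character.} I would use the Hadamard product and the explicit formula. Shift the contour left past $\Re s=-1/2$. The integral becomes a sum over nontrivial zeros $\rho$ of $L(s,\chi^*)$ of $-\frac{Q^{A_0(\rho-1)}-1-A_0\mathcal L(\rho-1)}{\mathcal L(\rho-1)^2}$, plus the contribution of $-\frac{L'}{L}(1,\chi^*)$ from the residue at $s=0$. That residue contribution is $A_0\cdot\bigl(-\Re\frac{L'}{L}(1,\chi^*)\bigr)$, up to $O(1)$.

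Writing $\rho-1=-z$ with $\Re z>0$, the zero terms have real part $\le$ a quantity controlled by $\Re\frac{A_0}{z}$ plus bounded terms, since $|Q^{-A_0z}|\le 1$. Combined with the Hadamard identity
\[
-\Re\frac{L'}{L}(1,\chi^*)=\tfrac12\log C(\chi^*)-\sum_\rho\Re\frac{1}{1-\rho}+O(1),
\]
the positive sums over zeros cancel in the favourable direction. This is exactly the MT trick. It gives the bound
\[
\Bigl(\tfrac{A_0}{2}\log\mathfrak f_\chi+o(\mathcal L)\Bigr)\le\Bigl(\tfrac{A_0}{2}+o(1)\Bigr)\mathcal L,
\]
which is the convexity-strength version.

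\emph{Main obstacle.} The difficulty is improving $\tfrac12$ to $\alpha_0/2$, which is where \eqref{B.1} must enter. I would try to use the subconvex bound through Jensen's formula, or equivalently via
\[
\log|L(1/2+it)|\le(\alpha_0/2+\varepsilon)\log C,
\]
to bound the number of zeros in discs, and thereby bound $-\Re\frac{L'}{L}(\sigma,\chi^*)$ for $\sigma=1+1/\mathcal L$. Concretely, I would apply the Borel--Carathéodory / Landau lemma to $\log L(s,\chi^*)$ on a disc centred at $1+\delta$ with radius reaching $\Re s=1/2$. On this disc $\log|L|\le(\alpha_0/2+\varepsilon)\mathcal L$ by \eqref{B.1} and Phragmén--Lindelöf. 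This gives
\[
-\Re\frac{L'}{L}(s)\le(\alpha_0/2+\varepsilon)\mathcal L\cdot\frac{c}{\delta}+\sum_{|\rho-s|\text{ small}}\Re\frac{1}{s-\rho},
\]
where the zero terms have the right sign to discard.

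I would then instead integrate on $\Re s=1+\delta$ with $\delta\to0$ slowly, so that the factor loss $1/\delta$ is matched by the transform weight. I expect the bookkeeping of this step to be the hardest: making the constant exactly $A_0\alpha_0/2$ requires the Landau-lemma constant to be $1$ in the limit. To handle this I would follow MT's Lemma closely, replacing $\log q$ with $\log C(\chi^*,t)\le\mathcal L+O(\log(2+|t|))$ throughout.
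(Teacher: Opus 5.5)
You handle the reductions correctly: passing to $\chi^*$, and discarding prime powers and the primes dividing $\mathfrak q$. The principal-character case is also fine; the paper proves \eqref{principal} the same way, by Mertens' theorem and partial summation.

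The gap is in the non-principal case, at exactly the step you call the main obstacle, and that step is the whole content of the lemma. Your explicit-formula computation, combined with Hadamard's formula, gives $\Re\sum\le\frac{A_0}{2}\log C(\chi^*)-\mathcal L\sum_\rho\Re F_{A_0}((1-\rho)\mathcal L)+o(\mathcal L)$, where $F_{A_0}(z)=\int_0^{A_0}(A_0-u)e^{-zu}\,du$.

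\begin{itemize}
\item To discard the zeros you need $\Re F_{A_0}(z)\ge0$ for $\Re z\ge0$. The paper proves this from the boundary values $\frac{1-\cos(A_0y)}{y^2}\ge0$, decay at infinity, and the minimum principle. Your remark ``$|Q^{-A_0z}|\le1$ plus bounded terms'' does not give it, because zeros with $|1-\rho|\mathcal L\ll1$ contribute terms of size $\mathcal L$.
\item The constant $\frac{A_0}{2}$ is not convexity strength. In the normalization of \eqref{B.1}, convexity is $\alpha_0=\frac12$, while $\frac{A_0}{2}$ corresponds to $\alpha_0=1$.
\end{itemize}

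So the full explicit formula cannot produce $\frac{A_0\alpha_0}{2}$; the growth of $L$ has to enter differently.

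Your proposed remedy does not supply this, for two reasons.

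\begin{itemize}
\item \emph{The sup bound loses the constant.} Feeding the sup bound $\log|L|\le(\frac{\alpha_0}{2}+\varepsilon)\mathcal L$ on a disc of radius about $\frac12$ into a Landau or Borel--Carath\'eodory lemma gives $-\Re\frac{L'}{L}\le c\,\alpha_0\mathcal L-\sum\Re\frac1{s-\rho}$ with $c>\frac12$. (With radius about $\frac12$ there is also no $1/\delta$ loss to ``match''.) Even an optimal Poisson-kernel computation using only a sup bound on the left half-disc gives $c=\frac2\pi$. With $A_0=4\alpha_0$ this yields the ratio $\frac1\pi\approx0.318$ instead of $\frac14$, which already breaks the comparisons with $0.3079$ and $0.2772$ in Lemma~\ref{lem:prime-escape-rescaled}. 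The lemma needs the sharp constant.
\item \emph{Real-part bounds cannot simply be integrated.} A pointwise upper bound for $-\Re\frac{L'}{L}$ on $\Re s=1+\delta$ cannot be integrated against the complex, oscillating kernel $\frac{Q^{A_0s}}{\mathcal L s^2}$, since $\Im\frac{L'}{L}$ enters as well.
\end{itemize}

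What is missing, and what the paper does, is the following.

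\begin{enumerate}
\item Apply Phragm\'en--Lindel\"of between \eqref{B.1} on $\sigma=\frac12$ and $|L(1+\eta+it,\chi^*)|\le\zeta_K(1+\eta)$, with $\eta=\frac{\log\mathcal L}{\mathcal L}$. This gives the \emph{linear} bound $\log|L(\sigma+it,\chi^*)|\le\alpha_0\mathcal L_\chi(1-\sigma+\varepsilon)+o(\mathcal L)$ for $\frac12\le\sigma\le1+\eta$ and $|t|$ up to a power of $\mathcal L$, where $\mathcal L_\chi=\log N\mathfrak f_\chi+O_K(1)$. Heuristically, with a linear bound the Poisson kernel returns exactly half the slope, namely $\frac{\alpha_0}{2}\mathcal L$.
\item Invoke the Heath-Brown-type inequality \cite[Prop.~6.2]{zaman}, the analogue of \cite[Lem.~5.2]{heathbrown}. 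Its hypothesis is this bound with $\phi=\frac{\alpha_0}{2}$, and it gives
$\Re\sum_{\mathfrak a}\frac{\Lambda_K(\mathfrak a)\chi(\mathfrak a)}{N\mathfrak a}f_{A_0}\bigl(\frac{\log N\mathfrak a}{\mathcal L}\bigr)\le-\mathcal L\sum_{|1-\rho|\le\delta}\Re F_{A_0}((1-\rho)\mathcal L)+A_0\frac{\alpha_0}{2}\mathcal L_\chi+\varepsilon\mathcal L$,
retaining only zeros near $1$.
\item Drop those remaining zeros using $\Re F_{A_0}\ge0$.
\end{enumerate}

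Deferring to an unspecified lemma of Matom\"aki--Ter\"av\"ainen leaves step (2), the heart of the proof, unaddressed.
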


\begin{proof}
We use the number-field version of the weighted explicit-formula inequality due to Zaman \cite[Prop.~6.2]{zaman}, which is analogous to Heath-Brown
\cite[Lem.~5.2]{heathbrown}. 
	Take \(\vartheta=1\) in Zaman's notation \cite[(3.2), (3.3)]{zaman}, so that
	\[
	\mathcal L=\log d_K+\log N\mathfrak q+n_K\nu(n_K)
	=\log Q+O_K(1),
	\qquad
	\mathcal L_\chi
	=\log d_K+\log N\mathfrak f_\chi+n_K\nu(n_K)
	\le \mathcal L,
	\]
	where $\nu: [1,\infty)\to  [4,\infty)$ is any fixed increasing function with $\nu(x) \gg \log(x+4)$.  
	
	We first check that  \cite[Lem.~4.2]{zaman}  is satisfied with
	\(\phi=\alpha_0/2\). Let \(\eta=(\log\mathcal L)/\mathcal L\).   Since \(\chi^*\) is primitive and
	non-principal, \(L(s,\chi^*)\) is entire of finite order.   	
	Let \(s=\sigma+it\), where
	\(	\frac12\le \sigma\le 1+\eta .\)
	By the assumed bound on the
	central line and the standard estimate
	$
	C(\chi^*,t)\ll_K d_KN\mathfrak f_\chi(1+|t|)^{n_K},
	$
	we have for any $\varepsilon_0>0$,
	\[
		\Bigl|L \Bigl(\frac12+it,\chi^*\Bigr)\Bigr|
	 \ll_{K,\varepsilon }
	\bigl(d_K N \mathfrak f_\chi \bigr)^{\alpha_0/2+\varepsilon_0}
	(1+|t|)^{n_K(\alpha_0/2+\varepsilon_0 )} .
	\]
	On the other hand, on the line \(\sigma=1+\eta\), the Euler product gives
	\[
	 \bigl|L(1+\eta+it,\chi^*)\bigr|
	\le
	\zeta_K(1+\eta)
	\ll_K
	\eta^{-n_K}.
	\]

	Writing
\(	\ell(\sigma)
=
\frac{1+\eta-\sigma}{1/2+\eta} \) and  applying
	the Phragmen--Lindelof principle \cite[Thm.~2]{rademacherPL} in  the strip
	$
	\frac12\le \sigma\le 1+\eta
	$
	to   \(L(s,\chi^*)\) gives
\begin{equation}\label{bound1}
		\log |L(s,\chi^*)|
	\le
	\ell(\sigma)
	\bigl(\frac{\alpha_0}{2}+\varepsilon_0\Bigr)
	\bigl(\log d_K+\log N\mathfrak f_\chi\bigr)
	+
	O_{K,\varepsilon_0 }(\log(1+|t|))
	+
	O_{K,\varepsilon_0 }(\log \mathcal L).
\end{equation}
Take $\eta(x) =\log x /2+2$ in the notation of  \cite[(3.2)]{zaman}, we have \[
\mathcal{T} = \bigl(\mathcal{L} -n_K\nu(n_K)\bigr) ^{ \frac{2 \log (n_K+1) }{\log n_K +4}} +\nu(n_K).
\]
	Now assume that \(|t|\le \mathcal T\), then 
	$
	\log(1+|t|)
	\ll_K
	\log \mathcal L
	=
	o_K (\mathcal L).
	$
	 
	Since
	\[
	\ell(\sigma)
	=
	\frac{1+\eta-\sigma}{1/2+\eta}
	=
	2(1-\sigma)+O(\eta) \qquad\text{and}\qquad \log N\mathfrak f_\chi \leq \mathcal L _\chi , 
 	\]
  by \eqref{bound1} and taking $\varepsilon_0$ sufficiently small in terms of $\varepsilon$,   uniformly for
	$
	\frac12\le \sigma\le 1+\frac{\log \mathcal L}{\mathcal L}
	$, we have 
	$|t|\le \mathcal T,
	$
	\[
	\log |L(s,\chi^*)|
	\le
	\alpha_0 \mathcal L_\chi(1-\sigma+\varepsilon )
	+
	o_{K, \varepsilon }(\mathcal L).
	\]
	This is the input of \cite[Lem.~4.2]{zaman} with
	\(	\phi=\frac{\alpha_0}{2}.\)

	The weight \(f_{A_0}\) satisfies Condition~1 of \cite[p.~335]{zaman}
	with \(x_0=A_0\): it is continuous on \([0,\infty)\), vanishes for
	\(u\ge A_0\), and is twice differentiable on \((0,A_0)\), with
	\(f_{A_0}''=0\).
  Applying  \cite[Prop.~6.2]{zaman} to   \(L(s,\chi)\) at \(s=1\)   with
	weight \(f_{A_0}\) and $\phi= \alpha_0/2$ gives that for any $\varepsilon>0$, there exists $\delta =\delta (\varepsilon)>0$  such that 
\begin{equation}\label{zero}
	\Re
	\sum_{\mathfrak a}
	\frac{\Lambda_K(\mathfrak a)\chi(\mathfrak a)}
	{N\mathfrak a}
	f_{A_0} \Bigl(
	\frac{\log N\mathfrak a}{\mathcal L}
	\Bigr)
	\le
	-\mathcal L
	\sum_{|1-\rho|\le \delta}
	\Re F_{A_0}((1-\rho)\mathcal L)
	+
	A_0\frac{\alpha_0}{2}\mathcal L _\chi
	+
	\varepsilon\mathcal L,
\end{equation}
where the sum is over the non-trivial zeros \(\rho\) of the primitive
Hecke \(L\)-function \(L(s,\chi^*)\)  counted with multiplicity,   and
	\[
	F_{A_0}(z)
	:=
	\int_0^{A_0}
	(A_0-u)e^{-zu}\,du
	\]
	is the Laplace transform of \(f_{A_0}\).

	We now check that the zero term in \eqref{zero} is non-positive.  First
	observe that \(F_{A_0}\) extends holomorphically to \(z=0\), with
	\(F_{A_0}(0)=A_0^2/2\). Thus \(\Re F_{A_0}\) is
	harmonic in the half-plane \(\Re z>0\) and continuous on its closure
	\(\Re z\ge0\).  On the boundary \(\Re z=0\), say \(z=iy\), we
	have
	\[
	\Re F_{A_0}(iy)
	=
	\int_0^{A_0}(A_0-u)\cos(yu)\,du
	=
	\frac{1-\cos(A_0y)}{y^2}
	\ge 0,
	\]
	with the value at \(y=0\) understood by continuity.
	
	Moreover,
	$
	F_{A_0}(z)
	=
	\frac{A_0z-1+e^{-A_0z}}{z^2},
	$
	so \(F_{A_0}(z)\to 0\) as \(|z|\to\infty\) uniformly in the half-plane
	\(\Re z\ge0\).  Applying the minimum principle
	\cite[Cor.~1.9]{harmonic} to
	\(\Re F_{A_0}(z)\) in the half-discs
	\(\{z:\Re z\ge0,\ |z|\le R\}\), and then letting \(R\to\infty\), gives
	\[
	\Re F_{A_0}(z)\ge0
	\qquad (\Re z\ge0).
	\]
	Since every non-trivial zero satisfies \(\Re\rho\le1\), we have
	$
	\Re\bigl((1-\rho)\mathcal L\bigr)\ge0.
	$
	Therefore
	\[
	\Re F_{A_0}\bigl((1-\rho)\mathcal L\bigr)\ge0,
	\]
	and hence the zero term in \eqref{zero} is non-positive and may be discarded. Therefore, since $\mathcal{L}_\chi\leq \mathcal{L}$, we have 
	\[	\Re
	\sum_{\mathfrak a}
	\frac{\Lambda_K(\mathfrak a)\chi(\mathfrak a)}
	{N\mathfrak a}
	f_{A_0}\Bigl(
	\frac{\log N\mathfrak a}{\mathcal L }
	\Bigr)
	\le
	\Bigl(
	\frac{A_0\alpha_0}{2}
	+ \varepsilon
	\Bigr)\mathcal L. \]
	As \(\mathcal L=\log Q+O_K(1)\),  and $\varepsilon>0$ is arbitrary,   it follows that	\begin{equation}\label{non-principal1}
			\Re
		\sum_{\mathfrak a}
		\frac{\Lambda_K(\mathfrak a)\chi(\mathfrak a)}
		{N\mathfrak a}
		f_{A_0}\Bigl(
		\frac{\log N\mathfrak a}{\log Q }
		\Bigr)
		\le
		\Bigl(
		\frac{A_0\alpha_0}{2}
		+ o (1)
				\Bigr)\log Q.
	\end{equation}
	Moreover, the  contribution of prime powers \(\mathfrak p^m\), \(m\ge2\), is
	\(O_K(1)\).   Thus  \eqref{non-principal1} yields \eqref{non-principal2}.
	
	We now prove the principal-character asymptotic. By Mertens' theorem for
	prime ideals \cite[Lem.~2.3]{merten},
	\[
	  \sum_{N\mathfrak p\le y}
	\frac{\log N\mathfrak p}{N\mathfrak p}
	=
	\log y + O_K(1).
	\]
	Partial summation gives
	\[ \sum_{\mathfrak p} \frac{\log N\mathfrak p}{N\mathfrak p} f_{A_0}\Bigl( \frac{\log N\mathfrak p}{\log Q} \Bigr) =
	\log Q\int_0^{A_0}(A_0-u)\,du+O_{A_0,K}(1)=      \frac{A_0^2}{2}\log Q +O_{A_0,K}(1)   . \]
 
	Moreover, the prime ideals dividing \(\mathfrak q\) contribute only
	\(o_K(\log Q)\) since \(\sum_{\mathfrak p\mid\mathfrak q}
	\frac{\log N\mathfrak p}{N\mathfrak p}
	\ll_K
	( \log  Q)^{1/2} \); see for example \cite[Lem.~2.4]{zaman}.  This proves \eqref{principal}.
\end{proof}
We now assume    a bounded-order subconvexity input in order to apply Lemma \ref{lem:weighted-prime-ideal-sums-rescaled}. For every fixed
integer \(\ell\ge2\) and every \(\varepsilon>0\), every primitive
non-principal Hecke character \(\chi^*\) of order at most \(\ell\) satisfies
\begin{equation} \label{Lb}
	L\Bigl(\frac12+it,\chi^*\Bigr)
	\ll_{K,\ell,\varepsilon}
	C(\chi^*,t)^{\alpha_0/2+\varepsilon}
	\qquad (t\in\mathbb R),
	\tag{\(\mathrm L^{\mathrm b}(\alpha_0)\)}
\end{equation}
where \(C(\chi^*,t)\) denotes the analytic conductor of \(\chi^*\).  
 
Note that the same proof of Proposition \ref{prop_subconvexity} shows that \eqref{Lb} implies \eqref{CS^b}. 
\begin{lemma}
	\label{lem:prime-escape-rescaled} Assume \eqref{Lb}.
	Let \(k\in\{1,2\}\), and let \(H\le G\) be a subgroup of index
	\(3k+2\). Suppose that \(G/H\) is cyclic and generated by \(gH\).	
	Put
\(	A_0=4\alpha_0.\)
	Then
	\begin{equation}\label{first}
		\sum_{ {
				N\mathfrak p\le Q^{A_0}, \,	\mathfrak p\nmid\mathfrak q\atop
				[\mathfrak p]\notin
				\bigcup_{j=1}^{k+1}g^{k+j}H}}
		\frac{\log N\mathfrak p}{N\mathfrak p}
		\gg_{K,\alpha_0}
		\log Q.
	\end{equation}
\end{lemma}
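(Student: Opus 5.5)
The plan is to detect the excluded cosets with a real trigonometric polynomial on the cyclic group \(G/H\cong\mathbb Z/Y\mathbb Z\), where \(Y=3k+2\). I then feed the character sums into Lemma~\ref{lem:weighted-prime-ideal-sums-rescaled}, whose hypothesis \eqref{B.1} is supplied by \eqref{Lb}. This works because every character of \(G/H\) has order dividing \(Y\le 8\), so \eqref{Lb} applies with \(\ell=8\) to the primitive character inducing it.

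Write \([\mathfrak p]H=g^{m(\mathfrak p)}H\) with \(m(\mathfrak p)\in\mathbb Z/Y\mathbb Z\). Let \(S=\{k+1,\dots,2k+1\}\), so that \([\mathfrak p]\in\bigcup_{j=1}^{k+1}g^{k+j}H\) exactly when \(m(\mathfrak p)\in S\). For \(r\) mod \(Y\), let \(\chi_r\) be the character of \(G/H\) with \(\chi_r(g)=e^{2\pi i r/Y}\). I look for a function
\[
\phi(m)=a-\sum_{r\ne0}b_r\cos(2\pi rm/Y),\qquad b_r\ge0,
\]
with \(\phi(m)\le 0\) for \(m\in S\) and \(a>\tfrac14\sum_r b_r\). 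Put
\[
w_{\mathfrak p}=\frac{\log N\mathfrak p}{N\mathfrak p}\,f_{A_0}\Bigl(\frac{\log N\mathfrak p}{\log Q}\Bigr),
\]
supported on \(N\mathfrak p\le Q^{A_0}\) with \(0\le f_{A_0}\le A_0\). Since \(\phi\le0\) on \(S\) and \(\phi\le M:=a+\sum b_r\) everywhere, we get
\[
\sum_{\mathfrak p\nmid\mathfrak q}w_{\mathfrak p}\,\phi(m(\mathfrak p))\le M A_0\sum_{\substack{N\mathfrak p\le Q^{A_0},\ \mathfrak p\nmid\mathfrak q\\ m(\mathfrak p)\notin S}}\frac{\log N\mathfrak p}{N\mathfrak p}.
\]

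For the lower bound, expand \(\phi\) as \(a-\sum_r b_r\Re\chi_r(\mathfrak p)\). Apply \eqref{principal} to the \(a\)-term and the upper bound \eqref{non-principal2} to each \(\Re\sum\chi_r(\mathfrak p)w_{\mathfrak p}\) term. The minus sign in front of \(b_r\ge0\) is exactly why only an upper bound on the real part is needed. This gives
\[
\sum_{\mathfrak p\nmid\mathfrak q} w_{\mathfrak p}\,\phi(m(\mathfrak p))\ge\Bigl(a\frac{A_0^2}{2}-\frac{A_0\alpha_0}{2}\sum_r b_r-o(1)\Bigr)\log Q.
\]
With \(A_0=4\alpha_0\) the bracket equals \(2\alpha_0^2\bigl(4a-\sum_r b_r\bigr)-o(1)\), which is \(\gg_{\alpha_0}1\) precisely when \(a>\tfrac14\sum_r b_r\). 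Combining the two displays yields \eqref{first}.

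The remaining step, and the main obstacle, is the finite linear-programming problem of exhibiting \(\phi\) for \(Y=5\) and \(Y=8\). The margin is thin in both cases.

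For \(Y=5\), \(S=\{2,3\}\), I take only \(b_2=1\). Then \(\cos(2\pi\cdot2m/5)=\cos(2\pi/5)\approx0.309\) at both \(m=2\) and \(m=3\), so \(a=0.309\) works and \(0.309>1/4\).

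For \(Y=8\), \(S=\{3,4,5\}\). The values of \(\cos(2\pi rm/8)\) at \(m=3,4,5\) are \((0,1,0)\) for \(r=2\) (counting \(r=\pm2\) together) and \((0.707,-1,0.707)\) for \(r=3\). Choosing \(b_3=1\) and \(b_2=1+\tfrac{\sqrt2}{2}\) makes \(\sum b_r\cos\) at least \(\tfrac{\sqrt2}{2}\) on \(S\). The ratio is then \(\tfrac{\sqrt2/2}{2+\sqrt2/2}\approx0.261>1/4\).

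Finally, \(\phi\) involves only \(\Re\chi_r\), so conjugate characters can be combined and the argument is unaffected.
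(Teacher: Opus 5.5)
Your proof is correct and is essentially the paper's argument. The paper feeds the same two inputs from Lemma~\ref{lem:weighted-prime-ideal-sums-rescaled} (principal mass $\tfrac{A_0^2}{2}\log Q$ against non-principal real parts at most $\tfrac14$ of it when $A_0=4\alpha_0$) into the character with $r=2$ for index $5$ and into $\chi^2,\chi^3$ for index $8$, phrased as a contradiction against the outside mass being below $10^{-3}W$. Your certificate $(b_2,b_3)=(1+\tfrac{\sqrt2}{2},1)$ is exactly the combination implicit in the paper's two-step use of $\chi^2$ (once directly, once with weight $1/\sqrt2$ through the lower bound on $\beta_3$) together with $\chi^3$, so your direct dual formulation is a cleaner packaging of the same inequalities.
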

\begin{proof}
	Put \(A_0=4\alpha_0\), and define
	\[
	w(\mathfrak p)
	:=
	\frac{\log N\mathfrak p}{N\mathfrak p}
	f_{A_0}\Bigl(\frac{\log N\mathfrak p}{\log Q}\Bigr), \qquad W
	:=
	\sum_{\mathfrak p\nmid\mathfrak q} w(\mathfrak p),
	\]
	where $f_{A_0}$ is as defined in Lemma~\ref{lem:weighted-prime-ideal-sums-rescaled}. 
	 
	 Lemma~\ref{lem:weighted-prime-ideal-sums-rescaled}  applied to the
	principal character gives 
	\begin{equation}\label{7.1}
		W
		=
	\Bigl(
		\frac{A_0^2}{2}+o_K(1)
		\Bigr)\log Q.
	\end{equation}
	For every non-principal character \(\chi\) of \(G\) of bounded order, using  \eqref{Lb} in
	Lemma~\ref{lem:weighted-prime-ideal-sums-rescaled}  gives
	\[
	\Re
	\sum_{\mathfrak p\nmid\mathfrak q}
	\chi(\mathfrak p)w(\mathfrak p)
	\le
	\Bigl(
	\frac{A_0\alpha_0}{2}+o_K(1)
	\Bigr)\log Q.
	\]
	Since \(A_0=4\alpha_0\), this may be rewritten  using \eqref{7.1}  as
	\begin{equation}\label{7.2}
		\Re
	\sum_{\mathfrak p\nmid\mathfrak q}
	\chi(\mathfrak p)w(\mathfrak p)
	\le
	\Bigl(
	\frac14+o_K(1)
	\Bigr)W.
	\end{equation}
  Since \(0\le f_{A_0}\le A_0\), we have
	$
	w(\mathfrak p)
	\le
	A_0\frac{\log N\mathfrak p}{N\mathfrak p}.
	$
		It is enough to
	show that the weighted sum outside the exceptional block of cosets is at
	least a fixed positive proportion of \(W\), say \(10^{-3}W\). That is, 
	 it suffices to show that 
	 \begin{equation}\label{suffice}
	 		\sum_{ {
	 				\mathfrak p\nmid\mathfrak q\atop
	 			[\mathfrak p]\notin
	 			\bigcup_{j=1}^{k+1}g^{k+j}H}}
	 	w(\mathfrak p)
	 	\ge
	 	10^{-3}W.
	 \end{equation}
	We now prove  this estimate  by contradiction. 
	
	First suppose that \(G/H\) is cyclic of order \(5\). Let \(gH\) generate
	\(G/H\), and suppose  for contradiction  that
	\[
	\sum_{{\mathfrak p\nmid\mathfrak q\atop
			[\mathfrak p]\notin g^2H\cup g^3H}}
	w(\mathfrak p)
	<
	10^{-3}W.
	\]
	Define \(\beta_0,\beta_1,\beta_2 \ge 0 \) by
	\[
	\beta_0 W
	:=
	\sum_{ {\mathfrak p\nmid\mathfrak q\atop
			[\mathfrak p]\in H}}
	w(\mathfrak p),\qquad 	\beta_1 W
	:=
	\sum_{ {\mathfrak p\nmid\mathfrak q\atop
			[\mathfrak p]\in gH\cup g^4H}}
	w(\mathfrak p), \qquad	\beta_2 W
	:=
	\sum_{ {\mathfrak p\nmid\mathfrak q\atop
			[\mathfrak p]\in g^2H\cup g^3H}}
	w(\mathfrak p).
	\]
	Then
	$
	\beta_0+\beta_1+\beta_2=1$, $
	\beta_0+\beta_1<10^{-3},
	$
	and hence
	$
	\beta_2>1-10^{-3}.
	$
	
	Let \(\chi\) be the character of \(G\) induced from \(G/H\) by
	\[
	\chi(g)=e^{2\pi i\cdot 2/5},
	\qquad
	\chi(h)=1\quad(h\in H).
	\]
	Then
	\[
	\Re
	\sum_{\mathfrak p\nmid\mathfrak q}
	\chi(\mathfrak p)w(\mathfrak p)
	=
	\Bigl(
	\beta_0
	+
	\beta_1\cos\frac{4\pi}{5}
	+
	\beta_2\cos\frac{2\pi}{5}
	\bigr)W   \ge
	\Bigl(
	10^{-3}\cos\frac{4\pi}{5}
	+
	(1-10^{-3})\cos\frac{2\pi}{5}
	\Bigr)W.
	\]
	The constant on the right  is $\approx 0.3079$ which is strictly larger than \(1/4\). This contradicts
\eqref{7.2} for \(Q\) sufficiently large. Therefore
		\begin{equation}\label{7.4}
	\sum_{ {\mathfrak p\nmid\mathfrak q\atop
			[\mathfrak p]\notin g^2H\cup g^3H}}
	w(\mathfrak p)
	\ge
	10^{-3}W.
	\end{equation}
	
	We now suppose that \(G/H\) is cyclic of order \(8\).  Let \(gH\) generate \(G/H\),
	and suppose  for contradiction  that
	\[
	\sum_{ {\mathfrak p\nmid\mathfrak q\atop
			[\mathfrak p]\notin g^3H\cup g^4H\cup g^5H}}
	w(\mathfrak p)
	<
	10^{-3}W.
	\]
	Define \(\beta_0,\dots,\beta_4  \geq 0 \) by
	\[
		\beta_0 W
	:=
	\sum_{{\mathfrak p\nmid\mathfrak q\atop
			[\mathfrak p]\in H}}
	w(\mathfrak p),\qquad 	\beta_4 W
	:=
	\sum_{{\mathfrak p\nmid\mathfrak q \atop
			[\mathfrak p]\in g^4H}}
	w(\mathfrak p),
\qquad
		\beta_i W
	:=
	\sum_{{\mathfrak p\nmid\mathfrak q \atop
			[\mathfrak p]\in g^iH\cup g^{8-i}H}}
	w(\mathfrak p), \, (i=1,2,3).
	\]
	Then
	\begin{equation}\label{7.6}
			\sum_{j=0}^4\beta_j=1,
		\qquad
		\beta_0+\beta_1+\beta_2<10^{-3}.
	\end{equation}

	Let \(\chi\) be the character induced from \(G/H\) by
	\[
	\chi(g)=e^{2\pi i/8},
	\qquad
	\chi(h)=1\quad(h\in H).
	\]
	Applying \eqref{7.2} to the non-principal character \(\chi^2\) and \(\chi^3\), we get
\begin{equation}\label{apply}
		\Re
	\sum_{\mathfrak p\nmid\mathfrak q}
	\chi(\mathfrak p)^2 w(\mathfrak p)
	\le
	\Bigl(\frac14+o_K(1)\Bigr)W,\qquad 	\Re
	\sum_{\mathfrak p\nmid\mathfrak q}
	\chi(\mathfrak p)^3 w(\mathfrak p)
	\le
	\Bigl(\frac14+o_K(1)\Bigr)W
\end{equation}
Since
	\(	\Re \chi(g^j)^2
	=
	\cos\frac{\pi j}{2},\)
 we have 
	\[
	\Re
	\sum_{\mathfrak p\nmid\mathfrak q}
	\chi(\mathfrak p)^2 w(\mathfrak p)
	=
	(\beta_0-\beta_2+\beta_4)W.
	\]
	Therefore, since $\beta_2< 10^{-3}$, 
	\begin{equation}\label{7.7}
			\beta_4
		\le
		\frac14+\beta_2+o_K(1)
		\le
		\frac14+10^{-3}+o_K(1).
	\end{equation}
	Combining \eqref{7.6} and \eqref{7.7}, we obtain
	\begin{equation}\label{7.8}
			\beta_3
		=
		1-\beta_0-\beta_1-\beta_2-\beta_4
		\ge
		\frac34-2\cdot10^{-3}-o_K(1).
	\end{equation}
Since
$
	\Re \chi(g^j)^3
	=
	\cos\frac{3\pi j}{4},
$  
	we have
	\[
		\Re
	\sum_{\mathfrak p\nmid\mathfrak q}
	\chi(\mathfrak p)^3 w(\mathfrak p)
=
	\Bigl(
	\beta_0
	-
	\frac{\beta_1}{\sqrt2}
	+
	\frac{\beta_3}{\sqrt2}
	-
	\beta_4
	\Bigr)W.
	\]
	Using \eqref{7.6}, \eqref{7.7}, and \eqref{7.8}, 
this is at least
	\[
\Bigl(
	-\frac{10^{-3}}{\sqrt2}
	+
	\frac{3/4-2\cdot10^{-3}}{\sqrt2}
	-
	\frac14
	-
	10^{-3}
	-o_K(1)
	\Bigr)W.
	\]
	The constant in the parentheses is $\approx 0.2772$ which is strictly larger than \(1/4\). This contradicts
	\eqref{apply} for \(Q\) sufficiently large. Therefore
	\begin{equation} \label{7.9}
			\sum_{ \mathfrak p\nmid\mathfrak q\atop
				[\mathfrak p]\notin g^3H\cup g^4H\cup g^5H}
		w(\mathfrak p)
		\ge
		10^{-3}W.
	\end{equation}
This finishes the proof of  \eqref{suffice} by \eqref{7.4} and \eqref{7.9}.  
\end{proof}
\begin{corollary} 
	\label{cor:dyadic-prime-escape}
	Under the hypotheses of Lemma~\ref{lem:prime-escape-rescaled}, let $\varepsilon>0$ be fixed and sufficiently small. Then 
	there exists
\(	\beta\in(2\varepsilon,4\alpha_0]\)
	such that
	\[
	\sum_{ 
			Q^{\beta-\varepsilon}<N\mathfrak p\le Q^\beta\atop
			[\mathfrak p]\notin
			\bigcup_{j=1}^{k+1}g^{k+j}H}
	\frac1{N\mathfrak p}
	\gg_{K,\alpha_0,\varepsilon}
	1.
	\]
\end{corollary}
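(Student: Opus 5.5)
The plan is to deduce the corollary from Lemma~\ref{lem:prime-escape-rescaled} by dyadic-type pigeonholing over the logarithmic scale. Write $A_0=4\alpha_0$ and let $\mathcal S$ be the set of prime ideals $\mathfrak p\nmid\mathfrak q$ with $[\mathfrak p]\notin\bigcup_{j=1}^{k+1}g^{k+j}H$. By \eqref{first},
\[
\sum_{\mathfrak p\in\mathcal S,\ N\mathfrak p\le Q^{A_0}}\frac{\log N\mathfrak p}{N\mathfrak p}\ge c_1\log Q
\]
for some $c_1=c_1(K,\alpha_0)>0$.

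First, I will discard the small primes. By Mertens' theorem for prime ideals (the estimate $\sum_{N\mathfrak p\le y}\log N\mathfrak p/N\mathfrak p=\log y+O_K(1)$ used in the proof of Lemma~\ref{lem:weighted-prime-ideal-sums-rescaled}), the primes with $N\mathfrak p\le Q^{2\varepsilon}$ contribute at most $(2\varepsilon+o(1))\log Q$. Since $\varepsilon$ is sufficiently small in terms of $c_1$, say $2\varepsilon<c_1/4$, the primes in $\mathcal S$ with $Q^{2\varepsilon}<N\mathfrak p\le Q^{A_0}$ still contribute at least $(c_1/2)\log Q$.

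Next, I will cover $(2\varepsilon,A_0]$ by the intervals $(\beta_i-\varepsilon,\beta_i]$ with $\beta_i=A_0-i\varepsilon$ for $0\le i\le M$, where $M=\lceil A_0/\varepsilon\rceil$; only those with $\beta_i>2\varepsilon$ are kept, together with the interval ending at the smallest admissible $\beta$ above $2\varepsilon$. There are $O(\alpha_0/\varepsilon)$ such intervals, so by pigeonhole one of them, with some $\beta\in(2\varepsilon,4\alpha_0]$, carries
\[
\sum_{\mathfrak p\in\mathcal S,\ Q^{\beta-\varepsilon}<N\mathfrak p\le Q^\beta}\frac{\log N\mathfrak p}{N\mathfrak p}\gg_{\alpha_0}\varepsilon\log Q.
\]
On this range $\log N\mathfrak p\le\beta\log Q\le A_0\log Q$, so dividing by $A_0\log Q$ gives
\[
\sum_{\mathfrak p\in\mathcal S,\ Q^{\beta-\varepsilon}<N\mathfrak p\le Q^\beta}\frac1{N\mathfrak p}\gg_{K,\alpha_0,\varepsilon}1,
\]
which is the corollary.

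The only mildly delicate point is the bookkeeping at the left endpoint: $\beta$ must lie strictly above $2\varepsilon$, while the window $(Q^{\beta-\varepsilon},Q^\beta]$ for the lowest admissible $\beta$ may reach below $Q^{2\varepsilon}$. This does no harm, since we only need the covering windows to contain the range $(Q^{2\varepsilon},Q^{A_0}]$, and overlapping windows only help the pigeonhole. The condition $\mathfrak p\nmid\mathfrak q$ is already built into \eqref{first}, so nothing further is needed there.
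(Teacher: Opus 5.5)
Your proposal is correct and follows essentially the same route as the paper's proof. You discard the primes with $N\mathfrak p\le Q^{2\varepsilon}$ via Mertens' theorem, pigeonhole over $O_{\alpha_0,\varepsilon}(1)$ windows $(Q^{\beta-\varepsilon},Q^\beta]$ covering $(Q^{2\varepsilon},Q^{4\alpha_0}]$, and then remove the $\log N\mathfrak p$ weight using $\log N\mathfrak p\le 4\alpha_0\log Q$. One small correction: the implied constant after pigeonholing depends on $K$ as well, through $c_1$, not only on $\alpha_0$.
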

\begin{proof}
		The contribution   in the range 
\(	N\mathfrak p\le Q^{2\varepsilon}\) in \eqref{first} 
	is
\(	O_K(\varepsilon\log Q).\)
	Choosing \(\varepsilon>0\) sufficiently small, Lemma \ref{lem:prime-escape-rescaled} gives
	\[
	\sum_{ 
			Q^{2\varepsilon}<N\mathfrak p\le Q^{4\alpha_0}\atop
			[\mathfrak p]\notin
			 \bigcup_{j=1}^{k+1}g^{k+j}H}
	\frac{\log N\mathfrak p}{N\mathfrak p}
		\gg_{K,\alpha_0,\varepsilon}
	\log Q.
	\]
Cover the interval \((Q^{2\varepsilon},Q^{4\alpha_0}]\) by
\(O_{\alpha_0,\varepsilon}(1)\) logarithmic intervals of the form  
	$
	(Q^{\beta-\varepsilon},Q^\beta].
	$
	Hence there exists at least one
	$
	\beta\in(2\varepsilon,4\alpha_0]
	$
	such that
	\[
	\sum_{ 
			Q^{\beta-\varepsilon}<N\mathfrak p\le Q^\beta\atop
			[\mathfrak p]\notin
		\bigcup_{j=1}^{k+1}g^{k+j}H}
	\frac{\log N\mathfrak p}{N\mathfrak p}
		\gg_{K,\alpha_0,\varepsilon}
	\log Q.
	\]
	Since $\log N\mathfrak p\asymp_{\alpha_0,\varepsilon}\log Q$ throughout
	this interval, the desired estimate follows.
\end{proof}
 
\section{Representation by three prime ideals}\label{three}

In this section, we prove Theorem~\ref{intro:three-prime-ray}, using
  exceptional quadratic estimates that will be established in the
next section. Throughout, we write
$
G=\operatorname{Cl}_{\mathfrak q}^{(\infty)}$,
$
Q=N\mathfrak q, 
$
and assume that \eqref{CS} and \eqref{Lb} hold for fixed parameters
\(0<\alpha_0\le \alpha<1\).

\begin{proof}[Proof of Theorem \ref{intro:three-prime-ray}]
		Let \(\kappa >0\) be fixed. We want to prove that if 
	$X\ge Q^{\max(1,3\alpha,4\alpha_0)+\kappa},$
	then
	$	E_3(X;\mathfrak q)=G.$
	Let $\varepsilon>0$ be fixed that is taken to be sufficiently small. 
	Set  
	\[
	\vartheta
	=
	1-\varepsilon-\alpha\frac{\log Q}{\log X },
	\qquad
	\vartheta_0
	=
	1-\varepsilon- {\alpha_0} \frac{\log Q}{\log X }.
	\]
	It suffices to take \(X= Q^{\max(1,3\alpha,4\alpha_0)+\kappa} \) 	 and  we will apply Proposition~\ref{prop:criteria-E2-E3}. 
	
	After decreasing \(\varepsilon\) if necessary, we may fix a small constant $\eta>0$, say $0<\eta < 10^{-3}$, such that 
	\begin{equation}\label{8.1}
			\frac{\vartheta}{2}-\frac{3\varepsilon}{2}
		>
	\frac13+\eta
	\end{equation}
	and
	\begin{equation}\label{8.2}
			\frac{\vartheta_0}{2}
		-
		3\varepsilon^{1/2}\frac{\vartheta_0}{\vartheta}
		>
		\frac38-\eta.
	\end{equation}

	If Proposition~\ref{prop:criteria-E2-E3}(a) holds, then by \eqref{8.1}
	we have
$\vartheta>\frac23+3\varepsilon,$
so we immediately obtain 
 \(E_3(X;\mathfrak q)=G.\)
	
	We may therefore assume that Proposition~\ref{prop:criteria-E2-E3}(b)
	holds. Let \(A',B'\subseteq G\) be the sets supplied there. By
	Proposition~\ref{prop:criteria-E2-E3}(b.i) and \eqref{8.1},
\begin{equation}\label{8.4}
		|A'|,\ |B'|
	\ge
	\Bigl(\frac13+\eta\Bigr)|G|.
\end{equation}
	
 We apply
	Lemma~\ref{lem:small-doubling-structure} with
\[
\alpha_*=\frac13+\eta,
\qquad
\alpha_*'=\frac38-\eta,
\qquad
\beta_*=\frac23.
\]
These parameters satisfy
$	\beta_*<2\alpha_*\le 2\alpha_*' .$
 Moreover, for $\varepsilon$ sufficiently small, 
 \( \frac1{2\alpha_*-\beta_*} =(2\eta)^{-1}\leq  \varepsilon^{-1/2}\). 
Hence,  Proposition~\ref{prop:criteria-E2-E3}(b.v) together with  \eqref{8.2} shows that
 \(A'\) and \(B'\) each meet at least proportion
\(\alpha_*'=\frac38-\eta\)
 of the cosets of  every subgroup \(H_0\le G\) with
 $
 [G:H_0]<\frac1{2\alpha_*-\beta_*}
 $. Indeed, it gives at least
 \(\alpha_*'\) proportion of cosets \(bH_0\) satisfying
\( |A'\cap B'\cap bH_0|\ge \frac{\varepsilon}{2}|G|,\)
 and each such coset is met by both \(A'\) and \(B'\).
 
	If Lemma~\ref{lem:small-doubling-structure}(a) holds, then
$
	|A'\cdot B'|
	\ge
 \frac23 |G|.
$
Together with \eqref{8.4}, this gives
	\[
	|A'\cdot B'|+|A'|\ge
	(1+ \eta)|G|.
	\]
	Hence, by the convolution lower bound from Lemma \ref{lem:convolution-product-lower-bound},  for any \(c\in G\),
	\[
	(1_{A'\cdot B'}*1_{A'})(c)\ge  \eta  |G|
	 .
	\]
	Proposition~\ref{prop:criteria-E2-E3}(b.iii) then gives
	$
	c\in E_3(X;\mathfrak q)
 $
	and therefore
	$
	E_3(X;\mathfrak q)=G.
	$
	
	We may therefore assume that Lemma~\ref{lem:small-doubling-structure}(b)
	holds.  	Let \(H\) be the stabilizer of \(A'\cdot B'\). By Lemma~\ref{lem:small-doubling-structure}(b.i)(b.ii),  \([G:H] <\frac{1}{2\alpha_*'- \beta_*}\), and there exists an integer \(k\ge0\) with \(
	\frac{k+1}{3k+2}\geq \alpha_*'\)
	such that
	$
	[G:H]=3k+2,
	$
	each of \(A'\) and \(B'\) meets exactly \(k+1\) cosets of \(H\), and
	\(A'\cdot B'\) is the union of exactly \(2k+1\) cosets of \(H\).
	
	We next show that in fact \(k\in\{0,1,2\}\). By
	Proposition~\ref{prop:criteria-E2-E3}(b.v) and \eqref{8.2}, the sets
	\(A'\) and \(B'\) each intersects more than
	$
	\bigl(\frac38-\eta\bigr)[G:H]
	$
	cosets of \(H\). Since each of \(A'\) and \(B'\) meets exactly \(k+1\)
	cosets of \(H\), we get
	$
	k+1
	\ge
	\bigl(\frac38-\eta\bigr)(3k+2),
	$
	which forces \(k\in\{0,1,2\}\).
	
	Moreover, the same estimate shows that the \(k+1\) cosets met by
	\(A'\) and the \(k+1\) cosets met by \(B'\) are the same. Indeed, the
	number of cosets in which \(A'\cap B'\) is non-empty is at least
	\[
	\Bigl\lceil
	\Bigl(\frac38-\eta\Bigr)(3k+2)
	\Bigr\rceil
	=
	k+1
	\qquad(k=0,1,2).
	\]	
	
	Let 
	$
	a_1H,\dots,a_{k+1}H
	$ be the \(H\)-cosets met by \(A'\) and \(B'\). Then, 	\(A',B'\subseteq S:=\bigcup_{j=1}^{k+1}a_jH  \) and  $A'\cdot B'\subset S^2$.   	Moreover, since \(A'\) and \(B'\) meet every \(H\)-coset contained in \(S\), the product 
	\(A'\cdot B'\) meets every \(H\)-coset contained in \(S^2\). Since \(H\) is the stabilizer of \(A'\cdot B'\), the set \(A'\cdot B'\) is a
	union of \(H\)-cosets. Therefore every \(H\)-coset contained in \(S^2\) is
	contained in \(A'\cdot B'\), and so
	$
	A'\cdot B'=S^2.
	$
Since \(A'\cdot B'=S^2\) is the union of exactly \(2k+1\) \(H\)-cosets, denote these cosets by
$
	b_1H,\dots,b_{2k+1}H.
$
	Then,
	$
	A'\cdot B'
	=
	\bigcup_{j=1}^{2k+1}b_jH
	=
	\Bigl(
	\bigcup_{j=1}^{k+1}a_jH
	\Bigr)^2.
	$
	Furthermore, Proposition~\ref{prop:criteria-E2-E3}(b.v) gives
	\[
		|A'\cap a_iH|\ge
	|A'\cap B'\cap a_iH|\ge \frac{\varepsilon}{2}|G|
	\qquad(1\le i\le k+1).
	\]

	We claim that
	\begin{equation}\label{8.6}
			\Bigl(
		\bigcup_{i=1}^{k+1}a_iH
		\Bigr)
		\cdot
		\Bigl(
		\bigcup_{j=1}^{2k+1}b_jH
			\Bigr)
		\subseteq
		E_3(X;\mathfrak q).
	\end{equation}
	Indeed, let \(c\in a_iH\cdot b_jH\). For every
	\(y\in A'\cap a_iH\), 
	$
	d:=y^{-1}c\in b_jH\subseteq A'\cdot B'$,  {and} $	cd^{-1}=y\in A'.
$
	Therefore
	\[
	(1_{A'\cdot B'}*1_{A'})(c)
	=
	\sum_{d\in A'\cdot B'}1_{A'}(cd^{-1})
	\ge
	|A'\cap a_iH|
	\ge \frac{\varepsilon}{2}|G|.
	\]
	By Proposition~\ref{prop:criteria-E2-E3}(b.iii), we obtain
	\(c\in E_3(X;\mathfrak q)\). This proves \eqref{8.6}.

	We now split into two cases.
	
	\medskip
	\noindent
	\textit{Case 1: \(k\in\{1,2\}\), or \(k=0\) and \(a_1\in H\).}
	
	Assume first that $
	\bigcup_{j=1}^{k+1}a_jH$
 {and} $
	\bigcup_{j=1}^{2k+1}b_jH
	$
	are not complements of each other. 
	Since there are exactly \(3k+2\)  cosets of $H$   in \(G\),  there exists a coset \(b_0H\)
	which intersects neither union.
	
	 Since $2< 3\vartheta_0<3$, the lower bound in \ref{rough_ideal_lemma} with $\gamma=1/3$ applies  and gives a positive lower bound
	 \[
	 \begin{aligned}
	 	\mathcal N_{1/3}(X;b_0,H)
	 	:=
	 	&\#\biggl\{
	 	\mathfrak n\subset \mathcal O_K :
	 	\begin{array}{l}
	 		N\mathfrak n\le X,\ [\mathfrak n]\in b_0H,\\[2pt]
	 		N\mathfrak p>X^{1/3}\text{ for every prime ideal }
	 		\mathfrak p\mid \mathfrak n
	 	\end{array}
	 	\biggr\}
	 	\\
	 	\ge\;&
	 	\bigl(1+o_K(1)\bigr)
	 	\frac{2\log(3\vartheta_0-1)}{Y\vartheta_0}
	 	\frac{X}{\log X}.
	 \end{aligned}
	 \]
	Since \(b_0H\) lies outside \(	\bigcup_{j=1}^{k+1}a_jH\), Proposition~\ref{prop:criteria-E2-E3}(b.vi)
	implies that
	\[
 	\pi (X;b_0H)
	:=
	\# \bigl\{
	\mathfrak p\subset\mathcal O_K:
	\mathfrak p\nmid\mathfrak q,\ 
	N\mathfrak p\le X,\ 
	[\mathfrak p]\in b_0H
	\bigr\}=O(\varepsilon X/\log X).
	\]
 Taking \(\varepsilon>0\) sufficiently small,
	we conclude that a positive proportion of the   ideals $\mathfrak{n}$  counted in \(	\mathcal N_{1/3}(X;b_0,H)\) are
	products of exactly two prime ideals.
	Each such ideal \(\mathfrak n\) may be written as
\(	\mathfrak n=\mathfrak p\mathfrak q\)
	with
	$
	X^{1/3}<N\mathfrak p,\ N\mathfrak q\le X^{2/3}.
	$
	Since \(b_0H\) is disjoint from
	\(\bigl(\bigcup_{j=1}^{k+1}a_jH\bigr)^2,\)
	the two prime factors cannot both have classes in
	\(\bigcup_{j=1}^{k+1}a_jH\). Hence at least one of them, say
	\(\mathfrak p\), satisfies
	$
	[\mathfrak p]\notin \bigcup_{j=1}^{k+1}a_jH.
	$
	Thus the number of such ideals is at most the number of pairs
	\((\mathfrak p,\mathfrak q)\) with
	\[
	X^{1/3}<N\mathfrak p\le X^{2/3},\qquad
	[\mathfrak p]\notin \bigcup_{j=1}^{k+1}a_jH,
	\qquad
	N\mathfrak q\le \frac{X}{N\mathfrak p}.
	\]
	For fixed \(\mathfrak p\), the number of possible \(\mathfrak q\) is
	$
	\ll_K \frac{X}{N\mathfrak p\log X},
	$
	since \(X/N\mathfrak p\in [X^{1/3},X^{2/3})\). Therefore
	\[
	\frac{X}{\log X}
	\ll_{K,\varepsilon}
	\sum_{
			X^{1/3}<N\mathfrak p\le X^{2/3}\atop
			[\mathfrak p]\notin \bigcup_{j=1}^{k+1}a_jH}
	\frac{X}{N\mathfrak p\,\log X}.
	\]
	Hence
	\[
	\sum_{
			X^{1/3}<N\mathfrak p\le X^{2/3}\atop
			[\mathfrak p]\notin  \bigcup_{j=1}^{k+1} a_jH}
	\frac1{N\mathfrak p}
	\gg_{K,\varepsilon} 1.
	\]
	Let $B:= 
	\frac{\log X}{\log Q}$. By dyadic decomposition, there exists
	$
	\beta\in
	\bigl[
	\frac{B}{3},
	\frac{2B}{3}
	\bigr]
	$
	and a coset \(a_0H\) outside \(\bigcup_{j=1}^{k+1}a_jH\) such that
	\begin{equation}\label{8.7}
		\sum_{ 
				Q^{\beta-\varepsilon}<N\mathfrak p\le Q^\beta\atop
				[\mathfrak p]\in a_0H}
		\frac1{N\mathfrak p}
		\gg _{K,\varepsilon}1.
	\end{equation}
	Since \(B\ge1+\kappa\), this \(\beta\) lies in the admissible range
	\((2\varepsilon,B]\) of Proposition~\ref{prop:criteria-E2-E3}(b.iv).
	
	Next we  consider the case that the two unions are complements.
	If \(k=0\) and \(a_1\in H\), this cannot happen. Thus we may assume
	\(k\in\{1,2\}\), corresponding  to \([G:H]=5\) and
	\([G:H]=8\). The quotient \(G/H\) is clearly cyclic when \([G:H]=5\).
	In the case \([G:H]=8\), the quotient \(G/H\) is also cyclic in the
	complementary situation.\footnote{Let \(\Gamma=G/H\) and
	write \(S=\{x,y,z\}\). Then \(|S^2|=5\) and \(S\cap S^2=\varnothing\).
		If \(\Gamma\simeq C_2^3\), then \(|S^2|\le4\), impossible. If
		\(\Gamma\simeq C_4\times C_2\), the square map has image of size \(2\), so
		two of \(x^2,y^2,z^2\) are equal, say \(x^2=y^2\).  Since \(|S^2|=5\), this
		is the only repetition among the six unordered products. Thus
		\(S^2=\{x^2,z^2,xy,xz,yz\}\). Multiplying the elements of
		\(\Gamma=S\sqcup S^2\) gives
		$
		1=(xyz)(x^2z^2xyxz yz)=xyzx^2,
		$
		where we used \(x^2=y^2\). Hence \(yz=x\), contradicting
		\(S\cap S^2=\varnothing\).} 
		A direct check in the cyclic groups of orders \(5\) and \(8\) shows that 
		$
		S=\bigcup_{j=1}^{k+1}g^{k+j}H.
		$ for some generator
		\(gH\) of \(G/H\).
		Therefore, by Corollary~\ref{cor:dyadic-prime-escape}, there exist
	$
	\beta\in(2\varepsilon,4\alpha_0]
	$
	and a coset \(a_0H\) outside
	$
	\bigcup_{j=1}^{k+1}a_jH
	$
	such that
	\begin{equation}\label{8.8}
			\sum_{ 
				Q^{\beta-\varepsilon}<N\mathfrak p\le Q^\beta\atop
				[\mathfrak p]\in a_0H}
		\frac1{N\mathfrak p}
		\gg _{K,\varepsilon }1.
	\end{equation}	
	Since \(B>4\alpha_0\), this \(\beta\) also lies in the admissible range of
	Proposition~\ref{prop:criteria-E2-E3}(b.iv).
	
	In either subcase, let
$
	c\in
	a_0H\cdot
	\bigl(
	\bigcup_{j=1}^{2k+1}b_jH
	\bigr).
$
	Then \(c\in a_0H\cdot b_jH\) for some \(j\). Hence, for every prime ideal
	\(\mathfrak p\) with \([\mathfrak p]\in a_0H\), we have
	$
	[\mathfrak p]^{-1}c\in b_jH\subseteq A'\cdot B'.
	$
	Therefore, either \eqref{8.7} or \eqref{8.8} gives
	\[
	\sum_{ 
			[\mathfrak p]d=c,\,d\in A'\cdot B'\atop
			Q^{\beta-\varepsilon}<N\mathfrak p\le Q^\beta
			}
	\frac1{N\mathfrak p}
	\gg _{K,\varepsilon}1 
	\]
	with $\beta$ in the admissible range of 
 Proposition~\ref{prop:criteria-E2-E3}(b.iv). By Proposition~\ref{prop:criteria-E2-E3}(b.iv),
	\begin{equation}	\label{8.9}
		a_0H\cdot
	\biggl(
	\bigcup_{j=1}^{2k+1}b_jH
	\biggr)
	\subseteq
	E_3(X;\mathfrak q).
	\end{equation}
 
Combining \eqref{8.6} and \eqref{8.9}, we have
\[
\biggl(
\Bigl(\bigcup_{i=1}^{k+1}a_iH\Bigr)
\cup a_0H
\biggr)
\cdot
\biggl(
\bigcup_{j=1}^{2k+1}b_jH
\biggr)
\subseteq E_3(X;\mathfrak q).
\]
Since \(a_0H\) is outside
\(\bigcup_{i=1}^{k+1}a_iH\), the two factors contain \(k+2\) and
\(2k+1\) cosets of \(H\), respectively. Hence
\[
\biggl|
\biggl(\bigcup_{i=1}^{k+1}a_iH\biggr)\cup a_0H
\biggr|
+
\biggl|
\bigcup_{j=1}^{2k+1}b_jH
\biggr|
=
(3k+3)|H|
>
|G|.
\]
Therefore their product is all of \(G\). It follows that 
$
E_3(X;\mathfrak q)=G.
$
	
\medskip
\noindent
\textit{Case 2: \(k=0\) and \(a_1\notin H\).}

In this case,  \(H\) has index \(2\), and
$
A',B'\subseteq a_1H$,
$
A'\cdot B'=H
$.
Let \(\psi\) be the quadratic character of \(G\) with kernel \(H\). Thus \(\psi(\mathfrak p)=1\) if and only if \( [\mathfrak p]\in H\). 

By \eqref{8.6}, we already know that
$
	a_1H\subseteq E_3(X;\mathfrak q).
$
It remains to prove that
$
H\subseteq E_3(X;\mathfrak q).
$
We will need Lemma \ref{lem:quadratic-prime-dichotomy} and Corollary \ref{cor:quadratic-transference-AprimeBprime} from Section \ref{sec:exceptional}. 

 Let
	\[B_0:=\max(1,3\alpha,4\alpha_0)+\frac{\kappa}{2}.\]
	Then \(Q^{B_0}<  X\), and
	$
	B_0>2\alpha+\alpha_0$,
	$
	B_0>2\alpha_0.
	$
	Thus, we may choose
	\[
	\beta_*\in
	\bigl(\max\{2\alpha,B_0/2\},\,B_0-\alpha_0\bigr)
	\]
	and then choose \(c_*>0\) sufficiently small so that
	$
	2\alpha+2c_*\beta_*<B_0.
	$
	After decreasing \(\varepsilon\) if necessary, we may assume
	$
	\beta_*>2\alpha+5\varepsilon$ and 
	$
	\beta_*+2\varepsilon<B_0-\alpha_0.
	$
	
	We apply Lemma~\ref{lem:quadratic-prime-dichotomy} with these parameters. 
Suppose first that Lemma~\ref{lem:quadratic-prime-dichotomy}(i) holds. Then
\[
\sum_{ 
		Q^{\alpha+\varepsilon}<N\mathfrak p\le Q^{\beta_*}\atop
		\psi(\mathfrak p)=1}
\frac1{N\mathfrak p}
\ge c_*.
\]
By decomposing the interval
$
(Q^{\alpha+\varepsilon},Q^{\beta_*}]
$
into dyadic intervals of the form
$
(Q^{\beta-\varepsilon},Q^\beta]
$,
we find some
$
\beta\in(2\varepsilon,\beta_*]
$
such that
\begin{equation}\label{8.13}
	\sum_{ 
			Q^{\beta-\varepsilon}<N\mathfrak p\le Q^\beta\atop
			\psi(\mathfrak p)=1 }
	\frac1{N\mathfrak p}
	\gg_\varepsilon 1.
\end{equation}
Since
$
\beta_*< B_0<B
$, this \(\beta\) lies in the admissible range
\((2\varepsilon,B]\) of Proposition~\ref{prop:criteria-E2-E3}(b.iv).
For any \(c\in H\), if \(\psi(\mathfrak p)=1\), then
\([\mathfrak p]\in H\)  and hence
$
[\mathfrak p]^{-1}c\in H=A'\cdot B'.
$
Therefore \eqref{8.13} implies
\[
\sum_{ 
	[\mathfrak p]d=c,\,d\in A'\cdot B'\atop
	Q^{\beta-\varepsilon}<N\mathfrak p\le Q^\beta
}
\frac1{N\mathfrak p}
\ge 	\sum_{ 
		Q^{\beta-\varepsilon}<N\mathfrak p\le Q^\beta\atop
		\psi(\mathfrak p)=1} 
\frac1{N\mathfrak p}
\gg_\varepsilon 1.
\]
By Proposition~\ref{prop:criteria-E2-E3}(b.iv), we have 
$
c\in E_3(X;\mathfrak q).
$
Since $c$ was arbitrary,  
$
H\subseteq E_3(X;\mathfrak q).
$

Suppose now that Lemma~\ref{lem:quadratic-prime-dichotomy}(ii) holds. Then
there exists
$
M\in[Q^{\beta_*},Q^{B_0}]
$
such that
\begin{equation}\label{8.14}
	\#\{
	\mathfrak p:
	M<N\mathfrak p\le2M,\ 
	\psi(\mathfrak p)=1
	\}
	\gg_{K ,\varepsilon}
	M\rho_K L(1,\psi)\mathcal V_\psi(Q).
\end{equation}
By the choice of $B_0$ and $\beta_*$, $M$ lies in the admissible range of  
Corollary~\ref{cor:quadratic-transference-AprimeBprime}.

Similarly, for every \(c\in H\), if \(\psi(\mathfrak p)=1\), then
$
[\mathfrak p]^{-1}c\in H=A'\cdot B',
$
and so \eqref{8.14} gives
\[
\sum_{[\mathfrak p]d=c,\ M<N\mathfrak p\le 2M 
	\atop
\psi(\mathfrak p)=1	,\  d\in A'\cdot B'}
1
\gg_{K,\varepsilon}
M\rho_K L(1,\psi)\mathcal V_\psi(Q).
\]
Corollary~\ref{cor:quadratic-transference-AprimeBprime} then gives
$
c\in E_3(X;\mathfrak q).
$
Again \(c\in H\) was arbitrary, so
$
H\subseteq E_3(X;\mathfrak q).
$
\end{proof}

	\section{The exceptional quadratic case}\label{sec:exceptional}
	Let \(\psi\) be a quadratic Hecke character modulo \(\mathfrak q\). We write
	\[
	(1*\psi)(\mathfrak a)
	:=
	\sum_{\mathfrak d\mid \mathfrak a}\psi(\mathfrak d),
	\]
	which is nonnegative on integral ideals.

	As in Section \ref{three}, we assume throughout \eqref{CS} and \eqref{Lb} for fixed parameters $0<\alpha_0\le \alpha<1$. 
	Note that \eqref{Lb} implies \eqref{CS^b}.

	We shall use the ideal-theoretic von Mangoldt function
	\begin{equation}\label{mango}
			\Lambda_K(\mathfrak a)
		:=
		\begin{cases}
			\log N\mathfrak p, & \mathfrak a=\mathfrak p^m,\\
			0, & \text{otherwise},
		\end{cases}
	\end{equation}
	so that
	\[
	\sum_{\mathfrak d\mid\mathfrak a}\Lambda_K(\mathfrak d)
	=
	\log N\mathfrak a.
	\]
	
	Finally, define
	\[
	\mathcal V_\psi(Q)
	:=
	\prod_{\mathfrak p\mid\mathfrak q}
	\Bigl(1-\frac1{N\mathfrak p}\Bigr)
	\prod_{{
			2<N\mathfrak p\le Q\atop
			\psi(\mathfrak p)=1}}
	\Bigl(1-\frac2{N\mathfrak p}\Bigr).
	\]
 
	We shall use the standard beta-sieve fundamental lemma in sieve dimension
	\(\varkappa\); see \cite[Lem.~6.8]{opera}. Although it is often stated for
	rational integers, its proof is purely combinatorial and depends only on
	unique factorization into primes. Since the nonzero integral ideals of \(K\)
	form a free commutative monoid generated by the prime ideals, the same
	argument applies verbatim to ideals, with the norm \(N\mathfrak d\) replacing
	the integer \(d\).
	
	\begin{lemma}[Fundamental lemma of the sieve over ideals]
		\label{lem:fundamental-sieve-ideals}
		Let \(\varkappa\ge1\) be fixed. Let \(z\ge2\), put
		$
		\mathcal P(z):=\prod_{N\mathfrak p<z}\mathfrak p,
		$
		and let \(D=z^s\), where \(s\ge 9\varkappa+1\). There exist coefficients
		\(\lambda_{\mathfrak d}^{\pm}\), supported on squarefree ideals
		\(\mathfrak d\mid\mathcal P(z)\) with \(N\mathfrak d\le D\), such that
		$
		|\lambda_{\mathfrak d}^{\pm}|\le1
		$
		and the following hold.
		
		\begin{enumerate}
			\item[(i)]
			For every integral ideal \(\mathfrak n\),
			\[
		\sum_{\mathfrak d\mid\mathfrak n}
		\lambda_{\mathfrak d}^{-}
		\le
		1_{(\mathfrak n,\mathcal P(z))=1}
		\le
		\sum_{\mathfrak d\mid\mathfrak n}
		\lambda_{\mathfrak d}^{+}.
		\]
			
			\item[(ii)]
			Let \(h\) be a multiplicative function on squarefree ideals, with
			\(0\le h(\mathfrak p)<1\). Assume that for some \(K_0\ge1\),
			\[
			\prod_{w\le N\mathfrak p<y}
			\left(1-h(\mathfrak p)\right)^{-1}
			\le
			K_0
			\Bigl(\frac{\log y}{\log w}\Bigr)^{\varkappa}
			\]
			for all \(2\le w\le y \). Then
			\[
			\sum_{\mathfrak d\mid\mathcal P(z)}
			\lambda_{\mathfrak d}^{+}h(\mathfrak d)
			\le
			\bigl(1+O_{\varkappa,K_0}(e^{-s})\bigr)
			\prod_{N\mathfrak p<z}(1-h(\mathfrak p)),
			\]
			and
			\[
			\sum_{\mathfrak d\mid\mathcal P(z)}
			\lambda_{\mathfrak d}^{-}h(\mathfrak d)
			\ge
			\bigl(1-O_{\varkappa,K_0}(e^{-s})\bigr)
			\prod_{N\mathfrak p<z}(1-h(\mathfrak p)).
			\]
		\end{enumerate}
	\end{lemma}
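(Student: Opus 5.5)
The plan is to transport the classical proof of the beta-sieve fundamental lemma \cite[Lem.~6.8]{opera} to ideals. The only arithmetic input that proof uses is that the integers form a free commutative monoid on the primes, so a squarefree divisor of \(\mathcal P(z)\) is a finite set of primes. I first fix the total order \(\le_K\) on prime ideals already used for Lemma~\ref{sieve_lemma}, and list the primes dividing \(\mathcal P(z)\) as \(\mathfrak p_1>_K\mathfrak p_2>_K\cdots\). Each squarefree \(\mathfrak d\mid\mathcal P(z)\) then has a unique decreasing factorization \(\mathfrak d=\mathfrak p_{i_1}\cdots\mathfrak p_{i_r}\) with \(\mathfrak p_{i_1}>_K\cdots>_K\mathfrak p_{i_r}\). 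I define the beta-sieve supports \(\mathcal D^{\pm}\) exactly as in \cite{opera}, with \(\beta=9\varkappa+1\). A divisor \(\mathfrak d\) belongs to \(\mathcal D^{+}\) iff \(\mathfrak p_{i_1}\cdots\mathfrak p_{i_{m-1}}(N\mathfrak p_{i_m})^{\beta+1}<D\) for every odd \(m\). The set \(\mathcal D^{-}\) is defined by the same condition for every even \(m\). I then set \(\lambda^{\pm}_{\mathfrak d}=\mu(\mathfrak d)1_{\mathcal D^{\pm}}(\mathfrak d)\).

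With these choices \(|\lambda^{\pm}_{\mathfrak d}|\le1\) is automatic. The support condition \(N\mathfrak d\le D\) follows by the usual induction on the number of prime factors.

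Part (i) comes from Buchstab's identity. For \(\mathfrak n\) an integral ideal, write \(\mathfrak p(\mathfrak d)\) for the \(\le_K\)-smallest prime factor of \(\mathfrak d\). Then
\[
\sum_{\mathfrak d\mid(\mathfrak n,\mathcal P(z))}\lambda^{\pm}_{\mathfrak d}
=1_{(\mathfrak n,\mathcal P(z))=1}
\mp\!\!\sum_{\mathfrak d\notin\mathcal D^{\pm}}^{\flat}\! 1_{(\mathfrak n,\mathcal P(\mathfrak p(\mathfrak d)))=1}1_{\mathfrak d\mid\mathfrak n}.
\]
Here \(\flat\) restricts to those \(\mathfrak d\) whose first failure of the defining condition occurs at the last prime \(\mathfrak p(\mathfrak d)\). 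The correction term has the right sign, so this identity gives (i) for every \(\mathfrak n\). It is a purely formal telescoping in the divisor lattice and transfers verbatim.

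For part (ii), expanding \(\sum\lambda^{\pm}_{\mathfrak d}h(\mathfrak d)\) by the same identity gives
\[
\prod_{N\mathfrak p<z}(1-h(\mathfrak p))\pm \sum_{\mathfrak d}^{\flat}h(\mathfrak d)\prod_{N\mathfrak p<N\mathfrak p(\mathfrak d)}(1-h(\mathfrak p)).
\]
(Primes of equal norm are handled by \(\le_K\); only the norm inequalities enter the estimates.) The error is then bounded following \cite{opera}. The dimension hypothesis yields \(V(w)/V(z)\le K_0(\log z/\log w)^{\varkappa}\), and one uses a Rankin-type bound on the sum over \(\mathfrak d\) with exactly \(n\) prime factors violating the beta-condition. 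Summing over \(n\) gives a total of \(O_{\varkappa,K_0}(e^{-s})\prod(1-h(\mathfrak p))\) once \(s\ge 9\varkappa+1\).

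The main obstacle is the integrity of this last estimate. It uses only the multiplicative \(h\), the norms \(N\mathfrak p\), and the hypothesised product bound over primes in \([w,y)\), with no counting of ideals up to \(x\). So the constants depend only on \(\varkappa\) and \(K_0\) and not on \(K\). The point I would check carefully is that ties among primes of equal norm never break the monotonicity used in the iterated inequalities \(N\mathfrak p_{i_m}^{\beta+1}\mathfrak p_{i_1}\cdots<D\). I would handle this by running the induction on the \(\le_K\)-order, which is compatible with the norm order as arranged for Lemma~\ref{sieve_lemma}.
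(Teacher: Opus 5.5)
Your proposal is correct and follows the paper's route. The paper gives no separate argument: it cites \cite[Lem.~6.8]{opera} and notes that the beta-sieve construction and its error analysis use only unique factorization and the dimension hypothesis, so they carry over verbatim to ideals with $N\mathfrak d$ in place of $d$. Your $\le_K$ tie-breaking, together with the observation that only norm inequalities enter the estimates, is exactly what makes that ``verbatim'' claim precise.

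One slip to fix: in your identity for (i) the sign should be $\pm$, not $\mp$. A boundary divisor for $\mathcal D^{+}$ (resp.\ $\mathcal D^{-}$) has an odd (resp.\ even) number of prime factors, so the nonnegative correction is added for the upper sieve and subtracted for the lower one. This matches the sign in your display for (ii).
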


	\begin{lemma}
		\label{lem:one-star-psi}
		Let \(\psi\) be a non-principal quadratic Hecke character modulo \(\mathfrak q\).
		For every \(\varepsilon>0\), there exists
		\(\eta=\eta(\varepsilon)>0\) such that, whenever
		$x\ge Q^{\alpha_0+\varepsilon},$
		we have
		\[
		\sum_{N\mathfrak a\le x}(1*\psi)(\mathfrak a)
		=
		\rho_K L(1,\psi)x
		+
		O_{K,\varepsilon}(x^{1-\eta}).
		\]
	\end{lemma}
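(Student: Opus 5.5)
We prove the estimate with the Dirichlet hyperbola method over ideals. The key input is the bounded-order character sum estimate \eqref{CS^b} with \(\ell=2\): it applies to \(\psi\), which is non-principal of order \(2\), and to the principal character through Lemma~\ref{lem:ideal-counting-prime-to-q}. Set \(y:=Q^{\alpha_0+\varepsilon/2}\) and \(z:=x/y\). Since \(x\ge Q^{\alpha_0+\varepsilon}\), we have \(z\ge Q^{\varepsilon/2}\ge1\). Writing \(\mathfrak a=\mathfrak d\mathfrak m\), we have
\[
\sum_{N\mathfrak a\le x}(1*\psi)(\mathfrak a)=\sum_{N\mathfrak d\,N\mathfrak m\le x}\psi(\mathfrak d).
\]
Split according to \(N\mathfrak d\le z\) or \(N\mathfrak m\le y\). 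The overlap of the two ranges is subtracted once, giving
\[
\sum_{N\mathfrak d\le z}\psi(\mathfrak d)\,\#\{\mathfrak m:N\mathfrak m\le x/N\mathfrak d\}
+\sum_{N\mathfrak m\le y}\Psi\bigl(x/N\mathfrak m\bigr)
-\Psi(z)\,\#\{\mathfrak m:N\mathfrak m\le y\},
\]
where \(\Psi(T):=\sum_{N\mathfrak d\le T}\psi(\mathfrak d)\).

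The second and third terms are handled directly by \eqref{CS^b}. For \(N\mathfrak m\le y\) we have \(x/N\mathfrak m\ge z\). The issue is that \(z\) is only guaranteed to be \(\ge Q^{\varepsilon/2}\), which may be below the threshold \(Q^{\alpha_0+\varepsilon/4}\). So the split point must be chosen differently: take \(y=z=\sqrt x\). Then \(\sqrt x\ge Q^{(\alpha_0+\varepsilon)/2}\), and this still need not exceed \(Q^{\alpha_0}\). Instead, put the bulk on the \(\mathfrak d\)-side. Choose \(z:=Q^{\alpha_0+\varepsilon/2}\), which is at most \(x\), and \(y:=x/z\ge Q^{\varepsilon/2}\). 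The terms then become:
\begin{itemize}
\item[(a)] \(\sum_{N\mathfrak m\le y}\Psi(x/N\mathfrak m)\). Here \(x/N\mathfrak m\ge z\), so \eqref{CS^b} gives each term \(\ll (x/N\mathfrak m)^{1-\eta_2}\). Summing yields \(\ll x^{1-\eta_2}y^{\eta_2}\).
\item[(b)] The overlap term \(\Psi(z)\cdot\#\{N\mathfrak m\le y\}\). Using \(\Psi(z)\ll z^{1-\eta_2}\), this is \(\ll z^{1-\eta_2}y=x z^{-\eta_2}\).
\item[(c)] \(\sum_{N\mathfrak d\le z}\psi(\mathfrak d)\bigl(\rho_K x/N\mathfrak d+O(x^{1-1/n_K}N\mathfrak d^{-(1-1/n_K)})\bigr)\), using \eqref{ideal_counting}. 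The error contributes \(\ll x^{1-1/n_K}z^{1/n_K}\). The main part is \(\rho_K x\sum_{N\mathfrak d\le z}\psi(\mathfrak d)/N\mathfrak d\). By partial summation and \eqref{CS^b}, which applies to every \(T\ge z\), this equals \(\rho_K xL(1,\psi)+O(xz^{-\eta_2})\).
\end{itemize}
Here we used that \(\sum_{\mathfrak d}\psi(\mathfrak d)N\mathfrak d^{-s}\) converges at \(s=1\) to \(L(1,\psi)\). This is interpreted as the imprimitive \(L\)-function with the Euler factors at \(\mathfrak p\mid\mathfrak q\) removed, which matches the paper's convention that \(\psi\) vanishes off ideals coprime to \(\mathfrak q\).

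All three error terms are power savings. In (a), \(y=x/z\le x\), so \(x^{1-\eta_2}y^{\eta_2}\le x\) is too weak: a genuine saving needs \(y\le x^{1-c}\). Since \(z\ge Q^{\alpha_0}\) is a positive power of \(Q\), this holds only if \(x\le Q^{C}\). The statement is uniform in large \(x\), so the split must be adjusted. Take \(z:=\max(Q^{\alpha_0+\varepsilon/2},x^{1/2})\). If \(x^{1/2}\ge Q^{\alpha_0+\varepsilon/2}\), the choice \(y=z=\sqrt x\) gives the standard symmetric hyperbola bound \(O(x^{1-\eta_2/2})\). Otherwise \(x<Q^{2\alpha_0+\varepsilon}\), and \(z=Q^{\alpha_0+\varepsilon/2}\ge x^{c}\) with \(c=(\alpha_0+\varepsilon/2)/(2\alpha_0+\varepsilon)=1/2\). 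This gives \(z\ge x^{1/2}\ge x^{c}\) and \(z\le x^{(\alpha_0+\varepsilon/2)/(\alpha_0+\varepsilon)}=x^{1-c'}\) with \(c'>0\) depending on \(\varepsilon\). Hence \(y\le x^{1/2}\) and \(z\le x^{1-c'}\). Then (a) is \(\ll x^{1-\eta_2/2}\), (b) is \(\ll x^{1-\eta_2/2}\), and the error in (c) is \(\ll x^{1-c'/n_K}\). The main obstacle is exactly this balancing of \(z\) so that both \(z\) and \(x/z\) leave room for saving while \(z\) stays above the \eqref{CS^b} threshold. The remaining steps are routine partial summation.

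Taking \(\eta:=\min(\eta_2/2,c'/n_K)\) completes the proof.
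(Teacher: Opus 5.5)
Your final argument is correct and uses the same decomposition as the paper. The paper splits according to $N\mathfrak c\le T_0$ versus $N\mathfrak c>T_0$, which is exactly your hyperbola identity with $z=T_0$, $y=x/T_0$; like you, it uses \eqref{ideal_counting} on the short $\psi$-range, and \eqref{CS^b} for the long $\psi$-sums and for the tail of $L(1,\psi)$.

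The only difference is the split point. The paper takes $T_0=x^{\theta}$ with $\theta=(\alpha_0+\varepsilon/2)/(\alpha_0+\varepsilon)$. For every $x\ge Q^{\alpha_0+\varepsilon}$ this $T_0$ is at least $Q^{\alpha_0+\varepsilon/2}$ and at most $x^{1-c}$, so your case split via $\max(Q^{\alpha_0+\varepsilon/2},x^{1/2})$, and the earlier choices you discarded, are not needed.
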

	
	\begin{proof}
		Write
		\[
		S(x)
		:=
		\sum_{N\mathfrak a\le x}(1*\psi)(\mathfrak a)
		=
		\sum_{N\mathfrak b\,N\mathfrak c\le x}\psi(\mathfrak c).
		\]
		Let
	$
	\theta:=\frac{\alpha_0+\varepsilon/2}{\alpha_0+\varepsilon}$, 
	 $
	T_0:=x^\theta .
	$
	We split the sum according to whether \(N\mathfrak c\le T_0\) or
	\(N\mathfrak c>T_0\). This gives
		\begin{equation}\label{eq_S(x)}
				S(x)
			=
			\sum_{N\mathfrak c\le T_0}
			\psi(\mathfrak c)
			\#\{\mathfrak b:N\mathfrak b\le x/N\mathfrak c\}
			+
			\sum_{N\mathfrak b\le x/T_0}
			\sum_{T_0<N\mathfrak c\le x/N\mathfrak b}
			\psi(\mathfrak c).
		\end{equation}
		Recall the
			ideal-counting estimate \cite[Thm.~VI.3.3]{lang}
	$	\#\{\mathfrak a:N\mathfrak a\le T\}
	=
	\rho_K T+O_K\bigl(T^{1-1/n_K}\bigr).$
		Let \(\delta_K:=1/n_K\). Then
		\[
			\sum_{N\mathfrak c\le T_0}
			\psi(\mathfrak c)
			\#\bigl\{\mathfrak b:N\mathfrak b\le x/N\mathfrak c\bigr\}
		 =
			\rho_K x
			\sum_{N\mathfrak c\le T_0}
			\frac{\psi(\mathfrak c)}{N\mathfrak c}
		 +
			O_K \Bigl(
			x^{1-\delta_K}
			\sum_{N\mathfrak c\le T_0}
			(N\mathfrak c)^{-1+\delta_K}
			\Bigr),
		\]
		where the error term is
		$
		\ll_K
		x^{1-\delta_K}T_0^{\delta_K}
	=
	x^{1-\delta_K (1-\theta)}
		=
		x^{1-\eta_1} 
		$
		with \(\eta_1=\delta_K (1-\theta)>0 \).
		
		Since \(x\ge Q^{\alpha_0+\varepsilon}\), we have
		\(T_0\ge Q^{\alpha_0+\varepsilon/2}\).
		Hence, by \eqref{CS^b} applied with \(\varepsilon/2\), there exists
		\(\eta_2=\eta_2(\varepsilon)>0\) such that
		\begin{equation}\label{eq_A(T)}
				A(T):=\sum_{N\mathfrak a\le T}\psi(\mathfrak a)
			\ll_{K,\varepsilon} T^{1-\eta_2}
			\qquad (T\ge T_0).
		\end{equation}
		Partial summation gives, for \(Y>T_0\),
		\[
		\sum_{T_0<N\mathfrak c\le Y}\frac{\psi(\mathfrak c)}{N\mathfrak c}
		=
		\frac{A(Y)}{Y}-\frac{A(T_0)}{T_0}
		+\int_{T_0}^{Y}\frac{A(t)}{t^2}\,dt
		\ll_{K,\varepsilon} T_0^{-\eta_2}.
		\]
		Letting \(Y\to\infty\), we obtain
		$
		\sum_{N\mathfrak c>T_0}\frac{\psi(\mathfrak c)}{N\mathfrak c}
		\ll_{K,\varepsilon} T_0^{-\eta_2}.
		$
		Hence
		\[
		\sum_{N\mathfrak c\le T_0}
		\frac{\psi(\mathfrak c)}{N\mathfrak c}
		=
		L(1,\psi)+O_{K,\varepsilon}(x^{-\theta\eta_2}).
		\]
		  Therefore the first sum  in \eqref{eq_S(x)} is
		\begin{equation}\label{first_sum}
			\sum_{N\mathfrak c\le T_0}
			\psi(\mathfrak c)
			\#\bigl\{\mathfrak b:N\mathfrak b\le x/N\mathfrak c\bigr\}=
			\rho_K L(1,\psi)x + O_{K,\varepsilon}\bigl(x^{1-\min(\eta_1,\theta\eta_2)}\bigr).
		\end{equation}
		
		For the second part,   since
		\(x/N\mathfrak b\ge T_0\), by \eqref{eq_A(T)}, we have 
		\[
		\sum_{T_0<N\mathfrak c\le x/N\mathfrak b}
		\psi(\mathfrak c)
		=A(x/N\mathfrak b)-A(T_0)
		\ll_{K,\varepsilon}
		\Bigl(\frac{x}{N\mathfrak b}\Bigr)^{1-\eta_2}
		+
		T_0^{1-\eta_2}.
		\]
		
		Therefore, the second sum in \eqref{eq_S(x)} is
		\begin{align}
			\sum_{N\mathfrak b\le x/T_0}
			\sum_{T_0<N\mathfrak c\le x/N\mathfrak b}\psi(\mathfrak c)
			&\ll_{K,\varepsilon}
			x^{1-\eta_2}
			\sum_{N\mathfrak b\le x/T_0}(N\mathfrak b)^{-1+\eta_2}
			+
			T_0^{1-\eta_2}\#\bigl\{N\mathfrak b\le x/T_0\bigr\}\nonumber  \\
			&\ll_{K,\varepsilon}
			x^{1-\eta_2}\Bigl(\frac{x}{T_0}\Bigr)^{\eta_2}
			+
			T_0^{1-\eta_2}\frac{x}{T_0}  \,\,\ll
			x ^{1-\theta\eta_2}. \label{second_sum}
		\end{align}
		Let \(\eta=\min\{\eta_1,\theta\eta_2\}  >0\).  
		Combining \eqref{first_sum} and \eqref{second_sum}, we have  \[
		S(x) = 	\rho_K L(1,\psi)x + O_{K,\varepsilon}(x^{1- \eta }).
		\]  
	\end{proof}
	
	\begin{lemma}
		\label{lem:divisible-one-star-psi}
		Let \(\psi\) be a non-principal quadratic Hecke character modulo
		\(\mathfrak q\). For every \(\varepsilon>0\), there exists
		\(\eta=\eta(\varepsilon)>0\) such that, uniformly for every nonzero
		integral ideal \(\mathfrak d\), whenever
		$
		\frac{x}{N\mathfrak d}\ge Q^{\alpha_0+\varepsilon},
		$
		we have
		\[
		\sum_{{N\mathfrak a\le x\atop
				\mathfrak d\mid\mathfrak a}}
		(1*\psi)(\mathfrak a)
		=
		\rho_K L(1,\psi)\,h(\mathfrak d)\,x
		+
		O_{K,\varepsilon}\biggl(
		\tau_3(\mathfrak d)
		\Bigl(\frac{x}{N\mathfrak d}\Bigr)^{1-\eta}
		\biggr),
		\]
		where   \(	h \) is defined on nonzero integral ideals by
		\[
		h(\mathfrak d)
		:=
		\sum_{\mathfrak d=\mathfrak d_1\mathfrak f\mathfrak g}
		\frac{\mu(\mathfrak f)\psi(\mathfrak f\mathfrak g)}
		{N\mathfrak d_1 (N\mathfrak f)^2 N\mathfrak g}.
		\]
	Moreover, $h$ is multiplicative and for every prime ideal \(\mathfrak p\),
\begin{equation}\label{h(p)}
		h(\mathfrak p)
	=
	\frac{1+\psi(\mathfrak p)}{N\mathfrak p}
	-
	\frac{\psi(\mathfrak p)}{(N\mathfrak p)^2},
\end{equation}
	\begin{equation}\label{h(p^k)}
			|h(\mathfrak p^k)|
		\le
		\frac{ k+1}{(N\mathfrak p)^k}\qquad  (k\ge1 ).
	\end{equation}
	\end{lemma}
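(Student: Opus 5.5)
The plan is to reduce the sum over multiples of \(\mathfrak d\) to a sum of \(1*\psi\) over all ideals twisted by a finite correction, and then apply the argument of Lemma~\ref{lem:one-star-psi}. Write \(\mathfrak a=\mathfrak d\mathfrak m\). Since \(1*\psi\) is multiplicative but \(\mathfrak d\) and \(\mathfrak m\) need not be coprime, I will not split \((1*\psi)(\mathfrak d\mathfrak m)\) directly. Instead I use the convolution identity
\[
(1*\psi)(\mathfrak d\mathfrak m)=\sum_{\mathfrak d=\mathfrak d_1\mathfrak f\mathfrak g}\mu(\mathfrak f)\psi(\mathfrak f\mathfrak g)\,(1*\psi)\bigl(\mathfrak m/(\mathfrak f\mathfrak g)\bigr)\cdot 1_{\mathfrak f\mathfrak g\mid \mathfrak m}\quad\text{(to be checked)},
\]
or, more safely, a Dirichlet series identity. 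Set \(F_{\mathfrak d}(s):=\sum_{\mathfrak m}(1*\psi)(\mathfrak d\mathfrak m)N\mathfrak m^{-s}\) and compute it locally at each \(\mathfrak p\mid\mathfrak d\). With \(\mathfrak p^{e}\parallel\mathfrak d\), the local factor of \(\sum_j (1*\psi)(\mathfrak p^{e+j})X^j\) equals \((1-X)^{-1}(1-\psi X)^{-1}\) times a polynomial \(P_e(X)\). Comparing coefficients shows that \(F_{\mathfrak d}(s)=\zeta_K(s)L(s,\psi)\,c_{\mathfrak d}(s)\), where \(c_{\mathfrak d}(s)=\sum_{\mathfrak d=\mathfrak d_1\mathfrak f\mathfrak g}\mu(\mathfrak f)\psi(\mathfrak f\mathfrak g)N(\mathfrak f\mathfrak g)^{-s}\). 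The point is that \((1*\psi)(\mathfrak p^{e+j})-(1+\psi(\mathfrak p))(1*\psi)(\mathfrak p^{e+j-1})+\psi(\mathfrak p)(1*\psi)(\mathfrak p^{e+j-2})=0\) for \(e+j\ge2\). This gives
\[
\sum_{\mathfrak d\mid\mathfrak a,\ N\mathfrak a\le x}(1*\psi)(\mathfrak a)=\sum_{\mathfrak d=\mathfrak d_1\mathfrak f\mathfrak g}\mu(\mathfrak f)\psi(\mathfrak f\mathfrak g)\sum_{N\mathfrak n\le x/(N\mathfrak d_1 N\mathfrak f^{2}N\mathfrak g)}(1*\psi)(\mathfrak n),
\]
since \(N\mathfrak m\,N(\mathfrak f\mathfrak g)\le x/N\mathfrak d_1\) is equivalent to \(N\mathfrak n\le x/(N\mathfrak d_1N\mathfrak f^2N\mathfrak g)\). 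The exact shape of the denominators is dictated by the claimed formula for \(h\), and I will make the local bookkeeping match it; this matching is the main obstacle.

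With this identity in hand, apply Lemma~\ref{lem:one-star-psi} to each inner sum. That requires \(x/(N\mathfrak d_1N\mathfrak f^2N\mathfrak g)\ge Q^{\alpha_0+\varepsilon}\), which fails when \(\mathfrak f\) is large. To handle this, I will instead rerun the proof of Lemma~\ref{lem:one-star-psi} directly, using the hyperbola split with \(T_0\) and \eqref{CS^b}. In that proof each inner range can be taken of length at least \((x/N\mathfrak d)^{\theta}\ge Q^{\alpha_0+\varepsilon/2}\), because \(N\mathfrak d_1N\mathfrak f^2N\mathfrak g\le N\mathfrak d\cdot N\mathfrak f\) and \(\mu(\mathfrak f)\ne0\). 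Where a term still falls below the threshold, I bound the tail trivially: the sum is at most \(\tau(\mathfrak n)\)-weighted and of size \((x/N\mathfrak d)^{1-\eta}\).

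Each term then contributes \(\rho_KL(1,\psi)x/(N\mathfrak d_1N\mathfrak f^2N\mathfrak g)\) plus \(O((x/N\mathfrak d)^{1-\eta})\). There are \(\tau_3(\mathfrak d)\) terms, so summing the main terms gives \(\rho_KL(1,\psi)h(\mathfrak d)x\), and the error terms together give \(O(\tau_3(\mathfrak d)(x/N\mathfrak d)^{1-\eta})\).

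Finally, \(h\) is a Dirichlet convolution of three multiplicative functions (\(1/N\), \(\mu\psi/N^2\) and \(\psi/N\)), so it is multiplicative. Evaluating at \(\mathfrak p\) with the three choices gives \eqref{h(p)}. For \(\mathfrak p^k\), the \(\mu\) factor restricts to \(\mathfrak f\in\{1,\mathfrak p\}\). Each of the at most \(k+1\) remaining terms with \(\mathfrak f=1\) has size at most \(N\mathfrak p^{-k}\), and the \(\mathfrak f=\mathfrak p\) terms pair with them to give a sign cancellation. Grouping the terms this way yields \(|h(\mathfrak p^k)|\le (k+1)N\mathfrak p^{-k}\).
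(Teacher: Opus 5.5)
Your architecture matches the paper's: reduce to the identity
\[
\sum_{N\mathfrak a\le x,\ \mathfrak d\mid\mathfrak a}(1*\psi)(\mathfrak a)=\sum_{\mathfrak d=\mathfrak d_1\mathfrak f\mathfrak g}\mu(\mathfrak f)\psi(\mathfrak f\mathfrak g)\sum_{N\mathfrak n\le x/(N\mathfrak d\,N\mathfrak f)}(1*\psi)(\mathfrak n),
\]
then use Lemma~\ref{lem:one-star-psi} for small $\mathfrak f$ and a trivial bound for large $\mathfrak f$. Your pairing argument for $|h(\mathfrak p^k)|$ is correct, and neater than the paper's case split on $\psi(\mathfrak p)$.

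\textbf{The identity is not derived, and the sketched derivation is wrong.} You reverse-engineer the identity from $h$. Your pointwise identity already fails at $\mathfrak d=\mathfrak p$, $\mathfrak m=1$: the left side is $1+\psi(\mathfrak p)$ and the right side is $1$. Your $c_{\mathfrak d}(s)$ is identically $1$. To see this, group $\mathfrak e=\mathfrak f\mathfrak g$: the weight $\psi(\mathfrak e)N\mathfrak e^{-s}$ depends only on $\mathfrak e$, and $\sum_{\mathfrak f\mid\mathfrak e}\mu(\mathfrak f)=1_{\mathfrak e=1}$.

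The correct factor is $\sum_{\mathfrak d=\mathfrak d_1\mathfrak f\mathfrak g}\mu(\mathfrak f)\psi(\mathfrak f\mathfrak g)N\mathfrak f^{-s}$. With $c_n:=(1*\psi)(\mathfrak p^n)$, its local factor at $\mathfrak p^e\parallel\mathfrak d$ is $c_e-\psi(\mathfrak p)c_{e-1}X$. This is exactly the numerator $P_e(X)$ that your three-term recurrence forces. Reading off coefficients then gives the cutoff $x/(N\mathfrak d\,N\mathfrak f)=x/(N\mathfrak d_1N\mathfrak f^2N\mathfrak g)$.

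Alternatively, use the paper's bijection. For $\mathfrak a=\mathfrak m_1\mathfrak m_2$ put $\mathfrak d_1=(\mathfrak d,\mathfrak m_1)$. Then $\mathfrak d/\mathfrak d_1\mid\mathfrak m_2$, and $\mathfrak m_1/\mathfrak d_1$ is coprime to $\mathfrak d/\mathfrak d_1$. Möbius inversion of that coprimality condition produces $\mathfrak f$.

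\textbf{The error analysis also needs repair.} Write $X=x/N\mathfrak d$. Rerunning the hyperbola argument cannot make the inner range $X/N\mathfrak f$ longer than it is. Squarefreeness of $\mathfrak f\mid\mathfrak d$ does not bound $N\mathfrak f$, which can be comparable to $X$ or larger. So the claim that every range has length at least $X^\theta$ is false.

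Your fallback is what works: for $N\mathfrak f\ge X^\gamma$, bound the inner sum trivially by $\ll (X/N\mathfrak f)\log 2X$. However, the statement has the full $h(\mathfrak d)$ in its main term. You must therefore also show that the main-term pieces $\rho_KL(1,\psi)\mu(\mathfrak f)\psi(\mathfrak f\mathfrak g)X/N\mathfrak f$ with $N\mathfrak f\ge X^\gamma$ are $O(\tau_3(\mathfrak d)X^{1-\eta})$.

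That step needs an upper bound for $L(1,\psi)$, which your proposal never supplies. The paper uses $L(1,\psi)\ll_{K,\varepsilon'}Q^{\varepsilon'}$. Alternatively, $L(1,\psi)\ll_{K,\varepsilon}\log Q$ follows from $\mathrm{CS}^{\mathrm b}(\alpha_0)$ by partial summation, as in the proof of Lemma~\ref{lem:one-star-psi}. Either bound, together with $X\ge Q^{\alpha_0+\varepsilon}$, absorbs those terms.
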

	
	\begin{proof}
		Write
		\[
		S_{\mathfrak d}(x)
		:=
		\sum_{{N\mathfrak a\le x\atop
				\mathfrak d\mid\mathfrak a}}
		(1*\psi)(\mathfrak a)=\sum_{{
				N\mathfrak m_1N\mathfrak m_2\le x\atop
				\mathfrak d\mid\mathfrak m_1\mathfrak m_2}}
		\psi(\mathfrak m_2).
		\]
		For a pair
		\((\mathfrak m_1,\mathfrak m_2)\) with
		\(\mathfrak d\mid \mathfrak m_1\mathfrak m_2\), put  
		$
		\mathfrak d_1=(\mathfrak d,\mathfrak m_1)$,
		$
		\mathfrak d_2=\mathfrak d/\mathfrak d_1.
		$
		Then \(\mathfrak d_2\mid\mathfrak m_2\), and writing
		\(\mathfrak m_1=\mathfrak d_1\mathfrak n_1\),
		\(\mathfrak m_2=\mathfrak d_2\mathfrak n_2\), we have
		\((\mathfrak n_1,\mathfrak d_2)=1\). Conversely, this condition gives a
		unique such pair  \((\mathfrak m_1,\mathfrak m_2)\). Hence
		\[
		S_{\mathfrak d}(x)
		=
		\sum_{\mathfrak d=\mathfrak d_1\mathfrak d_2}
		\psi(\mathfrak d_2)
		\sum_{{
				N\mathfrak n_1N\mathfrak n_2\le x/N\mathfrak d\atop
				(\mathfrak n_1,\mathfrak d_2)=1}}
		\psi(\mathfrak n_2).
		\]
		Writing \(\mathfrak d_2=\mathfrak f\mathfrak g\) and
		\(\mathfrak n_1=\mathfrak f\mathfrak r\), by Möbius inversion 
		$
		1_{(\mathfrak n_1,\mathfrak d_2)=1}
		=
		\sum_{\mathfrak f\mid(\mathfrak n_1,\mathfrak d_2)}
		\mu(\mathfrak f),
		$
		we have 
		\begin{equation}\label{9.1}
				S_{\mathfrak d}(x)
			=
			\sum_{\mathfrak d=\mathfrak d_1\mathfrak f\mathfrak g}
			\mu(\mathfrak f)\psi(\mathfrak f\mathfrak g)
			\sum_{N\mathfrak m\le x/(N\mathfrak d\,N\mathfrak f)}
			(1*\psi)(\mathfrak m).
		\end{equation}
		
	Let
	\[
	X:=\frac{x}{N\mathfrak d},\qquad \gamma:=\frac{\varepsilon}{4(\alpha_0+\varepsilon)}.
	\]
	For the terms with \(N\mathfrak f\ge X^\gamma\),  we use the elementary bound
	$
	\sum_{N\mathfrak m\le y}(1*\psi)(\mathfrak m)
	\ll_K y\log(2y).
	$
	To see this, using
	$
	|(1*\psi)(\mathfrak m)|\le \tau_K(\mathfrak m),
	$ then
	\[
	\sum_{N\mathfrak m\le y}|(1*\psi)(\mathfrak m)|
	\le
	\sum_{N\mathfrak m\le y}\tau_K(\mathfrak m)
	=\sum_{N\mathfrak r\le y}\#\bigl\{\mathfrak s: N \mathfrak s\leq y/N \mathfrak r \bigr\}
	\ll_K
	\sum_{N\mathfrak r\le y}\frac y{N\mathfrak r}
	\ll_K y\log(2y).
	\]
	Therefore, the  total contribution of the terms with \(N\mathfrak f\ge X^\gamma\) to \eqref{9.1} is
	\[
	\sum_{{\mathfrak d=\mathfrak d_1\mathfrak f\mathfrak g\atop
			N\mathfrak f\ge X^\gamma}}
	\biggl|
	\sum_{N\mathfrak m\le X/N\mathfrak f}
	(1*\psi)(\mathfrak m)
	\biggr|  \ll_K
	\sum_{{\mathfrak d=\mathfrak d_1\mathfrak f\mathfrak g\atop
			N\mathfrak f\ge X^\gamma}}
	\frac{X}{N\mathfrak f}\log(2X)
	\ll_K
	\tau_3(\mathfrak d)X^{1-\gamma}\log(2X)	\ll_{K,\varepsilon}
	\tau_3(\mathfrak d)X^{1-\gamma/2}.
	\]

	Now suppose \(N\mathfrak f<X^\gamma\). Since \(X\ge Q^{\alpha_0+\varepsilon}\),
	we have
	$
	\frac{X}{N\mathfrak f}
	>
	X^{1-\gamma}
	\ge
	Q^{\alpha_0+3\varepsilon/4}.
	$
	Applying Lemma~\ref{lem:one-star-psi} with \(3\varepsilon/4\), we have, for some $\eta_0=\eta(\varepsilon)>0$, 
	\[
	\sum_{N\mathfrak m\le X/N\mathfrak f}
	(1*\psi)(\mathfrak m)
	=
	\rho_K L(1,\psi)\frac{X}{N\mathfrak f}
	+
	O_{K,\varepsilon}\biggl(
	\Bigl(\frac{X}{N\mathfrak f}\Bigr)^{1-\eta_0}
	\biggr).
	\]
	Therefore the contribution of the  terms with \(N\mathfrak f<X^\gamma\) is
	\[
	\rho_K L(1,\psi)x
	\sum_{{\mathfrak d=\mathfrak d_1\mathfrak f\mathfrak g\atop
			N\mathfrak f<X^\gamma}}
	\frac{\mu(\mathfrak f)\psi(\mathfrak f\mathfrak g)}
	{N\mathfrak d_1 (N\mathfrak f)^2 N\mathfrak g}
	+
	O_{K,\varepsilon}\bigl(
	\tau_3(\mathfrak d)X^{1-\eta_0}
	\bigr).
	\]
		
	We may extend the main term to all factorizations
	\(\mathfrak d=\mathfrak d_1\mathfrak f\mathfrak g\).  Indeed, the omitted
	part with \(N\mathfrak f\ge X^\gamma\) is
	\begin{equation}\label{omitted}
			\ll_K
		L(1,\psi)x
		\sum_{{\mathfrak d=\mathfrak d_1\mathfrak f\mathfrak g\atop
				N\mathfrak f\ge X^\gamma}}
		\frac{1}{N\mathfrak d\,N\mathfrak f}
		\ll_K
		L(1,\psi)\tau_3(\mathfrak d)X^{1-\gamma}.
	\end{equation}
	We shall use the standard bound 
	$
	L(1,\psi)\ll_{K,\varepsilon'} Q^{\varepsilon'}
	$  for any \(\varepsilon'>0  \). 
	Indeed, for the primitive character \(\psi^*\) inducing \(\psi\) with
	conductor \(\mathfrak f_\psi\),  \cite[Lem.~2.1]{zaman}  applied at \(s=1\), gives
	$
	L(1,\psi^*)\ll_{K,\varepsilon'}(N\mathfrak f_\psi)^{\varepsilon'}
	\ll_{K,\varepsilon'}Q^{\varepsilon'}.
	$
	The missing Euler factors between \(L(s,\psi)\) and \(L(s,\psi^*)\) contribute
	only \(Q^{o_K(1)}\), by \cite[Lem.~2.4]{zaman}. After relabeling
	\(\varepsilon'\), this gives the stated bound.

	Using this bound
	with \(\varepsilon'>0\) sufficiently small, since $X\geq Q^{\alpha_0+\varepsilon}$, \eqref{omitted} is 
	$
	\ll_{K,\varepsilon}
	\tau_3(\mathfrak d)X^{1-\gamma/2}.
    $
		
	Combining the above estimates, we have
	 \[
	 S_{\mathfrak d}(x)
	 =
	 \rho_KL(1,\psi)h(\mathfrak d)x
	 +
	 O_{K,\varepsilon}
	 \bigl(
	 \tau_3(\mathfrak d)X^{1-\eta}
	 \bigr),
	 \]
	 for some \(\eta>0\), where
	 \[
	 h(\mathfrak d)
	 :=
	 \sum_{\mathfrak d=\mathfrak d_1\mathfrak f\mathfrak g}
	 \frac{\mu(\mathfrak f)\psi(\mathfrak f\mathfrak g)}
	 {N\mathfrak d_1(N\mathfrak f)^2N\mathfrak g}.
	 \]
	 
			The function \(h\) is multiplicative, since it is defined by a finite
		Dirichlet convolution of multiplicative functions on ideals.
		
		For \(\mathfrak d=\mathfrak p^k\), since \(\mu(\mathfrak p^e)=0\) for
		\(e\ge2\), we  have \[
		h(\mathfrak p^k)
		=
		\sum_{j=0}^{k}
		\frac{\psi(\mathfrak p^j)}{(N\mathfrak p)^k}
		-
		\sum_{j=0}^{k-1}
		\frac{\psi(\mathfrak p^{j+1})}{(N\mathfrak p)^{k+1}}.
		\]
			In particular,
		\[
		h(\mathfrak p)
		=
		\frac{1+\psi(\mathfrak p)}{N\mathfrak p}
		-
		\frac{\psi(\mathfrak p)}{(N\mathfrak p)^2},
		\]
		We have  \(\psi(\mathfrak p)\in\{0,\pm1\}\). 
		If \(\psi(\mathfrak p)=1\), then $ h(\mathfrak p^k) = \frac{k+1}{(N\mathfrak p)^k}-\frac{k}{(N\mathfrak p)^{k+1}}, $ while if \(\psi(\mathfrak p)=0\), then \(h(\mathfrak p^k)=\frac1{(N\mathfrak p)^k}\). Finally, if \(\psi(\mathfrak p)=-1\), then   \(\bigl|
		\sum_{j=0}^{k}\psi(\mathfrak p^j)
		\bigr|,
		\,
		\bigl|
		\sum_{j=1}^{k}\psi(\mathfrak p^j)
	\bigr|
		\le 1\), and hence $ |h(\mathfrak p^k)| \le \frac1{(N\mathfrak p)^k}+\frac1{(N\mathfrak p)^{k+1}}. $ Therefore, in any case,  
	    \[|h(\mathfrak p^k)|
	    \le
	    \frac{ k+1}{(N\mathfrak p)^k}.\]
	\end{proof}

	\begin{lemma}
		\label{lem:h-euler-product}
		Let \(h\) be as in Lemma~\ref{lem:divisible-one-star-psi}. Then
		\(0\le h(\mathfrak p)<1\) for every prime ideal \(\mathfrak p\), and
		\(h\) satisfies the sieve dimension condition  with \(\varkappa=2\); that is,
		there exists \(K_0 >1\) such that for all \(2\le w\le z \),
		\[
		\prod_{w\le N\mathfrak p<z}
		(1-h(\mathfrak p))^{-1}
		\le
		K_0\Bigl(\frac{\log z}{\log w}\Bigr)^2. 
		\]
		  And for any fixed \(\varepsilon>0\),
		\[
		\prod_{N\mathfrak p<Q^\varepsilon}
		(1-h(\mathfrak p))
		\asymp_{K,\varepsilon}
		\mathcal V_\psi(Q) :=
		\prod_{\mathfrak p\mid\mathfrak q}
		\Bigl(1-\frac1{N\mathfrak p}\Bigr)
		\prod_{{
				2<N\mathfrak p\le Q\atop
				\psi(\mathfrak p)=1}}
		\Bigl(1-\frac2{N\mathfrak p}\Bigr).
		\]
	\end{lemma}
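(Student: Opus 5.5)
The plan is to work prime by prime from the explicit formula \eqref{h(p)}, $h(\mathfrak p)=\frac{1+\psi(\mathfrak p)}{N\mathfrak p}-\frac{\psi(\mathfrak p)}{(N\mathfrak p)^2}$, splitting into the three cases $\psi(\mathfrak p)\in\{1,0,-1\}$.

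\textbf{Pointwise bounds.} The three cases give:
\[
h(\mathfrak p)=\frac2{N\mathfrak p}-\frac1{(N\mathfrak p)^2},\qquad h(\mathfrak p)=\frac1{N\mathfrak p},\qquad h(\mathfrak p)=\frac1{(N\mathfrak p)^2},
\]
respectively. Each is $\ge0$. For $N\mathfrak p\ge2$, each is also $<1$: in the first case, $2/N\mathfrak p-1/(N\mathfrak p)^2=1-(1-1/N\mathfrak p)^2<1$. The same computation gives $1-h(\mathfrak p)=(1-1/N\mathfrak p)^2$ when $\psi(\mathfrak p)=1$, which is at least $1/4$. Hence in all cases $1-h(\mathfrak p)\ge 1/4$, and
\[
(1-h(\mathfrak p))^{-1}\le \Bigl(1-\frac1{N\mathfrak p}\Bigr)^{-2}.
\]

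\textbf{Dimension condition.} The last inequality gives
\[
\prod_{w\le N\mathfrak p<z}(1-h(\mathfrak p))^{-1}\le\prod_{w\le N\mathfrak p<z}\Bigl(1-\frac1{N\mathfrak p}\Bigr)^{-2}.
\]
Mertens' theorem \eqref{merten} for $K$ bounds the right-hand side by $K_0(\log z/\log w)^2$, uniformly for $2\le w\le z$; the error term in \eqref{merten} is absorbed into the constant $K_0$. This proves the sieve-dimension condition with $\varkappa=2$.

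\textbf{Comparison with $\mathcal V_\psi(Q)$.} Write
\[
\log\prod_{N\mathfrak p<Q^\varepsilon}(1-h(\mathfrak p))=-\sum_{N\mathfrak p<Q^\varepsilon}h(\mathfrak p)+O(1),
\]
where the $O(1)$ holds because $1-h\ge1/4$ and $\sum_{\mathfrak p} h(\mathfrak p)^2<\infty$ (the number of prime ideals above each rational prime is bounded by $n_K$). Modulo $O(1)$, the sum $\sum_{N\mathfrak p<Q^\varepsilon}h(\mathfrak p)$ equals
\[
\sum_{\psi(\mathfrak p)=1,\ N\mathfrak p<Q^\varepsilon}\frac2{N\mathfrak p}+\sum_{\mathfrak p\mid\mathfrak q,\ N\mathfrak p<Q^\varepsilon}\frac1{N\mathfrak p}.
\]
On the other side, $\log\mathcal V_\psi(Q)$ equals, modulo $O(1)$,
\[
-\sum_{\mathfrak p\mid\mathfrak q}\frac1{N\mathfrak p}-\sum_{\psi(\mathfrak p)=1,\ 2<N\mathfrak p\le Q}\frac2{N\mathfrak p}.
\]
Here the factors with $N\mathfrak p=2$ are excluded from $\mathcal V_\psi$, but there are at most $n_K$ of them, so they change the product on the left only by a bounded factor. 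The two logarithms therefore differ by $O(1)$ plus the following two pieces:
\[
\sum_{\substack{\mathfrak p\mid\mathfrak q\\ N\mathfrak p\ge Q^\varepsilon}}\frac1{N\mathfrak p}\le\frac{\#\{\mathfrak p\mid\mathfrak q\}}{Q^\varepsilon}\ll\frac{\log Q}{Q^\varepsilon}=o(1),
\qquad
2\sum_{Q^\varepsilon\le N\mathfrak p\le Q}\frac1{N\mathfrak p}=2\log\frac1\varepsilon+o(1)=O_\varepsilon(1),
\]
the second by Mertens. Exponentiating gives $\prod_{N\mathfrak p<Q^\varepsilon}(1-h(\mathfrak p))\asymp_{K,\varepsilon}\mathcal V_\psi(Q)$.

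\textbf{Main obstacle.} The key observation is that the window $Q^\varepsilon\le N\mathfrak p\le Q$ contributes only an $\varepsilon$-dependent constant, with no information about $\psi$ needed there. The only care required is keeping the additive errors bounded uniformly in $\mathfrak q$ and $\psi$. Summing over all $\mathfrak p$ rather than only $\mathfrak p\nmid\mathfrak q$ creates no problem, because $\psi(\mathfrak p)=0$ for every $\mathfrak p\mid\mathfrak q$.
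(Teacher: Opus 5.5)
Your proposal is correct and follows essentially the same route as the paper. Both arguments use the three-case computation of $h(\mathfrak p)$ to get $(1-h(\mathfrak p))^{-1}\le(1-1/N\mathfrak p)^{-2}$, and apply Mertens for the dimension-$2$ condition. Both then compare with $\mathcal V_\psi(Q)$ prime by prime, absorbing the primes with $N\mathfrak p=2$ and those with $\psi(\mathfrak p)=-1$, and extend the cutoff from $Q^\varepsilon$ to $Q$ via Mertens. The only difference is cosmetic: you compare logarithms, while the paper compares the Euler products directly using $(1-1/N\mathfrak p)^2=(1-2/N\mathfrak p)(1+O((N\mathfrak p)^{-2}))$.
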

	
	\begin{proof}
		If \(\mathfrak p\mid\mathfrak q\), then \(\psi(\mathfrak p)=0\), and  
		$
		h(\mathfrak p)=\frac1{N\mathfrak p}.
		$
		If \(\psi(\mathfrak p)=-1\), then
		$
		h(\mathfrak p)=\frac1{(N\mathfrak p)^2}.
		$
		Finally, if \(\psi(\mathfrak p)=1\), then
		$
		h(\mathfrak p)
		=
		\frac2{N\mathfrak p}-\frac1{N\mathfrak p^2}$, and 
		$
		1-h(\mathfrak p)
		=
		\bigl(1-\frac1{N{\mathfrak p}}\bigr)^2$.
		In any case,  \(0\le h(\mathfrak p)<1\) and
		$
		(1-h(\mathfrak p))^{-1}
		\le
		\bigl(1-\frac1{N\mathfrak p}\bigr)^{-2}.
		$
		Therefore, by Mertens' theorem   \cite[Thm.~2]{merten} (see \eqref{merten}),
		\[
		\prod_{w\le N\mathfrak p<z}
		(1-h(\mathfrak p))^{-1}
		\le
		\prod_{w\le N\mathfrak p<z}
		\Bigl(1-\frac1{N\mathfrak p}\Bigr)^{-2}
		\ll_K
		\left(\frac{\log z}{\log w}\right)^2.
		\]
		This proves the sieve dimension assertion with \(\varkappa=2\).
		
		We now  compare the Euler product with \(\mathcal V_\psi(Q)\). The factors
		corresponding to prime ideals with \(\psi(\mathfrak p)=-1\) satisfy 
		\[
		1\ge \prod_{{N\mathfrak p<Q^\varepsilon\atop
				\psi(\mathfrak p)=-1}}
		\Bigl(1-\frac1{(N\mathfrak p)^2}\Bigr)\gg _{K} 1,
		\] since \(\sum_{\mathfrak p}(N\mathfrak p)^{-2}\ll_K 1  \) and hence the product is bounded below by a positive constant depending
		only on \(K\).
		For prime ideals with \(\psi(\mathfrak p)=1\) and \(N\mathfrak p>2\), we have
		\[
		1-h(\mathfrak p)
		=
		\Bigl(1-\frac1{N\mathfrak p}\Bigr)^2=\Bigl(1-\frac2{N\mathfrak p}\Bigr)\biggl(1+O\Bigl(\frac1{(N\mathfrak p)^2}\Bigr)\biggr)
		\]
	Therefore, by the absolute convergence of
	\(\sum_{\mathfrak p}(N\mathfrak p)^{-2}\),
		\[
		\prod_{{N\mathfrak p<Q^\varepsilon\atop
				\psi(\mathfrak p)=1}}
		(1-h(\mathfrak p))
		\asymp_K
		\prod_{{2<N\mathfrak p<Q^\varepsilon\atop
				\psi(\mathfrak p)=1}}
		\Bigl(1-\frac2{N\mathfrak p}\Bigr).
		\]
	The finitely many prime ideals with \(N\mathfrak p=2\) are absorbed into
	the implied constants.

		 Finally, we compare the cutoffs \(Q^\varepsilon\) and \(Q\).
		By Mertens theorem in the form \cite[Lem.~2.4]{merten}, there is a constant $B_K>0$ such that
		\begin{equation}\label{merten_reciprocal}
			\sum_{N\mathfrak p\le z}\frac1{N\mathfrak p}
			=
			\log\log z+B_K+O_K\Bigl(\frac{1}{\log z}\Bigr).
		\end{equation}
		Thus
		$
		\sum_{Q^\varepsilon\le N\mathfrak p\le Q}\frac1{N\mathfrak p}
		\ll_{K,\varepsilon}1,
		$
		and therefore
		\[
		\prod_{{Q^\varepsilon\le N\mathfrak p\le Q\atop
				\psi(\mathfrak p)=1}}
		\left(1-\frac2{N\mathfrak p}\right)
		\asymp_{K,\varepsilon}1.
		\]	
	Next, consider the factors corresponding to prime ideals dividing
	\(\mathfrak q\). Since \(N\mathfrak p\le Q\), we have 
		\[
			\prod_{{\mathfrak p\mid\mathfrak q\atop  N\mathfrak p\ge Q^\varepsilon}}
		\Bigl(1-\frac1{N\mathfrak p}\Bigr) 
		\ge
		\prod_{Q^\varepsilon\le N\mathfrak p\le Q}
		\Bigl(1-\frac1{N\mathfrak p}\Bigr)
		\gg_{K,\varepsilon}1.
		\]

		Combining these estimates gives
		\[
		\prod_{N\mathfrak p<Q^\varepsilon}
		(1-h(\mathfrak p))
		\asymp_{K,\varepsilon}
		\prod_{\mathfrak p\mid\mathfrak q}
		\Bigl(1-\frac1{N\mathfrak p}\Bigr)
		\prod_{{2<N\mathfrak p\le Q\atop
				\psi(\mathfrak p)=1}}
		\Bigl(1-\frac2{N\mathfrak p}\Bigr)
		=
		\mathcal V_\psi(Q).
		\]
	\end{proof}

\begin{lemma}
	\label{lem:upper-bound-psi-one-primes}
	Let \(\psi\) be a non-principal quadratic Hecke character
	modulo \(\mathfrak q\). For every sufficiently small constant \(\varepsilon>0\), there exists an ineffective
	constant \(C_{K,\varepsilon}>0\) such that, whenever
	$
	y\ge Q^{\alpha_0+\varepsilon},
	$
	we have
	\[
	\#\bigl\{\mathfrak p:N\mathfrak p\le y,\ \psi(\mathfrak p)=1\bigr\}
	\le
	C_{K,\varepsilon}
	\,y\,\rho_KL(1,\psi)\mathcal V_\psi(Q).
	\]
\end{lemma}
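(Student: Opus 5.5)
We bound the prime count by an upper-bound sieve applied to the sequence \((1*\psi)(\mathfrak a)\), in the spirit of the Matom\"aki--Ter\"av\"ainen treatment of the exceptional case.

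\textbf{Reduction to a sifted sum.} For a prime ideal \(\mathfrak p\) with \(\psi(\mathfrak p)=1\) we have \((1*\psi)(\mathfrak p)=2\). Put \(z:=y^{\gamma}\) with \(\gamma>0\) small, depending on \(\varepsilon\) and \(\alpha_0\); it is chosen below. Primes with \(N\mathfrak p<z\) number \(O(z)\), which is negligible once we check that \(z=o(y\,L(1,\psi)\mathcal V_\psi(Q))\). For the remaining primes,
\[
2\,\#\{\mathfrak p: z\le N\mathfrak p\le y,\ \psi(\mathfrak p)=1\}
\le
\sum_{N\mathfrak a\le y}(1*\psi)(\mathfrak a)\,1_{(\mathfrak a,\mathcal P(z))=1}
\le
\sum_{\mathfrak d\mid\mathcal P(z)}\lambda^+_{\mathfrak d}\sum_{N\mathfrak a\le y,\ \mathfrak d\mid\mathfrak a}(1*\psi)(\mathfrak a).
\]
Here \(\lambda^+_{\mathfrak d}\) are the upper-bound weights of Lemma~\ref{lem:fundamental-sieve-ideals} with \(\varkappa=2\), level \(D=z^{s}\) and \(s=19\). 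We need \(D\le y\,Q^{-\alpha_0-\varepsilon/2}\), so that \(y/N\mathfrak d\ge Q^{\alpha_0+\varepsilon/2}\) for every \(N\mathfrak d\le D\). Since \(y\ge Q^{\alpha_0+\varepsilon}\), the choice \(19\gamma\le \varepsilon/(2(\alpha_0+\varepsilon))\) suffices.

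\textbf{Main term.} By Lemma~\ref{lem:divisible-one-star-psi}, the inner sum equals \(\rho_K L(1,\psi)h(\mathfrak d)y+O(\tau_3(\mathfrak d)(y/N\mathfrak d)^{1-\eta})\). The main term is \(\rho_KL(1,\psi)\,y\sum_{\mathfrak d\mid\mathcal P(z)}\lambda^+_{\mathfrak d}h(\mathfrak d)\). By Lemma~\ref{lem:h-euler-product}, \(h\) has \(0\le h(\mathfrak p)<1\) and sieve dimension \(2\), so Lemma~\ref{lem:fundamental-sieve-ideals}(ii) bounds this sum by \(\ll \prod_{N\mathfrak p<z}(1-h(\mathfrak p))\). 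Since \(z\ge Q^{\gamma(\alpha_0+\varepsilon)}\) is a fixed power of \(Q\), the second part of Lemma~\ref{lem:h-euler-product}, together with monotonicity of the products over \(z\le N\mathfrak p< Q^{c}\), gives \(\asymp_{K,\varepsilon}\mathcal V_\psi(Q)\). A subtlety arises when \(z>Q\): then we truncate the sieve at \(\min(z,Q^{\varepsilon'})\) instead, since sifting less only enlarges the upper bound. So we take \(z=Q^{\varepsilon'}\) with \(\varepsilon'\) small, which is in any case \(\le y^{\gamma}\).

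\textbf{Error terms, the main obstacle.} The error terms contribute \(\ll y^{1-\eta}\sum_{N\mathfrak d\le D}\tau_3(\mathfrak d)(N\mathfrak d)^{-1+\eta}\ll y^{1-\eta}D^{\eta}(\log D)^{2}\), which is \(\ll y^{1-\eta/2}\) after shrinking \(\gamma\). The real difficulty is that this power saving, and the small-prime term \(O(z)\), must be dominated by the main term \(y\,L(1,\psi)\mathcal V_\psi(Q)\). This requires a lower bound on \(L(1,\psi)\mathcal V_\psi(Q)\) of size \(Q^{-o(1)}\), or at least \(\gg y^{-\eta/4}\). The factor \(\mathcal V_\psi(Q)\gg(\log Q)^{-2}\) by Mertens \eqref{merten}. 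For \(L(1,\psi)\) we invoke Siegel's theorem for Hecke \(L\)-functions, \(L(1,\psi^*)\gg_{K,\varepsilon''}(N\mathfrak f_\psi)^{-\varepsilon''}\), and handle the missing Euler factors at \(\mathfrak p\mid\mathfrak q\) as in the proof of Lemma~\ref{lem:divisible-one-star-psi}, where they contribute \(Q^{o(1)}\). This is the source of the ineffective constant. Taking \(\varepsilon''\) small relative to \(\eta(\alpha_0+\varepsilon)\), the error terms are \(o(y\,\rho_KL(1,\psi)\mathcal V_\psi(Q))\), and the stated bound follows with an ineffective \(C_{K,\varepsilon}\).
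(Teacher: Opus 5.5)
Your proposal is correct and follows essentially the same route as the paper: the upper-bound fundamental lemma in dimension $2$ applied to $(1*\psi)$ with sifting range a small fixed power of $Q$ (you take $z=Q^{\varepsilon'}$, the paper takes $z=Q^{\varepsilon/(2s_0)}$ and $D=Q^{\varepsilon/2}$), Lemma~\ref{lem:divisible-one-star-psi} for the sums over multiples of $\mathfrak d$, Lemma~\ref{lem:h-euler-product} to compare the Euler product with $\mathcal V_\psi(Q)$, and Siegel's bound, which is where the ineffectivity comes from, to absorb the $y^{1-\eta/2}$ and $O(z)$ error terms. One point to state explicitly: both inequalities in your first display use $(1*\psi)(\mathfrak a)\ge 0$ for every ideal $\mathfrak a$, which holds because $\psi$ is quadratic.
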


\begin{proof}
	Let \(s_0\ge19\) be fixed and put
	\[
	z=Q^{\varepsilon/(2s_0)},
	\qquad
	D=z^{s_0}=Q^{\varepsilon/2}.
	\]
	Let \(\lambda_{\mathfrak d}^{+}\) be the upper-bound beta-sieve weights
	from Lemma~\ref{lem:fundamental-sieve-ideals}.
	
	Since \(\psi\) is quadratic, \((1*\psi)(\mathfrak a)\ge0\) for every
 \(\mathfrak a\). Moreover, if 
	 \(\psi(\mathfrak p)=1\), then
	$
	(1*\psi)(\mathfrak p)=1+\psi(\mathfrak p)=2.
	$
	The prime ideals with \(N\mathfrak p<z\) contribute \(O_K(z)\) and the
	remaining primes satisfies \((\mathfrak p,\mathcal P(z))=1\),   hence
	\[
	\begin{aligned}
		\#\{\mathfrak p:N\mathfrak p\le y,\ \psi(\mathfrak p)=1\}
		&\ll_K
		z+
		\sum_{N\mathfrak a\le y}
		(1*\psi)(\mathfrak a)
		1_{(\mathfrak a,\mathcal P(z))=1}  \\
		&\le
		O_K(z)+
		\sum_{\mathfrak d\mid\mathcal P(z)}
		\lambda_{\mathfrak d}^{+}
		\sum_{{N\mathfrak a\le y\atop
				\mathfrak d\mid\mathfrak a}}
		(1*\psi)(\mathfrak a).
	\end{aligned}
	\]
	Since  \(\lambda_{\mathfrak d}^{+}\) is supported on
	\(N\mathfrak d\le D=Q^{\varepsilon/2}\), we have
	$
	\frac{y}{N\mathfrak d}
	\ge
	Q^{\alpha_0+\varepsilon/2}.
	$
	Applying Lemma~\ref{lem:divisible-one-star-psi}   with \(\varepsilon/2\)  gives
	\[
	\sum_{{N\mathfrak a\le y\atop
			\mathfrak d\mid\mathfrak a}}
	(1*\psi)(\mathfrak a)
	=
	\rho_K L(1,\psi)  h(\mathfrak d)y
	+
	O_{K,\varepsilon}\biggl(
	\tau_3(\mathfrak d) 
	\Bigl(\frac{y}{N\mathfrak d}\Bigr)^{1-\eta}
	\biggr)
	\]
	for some \(\eta>0\). Therefore
	\[ 
	\sum_{\mathfrak d\mid\mathcal P(z)}
	\lambda_{\mathfrak d}^{+}
	\sum_{{N\mathfrak a\le y\atop
			\mathfrak d\mid\mathfrak a}}
	(1*\psi)(\mathfrak a)
	 =
	\rho_K L(1,\psi)y
	\sum_{\mathfrak d\mid\mathcal P(z)}
	\lambda_{\mathfrak d}^{+}h(\mathfrak d)
	+
	O_{K,\varepsilon }(y^{1-\eta/2}),
	\]
	where  we use that 
	$
	\sum_{N\mathfrak d\le D}
	\frac{\tau_3(\mathfrak d)}{(N\mathfrak d)^{1-\eta}}
	\ll_{K,\varepsilon}
	D^\eta(\log D)^{O_K(1)} \ll_{K,\varepsilon} y^{\eta/2} 
	$ for the error term.
	
We now apply the fundamental lemma  Lemma~\ref{lem:fundamental-sieve-ideals} with sieve dimension
\(\varkappa=2\).
Since \(s_0\ge19\), by Lemma \ref{lem:h-euler-product} and Lemma~\ref{lem:fundamental-sieve-ideals}, we have 
\[
\sum_{\mathfrak d\mid\mathcal P(z)}
\lambda_{\mathfrak d}^{+}h(\mathfrak d)
\ll_K
\prod_{N\mathfrak p<z}(1-h(\mathfrak p))\asymp_{K,\varepsilon}
\mathcal V_\psi(Q).
\]
Hence
\[
\#\bigl\{
\mathfrak p:
N\mathfrak p\le y,\
\psi(\mathfrak p)=1
\bigr\}
\ll_{K,\varepsilon}
y\rho_KL(1,\psi)\mathcal V_\psi(Q)
+
y^{1-\eta/2}
+
z.
\]

By the (ineffective)  Siegel-type lower bound for real nontrivial Hecke characters
\cite[Lem.~10]{Mitsui}, for every \(\delta>0\) one has
\begin{equation}\label{siegel}
	L(1,\psi)\gg_{K,\delta}Q^{-\delta}.
\end{equation}
 Also, by  Mertens' theorem in the form \cite[Lem.~2.4]{merten} (see \eqref{merten_reciprocal}),
\begin{equation}\label{bound_V_psi}
\mathcal V_\psi(Q)\gg_K(\log Q)^{-3}\gg_{K,\delta}Q^{-\delta}.
\end{equation}
Choosing \(\delta>0\) sufficiently small so that $
2\delta 
<
\min\bigl\{
\frac{(\alpha_0+\varepsilon)\eta}{2},
\alpha_0+\varepsilon-\frac{\varepsilon}{2s_0}
\bigr\} 
$, and since
\(y\ge Q^{\alpha_0+\varepsilon}\),
\(z=Q^{\varepsilon/(2s_0)}\),  the terms \(y^{1-\eta/2}\) and \(z\) are
absorbed into the main term. This proves the lemma.
\end{proof}
	
\begin{lemma}
	\label{lem:quadratic-prime-dichotomy}
Let \(B_0>0\) be fixed and suppose
	$
	B_0>\max\{2\alpha+\alpha_0, 2\alpha_0\}.
	$
	Choose
	\[
	\beta_*\in
	\bigl(\max\{2\alpha,B_0/2\},\,B_0-\alpha_0\bigr),
	\qquad
	c_*>0,
	\]
	such that
	\[
	2\alpha+2c_*\beta_*<B_0.
	\]
	Let \(\varepsilon>0\) be sufficiently small so that
	\[
	\beta_*>2\alpha+5\varepsilon,
	\qquad
	\beta_*+2\varepsilon<B_0-\alpha_0.
	\]
	Let \(\psi\) be a non-principal quadratic Hecke character modulo
	\(\mathfrak q\). Then at least one of the following holds.
	
	\begin{enumerate}
		\item[(i)]	
		\[
		\sum_{Q^{\alpha+\varepsilon}<N\mathfrak p\le Q^{\beta_*} \atop \psi(\mathfrak p)=1
			}
		\frac1{N\mathfrak p}
		\ge c_*.
		\]
		\item[(ii)]
		There exists
	$
		M\in[Q^{\beta_*},Q^{B_0}]
	$
		such that
		\[
		\#\bigl\{
		\mathfrak p:
		M<N\mathfrak p\le2M,\ 
		\psi(\mathfrak p)=1
		\bigr\}
		\gg_{K, \varepsilon}
		M\,\rho_K L(1,\psi)\,\mathcal V_\psi(Q).
		\]
	\end{enumerate}
\end{lemma}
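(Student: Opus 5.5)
We argue by contradiction: assume (i) fails, so the $\psi$-split primes in $(Q^{\alpha+\varepsilon},Q^{\beta_*}]$ have reciprocal mass $<c_*$, and show that (ii) must then hold. The natural object is the nonnegative convolution $1*\psi$, for which Lemma~\ref{lem:one-star-psi} (and its divisible form, Lemma~\ref{lem:divisible-one-star-psi}) gives an asymptotic with main term $\rho_K L(1,\psi)x$ whenever $x\ge Q^{\alpha_0+\varepsilon}$. The plan is to sieve the sum of $1*\psi$ over ideals of norm in $(Q^{B_0}/2,Q^{B_0}]$ with the lower-bound fundamental lemma (Lemma~\ref{lem:fundamental-sieve-ideals}, dimension $\varkappa=2$, level $D=Q^{\varepsilon/2}$, sifting range $z=Q^{\varepsilon/(2s_0)}$). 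Combined with Lemmas~\ref{lem:divisible-one-star-psi} and~\ref{lem:h-euler-product}, this gives
\[
\sum_{\substack{Q^{B_0}/2<N\mathfrak a\le Q^{B_0}\\ (\mathfrak a,\mathcal P(z))=1}}(1*\psi)(\mathfrak a)\gg_{K,\varepsilon} Q^{B_0}\rho_K L(1,\psi)\mathcal V_\psi(Q).
\]
The error terms are absorbed using the Siegel bound \eqref{siegel} and \eqref{bound_V_psi}, exactly as in the proof of Lemma~\ref{lem:upper-bound-psi-one-primes}.

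Next we decompose this sifted sum by factorization type. Since $(1*\psi)(\mathfrak a)=\prod_{\mathfrak p^e\|\mathfrak a}(1*\psi)(\mathfrak p^e)$ is nonzero only when every prime with $\psi(\mathfrak p)=-1$ occurs to an even power, we may write $\mathfrak a=\mathfrak b\mathfrak c$. Here $\mathfrak b$ is composed of primes with $\psi(\mathfrak p)=1$ (or ramified) and $\mathfrak c$ contributes weight $\le 1$ and is essentially a square of an ideal built from inert primes. We then split into three cases.
\begin{enumerate}
\item[(a)] If $\mathfrak a$ has a split prime factor $\mathfrak p$ with $N\mathfrak p\ge Q^{\beta_*}$, then pigeonholing $N\mathfrak p$ into dyadic ranges $M\in[Q^{\beta_*},Q^{B_0}]$ and bounding the cofactor sum by $\ll (Q^{B_0}/M)\cdot\tau$-type averages yields (ii). The upper bound for the cofactor count uses Lemma~\ref{lem:one-star-psi} at scale $Q^{B_0}/M\ge Q^{\alpha_0+\varepsilon}$, which is possible because $\beta_*+2\varepsilon<B_0-\alpha_0$.
\item[(b)] If all split prime factors lie in $(z,Q^{\beta_*})$, then $\mathfrak b$ has a part in $(Q^{\alpha+\varepsilon},Q^{\beta_*}]$ of reciprocal mass $<c_*$ by the failure of (i). The primes in $(z,Q^{\alpha+\varepsilon}]$ contribute only $O(\log(\alpha/\varepsilon))$-bounded counts after the sieve. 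We bound this configuration by $\ll c_*\cdot Q^{B_0}\rho_KL(1,\psi)\mathcal V_\psi(Q)$ plus a term where the square-like part $\mathfrak c$ is large.
\item[(c)] If the inert square part $\mathfrak c$ has norm $>Q^{B_0-2\alpha-\dots}$, this is small, since the number of squares of norm $\le T$ is $O(T^{1/2})$.
\end{enumerate}
The condition $2\alpha+2c_*\beta_*<B_0$ is what makes (b) strictly smaller than the main term. Heuristically, products of at most $\beta_*/(\alpha+\varepsilon)$ split primes of norm $\le Q^{\beta_*}$, weighted by $e^{O(c_*)}$, cannot fill norm $Q^{B_0}$ unless a large prime or a large square part is present. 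More precisely, in (b) we apply Rankin's trick with the Euler product over split primes, $\exp\bigl(\sum 1/N\mathfrak p\bigr)\le e^{c_*}$ on the long range. The short range $\le Q^{\alpha+\varepsilon}$ has bounded total norm exponent $\le 2\alpha$ if we reserve two primes, which is where $2\alpha$ enters.

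\textbf{Main obstacle.} The key difficulty is case (b): making the Rankin-type bound rigorous and comparing it with $\rho_KL(1,\psi)\mathcal V_\psi(Q)$ rather than merely with $Q^{B_0}$. The inert part must be controlled by $L(1,\psi)$, and one needs $\mathcal V_\psi$ to appear from the split primes $<z$ being excluded. I would first try to imitate the Matom\"aki--Ter\"av\"ainen argument directly: write the $\mathfrak a$ in case (b) as $\mathfrak m\mathfrak n$ with $\mathfrak m$ the split part of norm $\le Q^{2\alpha+2c_*\beta_*}$ (in a Rankin-averaged sense) and $\mathfrak n$ free of split primes. Then bound the count of $\mathfrak n$ of norm $\asymp Q^{B_0}/N\mathfrak m$ by the upper-bound sieve, i.e.\ Lemma~\ref{lem:divisible-one-star-psi} applied at scale $\ge Q^{\alpha_0+\varepsilon}$. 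This is valid since $B_0-2\alpha-2c_*\beta_*>0$, possibly after shrinking $c_*$ or $\varepsilon$.
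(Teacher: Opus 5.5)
\textbf{There is a genuine gap, and it is in case (b)}, the step you flag as the main obstacle. The heuristics offered there are not correct.
\begin{enumerate}
\item[(1)] Nothing bounds the number of split prime factors of $\mathfrak a$ in $(z,Q^{\alpha+\varepsilon}]$; there can be up to $2s_0B_0/\varepsilon$ of them. So ``reserving two primes'' does not confine that part to norm $Q^{2\alpha}$.
\item[(2)] Since $B_0/\beta_*<2$, size considerations give no smallness either. Ideals of norm $\asymp Q^{B_0}$ with no prime factor above $Q^{\beta_*}$ are in general a positive proportion (compare $\rho(u)=1-\log u$ for $1<u<2$).
\item[(3)] Case (b) is not $\ll c_*\,Q^{B_0}\rho_KL(1,\psi)\mathcal V_\psi(Q)$, because ideals built from split primes below $Q^{\alpha+\varepsilon}$ are untouched by the failure of (i).
\item[(4)] The proposed decomposition does not make sense as stated. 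After sifting, the split-free part of $\mathfrak a$ is trivial up to a negligible set, since inert primes must occur to even powers and have norm $\ge z$. So case (b) consists essentially of ideals composed entirely of split primes in $(z,Q^{\beta_*})$, whose split part has norm $\asymp Q^{B_0}$, not $\le Q^{2\alpha+2c_*\beta_*}$. Moreover, Lemma~\ref{lem:divisible-one-star-psi} does not count split-free ideals.
\end{enumerate}
What you need is an upper bound for this configuration by $(1-c)$ times the main term, with an explicit $c>0$ coming from $B_0>2\alpha+2c_*\beta_*$. A Rankin-type bound with unspecified constants cannot give such a strict comparison.

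\textbf{The missing idea is the first-moment (Chebyshev) weighting used in the paper.} Put $a_{\mathfrak n}=(1*\psi)(\mathfrak n)1_{(\mathfrak n,\mathcal P(w))=1}$ and $Y=Q^{B_0}$, and write $\sum_{N\mathfrak n\le Y}a_{\mathfrak n}\log N\mathfrak n=\sum_{\mathfrak d}\Lambda_K(\mathfrak d)\sum_{\mathfrak d\mid\mathfrak n}a_{\mathfrak n}$.
\begin{enumerate}
\item[(a)] The lower fundamental lemma gives the left side $\ge(1-O(e^{-s_0})-o(1))B_0Y\log Q\,\rho_KL(1,\psi)V(w)$.
\item[(b)] For the terms with $N\mathfrak d\le Q^{\beta_*}$, the upper fundamental lemma together with Lemma~\ref{lem:divisible-one-star-psi} at the modulus $\mathfrak d\mathfrak e$ gives at most $(1+O(e^{-s_0})+o(1))Y\rho_KL(1,\psi)V(w)\sum_{N\mathfrak d\le Q^{\beta_*}}\Lambda_K(\mathfrak d)h(\mathfrak d)$. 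This is where $\beta_*+2\varepsilon<B_0-\alpha_0$ is really used, as a level of distribution.
\item[(c)] We have $h(\mathfrak p)\le 2/N\mathfrak p$ for split $\mathfrak p$ and $h(\mathfrak p)=(N\mathfrak p)^{-2}$ for inert $\mathfrak p$. So Mertens gives at most $(2\alpha+2\varepsilon+o(1))\log Q$ from $N\mathfrak d\le Q^{\alpha+\varepsilon}$, and the failure of (i) gives at most $2c_*\beta_*\log Q$ from the rest. The $2$ in $2\alpha$ is $(1*\psi)(\mathfrak p)=2$, not a count of reserved primes.
\item[(d)] Since $B_0>2\alpha+2c_*\beta_*$, the $\mathfrak d$ with $N\mathfrak d>Q^{\beta_*}$ must carry $\gg Y\log Q\,\rho_KL(1,\psi)\mathcal V_\psi(Q)$. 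Because $\beta_*>B_0/2$, these are split primes dividing $\mathfrak n$ exactly once.
\end{enumerate}

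\textbf{The last step is essentially your case (a), but the cofactor bound must change.} Bound the cofactor crudely by $\sum_{N\mathfrak m\le T,\,(\mathfrak m,\mathcal P(w))=1}\tau(\mathfrak m)\ll T/\log w+1$, as the paper does. Lemma~\ref{lem:one-star-psi} is unavailable for $M$ near $Q^{B_0}$, where $Q^{B_0}/M<Q^{\alpha_0+\varepsilon}$. Even where it applies, its unsifted main term $\rho_KL(1,\psi)Q^{B_0}/M$ only yields $\#\{\mathfrak p:M<N\mathfrak p\le 2M,\ \psi(\mathfrak p)=1\}\gg M\mathcal V_\psi(Q)/\log Q$. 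That falls short of (ii) by the factor $\rho_KL(1,\psi)\log Q$.
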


\begin{proof}
	Put
	$
	Y:=Q^{B_0}.
$
	Let \(s_0\ge 19\) be a constant  to be chosen sufficiently large later, and put
	\[
	w:=Q^{\varepsilon/s_0},
	\qquad
	D:=w^{s_0}=Q^\varepsilon.
	\]
	Let \(\lambda_{\mathfrak e}^{\pm}\) be the beta-sieve weights from
	Lemma~\ref{lem:fundamental-sieve-ideals}, with sifting range \(z=w\) and
	level \(D\).

	Define \[a_{\mathfrak n}:=(1*\psi)(\mathfrak n)1_{(\mathfrak n,\mathcal P(w))=1}.\] Let \(\Lambda_K\) denote the von Mangoldt function on ideals as defined in \eqref{mango}. 
	Using
	$
	\sum_{\mathfrak d\mid\mathfrak n}\Lambda_K(\mathfrak d)
	=
	\log N\mathfrak n,
	$
	we have 
	\begin{equation}\label{9.2}
	 S(Y):=  \sum_{N\mathfrak d\le Y}
		\Lambda_K(\mathfrak d)
		\sum_{N\mathfrak n\le Y\atop
				\mathfrak d\mid\mathfrak n}
	 a_{\mathfrak n}
		=
		\sum_{N\mathfrak n\le Y }a_{\mathfrak n}\log N\mathfrak n.
	\end{equation}
	We first lower-bound the right-hand side of \eqref{9.2}. Since \(\psi\) is quadratic, \(1*\psi\ge0\), and hence \(a_{\mathfrak n}\ge0\). Using \(1*\psi\ge0\) and
	$	1_{(\mathfrak n,\mathcal P(w))=1}
	\ge
	\sum_{\mathfrak e\mid\mathfrak n}
	\lambda_{\mathfrak e}^{-},$
	we have
	\[
	\sum_{N\mathfrak n\le Y }a_{\mathfrak n}\log N\mathfrak n
	\ge
	\sum_{\mathfrak e\mid\mathcal P(w)}
	\lambda_{\mathfrak e}^{-}
	\sum_{{N\mathfrak n\le Y\atop
			\mathfrak e\mid\mathfrak n}}
	(1*\psi)(\mathfrak n)\log N\mathfrak n.
	\]
	For \(N\mathfrak e\le D=Q^\varepsilon\), we have
	$
	\frac{Y}{N\mathfrak e}
	\ge
	Q^{B_0-\varepsilon}
	\ge
	Q^{\alpha_0+\varepsilon}
	$
	after decreasing \(\varepsilon\), since \(B_0>2\alpha_0\ge\alpha_0\). Applying
	Lemma~\ref{lem:divisible-one-star-psi} and partial summation, there is   \(\eta'>0\) such that uniformly for all such $\mathfrak{e}$, 
	\[
	\sum_{{N\mathfrak n\le Y\atop
			\mathfrak e\mid\mathfrak n}}
	(1*\psi)(\mathfrak n)\log N\mathfrak n
	=
	\rho_KL(1,\psi)h(\mathfrak e)(Y\log Y-Y)
	+
	O_{K,\varepsilon }\bigl(
	\tau_3(\mathfrak e) Y^{1-\eta'}
	\bigr).
	\]
By Lemma~\ref{lem:h-euler-product}, the multiplicative function \(h\) satisfies the sieve-dimension hypothesis of Lemma~\ref{lem:fundamental-sieve-ideals} with dimension \(2\). Since \(s_0\ge19=9\cdot2+1\), the fundamental lemma gives
\[
\sum_{\mathfrak e\mid\mathcal P(w)}
\lambda_{\mathfrak e}^{-}h(\mathfrak e)
\ge
\bigl(1-O_K(e^{-s_0})\bigr)V(w),\qquad 	\sum_{\mathfrak e\mid\mathcal P(w)}
\lambda_{\mathfrak e}^{+}h(\mathfrak e)
\le
\bigl(1+O_K(e^{-s_0})\bigr)V(w),
\]
where
$
V(w):=\prod_{N\mathfrak p<w}(1-h(\mathfrak p)) 
$.

Summing over
\(\mathfrak e\mid\mathcal P(w)\), the total contribution of the error terms is
\[\ll_{K,\varepsilon}
Y^{1-\eta'}
\sum_{N\mathfrak e\le D}\tau_3(\mathfrak e)
\ll_{K,\varepsilon}
Y^{1-\eta'}D^{1+o(1)}.\]
Since \(D=Q^\varepsilon\), \(Y=Q^{B_0}\), after decreasing $\varepsilon$ once more, and then choosing the exponent $\delta$ in the Siegel lower bound \eqref{siegel} sufficiently small, this error is absorbed into the main term. 
Moreover, since
$
Y\log Y-Y=(1-o(1))B_0\,Y\log Q,
$ we obtain
\begin{equation}\label{9.3}
 	\sum_{N\mathfrak n\le Y }a_{\mathfrak n}\log N\mathfrak n
	\ge \bigl(1-O_K(e^{-s_0}) -o_{K,\varepsilon,s_0}(1)\bigr) B_0
	Y\log Q\,\rho_KL(1,\psi)   V(w).
\end{equation}

	We now prove an upper bound for $S(Y)$. First we estimate the contribution of the terms with \(N\mathfrak d\le Q^{\beta_*}\) in \eqref{9.2}. 
	
	Using the upper-bound sieve 
	$
	1_{(\mathfrak n,\mathcal P(w))=1}
	\le
	\sum_{\mathfrak e\mid\mathfrak n}
	\lambda_{\mathfrak e}^{+},
	$
	and that \((\mathfrak d,\mathfrak e)=1\) when \(\mathfrak e\mid\mathcal P(w)\), we have
	\[
\sum_{{N\mathfrak d\le Q^{\beta_*}\atop
				(\mathfrak d,\mathcal P(w))=1}}
		\Lambda_K(\mathfrak d)
		\sum_{N\mathfrak n\le Y\atop
				\mathfrak d\mid\mathfrak n   }
		a_{\mathfrak n}
		\le
		\sum_{{N\mathfrak d\le Q^{\beta_*}\atop
				(\mathfrak d,\mathcal P(w))=1}}
		\Lambda_K(\mathfrak d)
		\sum_{\mathfrak e\mid\mathcal P(w)}
		\lambda_{\mathfrak e}^{+}
		\sum_{{N\mathfrak n\le Y\atop
				\mathfrak d\mathfrak e\mid\mathfrak n}}
		(1*\psi)(\mathfrak n).
	\]
	For \(N\mathfrak d\le Q^{\beta_*}\) and \(N\mathfrak e\le D=Q^\varepsilon\),
$
	\frac{Y}{N\mathfrak d\,N\mathfrak e}
	\ge
	Q^{B_0-\beta_*-\varepsilon}
	>
	Q^{\alpha_0+\varepsilon},
	$
	by our choice of \(\varepsilon\). Applying  Lemma~\ref{lem:divisible-one-star-psi}, the total error is again absorbed into
	the main term as before. Therefore, the contribution of the terms with \(N\mathfrak d\le Q^{\beta_*}\) in $S(Y)$ is 
	\begin{equation}\label{9.4}
		\le
		(1+o_{K,\varepsilon,s_0}(1))Y\rho_KL(1,\psi)
		\Bigl(
		\sum_{\mathfrak e\mid\mathcal P(w)}
		\lambda_{\mathfrak e}^{+}h(\mathfrak e)
		\Bigr)
		\biggl(
		\sum_{ {N\mathfrak d\le Q^{\beta_*}\atop
				(\mathfrak d,\mathcal P(w))=1}}
		\Lambda_K(\mathfrak d)h(\mathfrak d)
		\biggr).
	\end{equation}

	We next bound the inner sum over \(\mathfrak d\) in \eqref{9.4}. The contribution of prime powers \(\mathfrak d=\mathfrak p^k\), \(k\ge2\), is
	\(O_K(1)\), since
	$
	|h(\mathfrak p^k)|\le \frac{k+1}{(N\mathfrak p)^k} 
	$
	by \eqref{h(p^k)}.
	For primes,  if \(\psi(\mathfrak p)=1\), then  $
	h(\mathfrak p)
	=
	\frac2{N\mathfrak p}-\frac1{N\mathfrak p^2} \le  \frac2{N\mathfrak p} $.    If \(\psi(\mathfrak p)=-1\), then
	$
	h(\mathfrak p)=\frac1{(N\mathfrak p)^2} .
	$ And if \(\mathfrak p\mid\mathfrak q\), then
	 $ 
	h(\mathfrak p)=\frac1{N\mathfrak p} 
	$. Moreover, the primes $\mathfrak{p}$ with \((\mathfrak p,\mathcal P(w))=1\) have \(N\mathfrak p\ge w\).
	Therefore, by \eqref{merten_reciprocal},
	the primes with \(\psi(\mathfrak p)=-1\) contribute \(O_{K,\varepsilon,s_0}(1)\). For primes $\mathfrak{p}$ with \(\mathfrak p\mid\mathfrak q\), the contribution is  at most 
	\[	\sum_{ {\mathfrak p\mid\mathfrak q\atop
			N\mathfrak p\ge w}}
	\frac{\log N\mathfrak p}{N\mathfrak p}
	\le
	\frac1w\sum_{\mathfrak p\mid\mathfrak q}\log N\mathfrak p
	\le
	\frac{\log Q}{w}
	=o_{\varepsilon,s_0}(\log Q).\]
	We now split the sum at 
 \(Q^{\alpha+\varepsilon}\). In the range \(N\mathfrak d\le Q^{\alpha+\varepsilon}\), by the above estimates and \eqref{merten_reciprocal}, we have 
	\[
	\begin{aligned}
		\sum_{ {N\mathfrak d\le Q^{\alpha+\varepsilon}\atop
				(\mathfrak d,\mathcal P(w))=1}}
		\Lambda_K(\mathfrak d)h(\mathfrak d)
		&\le
		 \sum_{ {N\mathfrak p\le Q^{\alpha+\varepsilon}\atop
				\psi(\mathfrak p)=1}}
		\Lambda_K(\mathfrak p)h(\mathfrak p)
		+o_{K,\varepsilon,s_0}(\log Q)  \\
		&\le
		2\sum_{N\mathfrak p\le Q^{\alpha+\varepsilon}}
		\frac{\log N\mathfrak p}{N\mathfrak p}
		+o_{K,\varepsilon,s_0}(\log Q)
		=
		(2\alpha+2\varepsilon+o_{K,\varepsilon,s_0}(1))\log Q .
	\end{aligned}
	\]

	If (i)  fails, then
	\[\sum_{ {
			Q^{\alpha+\varepsilon}<N\mathfrak p\le Q^{\beta_*}\atop
			\psi(\mathfrak p)=1}}
	\frac1{N\mathfrak p}<c_*.\]
	Hence
	\[
	\sum_{ {
			Q^{\alpha+\varepsilon}<N\mathfrak d\le Q^{\beta_*}\atop
			(\mathfrak d,\mathcal P(w))=1}}
	\Lambda_K(\mathfrak d)h(\mathfrak d)
	\le
	2\sum_{ {
			Q^{\alpha+\varepsilon}<N\mathfrak p\le Q^{\beta_*}\atop
			\psi(\mathfrak p)=1}}
	\frac{\log N\mathfrak p}{N\mathfrak p}
+o_{K,\varepsilon,s_0}(\log Q)
	\le
	(2c_*\beta_*+o_{K,\varepsilon,s_0}(1))\log Q.
	\]
	Summing over these two ranges together with \eqref{9.4}, we have
	\[
	\sum_{ {N\mathfrak d\le Q^{\beta_*}\atop
			(\mathfrak d,\mathcal P(w))=1}}
	\Lambda_K(\mathfrak d)
	\sum_{ {N\mathfrak n\le Y\atop
			\mathfrak d\mid\mathfrak n}} a_{\mathfrak n}
	 \le \bigl(1+O_K(e^{-s_0})  +o_{K,\varepsilon,s_0}(1) \bigr)(2\alpha+2\varepsilon +2c_*\beta_*)
	Y\log Q\,\rho_KL(1,\psi) V (w).
	\]

	Set
	$
	\Delta:=B_0-(2\alpha +2c_*\beta_*)>0.
    $
	Taking \(\varepsilon\) sufficiently small, we may assume
	$
	B_0-(2\alpha+2\varepsilon+2c_*\beta_*)>\Delta/2.
	$
	We then choose \(s_0\) sufficiently large so that the \(O_K(e^{-s_0})\)
	terms are small compared with \(\Delta\), and then take \(Q\) sufficiently
	large so that the \(o_{K,\varepsilon,s_0}(1)\) terms are also small so that from \eqref{9.3}, we obtain
	\[
	\sum_{Q^{\beta_*}<N\mathfrak d\le Y}
	\Lambda_K(\mathfrak d)
	\sum_{ {N\mathfrak n\le Y\atop
			\mathfrak d\mid\mathfrak n}}
	a_{\mathfrak n}
	\gg_{K,\varepsilon,s_0}
	Y\log Q\,\rho_KL(1,\psi)V(w).
	\]
Since \(w=Q^{\varepsilon/s_0}\) and \(s_0\) is now fixed,
Lemma~\ref{lem:h-euler-product} gives
$
V(w)\asymp_{K,\varepsilon }\mathcal V_\psi(Q),
$
hence we have 
\begin{equation}\label{9.5}
	\sum_{Q^{\beta_*}<N\mathfrak d\le Y}
	\Lambda_K(\mathfrak d)
	\sum_{ {N\mathfrak n\le Y\atop
			\mathfrak d\mid\mathfrak n}}
	a_{\mathfrak n}
	\gg_{K,\varepsilon}
	Y\log Q\,\rho_KL(1,\psi)\mathcal V_\psi(Q).
\end{equation}
Note that  since
\(0\le a_{\mathfrak n}\le \tau(\mathfrak n)\), for any ideal
\(\mathfrak b\), we have 
\begin{equation}\label{9.6a}
	\sum_{ {N\mathfrak n\le Y\atop \mathfrak b\mid\mathfrak n}}
	a_{\mathfrak n}
	\le
	\sum_{N\mathfrak m\le Y/N\mathfrak b}
	\tau(\mathfrak b\mathfrak m)
	\ll_K
	\tau(\mathfrak b)\frac{Y}{N\mathfrak b}\log Y .
\end{equation}
Therefore, the contribution to the sum in
\eqref{9.5} from prime powers \(\mathfrak d=\mathfrak p^m\) with \(m\ge2\), is
\[
\ll_K
Y \log Y 
\sum_{ {m\ge2\atop (N\mathfrak p)^m>Q^{\beta_*}}}
\frac{(m+1)\log N\mathfrak p}{(N\mathfrak p)^m}
\ll_K
Y(\log Q)^{2}Q^{-\beta_*/2},
\]
which is negligible compared with
\(Y\log Q\,\rho_KL(1,\psi)\mathcal V_\psi(Q)\), using \eqref{siegel} and \eqref{bound_V_psi}.

Now we consider \(\mathfrak d=\mathfrak p\). Since
\(\beta_*>B_0/2\), any prime ideal with \(N\mathfrak p>Q^{\beta_*}\) satisfies
\((N\mathfrak p)^2>Y\).
Thus, if \(N\mathfrak n\le Y\) and \(\mathfrak p\mid\mathfrak n\), then \(\mathfrak p\) divides \(\mathfrak n\) exactly once. Hence, writing \(\mathfrak n=\mathfrak p\mathfrak m\), then \((\mathfrak p,\mathfrak m)=1\) and  $ (1*\psi)(\mathfrak n) = (1+\psi(\mathfrak p))(1*\psi)(\mathfrak m). $ Therefore the inner sum vanishes whenever \(\psi(\mathfrak p)=-1\).
 
The remaining primes with \(\psi(\mathfrak p)=0\) divide \(\mathfrak q\). By \eqref{9.6a}, their total contribution is
\[ \ll_K Y\log Y \sum_{ {\mathfrak p\mid\mathfrak q\atop N\mathfrak p>Q^{\beta_*}}} \frac{\log N\mathfrak p}{N\mathfrak p} \le Y\log Y  Q^{-\beta_*}\log Q, \]
which is negligible compared with \(Y\log Q\,\rho_KL(1,\psi)\mathcal V_\psi(Q)\), again using  \eqref{siegel} and \eqref{bound_V_psi}. 
 
Therefore, in \eqref{9.5} we may restrict the outer sum to primes
\(\mathfrak d=\mathfrak p\) with \(\psi(\mathfrak p)=1\). Thus
\begin{equation}\label{9.6}
	Y\log Q\,\rho_KL(1,\psi)\mathcal V_\psi(Q)
	\ll_{K,\varepsilon}
	\sum_{ {
			Q^{\beta_*}<N\mathfrak p\le Y\atop
			\psi(\mathfrak p)=1}}
	\log N\mathfrak p
	\sum_{ {
			N\mathfrak n\le Y\atop
			\mathfrak p\mid\mathfrak n}}
	a_{\mathfrak n}.
\end{equation}

Moreover, as noted above, if \(N\mathfrak p>Q^{\beta_*}\), then
\(\mathfrak p\) divides \(\mathfrak n\) exactly once. Writing
\(\mathfrak n=\mathfrak p\mathfrak m\), we therefore have
\((\mathfrak p,\mathfrak m)=1\)  and $
a_{\mathfrak n}\le 2 \tau(\mathfrak m)
$. Since \(N\mathfrak p>w\), the condition
\((\mathfrak n,\mathcal P(w))=1\) is equivalent to
\((\mathfrak m,\mathcal P(w))=1\).

Hence we have 
\begin{align}
	\sum_{ {N\mathfrak n\le Y\atop \mathfrak p\mid\mathfrak n}}
	a_{\mathfrak n}\ll\sum_{{N\mathfrak m\le Y/N\mathfrak p\atop
			(\mathfrak m,\mathcal P(w))=1}}
	\tau(\mathfrak m)
	&\le
	\sum_{{N\mathfrak s\le Y/N\mathfrak p\atop
			(\mathfrak s,\mathcal P(w))=1}}
	\#\Bigl\{
	\mathfrak r:
	N\mathfrak r\le \frac{Y }{N\mathfrak pN\mathfrak s},\;
	(\mathfrak r,\mathcal P(w))=1
	\Bigr\} \nonumber \\
	&\ll_K
	\sum_{{N\mathfrak s\le Y/N\mathfrak p\atop
			(\mathfrak s,\mathcal P(w))=1}}
    \Bigl(
	\frac{Y}{N\mathfrak p N\mathfrak s\log w}+1
	\Bigr)\nonumber \\
	& \ll_{K, \varepsilon }
	\frac{Y}{N\mathfrak p\,\log w}+1. \label{9.7}
\end{align}
Here we use the standard upper-bound linear sieve estimate
\[
\#\{\mathfrak a:N\mathfrak a\le X,\;(\mathfrak a,\mathcal P(w))=1\}
\ll_K
\frac{X}{\log w}+1,
\]
which can be
obtained by applying Lemma~\ref{sieve_lemma} to the sequence of all
integral ideals \(N\mathfrak a\le X\) with density
\(g(\mathfrak p)=1/N\mathfrak p\), together with Mertens' theorem for
prime ideals. By partial summation this also gives
\[
\sum_{{N\mathfrak a\le X\atop
		(\mathfrak a,\mathcal P(w))=1}}
\frac1{N\mathfrak a}
\ll_K
1+\frac{\log X}{\log w}.
\]
	Applying \eqref{9.7} to \eqref{9.6}, we obtain
	\begin{equation}\label{9.8}
			Y\log Q\,\rho_KL(1,\psi)\mathcal V_\psi(Q)
	 \ll_{K,\varepsilon}
		\frac{Y}{\log w}\!\!\!
		\sum_{{
				Q^{\beta_*}<N\mathfrak p\le Y\atop
				\psi(\mathfrak p)=1}}\!\!\!
		\frac{\log N\mathfrak p}{N\mathfrak p} 
		+
		(\log Y)
		\#\bigl\{
		\mathfrak p:
		Q^{\beta_*}<N\mathfrak p\le Y,\ 
		\psi(\mathfrak p)=1
		\bigr\}.
	\end{equation}
 If the
	second term in  \eqref{9.8} dominates, then
	\[
	\#\bigl\{
	\mathfrak p:
	Q^{\beta_*}<N\mathfrak p\le Y,\ 
	\psi(\mathfrak p)=1
\bigr\}
	\gg_{K,\varepsilon}
	Y\,\rho_KL(1,\psi)\mathcal V_\psi(Q).
	\]
	A dyadic decomposition then gives some
	\(M\in[Q^{\beta_*},Y]\) such that
	\[
	\#\bigl\{
	\mathfrak p:
	M<N\mathfrak p\le2M,\ 
	\psi(\mathfrak p)=1
	\bigr\}
		\gg_{K,\varepsilon}
	M\,\rho_KL(1,\psi)\mathcal V_\psi(Q).
	\]	
	Otherwise the first term in \eqref{9.8} dominates. Since
	\(\log w\asymp_\varepsilon\log Q\), we get
\[
\sum_{{
		Q^{\beta_*}<N\mathfrak p\le Y\atop
		\psi(\mathfrak p)=1}}
\frac{\log N\mathfrak p}{N\mathfrak p}
\gg_{K,\varepsilon}
(\log Q)^2
\rho_KL(1,\psi)\mathcal V_\psi(Q).
\]
	Again by dyadic decomposition, for some \(M\in[Q^{\beta_*},Y]\),
	\[
	\sum_{{
			M<N\mathfrak p\le2M\atop
			\psi(\mathfrak p)=1}}
	\frac{\log N\mathfrak p}{N\mathfrak p}
	\gg_{K,\varepsilon}
	\log Q\,
	\rho_KL(1,\psi)\mathcal V_\psi(Q).
	\]
	Since \(\log N\mathfrak p\ll_{B_0}\log Q\) on this range, this implies
	\[
	\#\bigl\{
	\mathfrak p:
	M<N\mathfrak p\le2M,\ 
	\psi(\mathfrak p)=1
	\bigr\}
\gg_{K,\varepsilon}
	M\,\rho_KL(1,\psi)\mathcal V_\psi(Q).
	\]
	This proves  (ii).
\end{proof}
\begin{lemma}
	\label{lem:twisted-one-star-psi}
	Let \(\psi\) and \(\xi\) be non-principal Hecke characters modulo
	\(\mathfrak q\), with \(\psi\) quadratic. Suppose $\psi\neq \xi$. For every \(\varepsilon>0\), there exists
	\(\eta=\eta(\varepsilon)>0\) such that  whenever
$
		T\ge Q^{2\alpha+4\varepsilon},$
	uniformly for nonzero integral ideals $\mathfrak{d}$ with 
	$N\mathfrak d\le Q^\varepsilon,
	$
	we have
	\[
	\sum_{ {
			N\mathfrak a\le T\atop
			\mathfrak d\mid\mathfrak a}}
	\xi(\mathfrak a)(1*\psi)(\mathfrak a)
	\ll_{K,\varepsilon}
	\frac{T}{N\mathfrak d}\,Q^{-\eta}.
	\]
\end{lemma}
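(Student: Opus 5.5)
The plan is to imitate the proof of Lemma~\ref{lem:divisible-one-star-psi}, but now both resulting characters are non-principal. First I would reduce to the undivided sum by the same decomposition: writing $\mathfrak a=\mathfrak m_1\mathfrak m_2$ with $(1*\psi)(\mathfrak a)=\sum_{\mathfrak m_1\mathfrak m_2=\mathfrak a}\psi(\mathfrak m_2)$, and splitting $\mathfrak d=\mathfrak d_1\mathfrak f\mathfrak g$ exactly as in \eqref{9.1}, gives
\[
\sum_{N\mathfrak a\le T,\ \mathfrak d\mid\mathfrak a}\xi(\mathfrak a)(1*\psi)(\mathfrak a)
=\sum_{\mathfrak d=\mathfrak d_1\mathfrak f\mathfrak g}\mu(\mathfrak f)\,\xi(\mathfrak d\mathfrak f)\,\psi(\mathfrak f\mathfrak g)
\sum_{N\mathfrak m\le T/(N\mathfrak d N\mathfrak f)}\xi(\mathfrak m)(1*\psi)(\mathfrak m).
\]
Since $\xi$ is completely multiplicative, $\xi\cdot(1*\psi)=\xi*(\xi\psi)$, so the inner sum is a hyperbola-type sum $\sum_{N\mathfrak b N\mathfrak c\le y}\xi(\mathfrak b)(\xi\psi)(\mathfrak c)$. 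Because $\xi\ne\psi$ and $\psi$ is quadratic, $\xi\psi=\xi\overline\psi$ is non-principal, and $\xi$ is non-principal by hypothesis; so both convolution factors enjoy \eqref{CS}.

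The core step is to show that for $y\ge Q^{2\alpha+2\varepsilon}$ (say) we have $\sum_{N\mathfrak b N\mathfrak c\le y}\xi(\mathfrak b)(\xi\psi)(\mathfrak c)\ll_{K,\varepsilon} y^{1-\eta_0}$ for some $\eta_0>0$. I would use the Dirichlet hyperbola method with the split at $U=V=y^{1/2}\ge Q^{\alpha+\varepsilon}$:
\[
\sum_{N\mathfrak b\le U}\xi(\mathfrak b)A_{\xi\psi}\!\Bigl(\tfrac{y}{N\mathfrak b}\Bigr)
+\sum_{N\mathfrak c\le V}(\xi\psi)(\mathfrak c)A_{\xi}\!\Bigl(\tfrac{y}{N\mathfrak c}\Bigr)
-A_\xi(U)A_{\xi\psi}(V),
\]
where $A_\chi(t)=\sum_{N\mathfrak a\le t}\chi(\mathfrak a)$. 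In the first two sums the argument $y/N\mathfrak b\ge y^{1/2}\ge Q^{\alpha+\varepsilon}$, so \eqref{CS} gives $A(\cdot)\ll (y/N\mathfrak b)^{1-\eta_1}$, and summing over $N\mathfrak b\le y^{1/2}$ yields $\ll y^{1-\eta_1}y^{\eta_1/2}=y^{1-\eta_1/2}$. The cross term is $\ll (y^{1/2})^{2(1-\eta_1)}=y^{1-\eta_1}$. This is exactly where the threshold $2\alpha$ enters: the balanced split needs $y^{1/2}\ge Q^{\alpha+\varepsilon}$. There is no main term since both characters are non-principal.

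Finally I would assemble the pieces. With $N\mathfrak d\le Q^\varepsilon$, terms with $N\mathfrak f\le Q^{\varepsilon}$ have $y=T/(N\mathfrak dN\mathfrak f)\ge Q^{2\alpha+2\varepsilon}$, so each contributes $\ll (T/(N\mathfrak d N\mathfrak f))^{1-\eta_0}$. In fact every $\mathfrak f\mid\mathfrak d$ has $N\mathfrak f\le Q^\varepsilon$, so no large-$\mathfrak f$ truncation as in Lemma~\ref{lem:divisible-one-star-psi} is needed. Summing over the at most $\tau_3(\mathfrak d)\ll Q^{o(1)}$ factorizations gives $\ll \tau_3(\mathfrak d)(T/N\mathfrak d)^{1-\eta_0}\le (T/N\mathfrak d)\,\tau_3(\mathfrak d)\,Q^{-(2\alpha+3\varepsilon)\eta_0}$, using $T/N\mathfrak d\ge Q^{2\alpha+3\varepsilon}$. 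Absorbing $\tau_3(\mathfrak d)\ll_{\varepsilon'}Q^{\varepsilon'}$ with $\varepsilon'$ small gives the bound $(T/N\mathfrak d)Q^{-\eta}$. The only delicate point I expect is bookkeeping of uniformity: the implied constants in \eqref{CS} depend only on $K,\varepsilon$, and the character $\xi\psi$ is again a character modulo $\mathfrak q$, so it is covered. The main conceptual obstacle, namely that $\xi\psi$ might be principal, is excluded precisely by the hypothesis $\xi\ne\psi$.
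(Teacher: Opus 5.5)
Your proposal is correct and follows the paper's own argument. You write $\xi(1*\psi)=\xi*(\xi\psi)$ with both factors non-principal, remove the condition $\mathfrak d\mid\mathfrak a$ by the $\mathfrak d_1,\mathfrak d_2$/M\"obius decomposition (which yields the same coefficients $\mu(\mathfrak f)\xi(\mathfrak d\mathfrak f)\psi(\mathfrak f\mathfrak g)$ as the paper's parametrization), apply the balanced hyperbola split with $y^{1/2}\ge Q^{\alpha+\varepsilon}$, and finally absorb the divisor-function factor using $N\mathfrak d\le Q^{\varepsilon}$.
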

\begin{proof}
	We use
	\[
	\xi(\mathfrak a)(1*\psi)(\mathfrak a)
	=
	(\xi*\xi\psi)(\mathfrak a).
	\]
	Since \(\xi\notin\{\chi_0,\psi\}\), both \(\xi\) and \(\xi\psi\) are
	non-principal.
	
	First we record a hyperbola estimate. For \(U\ge Q^{2\alpha+2\varepsilon}\),
	put
	\[
	H(U):=
	\sum_{N\mathfrak b\,N\mathfrak c\le U}
	\xi(\mathfrak b)(\xi\psi)(\mathfrak c).
	\]
	By Dirichlet's hyperbola decomposition,
	\[
		H(U)=
		\sum_{N\mathfrak b\le U^{1/2}}
		\xi(\mathfrak b)
		\sum_{N\mathfrak c\le U/N\mathfrak b}
		(\xi\psi)(\mathfrak c)+
		\sum_{N\mathfrak c\le U^{1/2}}
		(\xi\psi)(\mathfrak c)
		\sum_{N\mathfrak b\le U/N\mathfrak c}
		\xi(\mathfrak b)-
		\Bigl(\sum_{N\mathfrak b\le U^{1/2}}\xi(\mathfrak b)\Bigr)
			\Bigl(\sum_{N\mathfrak c\le U^{1/2}}(\xi\psi)(\mathfrak c)\Bigr).
	\]
	By \(\eqref{CS}\), for some \(\theta=\theta(\varepsilon)>0\),    whenever \(X\ge Q^{\alpha+\varepsilon}\), we have
	\[
	\sum_{N\mathfrak a\le X}\xi(\mathfrak a)\ll_{K,\varepsilon}X^{1-\theta},
	\qquad
	\sum_{N\mathfrak a\le X}(\xi\psi)(\mathfrak a)
	\ll_{K,\varepsilon}X^{1-\theta}.
	\]
	 	Since every inner sum in $H(U)$ has length at least
	 $
	 U^{1/2}\ge Q^{\alpha+\varepsilon},
	 $
	  we have
	\[
	H(U)
\ll_{K,\varepsilon}
U^{1-\theta}
\sum_{N\mathfrak b\le U^{1/2}}\frac{1}{(N\mathfrak b)^{1-\theta}}
+
U^{1-\theta}
\sum_{N\mathfrak c\le U^{1/2}}\frac{1}{(N\mathfrak c)^{1-\theta}}
+
U^{1-\theta}   \ll_{K,\varepsilon}
U^{1-\theta/2}.
	\]
Since \(U\ge Q^{2\alpha+2\varepsilon}\), this gives
	\begin{equation}\label{hyper}
		H(U)\ll_{K,\varepsilon}UQ^{-\eta_0}
	\end{equation}
	for some \(\eta_0=\eta_0(\varepsilon)>0\).
	
	We now impose the divisibility condition. Expanding the convolution,
	\[
	\sum_{ {N\mathfrak a\le T\atop \mathfrak d\mid\mathfrak a}}
	\xi(\mathfrak a)(1*\psi)(\mathfrak a)
	=
	\sum_{ {N\mathfrak b\,N\mathfrak c\le T\atop
			\mathfrak d\mid\mathfrak b\mathfrak c}}
	\xi(\mathfrak b)(\xi\psi)(\mathfrak c).
	\]	
	For a given pair \((\mathfrak b,\mathfrak c)\), put
	$
	\mathfrak d_1=(\mathfrak d,\mathfrak b)$,
	 $
	\mathfrak d_2=\mathfrak d/\mathfrak d_1.
	$
	Then \(\mathfrak d_2\mid\mathfrak c\), and writing
	$
	\mathfrak b=\mathfrak d_1\mathfrak b_0$,
	$
	\mathfrak c=\mathfrak d_2\mathfrak c_0,
	$
	we have \((\mathfrak b_0,\mathfrak d_2)=1\). Conversely, these conditions
	recover the condition \((\mathfrak d,\mathfrak b)=\mathfrak d_1\) and
	\(\mathfrak d\mid\mathfrak b\mathfrak c\).
	Thus the sum can be written as 
	\[
	\sum_{\mathfrak d_1\mid\mathfrak d}
	\xi(\mathfrak d_1)(\xi\psi)(\mathfrak d_2)
	\sum_{ {
			N\mathfrak b_0N\mathfrak c_0\le T/N\mathfrak d\atop
			(\mathfrak b_0,\mathfrak d_2)=1}}
	\xi(\mathfrak b_0)(\xi\psi)(\mathfrak c_0).
	\]
	By Möbius inversion,
$
	1_{(\mathfrak b_0,\mathfrak d_2)=1}
	=
	\sum_{ \mathfrak r\mid(\mathfrak b_0,\mathfrak d_2)}
	\mu(\mathfrak r).
$
	Writing \(\mathfrak b_0=\mathfrak r\mathfrak b\), this becomes
	\[
	\sum_{\mathfrak d_1\mid\mathfrak d}
	\sum_{\mathfrak r\mid\mathfrak d_2}
	\mu(\mathfrak r)
	\xi(\mathfrak d_1\mathfrak r)(\xi\psi)(\mathfrak d_2)
	\sum_{N\mathfrak bN\mathfrak c\le T/(N\mathfrak d\,N\mathfrak r)}
	\xi(\mathfrak b)(\xi\psi)(\mathfrak c).
	\]
  For the inner sum, 
  since
	\(N\mathfrak d\le Q^\varepsilon\) and \(N\mathfrak r\le Q^\varepsilon\),
	we have
	$
	T/(N\mathfrak d\,N\mathfrak r)\ge Q^{2\alpha+4\varepsilon-\varepsilon-\varepsilon}
	=
	Q^{2\alpha+2\varepsilon}.
	$ Thus, the hyperbola estimate \eqref{hyper} applies to the inner sum and we have
	\[ 
	\sum_{ {N\mathfrak a\le T\atop \mathfrak d\mid\mathfrak a}}
	\xi(\mathfrak a)(1*\psi)(\mathfrak a)
	 \ll_{K,\varepsilon}
	Q^{-\eta_0}
	\sum_{\mathfrak d_1\mid\mathfrak d}
	\sum_{\mathfrak r\mid \mathfrak d/\mathfrak d_1}
	\frac{T}{N\mathfrak d\,N\mathfrak r}  \ll_{K,\varepsilon}
	\frac{T}{N\mathfrak d}
	Q^{-\eta_0}
	\tau(\mathfrak d)^2.
	\]
	Finally, since \(N\mathfrak d\le Q^\varepsilon\), the divisor bound for
	ideals gives \(\tau(\mathfrak d)^2\ll_{K,\varepsilon,\eta_0}Q^{\eta_0/2}\).
	Therefore,   we have 
	\[
	\sum_{ {
			N\mathfrak a\le T\atop
			\mathfrak d\mid\mathfrak a}}
	\xi(\mathfrak a)(1*\psi)(\mathfrak a)
	\ll_{K,\varepsilon}
	\frac{T}{N\mathfrak d}Q^{-\frac{\eta_0}{2}}.
	\]	
\end{proof}
\begin{lemma}[Sharp Halász--Montgomery with weight \(1*\psi\)]
	\label{lem:sharp-HM-psi}
  Let \(\varepsilon>0\) and
	\(C\ge1\) be fixed. Suppose
	\[
	Q^{2\alpha+4\varepsilon}\le X\le Q^C.
	\]
	Let \(\chi_1,\dots,\chi_R\) be distinct Hecke characters modulo
	\(\mathfrak q\), and let \(\psi\) be a non-principal quadratic
	Hecke character modulo \(\mathfrak q\). Then there exists
	\(\eta=\eta(\varepsilon)>0\) such that, for any complex coefficients
	\(a_{\mathfrak n}\),
	\[\sum_{j=1}^{R}\,
	\biggl|\! 
	\sum_{ {
			N\mathfrak n\le X\atop
			(\mathfrak n,\mathcal P(Q^\varepsilon))=1}}\!\!\!\!\!
	(1*\psi)(\mathfrak n)
	a_{\mathfrak n}
	\chi_j(\mathfrak n)
	\biggr|^2  
	 \ll_{K,\varepsilon,C}
	\bigl(
	X\rho_K L(1,\psi)\mathcal V_\psi(Q)
	+
	XQ^{-\eta}R
	\bigr)\!\!\!
	\sum_{ {
			N\mathfrak n\le X\atop
			(\mathfrak n,\mathcal P(Q^\varepsilon))=1}}\!\!\!\!\!
	(1*\psi)(\mathfrak n)|a_{\mathfrak n}|^2 .\]
\end{lemma}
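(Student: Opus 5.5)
The plan is to follow the duality-plus-sieve argument of Lemma~\ref{lem:ray-HM}, now with the weight \(1*\psi\) built into the bilinear form. By the duality principle (in weighted form, with weights \(w_{\mathfrak n}:=(1*\psi)(\mathfrak n)1_{(\mathfrak n,\mathcal P(Q^\varepsilon))=1}\ge0\)), it suffices to show that for all complex \(c_1,\dots,c_R\),
\[
\sum_{N\mathfrak n\le X}w_{\mathfrak n}\Bigl|\sum_{j=1}^R c_j\chi_j(\mathfrak n)\Bigr|^2
\ll_{K,\varepsilon,C}\bigl(X\rho_KL(1,\psi)\mathcal V_\psi(Q)+XQ^{-\eta}R\bigr)\sum_j|c_j|^2 .
\]
To see this is the right dual form, write \(b_{\mathfrak n}=w_{\mathfrak n}^{1/2}a_{\mathfrak n}\) and \(u_j(\mathfrak n)=w_{\mathfrak n}^{1/2}\chi_j(\mathfrak n)\). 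The left side of the lemma is then \(\sum_j|\langle b,\overline{u_j}\rangle|^2\). Its operator norm equals that of the dual form above, and \(\sum w|a|^2=\sum|b|^2\).

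Since \(w\ge0\), I would bound \(1_{(\mathfrak n,\mathcal P(Q^\varepsilon))=1}\le\sum_{\mathfrak e\mid\mathfrak n}\lambda^+_{\mathfrak e}\). For this I take the upper beta-sieve weights of Lemma~\ref{lem:fundamental-sieve-ideals} in dimension \(\varkappa=2\), with sifting range \(z=Q^{\varepsilon/s_0}\), \(s_0\ge19\), and level \(D=Q^\varepsilon\). This sifts only primes below \(z\), which is still a valid majorant because a smaller sifting range gives a larger indicator. Expanding the square, the dual form is at most
\[
\sum_{j,k}c_j\overline{c_k}\sum_{\mathfrak e\mid\mathcal P(z)}\lambda^+_{\mathfrak e}\sum_{\substack{N\mathfrak n\le X\\ \mathfrak e\mid\mathfrak n}}(\chi_j\overline{\chi_k})(\mathfrak n)(1*\psi)(\mathfrak n).
\]

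For the diagonal terms \(j=k\), the character is principal on ideals coprime to \(\mathfrak q\). The subtlety is that \(\chi_0(\mathfrak n)(1*\psi)(\mathfrak n)\) is \((1*\psi)\) restricted to \((\mathfrak n,\mathfrak q)=1\). I would simply bound it by \((1*\psi)(\mathfrak n)\), which is legitimate since the summand is nonnegative and can only grow, though with signed \(\lambda^+\) one must be careful. It is cleaner to keep the factor \(\chi_0\) and note that primes dividing \(\mathfrak q\) have \(\psi(\mathfrak p)=0\). The coprimality can then be handled by a Möbius twist absorbed into \(h\), or one can observe that Lemma~\ref{lem:divisible-one-star-psi} applied to \(\chi_0(1*\psi)=1*\psi\) on ideals coprime to \(\mathfrak q\) gives the same shape of main term. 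Either way, Lemma~\ref{lem:divisible-one-star-psi} (valid since \(X/N\mathfrak e\ge Q^{2\alpha+3\varepsilon}\ge Q^{\alpha_0+\varepsilon}\)) gives a main term \(\rho_KL(1,\psi)X\sum_{\mathfrak e}\lambda^+_{\mathfrak e}h(\mathfrak e)\) plus an error \(O(\sum_{N\mathfrak e\le D}\tau_3(\mathfrak e)(X/N\mathfrak e)^{1-\eta})\ll XQ^{-c}\). By Lemma~\ref{lem:fundamental-sieve-ideals}(ii) and Lemma~\ref{lem:h-euler-product}, the main term is \(\ll X\rho_KL(1,\psi)\mathcal V_\psi(Q)\). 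The error \(XQ^{-c}\) is absorbed either into the main term, using Siegel's bound \eqref{siegel} and \eqref{bound_V_psi}, or into the \(XQ^{-\eta}R\) term (since \(R\ge1\)). The diagonal thus contributes \(\ll(X\rho_KL(1,\psi)\mathcal V_\psi(Q)+XQ^{-\eta})\sum|c_j|^2\).

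For the off-diagonal terms \(j\ne k\), set \(\xi=\chi_j\overline{\chi_k}\), which is non-principal. If \(\xi\ne\psi\), Lemma~\ref{lem:twisted-one-star-psi} applies, since \(X\ge Q^{2\alpha+4\varepsilon}\) and \(N\mathfrak e\le Q^\varepsilon\), and gives \(\ll (X/N\mathfrak e)Q^{-\eta_0}\). Summing over \(N\mathfrak e\le Q^\varepsilon\) with \(|\lambda^+|\le1\) costs only a factor \(\log Q\), so this term is \(\ll XQ^{-\eta_0/2}\).

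The main obstacle is the case \(\xi=\psi\), where \(\psi(1*\psi)=\psi+\chi_0\) carries a principal part and Lemma~\ref{lem:twisted-one-star-psi} fails. Here \(\sum_{\mathfrak e\mid\mathfrak n}\psi(\mathfrak n)(1*\psi)(\mathfrak n)\) again has a main term, by the analogue of Lemma~\ref{lem:divisible-one-star-psi} with \(\psi\) and \(1\) interchanged, of size \(\rho_KL(1,\psi)Xh'(\mathfrak e)\). The corresponding sieve sum is \(\ll\mathcal V_\psi(Q)\), because \(h'(\mathfrak p)\) agrees with \(h(\mathfrak p)\) to first order. This gives a contribution bounded like the diagonal. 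The number of such pairs \((j,k)\) is at most \(R\), since \(k\) is determined by \(j\), and \(\sum_j|c_jc_{k(j)}|\le\sum|c_j|^2\). So this case is also \(\ll X\rho_KL(1,\psi)\mathcal V_\psi(Q)\sum|c_j|^2\).

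Finally, the generic off-diagonal bound combined with \(\sum_{j\ne k}|c_jc_k|\le R\sum|c_j|^2\) yields the term \(XQ^{-\eta}R\), which completes the proof.
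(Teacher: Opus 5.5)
Your proposal is correct and has the same skeleton as the paper's proof: weighted duality, dimension-$2$ upper beta-sieve weights with $z=Q^{\varepsilon/s_0}$ and $D=Q^{\varepsilon}$, Lemma~\ref{lem:twisted-one-star-psi} for $\xi_{j,k}\notin\{\chi_0,\psi\}$, and Lemma~\ref{lem:divisible-one-star-psi} with Lemmas~\ref{lem:fundamental-sieve-ideals} and~\ref{lem:h-euler-product} for the remaining pairs. The only real divergence is in the exceptional pairs $\xi_{j,k}\in\{\chi_0,\psi\}$, where you do more work than necessary.

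The sign worry you raise is resolved by $\sum_{\mathfrak e\mid\mathfrak n}\lambda^+_{\mathfrak e}\ge 1_{(\mathfrak n,\mathcal P(z))=1}\ge 0$ for every $\mathfrak n$. Hence $\bigl(\sum_{\mathfrak e\mid\mathfrak n}\lambda^+_{\mathfrak e}\bigr)(1*\psi)(\mathfrak n)\ge0$ pointwise. Since $|\xi_{j,k}(\mathfrak n)|\le1$, the paper bounds every exceptional pair, $\chi_0$ and $\psi$ alike, by $|c_j||c_k|\sum_{\mathfrak e}\lambda^+_{\mathfrak e}\sum_{N\mathfrak n\le X,\ \mathfrak e\mid\mathfrak n}(1*\psi)(\mathfrak n)$. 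Lemma~\ref{lem:divisible-one-star-psi} applies to this sum exactly as stated.

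This makes two of your steps unnecessary: the M\"obius twist for coprimality with $\mathfrak q$, and your ``interchanged'' analogue of Lemma~\ref{lem:divisible-one-star-psi} with a new density $h'$. Your second diagonal alternative is also not literally covered by that lemma, which sums over all ideals rather than those coprime to $\mathfrak q$.

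In fact the case $\xi=\psi$ \emph{is} the diagonal case. For every $\mathfrak n$ one has $\psi(\mathfrak n)(1*\psi)(\mathfrak n)=\chi_0(\mathfrak n)(1*\psi)(\mathfrak n)$, because $(1*\psi)(\mathfrak n)=0$ whenever $\psi(\mathfrak n)=-1$. Note also that $\psi\cdot(1*\psi)$ equals the convolution $\psi*\chi_0$, not $\psi+\chi_0$ as you wrote.

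Your route can be completed, provided the analogue keeps the factor $\varphi_K(\mathfrak q)/N\mathfrak q$ coming from $\chi_0$, so that the resulting Euler product is still $\ll\mathcal V_\psi(Q)$. But it proves an asymptotic where only an upper bound is needed; the positivity argument is shorter and uses only lemmas already established.
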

\begin{proof}
	Define
	\[W(\mathfrak n):=(1*\psi)(\mathfrak n).\]
	Since \(\psi\) is quadratic, \(W(\mathfrak n)\ge0\).
	
		By the duality principle \cite[Sec.~7.1, p.~170]{book_analytic},  it suffices to prove that  for arbitrary complex numbers
	\(c_1,\dots,c_R\),
	\begin{equation}\label{9.9}
		\sum_{{
				N\mathfrak n\le X\atop
				(\mathfrak n,\mathcal P(Q^\varepsilon))=1}}
		W(\mathfrak n)
		\biggl|
		\sum_{j=1}^{R}c_j\chi_j(\mathfrak n)
		\biggr|^2   \ll_{K,\varepsilon,C}
		\bigl(
		X\rho_K L(1,\psi)\mathcal V_\psi(Q)
		+
		XQ^{-\eta}R
		\bigr)
		\sum_{j=1}^{R}|c_j|^2 .
	\end{equation}

	Fix \(s_0\ge 19\), and let 
	\[
	z:=Q^{\varepsilon/s_0},
	\qquad
	D:=z^{s_0}=Q^\varepsilon.
	\]
	Let \(\lambda_{\mathfrak d}^{+}\) be the upper-bound sieve weights with
	sifting range \(z\) and level \(D\). Applying the sieve inequality
	$
	1_{(\mathfrak n,\mathcal P(Q^\varepsilon))=1}
	\le
	1_{(\mathfrak n,\mathcal P(z))=1}
	\le
	\sum_{\mathfrak d\mid\mathfrak n}
	\lambda_{\mathfrak d}^{+},
	$
	the left-hand side of \eqref{9.9} is bounded by 
	\begin{equation}\label{contribution}
				\sum_{ 
					N\mathfrak n\le X } \Bigl(\sum_{\mathfrak d\mid\mathfrak n}
				\lambda_{\mathfrak d}^{+}\Bigr)
			W(\mathfrak n)
			\biggl|
			\sum_{j=1}^{R}c_j\chi_j(\mathfrak n)
			\biggr|^2  =
		\sum_{j,k=1}^{R}
		c_j\overline{c_k}
		\sum_{\mathfrak d\mid\mathcal P(z)}
		\lambda_{\mathfrak d}^{+}
		\sum_{ {
				N\mathfrak n\le X\atop
				\mathfrak d\mid\mathfrak n}}
		\chi_j(\mathfrak n)\overline{\chi_k(\mathfrak n)}
		W(\mathfrak n).
	\end{equation}
	
	Write
	$
	\xi_{j,k}:=\chi_j\overline{\chi_k}.
	$
	First suppose
	$\xi_{j,k}\notin\{\chi_0,\psi\}.$
	For  \(N\mathfrak d\le D=Q^\varepsilon\) and \(X\ge Q^{2\alpha+4\varepsilon}\), Lemma~\ref{lem:twisted-one-star-psi}
	gives
	\[
	\sum_{{
			N\mathfrak n\le X\atop
			\mathfrak d\mid\mathfrak n}}
	\xi_{j,k}(\mathfrak n)W(\mathfrak n)
	\ll_{K,\varepsilon}
	\frac{X}{N\mathfrak d}Q^{-\eta_0}
	\]
	for some \(\eta_0=\eta_0(\varepsilon)>0\).
	Using \(|\lambda_{\mathfrak d}^{+}|\le1\) and that
	\[
	\sum_{{\mathfrak d\mid\mathcal P(z)\atop N\mathfrak d\le D}}
	\frac1{N\mathfrak d}
	\le
	\prod_{N\mathfrak p<z}\Bigl(1+\frac1{N\mathfrak p}\Bigr)
	\ll_K \log z
	\ll_{K,\varepsilon }\log Q,
	\]
	we have, 	for some \(\eta_1=\eta_1(\varepsilon)>0\),
	\[
	\sum_{\mathfrak d\mid\mathcal P(z)}
	\lambda_{\mathfrak d}^{+}
	\sum_{ {
			N\mathfrak n\le X\atop
			\mathfrak d\mid\mathfrak n}}
	\xi_{j,k}(\mathfrak n)W(\mathfrak n)
	\ll_{K,\varepsilon}
	XQ^{-\eta_1}.
	\]
	Therefore the total contribution of these   pairs with $\xi_{j,k}\notin\{\chi_0,\psi\} $ to \eqref{contribution} is
	\begin{equation}\label{9.11}
			\ll_{K,\varepsilon}
		XQ^{-\eta_1}
		\sum_{j,k=1}^{R}|c_j||c_k|
		\ll
		XQ^{-\eta_1}R
		\sum_{j=1}^{R}|c_j|^2.
	\end{equation}

	It remains to treat the pairs for which
	$
	\xi_{j,k}\in\{\chi_0,\psi\}.
	$
	If \(\xi_{j,k}=\chi_0\), then \(j=k\). If \(\xi_{j,k}=\psi\), then
	\(\chi_j=\psi\chi_k\),  for each \(j\) there is at most one such
	\(k\), and these corresponding \(k\)'s are distinct as \(j\) varies.   Hence
	\[
	\sum_{ {j,k\atop \xi_{j,k}\in\{\chi_0,\psi\}}}
	|c_j||c_k|
	\le 2
	\sum_{j=1}^R |c_j|^2 .
	\]
 
Note that 
$
	\sum_{\mathfrak d\mid\mathfrak n}\lambda_{\mathfrak d}^{+} \ge0
	$ for any \(\mathfrak n\),  the exceptional contribution in \eqref{contribution} is bounded in absolute value by	
	\[
	\sum_{ {j,k\atop \xi_{j,k}\in\{\chi_0,\psi\}}}\!\!\!\!
	|c_j||c_k|\!\!
	\sum_{N\mathfrak n\le X}
	\biggl(\sum_{\mathfrak d\mid\mathfrak n}\lambda_{\mathfrak d}^{+}\biggr)
	W(\mathfrak n) \le
2	\sum_{j=1}^R |c_j|^2\!
	\sum_{N\mathfrak n\le X}
			\biggl(\sum_{\mathfrak d\mid\mathfrak n}\lambda_{\mathfrak d}^{+}	\biggr)W(\mathfrak n)
=2
	\sum_{j=1}^{R}|c_j|^2\!
	\sum_{\mathfrak d\mid\mathcal P(z)}
	\lambda_{\mathfrak d}^{+}
	\sum_{
			N\mathfrak n\le X\atop
			\mathfrak d\mid\mathfrak n}
	W(\mathfrak n).
	\]
	For \(N\mathfrak d\le D=Q^\varepsilon\), by Lemma~\ref{lem:divisible-one-star-psi}, there is some \(\eta_2=\eta_2(\varepsilon)>0\),
such that
	\[
	\sum_{{
			N\mathfrak n\le X\atop
			\mathfrak d\mid\mathfrak n}}
	W(\mathfrak n)
	=
X\rho_K L(1,\psi)h(\mathfrak d)	
	+
	O_{K,\varepsilon}\biggl(
	\tau_3(\mathfrak d) 
	\Bigl(\frac{X}{N\mathfrak d}\Bigr)^{1-\eta_2}
	\biggr).
	\]
  Summing over \(\mathfrak d\), the   error term is bounded by 
  \[
  \sum_{N\mathfrak d\le D}
  \tau_3(\mathfrak d)
  \Bigl(\frac{X}{N\mathfrak d}\Bigr)^{1-\eta_2}
  \le
  X^{1-\eta_2}
  \sum_{N\mathfrak d\le D}
  \frac{\tau_3(\mathfrak d)}{(N\mathfrak d)^{1-\eta_2}}  \ll_{K,\varepsilon}
  X^{1-\eta_2}D^{\eta_2}(\log D)^{O_K(1)}.
  \]
  Hence, for some  \(\eta_3=\eta_3(\eta)>0\), 
	\[
	\sum_{\mathfrak d\mid\mathcal P(z)}
	\lambda_{\mathfrak d}^{+}
	\sum_{{
			N\mathfrak n\le X\atop
			\mathfrak d\mid\mathfrak n}}
	W(\mathfrak n)
	\ll_{K,\varepsilon }
	X\rho_KL(1,\psi)
	\sum_{\mathfrak d\mid\mathcal P(z)}
	\lambda_{\mathfrak d}^{+}h(\mathfrak d)
	+
	XQ^{-\eta_3}.
	\]
	By   Lemma \ref{lem:fundamental-sieve-ideals} and   	Lemma~\ref{lem:h-euler-product}, 
	\[
	\sum_{\mathfrak d\mid\mathcal P(z)}
	\lambda_{\mathfrak d}^{+}h(\mathfrak d)
	\ll_K
	\prod_{N\mathfrak p<z}(1-h(\mathfrak p))\asymp_{K,\varepsilon}
	\mathcal V_\psi(Q).
	\]
	Thus the exceptional contribution from pairs with 	$
	\xi_{j,k}\in\{\chi_0,\psi\}.
	$ in \eqref{contribution}  is 
\begin{equation}\label{9.13}
		\ll_{K,\varepsilon }
	\bigl(
	X\rho_KL(1,\psi)\mathcal V_\psi(Q)
	+
	XQ^{-\eta_3}
	\bigr)
	\sum_{j=1}^{R}|c_j|^2.
\end{equation}
Combining \eqref{9.11}, \eqref{9.13} and taking
$
\eta=\min\{\eta_1,\eta_3\} 
$
gives
\eqref{9.9}.
\end{proof}

\begin{proposition}[Exceptional quadratic transference]
	\label{prop:quadratic-transference-final}
	Assume the setup of Proposition~\ref{prop:transference-conclusion}.
	Let \(A\subseteq G\) be the dense-model set constructed there.
	
	Let \(\psi\) be a non-principal quadratic Hecke character modulo
	\(\mathfrak q\). 
	Suppose that, for some \(c\in G\) and some   \(M\) with
	\[
	Q^{2\alpha+5\varepsilon}\le M \le X/2,
	\]
	\begin{equation}\label{9.14}
		\sum_{[\mathfrak p]a_1a_2=c,\  a_1,a_2\in A
			\atop
			M<N\mathfrak p\le2M,\  \psi(\mathfrak p)=1}
		1
		\gg
		\frac{|G|\,M\rho_KL(1,\psi)\mathcal V_\psi(Q)}
		{(\log Q)^{1/2-\varepsilon}}.
	\end{equation}
	Then
$
	c\in E_3(X;\mathfrak q).
$
\end{proposition}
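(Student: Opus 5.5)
We argue exactly as in the proof of Proposition~\ref{prop:transference-conclusion}(iv), with the $1/N\mathfrak p$ weight replaced by the sifted weight $(1*\psi)$, whose large-spectrum behaviour is controlled by Lemma~\ref{lem:sharp-HM-psi}. Put
\[
f_1(\mathfrak a):=\frac{(1*\psi)(\mathfrak a)}{2}\,1_{\mathfrak a=\mathfrak p}\,1_{M<N\mathfrak p\le 2M}.
\]
Since $(1*\psi)(\mathfrak p)=2$ when $\psi(\mathfrak p)=1$ and vanishes when $\psi(\mathfrak p)=-1$, $f_1$ is, up to primes dividing $\mathfrak q$, the indicator of primes with $\psi(\mathfrak p)=1$ in $(M,2M]$. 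Form
\[
(f*f*f_1)(c)=\sum_{[\mathfrak a_1\mathfrak a_2\mathfrak a_3]=c}f(\mathfrak a_1)f(\mathfrak a_2)f_1(\mathfrak a_3)
\]
and the model $(g*g*f_1)(c)$. Positivity of the former gives $c\in E_3(X;\mathfrak q)$. Since $g\ge\varepsilon/10$ on $A$, hypothesis \eqref{9.14} gives
\[
|G|^{-2}(g*g*f_1)(c)\gg_\varepsilon \frac{M\rho_KL(1,\psi)\mathcal V_\psi(Q)}{|G|(\log Q)^{1/2-\varepsilon}}.
\]
By orthogonality and $|\widehat g|\le|\widehat f|$, the difference between the normalized convolutions is bounded by
\[
\frac2{|G|}\sum_\chi|\widehat f(\chi)|\,|\widehat f_1(\chi)|\,|\widehat f(\chi)-\widehat g(\chi)|,
\]
where $\widehat f_1(\chi):=\sum_{\mathfrak a}f_1(\mathfrak a)\chi(\mathfrak a)$ (unnormalized). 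It therefore suffices to show that
\[
\sum_\chi|\widehat f(\chi)|\,|\widehat f_1(\chi)|\,|\widehat f(\chi)-\widehat g(\chi)|=o\Bigl(M\rho_KL(1,\psi)\mathcal V_\psi(Q)\,(\log Q)^{-1/2+\varepsilon}\Bigr).
\]

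Write $\Sigma:=M\rho_KL(1,\psi)\mathcal V_\psi(Q)$. We split the characters by the size of $|\widehat f_1(\chi)|$. Let $\mathcal X_1$ be the set where $|\widehat f_1(\chi)|\le \Sigma(\log Q)^{-(n_K+2)}$. On $\mathcal X_1$ we use $|\widehat f-\widehat g|\le|\widehat f|$ and the $L^2$ bound \eqref{f_l2_mean_bound}, giving a contribution $\ll\Sigma/\log Q$, which is acceptable. On the complement $\mathcal X_2$ we use Cauchy--Schwarz:
\[
\sum_{\chi\in\mathcal X_2}|\widehat f|\,|\widehat f_1|\,|\widehat f-\widehat g|\le\delta\Bigl(\sum_{\mathcal X_2}|\widehat f|^2\Bigr)^{1/2}\Bigl(\sum_{\mathcal X_2}|\widehat f_1|^2\Bigr)^{1/2}.
\]
For the $f_1$ factor we apply Lemma~\ref{lem:sharp-HM-psi} with $a_{\mathfrak n}=\tfrac12 1_{\mathfrak n\text{ prime}}1_{M<N\mathfrak n\le2M}$ and $X=2M$. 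This is admissible because $M\ge Q^{2\alpha+5\varepsilon}$, and primes of norm $>M$ are coprime to $\mathcal P(Q^\varepsilon)$. It gives
\[
\sum_{\mathcal X_2}|\widehat f_1|^2\ll\bigl(M\rho_KL(1,\psi)\mathcal V_\psi+MQ^{-\eta}|\mathcal X_2|\bigr)\cdot\#\{\mathfrak p:\psi(\mathfrak p)=1,\ N\mathfrak p\le 2M\}.
\]
Lemma~\ref{lem:upper-bound-psi-one-primes} bounds the prime count by $\ll\Sigma$. Comparing with the lower bound $|\widehat f_1|>\Sigma(\log Q)^{-n_K-2}$ on $\mathcal X_2$ yields $|\mathcal X_2|\ll(\log Q)^{O(1)}$; here Siegel's bound \eqref{siegel} and \eqref{bound_V_psi} absorb the $Q^{-\eta}$ term. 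Consequently $\sum_{\mathcal X_2}|\widehat f_1|^2\ll\Sigma^2$. For the $f$ factor, Lemma~\ref{lem:ray-HM}(i) with $|\mathcal X_2|\ll(\log Q)^{O(1)}$ gives $\sum_{\mathcal X_2}|\widehat f|^2\ll1$, exactly as in \eqref{5.43}. The $\mathcal X_2$ contribution is therefore $\ll\delta\Sigma=\Sigma(\log Q)^{-1/2+\varepsilon/2}$. This beats the main term $\Sigma(\log Q)^{-1/2+\varepsilon}$ by a factor $(\log Q)^{-\varepsilon/2}$, so $(f*f*f_1)(c)>0$.

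The main obstacle is the $\mathcal X_2$ step, in particular getting a second-moment bound for $\widehat f_1$ that scales with $L(1,\psi)\mathcal V_\psi(Q)$ rather than with $\log$-density. The trivial mean-value bound would lose the possibly tiny factor $L(1,\psi)\mathcal V_\psi$ and kill the comparison with \eqref{9.14}. This is exactly why the majorant $1*\psi$ and Lemma~\ref{lem:sharp-HM-psi} are needed. We also need to check that Siegel's bound makes all power-saving errors $Q^{-\eta}$ negligible relative to $\Sigma/M$.
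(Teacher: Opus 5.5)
Your proposal is correct and follows the paper's proof essentially step for step. It uses the same comparison of the $f$--$f$--$f_0$ and $g$--$g$--$f_0$ convolutions, and splits characters by the size of $|\widehat f_0(\chi)|$ against $M\rho_KL(1,\psi)\mathcal V_\psi(Q)$ times a negative power of $\log Q$. It then combines Lemma~\ref{lem:sharp-HM-psi}, Lemma~\ref{lem:upper-bound-psi-one-primes} and the Siegel bound to get $|\mathcal X_2|\ll(\log Q)^{O(1)}$ and $\sum_{\mathcal X_2}|\widehat f_0|^2\ll\Sigma^2$, applies Lemma~\ref{lem:ray-HM}(i) for $\sum_{\mathcal X_2}|\widehat f|^2\ll1$, and finishes with Cauchy--Schwarz against $\delta$. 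Your $f_1=\tfrac12(1*\psi)\,1_{\mathfrak a=\mathfrak p}1_{M<N\mathfrak p\le2M}$ coincides with the paper's $f_0$ on ideals coprime to $\mathfrak q$, since the extra weight on $\mathfrak p\mid\mathfrak q$ is killed by $\chi(\mathfrak p)=0$. Likewise, your choice of threshold exponent $n_K+2$ instead of $n_K+10$ changes nothing.
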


\begin{proof}
	Let \(f\) and \(g\) be the functions appearing in the proof of
	Proposition~\ref{prop:transference-conclusion}. Thus \(f :\mathcal A(X;\mathfrak q)\to\mathbb R_{\ge0}\)	is given by  \[	f(\mathfrak a)
	:=
	\frac{\vartheta}{2}
	V_{\mathfrak q}\log X\,
	1_{\mathfrak a=\mathfrak p}
	1_{N\mathfrak p\ge z},\] and
	\(g:G\to[0,1+o(1)]\) is its dense model.
	
	Define \(f_0:\mathcal A(X;\mathfrak q)\to\mathbb R_{\ge0}\) by
	\[
	f_0(\mathfrak a)
	:=
	1_{\mathfrak a=\mathfrak p}
	1_{M<N\mathfrak p\le2M}
	1_{\psi(\mathfrak p)=1}.
	\]
	Note that  the function \(f_0\) is supported on prime ideals of
	norm at most \(X\) since \(2M\le X\).
	
	For \(c\in G\), define
	\[
	T_{f,f_0}(c)
	:=
	\frac1{|\mathcal A(X;\mathfrak q)|^2}\sum_{ {
			\mathfrak a_1,\mathfrak a_2,\mathfrak a_3
			\in\mathcal A(X;\mathfrak q)\atop
			[\mathfrak a_1\mathfrak a_2\mathfrak a_3]=c}}
	f(\mathfrak a_1)f(\mathfrak a_2)f_0(\mathfrak a_3),
	\]
	\[
	T_{g,f_0}(c)
	:=
  \frac1{|G|^2}	\sum_{ {
			a_1,a_2\in G,\ \mathfrak a_3\in\mathcal A(X;\mathfrak q)\atop
			a_1a_2[\mathfrak a_3]=c}}
	g(a_1)g(a_2)f_0(\mathfrak a_3).
	\]
	If \(	T_{f,f_0}(c)>0\), then by definition,  \(c\in E_3(X;\mathfrak q)\).

	For a character \(\chi\) of \(G\), define
	\[
	\widehat f(\chi)
	:=
	\frac1{|\mathcal A(X;\mathfrak q)|}
	\sum_{\mathfrak a\in\mathcal A(X;\mathfrak q)}
	f(\mathfrak a)\chi(\mathfrak a),
	\qquad
	\widehat g(\chi)
	:=
	\frac1{|G|}
	\sum_{a\in G}g(a)\chi(a),
	\]
	and
	\[
	\widehat f_0(\chi)
	:=
	\sum_{\mathfrak a\in\mathcal A(X;\mathfrak q)}
	f_0(\mathfrak a)\chi(\mathfrak a)
	=
	\sum_{ {
			M<N\mathfrak p\le2M\atop
			\psi(\mathfrak p)=1}}
	\chi(\mathfrak p).
	\]
	Here we do not include the normalization factor
	\(
	|\mathcal A(X;\mathfrak q)|^{-1}
	\)
	in the definition of \(\widehat f_0(\chi)\). This avoids introducing a
	different normalization scale and leads to cleaner definition for
	\(T_{g,f_0}\).

	By orthogonality of characters,
	\[
	 T_{f,f_0}(c)
	-
 T_{g,f_0}(c)
	=
	\frac1{|G|}
	\sum_{\chi\in\widehat G}
	\left(\widehat f(\chi)^2-\widehat g(\chi)^2\right)
	\widehat f_0(\chi)\overline{\chi(c)}.
	\]
	Therefore
	\begin{equation}\label{9.15}
			\left|
		T_{f,f_0}(c)
		-
		T_{g,f_0}(c)
		\right|
		\le
		\frac1{|G|}
		\sum_{\chi\in\widehat G}
		\bigl(|\widehat f(\chi)|+|\widehat g(\chi)|\bigr)
		|\widehat f_0(\chi)|
		|\widehat f(\chi)-\widehat g(\chi)|.
	\end{equation}
	Define
	\[
	A_\psi(M):=M\rho_KL(1,\psi)\mathcal V_\psi(Q),
	\qquad
	\mathcal L:=n_K+10.
	\]
		We split the characters according to the size of \(\widehat f_0(\chi)\).
	Define
	\[
	\mathcal X_1
	:=
	\Bigl\{
	\chi\in\widehat G:
	|\widehat f_0(\chi)|
	<
	(\log Q)^{-\mathcal L}A_\psi(M)
	\Bigr\},
	\qquad
	\mathcal X_2:=\widehat G\setminus\mathcal X_1 .
	\]
	
	For \(\chi\in\mathcal X_1\), using
	$
	|\widehat g(\chi)|\le |\widehat f(\chi)|$, 
 $
	|\widehat f(\chi)-\widehat g(\chi)|\le |\widehat f(\chi)|,
	$
	and the mean-square estimate
$
	\sum_{\chi\in\widehat G}|\widehat f(\chi)|^2
	\ll_{K }(\log Q)^{n_K+1} 
$   from \eqref{f_l2_mean_bound},
	we have
	\begin{equation}\label{9.16}
			\frac1{|G|}
			\sum_{\chi\in\mathcal X_1}
			\bigl(|\widehat f(\chi)|+|\widehat g(\chi)|\bigr)
			|\widehat f_0(\chi)|
			|\widehat f(\chi)-\widehat g(\chi)| 
			\ll_{K}
			\frac{A_\psi(M)}{|G|(\log Q)^{\mathcal L}}
			\sum_{\chi\in\widehat G}|\widehat f(\chi)|^2
			\ll_{K}
			\frac{A_\psi(M)}{|G|(\log Q)^9}.
	\end{equation}

	For \(\chi\in\mathcal X_2\), we will apply
Lemma~\ref{lem:sharp-HM-psi} at length \(2M\) with coefficients
\[
a_{\mathfrak n}
=
\frac12\,
1_{\mathfrak n=\mathfrak p}
1_{M<N\mathfrak p\le2M}
1_{\psi(\mathfrak p)=1}.
\]
Since \((1*\psi)(\mathfrak p)=2\) when $\psi(\mathfrak p)=1$ and $M>Q^{\varepsilon}$, we have
\[
\sum_{ {N\mathfrak n\le2M\atop
		(\mathfrak n,\mathcal P(Q^\varepsilon))=1}}
(1*\psi)(\mathfrak n)a_{\mathfrak n}\chi(\mathfrak n)
=
\widehat f_0(\chi).
\]
Moreover, by Lemma~\ref{lem:upper-bound-psi-one-primes},
\[
\sum_{ {N\mathfrak n\le2M\atop
		(\mathfrak n,\mathcal P(Q^\varepsilon))=1}}
(1*\psi)(\mathfrak n)|a_{\mathfrak n}|^2
=
\frac12
\#\{\mathfrak p:M<N\mathfrak p\le2M,\ \psi(\mathfrak p)=1\}\ll_{K,\varepsilon}
A_\psi(M).
\]
Therefore Lemma~\ref{lem:sharp-HM-psi} gives, for some \(\eta_0=\eta_0(\varepsilon)>0\),
\begin{equation}\label{9.17a}
	\sum_{\chi\in\mathcal X_2}|\widehat f_0(\chi)|^2
	\ll_{K,\varepsilon }
	\left(A_\psi(M)+MQ^{-\eta_0}|\mathcal X_2|\right)A_\psi(M).
\end{equation}

We now bound \(|\mathcal X_2|\). By the definition of \(\mathcal X_2\), 
$
|\widehat f_0(\chi)|
\ge
(\log Q)^{-\mathcal L}A_\psi(M)$ for any $\chi\in\mathcal X_2$. 
  Using this lower bound in \eqref{9.17a}, we have
\[
|\mathcal X_2|(\log Q)^{-2\mathcal L}A_\psi(M)^2
\ll_{K,\varepsilon}
A_\psi(M)^2
+
MQ^{-\eta_0}|\mathcal X_2|A_\psi(M).
\]
The second term on the right is absorbed into the left. Indeed, by the
Siegel lower bound   \eqref{siegel} and Lemma~\ref{lem:h-euler-product}, for any \(\delta>0\), 
\begin{equation}\label{bound}
	\frac{A_\psi(M)}{M}
	=
	\rho_KL(1,\psi)\mathcal V_\psi(Q)
	\gg_{K,\delta}Q^{-2\delta}.
\end{equation}
 Choosing \(\delta>0\) sufficiently small, we have
$
MQ^{-\eta_0}A_\psi(M)
\le
\frac12
(\log Q)^{-2\mathcal L}A_\psi(M)^2
$
for \(Q\) sufficiently large. Therefore
\begin{equation}\label{9.17c}
	|\mathcal X_2|\ll_{K,\varepsilon }(\log Q)^{2\mathcal L}.
\end{equation}
Applying \eqref{bound} and \eqref{9.17c} to \eqref{9.17a},   we have
\begin{equation}\label{9.17}
	\sum_{\chi\in\mathcal X_2}|\widehat f_0(\chi)|^2
	\ll_{K,\varepsilon }
	A_\psi(M)^2.
\end{equation}

We now prove the mean-square bound for \(f\) restricted to \(\mathcal X_2\). This follows similarly as in the proof of
Proposition~\ref{prop:transference-conclusion}.
By Lemma~\ref{lem:ray-HM}(i),   for some \(\eta_1=\eta_1(\varepsilon)>0\),
\[
\sum_{\chi\in\mathcal X_2}|\widehat f(\chi)|^2
\ll_{K,\varepsilon}
\Bigl(
\frac{X}{\log Q}
+
|\mathcal X_2|X^{1-\eta_1}Q^{\varepsilon\eta_1}
\Bigr)
\frac1{|\mathcal A(X;\mathfrak q)|^2}
\sum_{\mathfrak a\in\mathcal A(X;\mathfrak q)}f(\mathfrak a)^2.
\]
 Using \eqref{5.14}, this gives
\[
\sum_{\chi\in\mathcal X_2}|\widehat f(\chi)|^2
\ll_{K,\varepsilon}
\frac{\log X}{\log Q}
+
|\mathcal X_2|X^{-\eta_1}Q^{\varepsilon\eta_1}\log X.
\]
Since \(Q^{2\alpha+5\varepsilon}\le X\le Q^C\),  
\(|\mathcal X_2|\ll_{K,\varepsilon}(\log Q)^{2\mathcal L}\), taking $\varepsilon$ sufficiently small and fixed, the second term is \(o_{K,\varepsilon}(1)\),
and \(\log X/\log Q\le C\). Hence
\[
\sum_{\chi\in\mathcal X_2}|\widehat f(\chi)|^2
\ll_{K,\varepsilon}1.
\]
Moreover,  \(|\widehat g(\chi)|\le |\widehat f(\chi)|\), this implies
\begin{equation}\label{9.18}
	\sum_{\chi\in\mathcal X_2}
	\bigl(|\widehat f(\chi)|+|\widehat g(\chi)|\bigr)^2
	\ll_{K,\varepsilon}1.
\end{equation}

Finally, Proposition~\ref{prop:transference-conclusion} gives the
dense-model estimate
\[
|\widehat f(\chi)-\widehat g(\chi)|
\le
\delta,
\qquad
\delta=(\log Q)^{-1/2+\varepsilon/2}.
\]
Combining this with \eqref{9.17}, \eqref{9.18}, and the
Cauchy--Schwarz inequality, we obtain
\begin{equation}\label{9.19}
	\begin{aligned}
		&\frac1{|G|}
		\sum_{\chi\in\mathcal X_2}
		\bigl(|\widehat f(\chi)|+|\widehat g(\chi)|\bigr)
		|\widehat f_0(\chi)|
		|\widehat f(\chi)-\widehat g(\chi)| \\
		&\qquad\le
		\frac{\delta}{|G|}
		\Bigl(
		\sum_{\chi\in\mathcal X_2}
		\bigl(|\widehat f(\chi)|+|\widehat g(\chi)|\bigr)^2
		\Bigr)^{1/2}
		\Bigl(
		\sum_{\chi\in\mathcal X_2}
		|\widehat f_0(\chi)|^2
		\Bigr)^{1/2}  \ll_{K,\varepsilon }
		\frac{\delta}{|G|} A_\psi(M)  .
	\end{aligned}
\end{equation}

	Combining \eqref{9.15}, \eqref{9.16}, and \eqref{9.19}, we obtain
	\begin{equation}\label{9.20}
		T_{f,f_0}(c)
	=
	T_{g,f_0}(c)
	+
	O _{K,\varepsilon }\Bigl(
	\frac{\delta}{|G|}
	A_\psi(M)
	\Bigr).
	\end{equation}
	By the definition of $A$ in \eqref{def_A}, \(g(a)\ge\varepsilon/10\) for \(a\in A\). Thus, by hypothesis \eqref{9.14},  
	\[
	T_{g,f_0}(c)
	\gg_{\varepsilon} 	\frac1{|G|^2} 
	\sum_{[\mathfrak p]a_1a_2=c,\ a_1,a_2\in A
		\atop
		M<N\mathfrak p\le2M,\  \psi(\mathfrak p)=1}
	1 
	\gg
	\frac{A_\psi(M)}{|G|(\log Q)^{1/2-\varepsilon}}
	 .
	\]
	This dominates the error term in \eqref{9.20} as 
	$
	\delta=(\log Q)^{-1/2+\varepsilon/2}.
	$
	Hence \(T_{f,f_0}(c)>0\), and
	$
	c\in E_3(X;\mathfrak q).
	$
\end{proof}
\begin{corollary}
	\label{cor:quadratic-transference-AprimeBprime}
	Assume the setup of Proposition~\ref{prop:criteria-E2-E3}(b), and let
	\(A',B'\subseteq A\) be the sets there.
		Let \(\psi\) be a non-principal quadratic Hecke character modulo
	\(\mathfrak q\). 
	Suppose that, for some \(c\in G\) and some   \(M\) with
	\[
	Q^{2\alpha+5\varepsilon}\le M \le X/2,
	\]
	\begin{equation}\label{9.21}
			\sum_{[\mathfrak p]d=c,\ d\in A'\cdot B'
				\atop
				M<N\mathfrak p\le2M,\ \psi(\mathfrak p)=1}
		1
		\gg 
		M\rho_K L(1,\psi)\mathcal V_\psi(Q).
	\end{equation}
	Then
	$
	c\in E_3(X;\mathfrak q).
	$
\end{corollary}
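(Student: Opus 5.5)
The plan is to reduce Corollary~\ref{cor:quadratic-transference-AprimeBprime} to Proposition~\ref{prop:quadratic-transference-final}. The argument is the same as the one deducing Proposition~\ref{prop:criteria-E2-E3}(b.iv) from Proposition~\ref{prop:transference-conclusion}(iv). We will show that hypothesis \eqref{9.21} on $A'\cdot B'$ implies hypothesis \eqref{9.14} on $A$, with the factor $|G|$ gained from the popularity of elements of $A'\cdot B'$.

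The key input is \eqref{6.4} from the proof of Proposition~\ref{prop:criteria-E2-E3}, Case~2. For every $d\in A'\cdot B'$ we have $(1_A*1_A)(d)\ge t\ge \varepsilon|G|/10$, since Lemma~\ref{lem:popular-products}(b) was applied with both sets equal to $A$. We also treat the degenerate case $|G|<M_\varepsilon$, where $A'=B'=A$. There $(1_A*1_A)(d)\ge1\gg_\varepsilon |G|$ for $d\in A\cdot A$, because $|G|$ is bounded in terms of $\varepsilon$. So in every case $(1_A*1_A)(d)\gg_\varepsilon|G|$ on $A'\cdot B'$.

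For fixed $c$, we group the representations counted in \eqref{9.14} according to $d=a_1a_2$:
\[
\sum_{\substack{[\mathfrak p]a_1a_2=c,\ a_1,a_2\in A\\ M<N\mathfrak p\le2M,\ \psi(\mathfrak p)=1}}1
=\sum_{\substack{M<N\mathfrak p\le2M\\ \psi(\mathfrak p)=1}}(1_A*1_A)\bigl([\mathfrak p]^{-1}c\bigr)
\ge\sum_{\substack{[\mathfrak p]d=c,\ d\in A'\cdot B'\\ M<N\mathfrak p\le2M,\ \psi(\mathfrak p)=1}}(1_A*1_A)(d).
\]
By the bound above, this is $\gg_\varepsilon|G|$ times the left side of \eqref{9.21}, hence $\gg |G|\,M\rho_KL(1,\psi)\mathcal V_\psi(Q)$. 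This exceeds the right-hand side of \eqref{9.14} by a factor $(\log Q)^{1/2-\varepsilon}$, so \eqref{9.14} holds for $Q$ large.

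It remains to check the setup of Proposition~\ref{prop:quadratic-transference-final}. The setup of Proposition~\ref{prop:criteria-E2-E3} places $X$ in the range of Proposition~\ref{prop:transference-conclusion}. The range $Q^{2\alpha+5\varepsilon}\le M\le X/2$ is assumed identically in both statements. Applying that proposition gives $c\in E_3(X;\mathfrak q)$.

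The only real obstacle is bookkeeping. One must confirm that $A$ in Proposition~\ref{prop:criteria-E2-E3} is literally the dense-model set of Proposition~\ref{prop:transference-conclusion}, as the proof states, so that $g\ge\varepsilon/10$ on $A$. One must also make sure the implied constants depend only on $K$ and $\varepsilon$.
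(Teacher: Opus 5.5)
Your proposal is correct and follows the paper's proof. Like the paper, you use \eqref{6.4} to get $(1_A*1_A)(d)\gg|G|$ on $A'\cdot B'$, regroup the sum in \eqref{9.14} by $d=a_1a_2$ to see that \eqref{9.21} implies \eqref{9.14}, and then apply Proposition~\ref{prop:quadratic-transference-final}; your separate treatment of the degenerate case $|G|<M_\varepsilon$, where $A'=B'=A$ and $(1_A*1_A)(d)\ge 1\gg_\varepsilon|G|$, is a small extra check that the paper leaves implicit.
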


\begin{proof}
	By \eqref{6.4} in the proof of
	Proposition~\ref{prop:criteria-E2-E3}(b.ii),  we have that for every
	\(d\in A'\cdot B'\),
	\[
	(1_A*1_A)(d) = \#\{(a_1,a_2)\in A^2:a_1a_2=d\}\gg |G|.
	\] 
Consequently,
	\[
		\sum_{[\mathfrak p]a_1a_2=c,\  a_1,a_2\in A
		\atop
		M<N\mathfrak p\le2M,\  \psi(\mathfrak p)=1}
		1  \ge
		\sum_{[\mathfrak p]d=c,\ d\in A'\cdot B'
			\atop
			M<N\mathfrak p\le2M,\ \psi(\mathfrak p)=1}
		(1_A*1_A)(d) \gg
		|G|
		\sum_{[\mathfrak p]d=c,\ d\in A'\cdot B'
			\atop
			M<N\mathfrak p\le2M,\ \psi(\mathfrak p)=1}
		1.
	\]
	Hence \eqref{9.21} implies condition \eqref{9.14} of
	Proposition~\ref{prop:quadratic-transference-final}. Applying Proposition~\ref{prop:quadratic-transference-final}, we conclude that
	$
	c\in E_3(X;\mathfrak q).
	$
\end{proof}

\section{Products of two prime ideals}
In this section, we will prove Theorem \ref{intro:two-prime-ray}. Before the proof we need some lemmas. As before, we write
$
G=\operatorname{Cl}_{\mathfrak q}^{(\infty)}$,
$
Q=N\mathfrak q, 
$
and assume that \eqref{CS} and \eqref{Lb} hold for fixed parameters
\(0<\alpha_0\le \alpha<1\). Note that \eqref{Lb} implies \eqref{CS^b}.

\begin{lemma} 
	\label{lem:ray-prime-energy}
Let \(\kappa>0\) and \(B\ge \max(1,3\alpha)+\kappa\) be fixed, and let
\(\varepsilon>0\) be sufficiently small. Set \(X:=Q^B\),  \(	z:=X^{2/3}\), and let \(\mathcal P_1,\mathcal P_2\) be finite sets
	of prime ideals such that for some \(Q^{2\varepsilon}\leq Y_1\le X/2\),
	\[
	\mathcal P_1
	\subseteq
	\{\mathfrak p\nmid\mathfrak q:Y_1<N\mathfrak p\le 2Y_1\}, 
	\qquad
	\mathcal P_2
	\subseteq
	\{\mathfrak p\nmid\mathfrak q:	z<N\mathfrak p\le X\}.
	\]
	Suppose  that, for some fixed constant $ C_0\ge 1$ and for any $\nu>0$, 
	\begin{equation}\label{eq:P1-lower-for-energy}
|\mathcal P_1|\gg_\nu Y_1Q^{-\nu},\qquad \frac{2X}{\log 	z}
		\le
		(C_0+o(1))|\mathcal P_2|.
	\end{equation}
	Define 	the multiplicative energy 
	\[
	\mathcal E_G(\mathcal P_1,\mathcal P_2)
	:=
	\#\bigl\{(\mathfrak p_1,\mathfrak p_1',\mathfrak p_2,\mathfrak p_2') \in \mathcal{P}_1^2\times \mathcal{P}_2^2:
	[\mathfrak p_1\mathfrak p_2]=
	[\mathfrak p_1'\mathfrak p_2']\bigr\}.
	\]
	Then
	\[
	\mathcal E_G(\mathcal P_1,\mathcal P_2)
	\le
	\frac{C_0+o_{K,\varepsilon}(1)}{|G|}
	|\mathcal P_1|^2|\mathcal P_2|^2.
	\]
\end{lemma}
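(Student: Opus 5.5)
We plan to expand the energy by orthogonality in \(G\) and separate the principal character from the others. Writing \(S_i(\chi):=\sum_{\mathfrak p\in\mathcal P_i}\chi(\mathfrak p)\), orthogonality gives
\[
\mathcal E_G(\mathcal P_1,\mathcal P_2)
=
\frac1{|G|}\sum_{\chi\in\widehat G}|S_1(\chi)|^2|S_2(\chi)|^2 .
\]
The trivial character contributes exactly \(|\mathcal P_1|^2|\mathcal P_2|^2/|G|\), which fits inside the claimed main term since \(C_0\ge1\). The task is therefore to show that the non-principal characters contribute at most \((C_0-1+o(1))|\mathcal P_1|^2|\mathcal P_2|^2/|G|\). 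A cruder target is in fact more natural: bound the \emph{whole} sum by \((C_0+o(1))|\mathcal P_1|^2|\mathcal P_2|^2\) directly. We will do this by treating \(\mathfrak p_1\mathfrak p_2\) as a single ideal.

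Since \(N\mathfrak p_2>z=X^{2/3}\) and \(N\mathfrak p_1\le 2Y_1\le X\), each ideal \(\mathfrak n=\mathfrak p_1\mathfrak p_2\) has at most a bounded number of representations. It has at most two prime factors above \(z\) when \(N\mathfrak n\le X^{2}\), and the roles are essentially determined. So
\[
\sum_\chi|S_1S_2(\chi)|^2 \ll \sum_\chi\Bigl|\sum_{\mathfrak n}a_{\mathfrak n}\chi(\mathfrak n)\Bigr|^2 ,
\]
with \(a_{\mathfrak n}\) bounded and supported on \(N\mathfrak n\le 2Y_1X\). A constant loss would spoil \(C_0\), however, so the arrangement must be cleaner.

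The sharper route avoids that loss. Fix \(\mathfrak p_1,\mathfrak p_1'\) and count the pairs \((\mathfrak p_2,\mathfrak p_2')\) with \([\mathfrak p_2']=[\mathfrak p_2][\mathfrak p_1\mathfrak p_1'^{-1}]\). This gives
\[
\mathcal E_G\le|\mathcal P_1|^2\max_{b\in G}\#\{(\mathfrak p_2,\mathfrak p_2')\in\mathcal P_2^2:[\mathfrak p_2']=b[\mathfrak p_2]\}
\le |\mathcal P_1|^2\,|\mathcal P_2|\max_{c\in G}\#\{\mathfrak p:z<N\mathfrak p\le X,\ [\mathfrak p]=c\}.
\]
It then suffices to prove a Brun--Titchmarsh bound in a single ray class,
\[
\#\{\mathfrak p: N\mathfrak p\le X,\ [\mathfrak p]=c\}\le (1+o(1))\frac{2X}{|G|\log z}.
\]
Combined with the second part of \eqref{eq:P1-lower-for-energy}, this yields exactly \((C_0+o(1))|\mathcal P_1|^2|\mathcal P_2|^2/|G|\). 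The role of \(\mathcal P_1\) and of its lower bound then appears to be only as a safeguard. We would keep it in reserve: if the class-uniform bound fails, we would instead average over \(b=[\mathfrak p_1\mathfrak p_1'^{-1}]\) and use the lower bound on \(|\mathcal P_1|\) together with Lemma~\ref{lem:ray-HM}(ii) to show that \(b\) is equidistributed up to \(Q^{-\nu}\) losses.

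The key step is the Brun--Titchmarsh estimate, which we would prove exactly as in Lemma~\ref{rough_ideal_lemma} with \(H=\{1\}\). Apply the upper linear sieve of Lemma~\ref{sieve_lemma} with sifting range \(z=X^{2/3}\) and level \(D=X^{\vartheta}\), where \(\vartheta=1-\varepsilon-\alpha\log Q/\log X\). The inner counts are \(\#\{\mathfrak m:N\mathfrak m\le X/N\mathfrak d,\ [\mathfrak d\mathfrak m]=c\}\), which we evaluate by orthogonality over all of \(\widehat G\). This is the main obstacle. With \(|G|\asymp Q\) characters, summing \(\mathrm{CS}(\alpha)\) errors naively costs a factor \(Q\). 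We instead need \(X/N\mathfrak d\ge Q^{1+\varepsilon'}\) and the uniform count \(M_K\) from Lemma~\ref{lem:ray-class-mean-value-sharp} (\cite[Thm.~1]{counting_ideal}), giving the main term \(X/(|G|N\mathfrak d)\) with relative error \(o(1)\) once \(D\le X^{1-1/B-\varepsilon}\). Hence we take \(D=X^{1-1/B-\varepsilon}\) rather than \(X^\vartheta\). For \(s=\log D/\log z=\tfrac32(1-1/B-\varepsilon)\) we have \(F_0(s)=2e^\gamma/s\), and Mertens' theorem \eqref{merten} gives the bound \(2X/(|G|(1-1/B)\log X)\). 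This is admissible only if it is at most \((1+o(1))2X/(|G|\log z)\), that is, if \(1-1/B\ge 2/3\), i.e. \(B\ge3\). For smaller \(B\) we would fall back on the averaged version over \(b\), using \(B\ge\max(1,3\alpha)+\kappa\) and the Halász–Montgomery input. We expect this range issue to be where the real difficulty lies.
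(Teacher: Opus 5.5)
Your argument does not prove the lemma in the range where it is needed. The only route you carry out bounds $\mathcal E_G$ by $|\mathcal P_1|^2|\mathcal P_2|\max_c\#\{\mathfrak p:N\mathfrak p\le X,\ [\mathfrak p]=c\}$. This throws away all averaging over $\mathcal P_1$. It then requires a Brun--Titchmarsh bound in a single ray class with constant $2/\log z=3/\log X$.

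As you compute yourself, the sieve level available for one individual class is at most about $X/Q$, so this constant is reached only when $B\ge3$. The lemma, however, is stated for $B\ge\max(1,3\alpha)+\kappa$. It is applied in Proposition~\ref{prop:E2-energy-increment} with $B=\max(1,3\alpha,4\alpha_0)+\kappa\approx1.61$, so $B<3$ is the whole point, and there you only gesture at a fallback.

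Even for $B\ge3$ the step is not justified. The bound for $M_K(X;\mathfrak q)$ from \cite[Thm.~1]{counting_ideal} is an upper bound with an unspecified constant and a $(\log 3Q)^{n_K}$ loss. The sieve instead needs an asymptotic $\#\{\mathfrak m:N\mathfrak m\le T,\ [\mathfrak m]=c\}=(1+o(1))V_{\mathfrak q}T/|G|$ with summable remainders, which that bound does not give.

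The missing idea is that $\mathcal P_1$ is not a safeguard: it is what allows sieve level $X^{2/3}$ instead of $X/Q$. The paper majorizes only $\mathfrak p_2'$ by the upper linear sieve with $z=X^{2/3}$ and level $D=z$, so $s=1$ and $F_0(1)=2e^\gamma$. It then expands by characters, $\mathcal E_G\le|G|^{-1}\sum_\chi|S_1(\chi)|^2S_2(\chi)R(\overline\chi)$, where $R$ is the sieve-weighted ideal sum. The principal character gives $(2+o(1))\frac{X}{\log z}|\mathcal P_1|^2|\mathcal P_2|/|G|$, and the hypothesis on $|\mathcal P_2|$ turns this into the $C_0$ main term.

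For $\chi\ne\chi_0$, one has $X/N\mathfrak d\ge X^{1/3}\ge Q^{\alpha+\kappa/3}$; this is where $B\ge3\alpha+\kappa$ enters. Hence $\mathrm{CS}(\alpha)$ gives $R(\chi)\ll XQ^{-\eta_1}$ pointwise. This bound is used only on the characters with $|S_1(\chi)|>|\mathcal P_1|(\log Q)^{-A_1}$. Their number is $\ll Q^{\nu L}(\log Q)^{O(1)}$, by an $L$-th moment of $S_1$, Lemma~\ref{lem:ray-HM}(ii), and the lower bound $|\mathcal P_1|\gg Y_1Q^{-\nu}$. On the remaining characters, the factor $(\log Q)^{-2A_1}$ combined with Cauchy--Schwarz and the large sieve for $S_2$ and $R$ suffices. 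That large-sieve step is Lemma~\ref{lem:ray-class-mean-value-sharp} and uses $X\ge Q^{1+\kappa}$.

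Your suggestion that $b=[\mathfrak p_1\mathfrak p_1'^{-1}]$ is ``equidistributed up to $Q^{-\nu}$ losses'' would not suffice by itself. A $Q^{\nu}$ loss is affordable only in the count of exceptional characters, where the power saving in $R(\chi)$ beats it. It cannot be absorbed in the main term, whose constant must be exactly $C_0$.
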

\begin{proof}
	
	Let \(\lambda_{\mathfrak d}^{+}\) be the upper-bound linear sieve weights as in Lemma \ref{sieve_lemma}
	with sifting range \(z\) and level \(D=z\)  so that \(s=1\). Since all
	prime ideals in \(\mathcal P_2\) have norm \(>z\), using the upper bound sieve inequality
	$
	1_{\mathfrak n=\mathfrak p\in\mathcal P_2}
	\le
	1_{(\mathfrak n,\mathcal P(z))=1}
	\le
	\sum_{\mathfrak d\mid\mathfrak n}\lambda_{\mathfrak d}^{+},
	$
	we have
	\[
	\mathcal E_G(\mathcal P_1,\mathcal P_2)
 \le
\sum_{\mathfrak p_1,\mathfrak p_1'\in\mathcal P_1}
\sum_{\mathfrak p_2\in\mathcal P_2}
\sum_{ {N\mathfrak n\le X\atop (\mathfrak n,\mathfrak q)=1}}
\Bigl(\sum_{\mathfrak d\mid\mathfrak n}\lambda_{\mathfrak d}^{+}\Bigr)
1_{[\mathfrak p_1\mathfrak p_2]=[\mathfrak p_1'\mathfrak n]}.
	\]
	By orthogonality of characters of \(G\), this is
	\begin{equation}\label{E1}
			\mathcal E_G(\mathcal P_1,\mathcal P_2)
		 \le
		\frac1{|G|}
		\sum_{\chi\in\widehat G}
		\biggl|
		\sum_{\mathfrak p\in\mathcal P_1}\chi(\mathfrak p)
		\biggr|^2
		\Bigl(
		\sum_{\mathfrak p\in\mathcal P_2}\chi(\mathfrak p)
		\Bigr)
		\biggl(
		\sum_{ {N\mathfrak n\le X\atop(\mathfrak n,\mathfrak q)=1}}
		\overline{\chi}(\mathfrak n)
		\sum_{\mathfrak d\mid\mathfrak n}\lambda_{\mathfrak d}^{+}
		\biggr).
	\end{equation}
	The contribution of the principal character \(\chi=\chi_0\) in \eqref{E1} is
	\[
	\frac1{|G|}
	|\mathcal P_1|^2|\mathcal P_2|
	\sum_{{N\mathfrak n\le X\atop(\mathfrak n,\mathfrak q)=1}}
	\sum_{\mathfrak d\mid\mathfrak n}\lambda_{\mathfrak d}^{+}.
	\]
Interchanging the order of summation and applying the ideal-counting
estimate in Lemma \ref{lem:ideal-counting-prime-to-q}, we get
	\[
	\begin{aligned}
		\sum_{ {N\mathfrak n\le X\atop(\mathfrak n,\mathfrak q)=1}}
	\sum_{\mathfrak d\mid\mathfrak n}\lambda_{\mathfrak d}^{+}
	&=
	\sum_{ (\mathfrak d,\mathfrak q)=1}
	\lambda_{\mathfrak d}^{+}
	\#\Bigl\{
	\mathfrak m:
	N\mathfrak m\le \frac{X}{N\mathfrak d},\
	(\mathfrak m,\mathfrak q)=1
	\Bigr\}  \\
		&=
		V_{\mathfrak q}X
		\sum_{(\mathfrak d,\mathfrak q)=1}
		\frac{\lambda_{\mathfrak d}^{+}}{N\mathfrak d}
		+	
		O_{K,\delta}\biggl(\sum _{N\mathfrak{d}\le z}\Bigl(\frac{X}{N \mathfrak{d}}\Bigr)^{1-1/n_K} Q^{\delta}\biggr)
	\end{aligned}
	\]
	where
$
	V_{\mathfrak q}:=
	\rho_K\prod_{\mathfrak p\mid\mathfrak q}
	\bigl(1-\frac1{N\mathfrak p}\bigr),
	$
	Since
	$
	\sum_{N\mathfrak d\le z}
	(N\mathfrak d)^{-1+1/n_K}
	\ll_K z^{1/n_K},
	$
	the error term is  \( \ll_{K,\delta} Q^\delta X^{1-1/n_K} z^{1/n_K}=o_{K}(X/\log z) \) provided \(\delta>0\) is chosen sufficiently small.
	
By the upper-bound linear sieve, Lemma~\ref{sieve_lemma} with \(s=1\),
together with the prime-ideal Mertens theorem \eqref{merten}, we have
	\[
	\begin{aligned}
		V_{\mathfrak q}
		\sum_{(\mathfrak d,\mathfrak q)=1}
		\frac{\lambda_{\mathfrak d}^{+}}{N\mathfrak d}
		&\le
		(2e^\gamma+o_K(1))
		\rho_K
		\prod_{N\mathfrak p<z}
		\Bigl(1-\frac1{N\mathfrak p}\Bigr) = 	\frac{2+o_K(1)}{\log z}.
	\end{aligned}
	\]	
	Hence, using the lower bound for \(|\mathcal P_2|\) in
	\eqref{eq:P1-lower-for-energy}, the  contribution of the principal character is
	\begin{equation}\label{E2}
			\le 
		\frac{(2+o_K(1))}{|G|}
		|\mathcal P_1|^2|\mathcal P_2|
		\frac{X}{\log z}	\le
		\frac{C_0+o_{K }(1)}{|G|}
		|\mathcal P_1|^2|\mathcal P_2|^2.
	\end{equation}

	Write
	\[
	S_1(\chi):=\sum_{\mathfrak p\in\mathcal P_1}\chi(\mathfrak p),
	\qquad
	S_2(\chi):=\sum_{\mathfrak p\in\mathcal P_2}\chi(\mathfrak p),
	\qquad 	R(\chi):=
	\sum_{ {N\mathfrak n\le X\atop(\mathfrak n,\mathfrak q)=1}}
	\chi(\mathfrak n)
	\sum_{\mathfrak d\mid\mathfrak n}\lambda_{\mathfrak d}^{+}.
	\]
 We claim that the non-principal-characters contribution in \eqref{E1} is
	\begin{equation}\label{eq:energy-nonprincipal-ray}
			\frac1{|G|}
		\sum_{\chi\ne\chi_0}
		|S_1(\chi)|^2|S_2(\chi)||R(\overline\chi)|
		\ll_{K,\varepsilon}
		\frac{|\mathcal P_1|^2|\mathcal P_2|^2}
		{|G| \log  Q  }.
	\end{equation}
	Assuming the claim, combining it with \eqref{E2} proves the
	lemma.
	
	We now prove  \eqref{eq:energy-nonprincipal-ray}.
	
	Split the non-principal characters into
	\[
	\mathcal X_1
	:=
	\left\{
	\chi\ne\chi_0:
	|S_1(\chi)|
	\le
	\frac{|\mathcal P_1|}{(\log  Q)^{A_1}}
	\right\},
	\]
	and
	\[
	\mathcal X_2
	:=
	\left\{
	\chi\ne\chi_0:
	|S_1(\chi)|
	>
	\frac{|\mathcal P_1|}{(\log  Q)^{A_1 }}
	\right\},
	\]
	where $A_1>0$ is a constant to be chosen later. 
	
	For \(\mathcal X_1\), Cauchy--Schwarz gives
	\[
	\frac1{|G|}
	\sum_{\chi\in\mathcal X_1}
	|S_1(\chi)|^2|S_2(\chi)||R(\overline\chi)| 
	\le
	\frac{|\mathcal P_1|^2}{|G|(\log  Q)^{2A_1}}
	\biggl(\sum_{\chi\in\widehat G}|S_2(\chi)|^2\biggr)^{1/2}
		\biggl(\sum_{\chi\in\widehat G}|R(\chi)|^2\biggr)^{1/2}.
	\]	
	Applying Lemma~\ref{lem:ray-class-mean-value-sharp}, with   the coefficients
	supported on \(\mathcal P_2\) and using that that  $X\geq Q^{1+\kappa}$, we have
\begin{equation}\label{E3a}
			\sum_{\chi\in\widehat G}|S_2(\chi)|^2
		\ll_K
		(X+Q)(\log(3Q))^{n_K}|\mathcal P_2|
		\ll_K
		X(\log Q)^{n_K}|\mathcal P_2|.
\end{equation}	
	Next write
	\[
	R(\chi)
	=
	\sum_{ {N\mathfrak n\le X\atop(\mathfrak n,\mathfrak q)=1}}
	b_{\mathfrak n}\chi(\mathfrak n),
	\qquad
	b_{\mathfrak n}:=
	\sum_{\mathfrak d\mid\mathfrak n}\lambda_{\mathfrak d}^{+}.
	\]
	Since \(|\lambda_{\mathfrak d}^{+}|\le1\), we have
	$
	|b_{\mathfrak n}|\le \tau(\mathfrak n).
	$
	Moreover, using \(\tau(\mathfrak n)^2\le \tau_4(\mathfrak n)\), we have
	\[
	\sum_{ {N\mathfrak n\le X\atop(\mathfrak n,\mathfrak q)=1}}
	|b_{\mathfrak n}|^2
	\le
	\sum_{N\mathfrak n\le X}\tau(\mathfrak n)^2
	\le
	\sum_{N\mathfrak n\le X}\tau_4(\mathfrak n)
	\ll_K
	X(\log 2X)^3.
	\]
	Applying Lemma~\ref{lem:ray-class-mean-value-sharp} again gives
\begin{equation}\label{E3b}
	\sum_{\chi\in\widehat G}|R(\chi)|^2
	\ll_K
	(X+Q)(\log(3Q))^{n_K}X(\log 2X)^3
	\ll_K
	X^2(\log Q)^{n_K+3}.
\end{equation}
 From \eqref{eq:P1-lower-for-energy}, we have
	$
 \frac{X}{\log z}\ll 	|\mathcal P_2|.
$
Combining \eqref{E3a} and \eqref{E3b}, and using  that  \(\log z\asymp\log Q\), we obtain 
	\begin{equation}\label{E3}
			\frac1{|G|}
		\sum_{\chi\in\mathcal X_1}
		|S_1(\chi)|^2|S_2(\chi)||R(\overline\chi)|
		\ll_K
		\frac{|\mathcal P_1|^2|\mathcal P_2|^2}
		{|G|(\log  Q)^{2A_1-n_K-3}}.
	\end{equation}
	Choose \(A_1\) so that \(2A_1-n_K-3\ge 1 \); for example,
	\(A_1=n_K+2\) suffices. Then \eqref{E3} is
	$
	O_{K }\Bigl(
		\frac{|\mathcal P_1|^2|\mathcal P_2|^2}
	{|G|(\log  Q)^{ n_K+1}}
	\Bigr).
	$

	We next consider \(\mathcal X_2\). First   we record a pointwise bound for \(R(\chi)\) for $\chi$ non-principal. Write
	\[
	R(\chi)
	=
	\sum_{{\mathfrak d\mid\mathcal P(z),\ 
			N\mathfrak d\le D\atop
			(\mathfrak d,\mathfrak q)=1}}
	\lambda_{\mathfrak d}^{+}\chi(\mathfrak d)
	\sum_{ {N\mathfrak m\le X/N\mathfrak d\atop
			(\mathfrak m,\mathfrak q)=1}}
	\chi(\mathfrak m).
	\]
For \(N\mathfrak d\le D=z=X^{2/3}\), taking \(\varepsilon<\kappa/3\), we have
	$
	\frac{X}{N\mathfrak d}\ge X^{1/3}\ge Q^{ \alpha+\kappa /3}\ge Q^{\alpha+\varepsilon}.
	$
	Applying \eqref{CS} to the inner sum, we have for some \(\eta_0=\eta_0(\varepsilon)>0\),
	\[
	\sum_{ {N\mathfrak m\le X/N\mathfrak d\atop
			(\mathfrak m,\mathfrak q)=1}}
	\chi(\mathfrak m)
	\ll_{K ,\varepsilon}
	\Bigl(\frac{X}{N\mathfrak d}\Bigr)^{1-\eta_0}.
	\]
	Therefore, 
	\begin{equation}\label{E4R}
			R(\chi)
		\ll_{K ,\varepsilon}
		X^{1-\eta_0}
		\sum_{N\mathfrak d\le D}
		\frac1{(N\mathfrak d)^{1-\eta_0}} 
		\ll_{K}
		X^{1-\eta_0}D^{\eta_0}=XQ^{-\eta_1},\qquad\eta_1: =\frac{B}{3}\eta_0.
	\end{equation}

	We now bound \(|\mathcal X_2|\) using higher-moment estimates, similar to that in the proof of Proposition~\ref{prop:transference-conclusion}(iv). Define
	\[
	\beta:=\frac{\log Y_1}{\log Q},
	\qquad
	L:=\Bigl\lfloor \frac{B}{\beta}\Bigr\rfloor .
	\]
	Then $\beta\ge 2\varepsilon$, by the same elementary  proof for \eqref{claim}, we have \(L\beta>B/2
\ge \alpha+\kappa/2\).	Taking $\varepsilon$ sufficiently small, then $Y_1^L= Q^{L\beta}\ge Q^{\alpha+2\varepsilon}.$ 
	Expanding the \(L\)-th power, we have
	\[
	(S_1(\chi))^L
	=
	\biggl(\sum_{\mathfrak p\in\mathcal P_1}\chi(\mathfrak p)\biggr)^L
	=
	\sum_{\mathfrak n} c_{\mathfrak n}\chi(\mathfrak n).
	\]
	The coefficients \(c_{\mathfrak n}\) are supported on products of \(L\)
	prime ideals from \(\mathcal P_1\). Hence
	$
	Y_1^L<N\mathfrak n\le (2Y_1)^L 
	$
 and 
	$
	\sum_{\mathfrak n}|c_{\mathfrak n}|^2
	\ll_L
	|\mathcal P_1|^L.
	$
	Hence,
$
	\sum_{\mathfrak n} N\mathfrak n\,|c_{\mathfrak n}|^2
	\ll_L
	Y_1^L|\mathcal P_1|^L.
	$
	
	Moreover, since every \(\mathfrak p\in\mathcal P_1\) has
	\(N\mathfrak p>Y_1\ge Q^{2\varepsilon}\), all ideals \(\mathfrak n\) in the
	support of \(c_{\mathfrak n}\) are coprime to \(\mathcal P(Q^\varepsilon)\).
 Applying Lemma~\ref{lem:ray-HM}(ii) to the coefficients
	$
	a_{\mathfrak n}:=N\mathfrak n\,c_{\mathfrak n}
	$
	in the interval \(Q^{\alpha+2\varepsilon}\le Y_1^L<N\mathfrak n\le(2Y_1)^L\), there is some   $\eta_2=\eta_2(\varepsilon)>0$  such that
	\begin{align}\label{E5a}
		\sum_{\chi\in\mathcal X_2}|S_1(\chi)|^{2L}
		&\ll_{K,\varepsilon}
		\biggl(
		1+
		|\mathcal X_2|
		\Bigl(\frac{Q^\varepsilon}{Y_1^L}\Bigr)^{\eta_2}
		\biggr)
		\sum_{\mathfrak n}N\mathfrak n\,|c_{\mathfrak n}|^2 \nonumber  \\
		&\ll_{K,\varepsilon}
		\bigl(
		1+
		|\mathcal X_2|Y_1^{-L\eta_2}Q^{\varepsilon\eta_2}
		\bigr)
		Y_1^L|\mathcal P_1|^L.
	\end{align}
	On the other hand, by the definition of \(\mathcal X_2\),
	\[
	|\mathcal X_2|
	\biggl(\frac{|\mathcal P_1|}{(\log  Q)^{A_1}}\biggr)^{2L}
	<
	\sum_{\chi\in\mathcal X_2}|S_1(\chi)|^{2L}.
	\]
	Applying this in \eqref{E5a}, we obtain
	\[
	|\mathcal X_2|
	\frac{|\mathcal P_1|^{2L}}{(\log Q)^{2A_1L}}
	\ll_{K,\varepsilon}
	Y_1^L|\mathcal P_1|^L
	+
	|\mathcal X_2|Y_1^{L(1-\eta_2)}Q^{\varepsilon\eta_2}
	|\mathcal P_1|^L.
	\]
		Since \(|\mathcal P_1|\gg_\nu Y_1   Q ^{-\nu}\) by \eqref{eq:P1-lower-for-energy} and  $Y_1\geq Q^{2\varepsilon}$. Choosing \(\nu>0\) sufficiently small, say   $\nu<\frac{\varepsilon\eta_2 (2L-1)}{L }$, then the second term on the
	right-hand side of \eqref{E5a} is absorbed into the left-hand side. Thus 
	\begin{equation}\label{E5}
			|\mathcal X_2|
		\ll_{K,\varepsilon}
		\biggl(\frac{Y_1}{|\mathcal P_1|}\biggr)^L
		(\log  Q)^{2A_1L}
		\ll_{K,\varepsilon,\nu}
		Q^{ \nu L }(\log  Q)^{ 2A_1 L}.
	\end{equation}
	
Finally, using the trivial bounds
$
|S_1(\chi)|\le |\mathcal P_1|, 
|S_2(\chi)|\le |\mathcal P_2|,
$ together with \eqref{E4R} and \eqref{E5},
	we have
	\[
 \frac1{|G|}
\sum_{\chi\in\mathcal X_2}
|S_1(\chi)|^2|S_2(\chi)||R(\overline\chi)|  
\ll_{K,\varepsilon}
\frac{XQ^{-\eta_1}}{|G|}
|\mathcal P_1|^2|\mathcal P_2|
|\mathcal X_2|   \ll_{K,\varepsilon,\nu}
\frac{X}{|G|}
|\mathcal P_1|^2|\mathcal P_2| Q^{\nu L-\eta_1 } (\log  Q)^{ 2A_1 L}
.
	\]
	Since
	$
	|\mathcal P_2|\gg \frac{X}{\log z}\gg \frac{X}{\log Q},
	$
	  taking $\nu>0$ sufficiently small so that $\nu L<\min\{ \varepsilon\eta_2(2L-1),\eta_1\}$,
 the above is 
	\begin{equation}\label{E7}
		\frac1{|G|}
		\sum_{\chi\in\mathcal X_2}
		|S_1(\chi)|^2|S_2(\chi)||R(\overline\chi)| 
		\ll_{K,\varepsilon,\nu} 
		\frac{|\mathcal P_1|^2|\mathcal P_2|^2  }{|G|}Q^{\nu L-\eta_1 } (\log  Q)^{ 2A_1 L+1} 
		\ll_{K,\varepsilon} \frac{|\mathcal P_1|^2|\mathcal P_2|^2  }{|G|(\log Q )}.
	\end{equation}

	Combining \eqref{E3} and \eqref{E7} gives the claim
	\eqref{eq:energy-nonprincipal-ray}, completing the proof.
\end{proof}
\begin{proposition}
	\label{prop:E2-energy-increment}
	Assume the setup of Proposition~\ref{prop:criteria-E2-E3}(b) and    let
	\(A',B'\subseteq G\) be the sets   there. Assume additionally that $X\ge Q^{\max(1,3\alpha,4\alpha_0)+\kappa}$. Suppose  
	Lemma~\ref{lem:small-doubling-structure}(b) holds, and let \(H\) be the
	stabilizer of \(A'\cdot B'\). Assume that
	\[
	[G:H]=3k+2,\qquad k\in\{0,1,2\},
	\]
	and that
	\[
	A', B' \subset \bigcup_{i=1}^{k+1}a_iH, \qquad
	A'\cdot B'
	=
	\bigcup_{j=1}^{2k+1}b_jH
	=
	\biggl(\bigcup_{i=1}^{k+1}a_iH\biggr)^2.
	\]
	Then
	\[
	|E_2(X;\mathfrak q)\setminus (A'\cdot B')|
	\ge
	\bigl(\gamma_k-O(\varepsilon^{1/2})-o_K(1)\bigr)|G|,
	\]
	where
	\[
	\gamma_0=\frac13,\quad
	\gamma_1=\frac7{45},\quad
	\gamma_2=\frac19.
	\]
\end{proposition}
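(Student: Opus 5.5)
The plan is to produce many elements of $E_2(X;\mathfrak q)$ outside $A'\cdot B'=S^2$, where $S:=\bigcup_{i=1}^{k+1}a_iH$. These will be classes $[\mathfrak p_1\mathfrak p_2]$ in which $\mathfrak p_1$ is an ``escaping'' prime lying in a coset $a_0H\not\subseteq S$, and $\mathfrak p_2$ is a large prime of norm in $(z,X]$, $z=X^{2/3}$, lying in a coset $cH$ with $a_0cH\not\subseteq S^2$. The number of distinct classes obtained will be bounded below by Cauchy--Schwarz against the multiplicative energy:
\[
\#\{[\mathfrak p_1\mathfrak p_2]:\mathfrak p_i\in\mathcal P_i\}\ \ge\ \frac{|\mathcal P_1|^2|\mathcal P_2|^2}{\mathcal E_G(\mathcal P_1,\mathcal P_2)}\ \ge\ \frac{|G|}{C_0+o(1)},
\]
where the second inequality is Lemma~\ref{lem:ray-prime-energy}. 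Since every such class lies in $a_0cH$, which is disjoint from $S^2$, these classes lie in $E_2(X;\mathfrak q)\setminus(A'\cdot B')$. The quantity $\gamma_k$ should then be $1/C_0$, summed over the admissible target cosets.

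The first step is to find $\mathcal P_1$. I would reuse the case analysis from the proof of Theorem~\ref{intro:three-prime-ray}. When $S$ and $S^2$ are not complementary, the argument leading to \eqref{8.7} via Lemma~\ref{rough_ideal_lemma} supplies a coset $a_0H\not\subseteq S$ and a dyadic range in which the escaping primes have $\sum 1/N\mathfrak p\gg1$. In the complementary cases of index $5$ and $8$, Corollary~\ref{cor:dyadic-prime-escape} supplies the same thing, using $X\ge Q^{4\alpha_0+\kappa}$. In the index-$2$ case with $a_1\notin H$, Lemma~\ref{lem:quadratic-prime-dichotomy} supplies primes with $\psi(\mathfrak p)=1$ in $H$, with the Siegel bound \eqref{siegel} giving the density $|\mathcal P_1|\gg_\nu Y_1Q^{-\nu}$. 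After a pigeonhole step over the $O(1)$ cosets and $O(\log)$ dyadic pieces, I take $\mathcal P_1$ to be the primes in a single coset $a_0H$ with $Y_1<N\mathfrak p\le2Y_1$. The hypothesis $|\mathcal P_1|\gg_\nu Y_1Q^{-\nu}$ of Lemma~\ref{lem:ray-prime-energy} follows because a sum $\sum1/N\mathfrak p\gg1$, or the lower bound in (ii), over a dyadic range forces $\gg Y_1/\log Q$ primes.

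The second step is to find $\mathcal P_2$. By Proposition~\ref{prop:criteria-E2-E3}(b.vi), each coset $bH\not\subseteq S$ contains only $O(\varepsilon X/\log X)$ primes of norm at most $X$. Hence almost all of the roughly $X/\log X$ primes in $(X^{2/3},X]$ lie in the $k+1$ cosets $a_iH$. The upper bound of Lemma~\ref{rough_ideal_lemma} with $\gamma=2/3$ caps each coset at $(1+o(1))\frac{2}{Y\vartheta_0}\frac X{\log X}$, and $\vartheta_0\approx1$ up to $O(\varepsilon)$. For each $i$ with $a_0a_iH\not\subseteq S^2$, I would apply Lemma~\ref{lem:ray-prime-energy} with $\mathcal P_2$ the large primes in $a_iH$ and
\[
C_0=\frac{2X/\log z}{|\mathcal P_2|}=\frac{3X/\log X}{|\mathcal P_2|}.
\]
This gives $\ge|\mathcal P_2|\log X/(3X)\cdot|G|$ classes in the coset $a_0a_iH$. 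Summing over the distinct target cosets outside $S^2$ gives a lower bound of the form $\frac13\sum_i m_i$, where $m_i$ is the relative prime mass of coset $a_iH$ and the sum runs over admissible $i$.

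The main obstacle is the combinatorial optimisation for each $k$, which must also cope with several $a_i$ possibly mapping to the same target coset. For $k=0$ I expect $Y=2$, all the mass in $a_1H$, and $\gamma_0=1/3$. For $k=1,2$, one must take the worst case over distributions of the masses $m_i$ subject to the cap $m_i\le 2/Y$ and $\sum m_i\ge1-O(\varepsilon^{1/2})$. One must also check that at least one coset with large mass maps outside $S^2$; if not, either a second escaping coset is used, or, if many shifts $a_0a_i$ land in $S^2$, that same fact constrains $a_0$. I expect the fractions $7/45$ and $1/9$ to come out of this bookkeeping with the caps $2/5$ and $2/8$. The $O(\varepsilon^{1/2})$ loss comes from (b.vi) and (b.v), and the $o_K(1)$ loss comes from Lemma~\ref{lem:ray-prime-energy}.
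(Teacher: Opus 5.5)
Your architecture matches the paper's. $\mathcal P_1$ comes from \eqref{8.7}, Corollary~\ref{cor:dyadic-prime-escape}, or Lemma~\ref{lem:quadratic-prime-dichotomy} with \eqref{siegel}. $\mathcal P_2$ is the set of primes of norm in $(X^{2/3},X]$ in one coset of $S$. Lemma~\ref{lem:ray-prime-energy} with $C_0=\frac{3X/\log X}{|\mathcal P_2|}$ plus Cauchy--Schwarz then gives $\frac13$ of that coset's relative prime mass.

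However, the step that actually produces $\gamma_k$ is wrong as you state it. The claim that $\vartheta_0\approx1$ up to $O(\varepsilon)$ is false. Here $\vartheta_0=1-\varepsilon-\alpha_0\frac{\log Q}{\log X}$, and the hypothesis $X\ge Q^{4\alpha_0+\kappa}$ only gives $\vartheta_0\ge\frac34-O(\varepsilon)$. This is essentially attained when $X=Q^{4\alpha_0+\kappa}$ and $4\alpha_0$ is the maximum, for instance with $\alpha=\alpha_0=103/256$.

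With your caps $2/Y$, the constraints are inconsistent for $k=1,2$. The $k+1$ cosets of $S$ must carry mass $1-O(\varepsilon)$, yet $(k+1)\cdot\frac{2}{3k+2}<1$. Even ignoring this, the worst case would give $\frac13(1-\frac25)=\frac15$ and $\frac13(1-\frac12)=\frac16$, not $\frac7{45}$ and $\frac19$.

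The correct cap from Lemma~\ref{rough_ideal_lemma} is $\frac{2}{Y\vartheta_0}\le\frac{8}{3Y}+O(\varepsilon^{1/2})$, that is $\frac8{15}$ and $\frac13$. Subtract the $k$ other cosets of $S$, each at this cap, and the $O(\varepsilon)$ mass outside $S$ from (b.vi). This shows that every coset $a_jH\subseteq S$ has relative prime mass at least $c_k=1-\frac{8k}{3(3k+2)}$, namely $1,\frac7{15},\frac13$. Then $\gamma_k=c_k/3$. This is where $X\ge Q^{4\alpha_0+\kappa}$ is used a second time, beyond the choice of $\mathcal P_1$, and it is also the source of the $O(\varepsilon^{1/2})$ loss.

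The combinatorial obstacle you describe does not actually arise. For fixed $a_0$, the map $xH\mapsto a_0xH$ is a bijection of $G/H$, so no two $a_i$ share a target coset. Some target automatically lies outside $S^2$. Because $H$ is the full stabilizer of $S^2$, the image $\bar S^2$ in $G/H$ has trivial stabilizer. If $\bar a_0\bar S\subseteq\bar S^2$, Lemma~\ref{lem:kneser} applied in $G/H$ to $(\bar S\cup\{\bar a_0\})\cdot\bar S=\bar S^2$ would give $|\bar S^2|\ge(k+2)+(k+1)-1=2k+2$, a contradiction. Hence $a_0a_{j_0}H\not\subseteq S^2$ for some $j_0$. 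Since the lower bound $c_k$ holds for every coset of $S$, you do not need the admissible coset to be a heavy one, and you do not need a second escaping coset. Summing over several admissible $i$ is harmless but gains nothing in the worst case.
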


\begin{proof}
	Let
	\[
	S:=\bigcup_{i=1}^{k+1}a_iH,
	\qquad
	Y:=[G:H]=3k+2,\qquad B: =\frac{\log X}{\log Q} .
	\]
	We follow the discussion in the proof of
	Theorem~\ref{intro:three-prime-ray} in
	Section~\ref{three}. There are two
	cases.
	
	\medskip
	\noindent
	\emph{The non-complementary case.}
	Suppose that \(S\) and \(S^2=A'\cdot B'\) are not complementary subsets of
	\(G/H\). 
	By  \eqref{8.7},
	   there exist a coset
	  \(a_0H\not\subset S\) and
	  \(\beta\in[B/3,2B/3]\) such that
  \begin{equation}\label{10.4}
  	\sum_{\substack{
  			Q^{\beta-\varepsilon}<N\mathfrak p\le Q^\beta\\
  			[\mathfrak p]\in a_0H}}
  	\frac1{N\mathfrak p}
  	\gg_{K,\varepsilon}1.
  \end{equation}

	\medskip
	\noindent
	\emph{The complementary case.}
Suppose that \(S\) and \(S^2\) are complementary subsets of \(G/H\).

	If \(k=1\) or \(k=2\), then \(G/H\) has order \(5\) or \(8\). In these
	cases, by Corollary~\ref{cor:dyadic-prime-escape} or
	\eqref{8.8}, there exist a coset
	\(a_0H\not\subset S\) and
	$
	\beta\in(2\varepsilon,4\alpha_0]
	$
	such that
	\begin{equation}\label{10.5}
			\sum_{\substack{
				Q^{\beta-\varepsilon}<N\mathfrak p\le Q^\beta\\
				[\mathfrak p]\in a_0H}}
		\frac1{N\mathfrak p}
		\gg_{K,\varepsilon}1.
	\end{equation}
In either case \eqref{10.4} or \eqref{10.5}, \(Q^\beta\le X/2\). A dyadic decomposition of the interval
\([Q^{\beta-\varepsilon},Q^\beta]\) therefore gives  
\(Y_1\) with \(2Y_1\le X\) such that
	\[
	 \#
	\{\mathfrak p:Y_1<N\mathfrak p\le2Y_1,\ [\mathfrak p]\in a_0H\} \gg_{K,\varepsilon}\frac{Y_1}{\log Q}.
	\]

 We now consider the   complementary case  with \(k=0\), following   Case~2 in the proof of
 Theorem~\ref{intro:three-prime-ray}. In this situation,
 \([G:H]=2\) and \(S\ne H\). Let \(\psi\) be the non-principal quadratic
 character of \(G\) with kernel \(H\). Then, 
	$
	\psi(\mathfrak p)=1
 \iff
	[\mathfrak p]\in H.
	$
	
	Choose a fixed number \(B_0<B\) with \(B_0>\max\{2\alpha+\alpha_0,2\alpha_0\}  \), for example \(B_0=B-\kappa/2\).   Applying
	Lemma~\ref{lem:quadratic-prime-dichotomy} to \(\psi\), one of the following
	two alternatives holds for $\varepsilon>0$  sufficiently small.

	\noindent
	\emph{Case (i).}
	Suppose that 
	Lemma~\ref{lem:quadratic-prime-dichotomy}(i) holds. Then there exists $	\beta_*\in
	 (\max\{2\alpha,B_0/2\}, B_0-\alpha_0 )$ such that 
	\[
	\sum_{\substack{
			Q^{\alpha+\varepsilon}<N\mathfrak p\le Q^{\beta_*}\\
			[\mathfrak p]\in H}}
	\frac1{N\mathfrak p}
	\ge c_*,  
	\]
	A dyadic decomposition of the interval $[Q^{\alpha+\varepsilon} , Q^{\beta_*} ]$ then gives \(Y_1\) with \(2Y_1\le X\) such that
	\[
	\#\{\mathfrak p:Y_1<N\mathfrak p\le2Y_1,\ [\mathfrak p]\in H\}
	\gg_{K,\varepsilon}
	\frac{Y_1}{\log Q}.
	\]

	\noindent
	\emph{Case (ii).}
	Suppose that    
	Lemma~\ref{lem:quadratic-prime-dichotomy}(ii) holds. Then there exists
	$
	M\in[Q^{\beta_*},Q^{B_0}]
	$
	such that
	\[
	\#\{\mathfrak p:M<N\mathfrak p\le2M,\ \psi(\mathfrak p)=1\}
	\gg_{K,\varepsilon}
	M\rho_KL(1,\psi)\mathcal V_\psi(Q).
	\]
Since \(\psi(\mathfrak p)=1\) if and only if
\([\mathfrak p]\in H\), and
$
	L(1,\psi)\gg_{K,\delta}Q^{-\delta}$, $
	\mathcal V_\psi(Q)\gg_{K,\delta}Q^{-\delta}$,
	it follows that, for every fixed \(\delta>0\),
	\[
	\#\{\mathfrak p:M<N\mathfrak p\le2M,\ [\mathfrak p]\in H\}
	\gg_{K,\varepsilon,\delta}
	M Q^{-\delta}.
	\]
	Since \(M\le Q^{B_0}\), we also have \(2M\le X\).
		
	Therefore, in both the complementary and non-complementary cases, we obtain a coset
	\(a_0H\not\subset S\) and a dyadic interval
	$
	Y_1<N\mathfrak p\le2Y_1$ with $2Y_1\le X$
	such that writing
	\[
	\mathcal P_1
	:=
	\{\mathfrak p:Y_1<N\mathfrak p\le2Y_1,\ [\mathfrak p]\in a_0H\},
	\]
	we have, for every fixed \(\delta>0\),
\begin{equation}\label{P1}
	|\mathcal P_1|\gg_{K,\varepsilon,\delta}Y_1Q^{-\delta}.
\end{equation}

  Since \(a_0H\not\subset S\), the product
$
	(S\cup a_0H)S
$
	contains strictly more \(H\)-cosets than \(S^2\). Hence there is
	\(j_0\in\{1,\dots,k+1\}\) such that
	$
		a_0a_{j_0}H\not\subset A'\cdot B'.
	$
	
	We now lower-bound the number of primes in the coset \(a_{j_0}H\). For a coset
	\(bH\), write
	\[
	\pi(T;bH)
	:=
	\#\bigl\{\mathfrak p\subset \mathcal{O}_K:\mathfrak p\nmid\mathfrak q,\ N\mathfrak p\le T,\ 
	[\mathfrak p]\in bH\bigr\}.
	\]

	If \(bH\not\subset S\), then
	\(A'\cap B'\cap bH=\varnothing\). Hence, by Proposition~\ref{prop:criteria-E2-E3}(b.vi),
	\[
	\pi(X;bH)\ll  \frac{\varepsilon X}{Y\log X}.
	\]
	Summing over the \(2k+1\) cosets outside \(S\), we have
	\begin{equation}\label{outside-prime-small}
		\sum_{bH\not\subset S}\pi(X;bH)
		\ll
	 \frac{	\varepsilon X}{\log X}.
	\end{equation}
	
	On the other hand, for any coset \(bH\), we  apply Lemma \ref{rough_ideal_lemma} to  \(bH\). The contribution to  \(\pi(X;bH)\) from prime ideals with
	\(N\mathfrak p\le X^{1/3}\) is
	\(o_K(X/\log X)\) by the prime ideal theorem.  And the number of remaining prime ideals  
	is  bounded by \(\mathcal{N}_{1/3}(X;b,H)\), hence Lemma \ref{rough_ideal_lemma} gives
	\begin{equation}\label{coset-prime-upper}
		\pi(X;bH)
		\le
		\Bigl(\frac{2}{Y\vartheta_0}+o_K(1)\Bigr)\frac{X}{\log X}.
	\end{equation}

	Moreover, the prime ideal theorem gives
	\[
	\sum_{bH\in G/H}\pi(X;bH)
	=
	(1+o_K(1))\frac{X}{\log X}.
	\]
	Subtracting the contributions of the \(k\) cosets
	\(a_iH\ne a_{j_0}H\) contained in \(S\) using
	\eqref{coset-prime-upper}, together with the contribution from the
	cosets outside \(S\) using \eqref{outside-prime-small}, we obtain
	\[
	\pi(X;a_{j_0}H)
	\ge
	\Bigl(
	1-\frac{2k}{Y\vartheta_0}
	-O(\varepsilon)-o_K(1)
	\Bigr)\frac{X}{\log X}.
	\]
	  Since   $X\ge Q^{\max(1,3\alpha,4\alpha_0)+\kappa}$, by \eqref{8.2}, 
	$
	\vartheta_0\ge \frac34-O(\varepsilon^{1/2}).
	$
	Therefore, 
	\[
	1-\frac{2k}{Y\vartheta_0}
	\ge
	1-\frac{8k}{3(3k+2)}-O(\varepsilon^{1/2}).
	\]
	Taking
	$
	c_k
	=
	1-\frac{8k}{3(3k+2)},
	$
	then we have 
	\begin{equation}\label{10.9}
		\pi(X;a_{j_0}H)
		\ge
		\bigl(c_k-O(\varepsilon^{1/2})-o_K(1)\bigr)\frac{X}{\log X},
	\end{equation}
	where
	\[
	c_0=1,\quad c_1=\frac7{15},\quad c_2=\frac13.
	\]

	Let 
	\[
	z:=X^{2/3},
	\qquad
	\mathcal P_2
	:=
	\{\mathfrak p:z<N\mathfrak p\le X,\ [\mathfrak p]\in a_{j_0}H\}.
	\]
	Since the number of prime ideals of norm at most \(z\) is
	\(o_K(X/\log X)\), it follows from \eqref{10.9} that
	\begin{equation}\label{10.10}
		|\mathcal P_2|
		\ge
		\Bigl(c_k-O(\varepsilon^{1/2})-o_K(1)\Bigr)\frac{X}{\log X}.
	\end{equation}
	By \eqref{P1}, \eqref{10.10}, and \(\log z=\frac23\log X\),
	Lemma~\ref{lem:ray-prime-energy} applies to $\mathcal{P}_1$ and $\mathcal{P}_2$  with
	$
	C_k=\frac3{c_k}+O(\varepsilon^{1/2})+o_K(1).
	$ 
	
	For \(u\in G\), define
	\[
	r(u)
	:=
	\#\bigl\{(\mathfrak p_1,\mathfrak p_2)\in\mathcal P_1\times\mathcal P_2:
	[\mathfrak p_1\mathfrak p_2]=u\bigr\}.
	\]
	Since \(	a_0a_{j_0}H\not\subset A'\cdot B'\), if \(r(u)>0\), then
	\[
	u\in E_2(X;\mathfrak q)\cap a_0a_{j_0}H
	\subseteq
	E_2(X;\mathfrak q)\setminus(A'\cdot B').
	\]
	Therefore, by Cauchy--Schwarz,
	\begin{equation}\label{10.11}
			|\mathcal P_1|^2|\mathcal P_2|^2
		=
		\Bigl(\sum_{u\in G}r(u)\bigr)^2
		\le
		|E_2(X;\mathfrak q)\cap a_0a_{j_0}H|
		\sum_{u\in G}r(u)^2.
	\end{equation}
	The second factor \(	\sum_{u\in G}r(u)^2\) is the multiplicative energy
	\[
	\mathcal E(\mathcal P_1,\mathcal P_2)
	:=
	\#\{(\mathfrak p_1,\mathfrak p_1',\mathfrak p_2,\mathfrak p_2') \in \mathcal{P}_1^2\times \mathcal{P}_2^2:
	[\mathfrak p_1\mathfrak p_2]=
	[\mathfrak p_1'\mathfrak p_2'] \}.
	\]
	And by Lemma~\ref{lem:ray-prime-energy},
	\begin{equation}\label{10.12}
			\mathcal E(\mathcal P_1,\mathcal P_2)
		\le
		\frac{C_k+O(\varepsilon^{1/2})+o_K(1)}{|G|}
		|\mathcal P_1|^2|\mathcal P_2|^2.
	\end{equation}
	Combining \eqref{10.11} and \eqref{10.12}, we have 
	\[
	|E_2(X;\mathfrak q)\setminus(A'\cdot B')|\ge|E_2(X;\mathfrak q)\cap a_0a_{j_0}H|
	\ge
	\Bigl(\frac1{C_k}-O(\varepsilon^{1/2})-o_K(1)\Bigr)|G|.
	\]
	Taking \(\gamma_k:=\frac{c_k}{3},\) we have 
	\[
	|E_2(X;\mathfrak q)\setminus(A'\cdot B')|
	\ge
	\left(\gamma_k-O(\varepsilon^{1/2})-o_K(1)\right)|G|, 
	\]
	where
	\[
	\gamma_0=\frac13,\quad
	\gamma_1=\frac7{45},\quad
	\gamma_2=\frac19.
	\]
\end{proof}

\begin{proof}[Proof of Theorem~\ref{intro:two-prime-ray}]
	Let \(\kappa>0\) be fixed, and let
	\(\varepsilon>0\) be chosen sufficiently small.  For $Q$ sufficiently large in terms of $\varepsilon$, we want to show that 
	\begin{enumerate}
		\item[(i)] If
		$
		X\ge Q^{\max(1,3\alpha,4\alpha_0)+\kappa},
		$
		then
		$
		|E_2(X;\mathfrak q)|
		\ge
		\bigl(\frac23-\varepsilon\bigr)|G|.
		$
		
		\item[(ii)] If
		$
		X\ge Q^{\max(1,4\alpha,4\alpha_0)+\kappa},
		$
		then
		$
		|E_2(X;\mathfrak q)|
		\ge
		\bigl(\frac{11}{16}-\varepsilon\bigr)|G|.
		$
	\end{enumerate}
It suffices to take $
X= Q^{\max(1,3\alpha,4\alpha_0)+\kappa} 
$   in (i) and 	$
X= Q^{\max(1,4\alpha,4\alpha_0)+\kappa} 
$ in (ii). 
	Let
	$
	B:=\frac{\log X}{\log Q}.
	$
	We apply Proposition~\ref{prop:criteria-E2-E3}.
	
	 If
	Proposition~\ref{prop:criteria-E2-E3}(a) holds, then
	\[
	|E_2(X;\mathfrak q)|
	\ge
	(\vartheta-3\varepsilon)|G|.
	\]
	In case {\rm (i)}, the assumption \(B\ge3\alpha+\kappa\) implies that
	\[
	\vartheta
	\ge
	\frac23+\frac{\kappa}{9\alpha+3\kappa} -\varepsilon.
	\]	
 Hence, if \(\varepsilon>0\) is sufficiently small that
$
 3\varepsilon
 <
 \frac{\kappa}{9\alpha+3\kappa},
 $
 then
	\[
	|E_2(X;\mathfrak q)|
	\ge
	\Bigl(\frac23-\varepsilon\Bigr)|G|.
	\]
	Similarly, in case {\rm (ii)}, the assumption
	\(B\ge4\alpha+\kappa\) gives that for sufficiently small \(\varepsilon\),
	\[
	|E_2(X;\mathfrak q)|
	\ge
	\Bigl(\frac34-\varepsilon\Bigr)|G|.
	\] 
This is stronger than the desired
	\((11/16-\varepsilon)|G|\) bound.
	
	We may therefore assume that Proposition~\ref{prop:criteria-E2-E3}(b)
	holds. Let \(A',B'\subseteq G\) be the sets  in Proposition~\ref{prop:criteria-E2-E3}(b).
	By the same argument as in the proof of Theorem~\ref{intro:three-prime-ray},
	using Proposition~\ref{prop:criteria-E2-E3}(b.v), the \(k+1\) cosets of
	\(H\) met by \(A'\) and \(B'\) are the same. Thus, for
	\(S=\bigcup_{i=1}^{k+1}a_iH\), we have
	$
	A',B'\subseteq S$, $A'B'=S^2.
	$
	And  Proposition~\ref{prop:E2-energy-increment}
	applies. 
	
	First we prove (i). We apply Lemma~\ref{lem:small-doubling-structure} with
	\[
	\alpha_*=\frac13- \varepsilon^{1/2},
	\qquad
	\alpha_*'=\frac38-\varepsilon^{1/2},
	\qquad
	\beta_*=\frac23-5\varepsilon^{1/2}.
	\]
	These parameters satisfy the conditions of Lemma \ref{lem:small-doubling-structure} for \(\varepsilon>0\)  sufficiently
	small. The lower bounds for \(|A'|\), \(|B'|\), and the upper bound for indices of cosets follow from Proposition~\ref{prop:criteria-E2-E3}(b.i),(b.v). 
	
	If Lemma~\ref{lem:small-doubling-structure}(a) holds, then
	\[
	|A'\cdot B'|\ge\Bigl(\frac23-5\varepsilon^{1/2}\Bigr)|G|.
	\]
	By Proposition~\ref{prop:criteria-E2-E3}(b.ii),
	\[
	|E_2(X;\mathfrak q)|
	\ge
	|(A'\cdot B')\cap E_2(X;\mathfrak q)|
	\ge
	|A'\cdot B'|-\varepsilon |G|	\ge
	\Bigl(\frac23-6\varepsilon^{1/2} \Bigr)|G|.
	\]

	We may therefore assume that Lemma~\ref{lem:small-doubling-structure}(b)
	holds. Then, for some \(k\in\{0,1,2\}\), the set \(A'\cdot B'\) is the
	union of \(2k+1\) cosets of a subgroup \(H\) of index \(3k+2\). By
	Proposition~\ref{prop:criteria-E2-E3}(b.ii) again,
	\[
	|(A'\cdot B')\cap E_2(X;\mathfrak q)|
	\ge
	|A'\cdot B'|-\varepsilon |G|
	\ge
	\Bigl(\frac{2k+1}{3k+2}-\varepsilon\Bigr)|G|.
	\]
	By Proposition~\ref{prop:E2-energy-increment},
	\[
	|E_2(X;\mathfrak q)\setminus(A'\cdot B')|
	\ge
	\bigl(\gamma_k-O(\varepsilon^{1/2})- o_{K }(1)\bigr)|G|.
	\]
	Hence, 
	\[
	|E_2(X;\mathfrak q)|
	\ge
	\Bigl(
	\frac{2k+1}{3k+2}+\gamma_k-O(\varepsilon^{1/2})- o_{K }(1)
	\Bigr)|G|.
	\]
Since 
$
\frac{2k+1}{3k+2}+\gamma_k>\frac23$,  for $ k=0,1,2 $,
renaming \(\varepsilon\) if necessary, the desired bound follows for
\(\varepsilon>0\) sufficiently small and \(Q\) sufficiently large.

	For (ii), we repeat the same argument by applying Lemma \ref{lem:small-doubling-structure} with
	\[
	\alpha_*=\alpha_*'=\frac38-  \varepsilon^{1/2} ,
	\qquad
	\beta_*=\frac{11}{16}-5\varepsilon^{1/2}.
	\]
	The assumptions \(B\ge4\alpha+\kappa\) and \(B\ge4\alpha_0+\kappa\) ensure
	that Proposition~\ref{prop:criteria-E2-E3}(b.i),(b.v) give the required condition for the bounds of 
	size and coset indices in Lemma \ref{lem:small-doubling-structure}. If Lemma~\ref{lem:small-doubling-structure}(a) holds,  then
	\[
	|E_2(X;\mathfrak q)|
	\ge
	\Bigl(\frac{11}{16}-6 \varepsilon^{1/2} \Bigr)|G|.
	\]
	Otherwise, Lemma~\ref{lem:small-doubling-structure}(b) holds, and the same  argument above gives
	\[
	|E_2(X;\mathfrak q)|
	\ge
	\Bigl(
	\frac{2k+1}{3k+2}+\gamma_k-O(\varepsilon^{1/2})- o_{K }(1)
	\Bigr)|G|.
	\]
Since
$
\frac{2k+1}{3k+2}+\gamma_k>\frac{11}{16}$,  for $ k=0,1,2 $, 
renaming \(\varepsilon\) if necessary completes the proof. 
\end{proof}

\bibliographystyle{alpha}

\begin{thebibliography}{99}
	
	\bibitem{harmonic}
	S. Axler, P. Bourdon, and R. Wade,
	\emph{Harmonic Function Theory},
	Springer, New York, 2013.
	
	\bibitem{cubefree_totally_real}
	O. Balkanova, D. Frolenkov, and H. Wu,
	On Weyl's subconvex bound for cube-free Hecke characters: totally real case,
	arXiv:2108.12283.
	
	\bibitem{ramanujan}
	V. Blomer and F. Brumley,
	On the Ramanujan conjecture over number fields,
	\emph{Ann. of Math. (2)} \textbf{174} (2011), no.~1, 581--605.
	
	\bibitem{sieve}
	M. D. Coleman,
	The Rosser--Iwaniec sieve in number fields, with an application,
	\emph{Acta Arith.} \textbf{65} (1993), no.~1, 53--83.
	
	\bibitem{ideal_product}
	J.-M. Deshouillers, S. Gun, O. Ramar\'e, and J. Sivaraman,
	Representing ideal classes of ray class groups by products of prime ideals of small size,
	\emph{Math. Z.} \textbf{310} (2025), no.~4, Paper No.~68.
	\bibitem{erdos}
	P. Erd\H{o}s, A. M. Odlyzko, and A. S\'ark\"ozy,
	On the residues of products of prime numbers,
	\emph{Period. Math. Hungar.} \textbf{18} (1987), no.~3, 229--239.
	
	\bibitem{opera}
	J. Friedlander and H. Iwaniec,
	\emph{Opera de Cribro},
	American Mathematical Society Colloquium Publications, vol.~57,
	American Mathematical Society, Providence, RI, 2010.
	\bibitem{Green}
	B. Green,
	Roth's theorem in the primes,
	\emph{Ann. of Math. (2)} \textbf{161} (2005), no.~3, 1609--1636.
	
	\bibitem{GreenTao}
	B. Green and T. Tao,
	Restriction theory of the Selberg sieve, with applications,
	\emph{J. Théor. Nombres Bordeaux} \textbf{18} (2006), no.~1, 147--182.
	
	\bibitem{counting_ideal}
	S. Gun, O. Ramar\'e, and J. Sivaraman,
	Counting ideals in ray classes,
	\emph{J. Number Theory} \textbf{243} (2023), 13--37.
	
	\bibitem{heathbrown}
	D. R. Heath-Brown,
	Zero-free regions for Dirichlet \(L\)-functions, and the least prime in an arithmetic progression,
	\emph{Proc. London Math. Soc. (3)} \textbf{64} (1992), no.~2, 265--338.
	
	\bibitem{hecke}
	E.~Hecke,
	{Eine neue Art von Zetafunktionen und ihre Beziehungen zur Verteilung der Primzahlen},
	\textit{Math. Z.} \textbf{1} (1918), no.~4, 357--376.
 
	
	\bibitem{book_analytic}
	H. Iwaniec and E. Kowalski,
	\emph{Analytic Number Theory},
	American Mathematical Society Colloquium Publications, vol.~53,
	American Mathematical Society, Providence, RI, 2004.
	
	\bibitem{lang}
	S. Lang,
	\emph{Algebraic Number Theory},
	Graduate Texts in Mathematics, vol.~110,
	Springer-Verlag, New York, 1994.
	
	\bibitem{counting}
	D. Masser and J. Vaaler,
	Counting algebraic numbers with large height II,
	\emph{Trans. Amer. Math. Soc.} \textbf{359} (2007), no.~1, 427--445.
	
	\bibitem{matomaki}
	K. Matom\"aki and J. Ter\"av\"ainen,
	Products of primes in arithmetic progressions,
	\emph{J. Reine Angew. Math.} \textbf{2024} (2024), no.~808, 109--149.
	
	
	\bibitem{Mitsui}
	T. Mitsui,
	Generalized prime number theorem,
	\emph{Japanese J. Math.} \textbf{26} (1956), 1--42.
	
	\bibitem{norm}
	M. R. Murty and J. Van Order,
	Counting integral ideals in a number field,
	\emph{Expo. Math.} \textbf{25} (2007), no.~1, 53--66.
	
	
	
	\bibitem{neukirch}
	J. Neukirch,
	\emph{Algebraic Number Theory},
	Grundlehren der Mathematischen Wissenschaften, vol.~322,
	Springer-Verlag, Berlin, 1999.
	

	
	\bibitem{rademacherPL}
	H. Rademacher,
	On the Phragm\'en--Lindel\"of theorem and some applications,
	\emph{Math. Z.} \textbf{72} (1959), 192--204.
	
		\bibitem{notes}
	A. Raghuram,
	Notes on the arithmetic of Hecke \(L\)-functions,
	\emph{Proc. Indian Acad. Sci. Math. Sci.} \textbf{132} (2022), no.~2, Paper No.~71.
	
	\bibitem{merten}
	M. Rosen,
	A generalization of Mertens' theorem,
	\emph{J. Ramanujan Math. Soc.} \textbf{14} (1999), no.~1, 1--19.
	
	\bibitem{sawin}
	W. Sawin,
	Square-root cancellation for sums of factorization functions over squarefree progressions in \(\mathbb{F}_q[t]\),
	\emph{Acta Math.} \textbf{233} (2024), no.~2, 285--418.
	
	\bibitem{taovu}
	T. Tao and V. H. Vu,
	\emph{Additive Combinatorics},
	Cambridge Studies in Advanced Mathematics, vol.~105,
	Cambridge University Press, Cambridge, 2006.
	
	\bibitem{counting_Martin}
	M. Widmer,
	Counting primitive points of bounded height,
	\emph{Trans. Amer. Math. Soc.} \textbf{362} (2010), no.~9, 4793--4829.
	
	\bibitem{wu}
	H. Wu,
	Burgess-like subconvexity for \(\mathrm{GL}_1\),
	\emph{Compos. Math.} \textbf{155} (2019), no.~8, 1457--1499.
	
	\bibitem{xie}
	L. Xie,
	On a conjecture of Erd\H{o}s over function fields,
	arXiv:2510.17612;
	to appear in \emph{Math. Z.}
	
	\bibitem{zaman}
	A. Zaman,
	Explicit estimates for the zeros of Hecke \(L\)-functions,
	\emph{J. Number Theory} \textbf{162} (2016), 312--375.
	
\end{thebibliography}

\end{document}